\setlist[enumerate,1]{label={(\alph*)}}
\newtheorem{theorem}{Theorem}[section]
\newtheorem*{theorem*}{Theorem}
\newtheorem{corollary}{Corollary}[theorem]
\newtheorem{lemma}[theorem]{Lemma}
\newtheorem{proposition}{Proposition}[section]
\theoremstyle{remark}
\newtheorem{remark}{Remark}
\theoremstyle{definition}
\newtheorem{definition}[theorem]{Definition}
\theoremstyle{remark}
\newtheorem*{example}{Example}
\theoremstyle{definition}
\theoremstyle{remark}
\theoremstyle{remark}
\newtheorem{conjecture}{Conjecture}
 \DeclareMathOperator{\Span}{Span}
 \DeclareMathOperator{\Ker}{Ker}
 \DeclareMathOperator{\Gr}{Gr}
 \DeclareMathOperator{\codim}{codim}
\newcommand{\bC}{\mathbb{C}}
\newcommand{\bP}{\mathbb{P}}
\newcommand{\bQ}{\mathbb{Q}}
\newcommand{\bR}{\mathbb{R}}
\newcommand{\bZ}{\mathbb{Z}}
\newcommand{\cA}{\mathcal{A}}
\newcommand{\cC}{\mathcal{C}}
\newcommand{\cD}{\mathcal{D}}
\newcommand{\cF}{\mathcal{F}}
\newcommand{\cG}{\mathcal{G}}
\newcommand{\cH}{\mathcal{H}}
\newcommand{\cI}{\mathcal{I}}
\newcommand{\cP}{\mathcal{P}}
\newcommand{\cT}{\mathcal{T}}
\newcommand{\fC}{\mathfrak{C}}
\newcommand{\fs}{\mathfrak{s}}
\newcommand{\ft}{\mathfrak{t}}
\newcommand{\fu}{\mathfrak{u}}
\newcommand{\sfT}{\mathsf{T}}
\newcommand{\sfx}{\mathsf{x}}
\newcommand{\tto}{\longrightarrow}
\newcommand{\mapstto}{\longmapsto}
\newcommand{\mapsffrom}{\longmapsfrom}
\newcommand{\wtilde}{\widetilde}
\title{Compactifying the Parameter Space for the Quantum Multiplication for Hypertoric Varieties.}
\author{Jeremy Peters}
\date{\today}
\begin{document}

\maketitle

\begin{abstract}
    In this paper, we will be studying the parameter space for the quantum multiplication for hypertoric varieties.  The operation of quantum multiplication for hypertoric varieties has an explicit formulation which is given by McBreen and Shenfeld.  In particular, this multiplication depends on a parameter which lives in the complement of a toric arrangement.  Following a paper of deConcini and Gaiffi, I will define a compactification of this parameter space and show how the quantum multiplication can be extended to this compactification.  
\end{abstract}

\tableofcontents

\section{Introduction}

In this paper, we will be working with hypertoric varieties.  A {\it Hypertoric variety} is an algebraic symplectic variety which is constructed as the Hamiltonian reduction of $T^*\mathbb{C}^n{/\!\!/\!\!/}_{0,\chi}T^k$ by $T^k$, where $T^k$ acts on $T^*\mathbb{C}^n$ by an inclusion $T^k\hookrightarrow T^n$; where $T^n$ acts on $T^*\mathbb{C}^n$ coordinate wise with a weight--$1$ action on the base and a weight--$-1$ action on the fibre; and where $\chi\in X^*(T^k)$ and $0\in \mathfrak{t}^k$.  The precise definition is given in Definition \ref{D2.1.1}.  For an overview of hypertoric varieties, see~\cite{P07}. 

Hypertoric varieties are special examples of conical symplectic resolutions.  A {\it symplectic resolution} is a morphism $\pi\colon X\longrightarrow X_0$, where $X$ is a smooth symplectic algebraic variety over $\mathbb{C}$ with symplectic form $\omega$, $X_0$ is its affinization, which is normal and Poisson, and $\pi$ is the canonical map, which is birational, projective and Poisson.  A symplectic resolution is said to be {\it conical} if it comes with additional $\mathbb{C}^\mathsf{x}$--actions on $X$ and on $X_0$, which commute with $\pi$, and are such that $\mathbb{C}^\mathsf{x}$ contracts $X_0$ to a point and scales the symplectic form $\omega$ on $X$ with positive weight.  Symplectic resolutions were originally studied by Beauville~\cite{B00}, Namikawa~\cite{N01a,N01b,N01c,N03,N04,N05}, and Kaledin~\cite{K06}.  In~\cite{BLPW16,BPW16}, it was conjectured that these symplectic resolutions should come in dual pairs such that certain properties are interchanged.  This phenomenon is known as {\it symplectic duality}.  Aside from hypertoric varieties, other examples of conical symplectic resolutions include cotangent bundles to flag varieties, quiver varieties and slices in the affine Grassmannian.  In this paper, we will consider symplectic resolutions that come equipped with an additional Hamiltonian group action by a torus $T^d$, which commute with the respective conical $\bC^\sfx$--actions on $X$ and $X_0$, and are such that $\pi$ is $T^d$--equivariant and such that the fixed point set $X^{T^d}$ is finite.  See~\cite{K22} for an overview. 

One invariant of interest is the quantum equivariant cohomology of a conical symplectic resolution, the study of which was originated in~\cite{BMO11}.  If $X$ is a symplectic resolution equipped with a Hamiltonian $T^d$--action, and a conical $\mathbb{C}^\mathsf{x}$--action, we obtain an action of $G = T^d\times\mathbb{C}^\mathsf{x}$ on $X$ and can study its $G$--equivariant cohomology $H_G^\bullet(X,\bC)$.  We define the $G$--{\it equivariant quantum cohomology} $QH_G^\bullet(X,\bC)$ of a symplectic resolution $X$ to be the commutative, associative deformation of $H_G^\bullet(X,\bC)$ by a formal power series in effective curve classes, defined as follows:
\begin{align*}
    \langle \gamma_1\star_q\gamma_2, \gamma_3\rangle = \sum_{\beta\in H_2(X,\mathbb{Z})_{\mathrm{eff}}}\langle \gamma_1,\gamma_2,\gamma_3\rangle_{0,3,\beta}\;q^\beta.
\end{align*}
Here, $\gamma_1,\gamma_2,\gamma_3\in H_G^\bullet(X,\bC)$; $\langle -,-\rangle$ is the Poincar\'{e} pairing on $H_G^\bullet(X,\bC)$; and $\langle -,-,-\rangle_{0,3,\beta}$ is the genus-$0$, degree-$3$ Gromov-Witten invariants valued in $H_G^\bullet(\mathrm{pt},\bC)$.  Also, $\beta$ ranges over the cone of effective curve classes in $H_2(X,\mathbb{Z})_{\mathrm{eff}}\subset H_2(X,\mathbb{Z})\simeq X^*(T^k)$; $q$ can be viewed as a coordinate on $H^2(X,\mathbb{C}^\mathsf{x}) = T^k$, with $q^\beta$ a formal symbol (with the relation $q^\beta q^{\beta'} = q^{\beta+\beta'}$); and $\omega$ is the symplectic form on $X$.  In what follows, we will assume our (equivariant) cohomology rings to have complex coefficients, unless otherwise specified.

Given $u\in H_G^2(X)$, the quantum multiplication operator $u\star_q(-)$ defines an endomorphism operator of $H_G^\bullet(X)$, which is linear with respect to the $H_G^\bullet(\mathrm{pt})$--module structure on $H_G^\bullet(X)$ induced by the natural map $H_G^\bullet(\mathrm{pt})\longrightarrow H_G^\bullet(X)$.  Let $E :=  \mathrm{End}_{H_G^\bullet(\mathrm{pt})}H_G^\bullet(X)$.

It is conjectured by Okounkov in Section 2.3.4 of~\cite{O17} that the quantum multiplication takes the following form:
\begin{align*}
    u\star_q(-) = u\cup(-) + \sum_{\beta\in \Phi^+}\frac{q^\beta}{1-q^\beta}\langle \beta,u\rangle\;\hbar L_\beta(-)\in E,
\end{align*}
where $\Phi^+\subset X^*(T^k)\simeq (\ft_\bZ^k)^*$ is a finite set of {\it positive K\"{a}hler roots} and where $L_\beta(-)$ is a {\it Steinberg operator}, constructed from each $\beta\in\Phi^+$ in the following manner.  Beginning with a symplectic resolution $X\rightarrow X_0$ with a $G$--action, described as above, the {\it Steinberg variety} is the product $X\times_{X_0} X\subset X\times X$.  The irreducible components of $X\times_{X_0} X$ define $G$--equivariant Borel-Moore homology classes $[Z_i]\in H_{2\mathrm{dim}_\bR(X)}^{\mathrm{BM},G}(X\times_{X_0} X)$, which are called {\it Steinberg correspondences}.  These correspondences define endomorphisms
\begin{align*}
    H_G^\bullet(X)\xrightarrow[]{p_1^*}H_G^\bullet(Z_i)\xrightarrow[]{p_{2,*}}H_G^\bullet(X)
\end{align*}
via pullback and pushforward along the projection maps 
\begin{align*}
    p_{1,2}\colon Z_i\hookrightarrow X\times_{X_0}X\longrightarrow X.
\end{align*}
The Steinberg operators $L_\beta(-)\in E$ are then given by these endomorphisms. 

In the case of this conjecture, the parameter $q$ can be specialized to a parameter in $T^k$ away from the {\it discriminantal arrangement}:
\begin{align*}
    T^\mathrm{reg}:= T^k\setminus \bigcup_{\beta\in \Phi^+}\left\{t\in T^k\mid \beta(t) = 1\right\}.
\end{align*} 

Details on the computation of the quantum multiplication are given by~\cite{MO12} for Nakajima quiver varieties and by~\cite{BMO11} for the Springer Resolution.  Following up on these examples, the quantum multiplication for slices in the Affine Grassmannian for simply-laced groups was computed in~\cite{D24,D20} and for hypertoric varieties, the computation was carried out in~\cite{MBS13}.  So in the foregoing examples, Okounkov's conjecture has been verified.  

For $X$ a conical symplectic resolution, it is of interest to study the map 
\begin{align*}
    Q\colon T^\mathrm{reg}&\longrightarrow \left\{\mathrm{Subspaces\;spanned\;by\;commuting\;elements\;of\;}E\right\}\\
    q&\longmapsto \left\{u\star_q(-)\mid u\in H^2(X)\right\},
\end{align*}
and extend this map to a compactification of the parameter space $T^\mathrm{reg}$.  A related question is to study the algebra $\mathcal{A}(q)\subset E$ given by the action of the entire quantum cohomology algebra $QH_G^\bullet(X)_q$ on $H_G^\bullet(X)$ for a given $q\in T^\mathrm{reg}$ and extend this full action to a compactification of $T^\mathrm{reg}$, but this will not be studied here.  If $X$ is a symplectic resolution whose cohomology $H_G^\bullet(X)$ is generated by divisor classes $u\in H_G^2(X)$, then the family $Q(q)$ is generated by the quantum multiplication by divisor classes and it follows that for each fixed $q\in T^\text{reg}$, the algebra $\mathcal{A}(q)$ is generated by $Q(q)$.  Thus, the question of extending $\mathcal{A}$ to $\widetilde{X}$ reduces to the question of studying the extension of $Q$ to $\widetilde{X}$.  If $X$ is a hypertoric variety, then Theorem \ref{T2.1.1} implies that the equivariant cohomology $H_G^\bullet(X)$ is generated by divisor classes, so the foregoing remarks are true.   

In the case where $X$ is a Nakajima quiver variety corresponding to a simply-laced Lie algebra $\mathfrak{g}$ of ADE type, the equivariant cohomology $H_G^\bullet(X)$ forms a representation of the {\it Yangian algebra}, $\mathsf{Y}(\mathfrak{g})\longrightarrow E$.  The subspace $Q(q)$ is found to be generated by certain {\it trigonometric Casimir Hamiltonians} in $\mathsf{Y}(\mathfrak{g})$ and it is conjectured that the full algebra $\mathcal{A}(q)$ is given by the image of a family of Bethe subalgebras in $E$~\cite{MO12}.  This conjecture is true in the case where $X$ is a partial flag variety in type A~\cite{G13}. 

In the case where $X$ is a slice in the affine Grassmannian $\mathrm{Gr}_G$, where $G$ is of ADE type, the equivariant cohomology $H_G^\bullet(X)$ is given in terms of weight modules of $\mathfrak{g}$ via~\cite{GR13}.  The operation of quantum multiplication on the specialized equivariant cohomology space $H_{\theta,1}^\bullet(X)$--where $\theta\in\mathfrak{h}^*$ is generic--is given by the action of {\it trigonometric Gaudin Hamiltonians} in $((U\mathfrak{g})^{\otimes n})^{\mathfrak{h}}$~\cite{D24,D20,IKR25}.  The algebra $QH_{\theta,1}^\bullet(X)$ is conjecturally described by the action of the family of {\it trigonometric Gaudin subalgebras}, which is the maximal commutative subalgebra of $(U\mathfrak{g}^{\otimes n})^\mathfrak{h}$ containing the trigonometric Gaudin Hamiltonians~\cite{IKLPR24, IKR25}.  The parameter space for this family of algebras is $((\mathbb{C}^\mathsf{x})^n\setminus \Delta)/\mathbb{C}^\mathsf{x}$, which compactifies to the Deligne-Mumford space $\overline{M}_{0,n+2}$ and it was shown in~\cite{IKR25} that this family of algebras extends to a family indexed by the compactification.  

In this present paper, we will study the family  
\begin{align*}
    Q\colon T^\mathrm{reg}&\longrightarrow \mathrm{Gr}(n', E)\\
    q&\longrightarrow \left\{u\star_q(-)\mid u\in H^2_G(X)\right\}
\end{align*}
for $n' = \mathrm{dim}(H_G^2(X))$ in the case where $X$ is a hypertoric variety.  Our goal is to obtain the following extension:
\begin{center}
\begin{tikzcd}
T^\mathrm{reg} \arrow[rr, "Q"] \arrow[d]   &  & {\mathrm{Gr}(n',E)} \\
\widetilde{X_\Sigma}, \arrow[rru, dashed] &  &                  
\end{tikzcd}
\end{center}
for some compactification $\widetilde{X_\Sigma}$ of $T^\mathrm{reg}$.  From~\cite{HP04}, we can write $u_1,\ldots,u_n,\hbar\in H_G^2(X)$ as the generating divisors, so that studying $Q(q)$ reduces to studying each of the quantum multiplication operators $u_i\star_q(-)$, for $i=1,\ldots,n$ (since $\hbar\star_q(-) = \hbar\;\mathbb{I}(-)$).  In order to extend the map $Q$, we can write out the quantum multiplication $u_i\star_q(-)$ in terms of the operators of classical multiplication $u_i\cup(-)$, and the Steinberg operators $L_\alpha(-)$, using Okounkov's conjecture, stated above.  These terms then satisfy the same algebraic relations that define the {\it modified trigonometric holonomy Lie algebra} $\widetilde{\mathfrak{u}_{\Phi^+}}$, which is a graded Lie algebra to be defined in Definition \ref{D2.2.2}.  This Lie algebra generalizes the holonomy Lie algebra, whose relations express the commutativity of the Hamiltonians of Gaudin's integrable system, or equivalently the flatness of the Knizhnik-Zamolodchikov connection.  Families of subspaces of the Holonomy Lie algebra spanned by these Hamiltonians were studied in~\cite{AFV16}.  Similarly, the conditions for flatness of the trigonometric KZ connection of~\cite{D24,D20} for ADE root systems gives rise to the commuting trigonometric Gaudin Hamiltonians of~\cite{IKLPR24,IKR25,KR25}. 

Using this modified trigonometric holonomy Lie algebra, we can define a map $\gamma\colon\widetilde{\mathfrak{u}_{\Phi^+}}\rightarrow E$ and express the quantum multiplications as $u_i\star_q(-) = \gamma(H_i^\mathrm{trig}(q))$, where 
\begin{align*}
    H_i^\mathrm{trig}(q) := u_i + \sum_{\alpha\in\Phi^+}\frac{q^\alpha}{1-q^\alpha}\alpha_it_\alpha,
\end{align*}  
for $i=1,\ldots,n$.  The goal of Section \ref{S2} is to prove the following first Main Theorem concerning this map $\gamma$.

\begin{theorem}\label{T1}
    Let $X = T^*\mathbb{C}^n{/\!\!/\!\!/}_{(0,\chi)}T^k$ be a Hypertoric variety with an action of $G = T^d\times\mathbb{C}^\mathsf{x}$, where $d = n-k$ and define the following map:
    \begin{align*}
        \gamma\colon \widetilde{\mathfrak{u}_{\Phi^+}}&\longrightarrow E\\
        t_\alpha&\longmapsto \hbar L_\alpha\\
        u_i&\longmapsto u_i\cup(-)\\
        \hbar&\longmapsto \hbar\;\mathbb{I}(-),
    \end{align*}
    where $\alpha\in\Phi^+$ and $i=1,\ldots,n$ and let $\widetilde{\mathfrak{u}_{\Phi^+}}^1\subset\widetilde{\mathfrak{u}_{\Phi^+}}$ be the subspace generated by the degree--$1$ elements.  Then $\gamma$ is well-defined on the level of Lie algebras and the restricted map
    \begin{align*}
        \gamma\colon\widetilde{\mathfrak{u}_{\Phi^+}}^1\longrightarrow E
    \end{align*}
    is injective.   
\end{theorem}

To this end, in Section \ref{S2.1}, we will describe preliminary properties to do with hypertoric varieties and their {\it circuits}--combinatorial data which correspond to their K\"{a}hler roots.  In Section \ref{S2.2}, we will demonstrate that $\gamma$ is well-defined, in that the operators in $E$ satisfy the requisite commutation relations which define $\widetilde{\mathfrak{u}_{\Phi^+}}^1$.  In Section \ref{S2.3}, we will be using the technology of {\it Stable Basis}, defined in~\cite{MO12}, to prove that $\gamma$ is injective and that the operators in $E$ are linearly independent.

Once we prove this theorem, we can then write $n' = n+1$ and we obtain the following map:
\begin{align*}
    \gamma_*\colon \mathrm{Gr}(n+1,\widetilde{\mathfrak{u}_{\Phi^+}}^1)\longrightarrow \mathrm{Gr}(n+1,E),
\end{align*}
induced by the inclusion of subspaces.  We will define a projection $\pi\colon \widetilde{\mathfrak{u}_{\Phi^+}}^1\longrightarrow \mathfrak{u}_{\Phi^+}^1$ in Definition \ref{D2.2.2}, which corresponds to taking the quotient $H_G^2(X)\to H_G^2(X)/H_G^2(\mathrm{pt}) = H^2(X,\mathbb{C}) = \mathfrak{t}_\mathbb{C}^k$.  From this, we can take the induced map 
\begin{align*}
    \pi^*\colon \mathrm{Gr}(k,\mathfrak{u}_{\Phi^+}^1)\longrightarrow \mathrm{Gr}(n+1,\widetilde{\mathfrak{u}_{\Phi^+}}^1), 
\end{align*}
defined by taking preimages.  As vector spaces, we have the decomposition $H_G^2(X)=\mathfrak{t}_\mathbb{C}^k\oplus H_G^2(\mathrm{pt})$, so we can choose the generators $u_1,\ldots,u_n,\hbar$ of $H_G^2(X)$ so that $u_1\ldots,u_k$ generate $\mathfrak{t}_\mathbb{C}^k$ and $u_{k+1},\ldots,u_{n},\hbar$ generate $H_G^2(\mathrm{pt})$.  Defining 
\begin{align*}
    Q'\colon T^\mathrm{reg}&\longrightarrow \mathrm{Gr}(k,\mathfrak{u}_{\Phi^+}^1)\\
    q&\longmapsto \mathrm{Span}\{H_i^\mathrm{trig}(q)\mid i=1,\ldots,k\}, 
\end{align*}
We can fit $\gamma_*$, $\pi^*$, $Q$ and $Q'$ into the following diagram:
\begin{center}
\begin{tikzcd}[ampersand replacement = \&]
T^\mathrm{reg} \arrow[rrr, "Q"] \arrow[rrrdd, ,"Q'"', dashed] \&  \&  \& {\mathrm{Gr}(n+1,E)},                                                        \\
                                                           \&  \&  \&                                                                            \\
                                                           \&  \&  \& {\mathrm{Gr}(k,\mathfrak{u}_{\Phi^+}^1),} \arrow[uu, "\gamma_*\circ\pi^*"', dashed]
\end{tikzcd}
\end{center}
which we will verify at the end of Section \ref{S2.4}.  Our above problem of extending $Q$ therefore reduces to extending the map $Q'$:
\begin{center}
\begin{tikzcd}
T^\mathrm{reg} \arrow[rr,"Q'"] \arrow[d]        &  & \mathrm{Gr}(k,\mathfrak{u}_{\Phi^+}^1) \\
\widetilde{X_\Sigma}. \arrow[rru, dashed] &  &                                                  
\end{tikzcd}
\end{center}
The extended map $\widetilde{Q}\colon \widetilde{X_\Sigma}\rightarrow \mathrm{Gr}(n+1,E)$ will then be given by the counter-clockwise outer composition in the below diagram:
\begin{center}
\begin{tikzcd}
T^\mathrm{reg} \arrow[rr, "Q"] \arrow[d] \arrow[rrd, "Q'" description, dashed] &  & {\mathrm{Gr}(n+1,E)}                                             \\
{\widetilde{X_\Sigma}} \arrow[rr, dashed, "\widetilde{Q}" description]                 &  & {\mathrm{Gr}(k,\mathfrak{u}_{\Phi^+}^1)}. \arrow[u]
\end{tikzcd}
\end{center}     

In Section \ref{S3}, we will attend to the task of defining the space $\widetilde{X_\Sigma}$ to extend $Q'$.  Its construction can be summarized as follows.  Starting with the subset $\Phi^+\subset(\mathfrak{t}^k_\mathbb{Z})^*$, we study the hypersurface arrangement $\mathcal{H}_{T^k} = \{H_\alpha\subset T^k\mid \alpha\in\Phi^+\}$ and from this arrangement, obtain a fan $\Sigma$.  This fan determines a toric variety $X_\Sigma$, defined in Section \ref{S3.1} with embedding given by $T^\mathrm{reg}\hookrightarrow X_\Sigma$ as a dense open torus.  We will assume throughout the paper that the hypertoric variety and the set $\Phi^+$ are such that the resulting toric variety $X_\Sigma$ is smooth.  The hypersurface arrangement $\mathcal{H}_{T^k}$ then embeds into a hypersurface arrangement $\mathcal{H}$ in $X_\Sigma$, and we can form the poset of its connected components.  By blowing up along the poset, we will obtain a space $\widetilde{X_\Sigma}$ first constructed and studied by De Concini and Gaiffi~\cite{DCG18}.  We then prove the following second Main Theorem.  

\begin{theorem}\label{T2}
    For $X$ a hypertoric variety, with a $G = T^d\times\mathbb{C}^\mathsf{x}$--action, define
    \begin{align*}
        Q\colon T^\mathrm{reg}&\longrightarrow \mathrm{Gr}(n+1,E)\\
        q&\longmapsto \left\{u\star_q(-)\mid u\in H_G^2(X)\right\},
    \end{align*}
    where $E = \mathrm{End}_{H_G^\bullet(\mathrm{pt})}H_G^\bullet(X)$ and let $\widetilde{X_\Sigma}$ to be the compactification of $T^\mathrm{reg}$, as constructed in~\cite{DCG18}.  Then $Q$ fits into the following diagram:
    \begin{center}
\begin{tikzcd}
T^\mathrm{reg} \arrow[rrr, "Q"] \arrow[dd] \arrow[rrrdd, dashed] &  &  & {\mathrm{Gr}(n+1,E)}                              \\
                                                                   &  &  &                                                 \\
\widetilde{X_\Sigma} \arrow[rrr, dashed]                           &  &  & {\mathrm{Gr}(k,\mathfrak{u}_{\Phi^+}^1)}. \arrow[uu]
\end{tikzcd}
    \end{center}   
\end{theorem}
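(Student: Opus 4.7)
The plan is to first use Theorem 1 to reduce the extension problem from $\text{Gr}(k,E)$ to the Grassmannian of a finite-dimensional Lie algebra, and then to extend the resulting map by a stratum-by-stratum local analysis on $\widetilde{X_\Sigma}$. By Theorem 1 the operator $u_i \star_q (-)$ is the image under $\gamma$ of an element of the form
\begin{align*}
    u_i + \sum_{\beta \in \Phi^+} \frac{q^\beta}{1-q^\beta}\, \hbar\, t_\beta^{(i)} \;\in\; \widetilde{\mathfrak{u}_{\Phi^+}}^1,
\end{align*}
and composing with the projection $\pi$ and with the map $\Delta_*$ of Section \ref{S2.4} produces a morphism $Q' \colon T^{\text{reg}} \to \text{Gr}(k,\mathfrak{u}_\Phi^1)$ which recovers $Q$ after post-composition with $\gamma_* \circ \pi^* \circ \Delta_*$. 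Since $\gamma$ is injective, extending $Q$ across the compactification reduces to extending the more tractable $Q'$.

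Second, I would exploit the explicit construction of $\widetilde{X_\Sigma}$. The toric piece $X_\Sigma$ already resolves the asymptotic behaviour of the characters $q^\beta$ at infinity, in the sense that on each toric chart each $q^\beta$ becomes a monomial in the local coordinates with prescribed integer exponents. The iterated blowup of $X_\Sigma$ along the connected components of the hypersurface arrangement $\mathcal{H}$, in the order dictated by the poset of \cite{dCG_2018}, then separates the loci along which several of the factors $1-q^\beta$ vanish simultaneously into a normal-crossing divisor. On each resulting chart, one can rescale the spanning family $\{v_i(q)\}_{i=1}^{k}$ of $Q'(q)$ by the dominant singular factor along the stratum to obtain a well-defined limit family in $\mathfrak{u}_\Phi^1$. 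Patching across toric and blowup charts, together with properness of the Grassmannian, produces the extension $\widetilde{X_\Sigma} \to \text{Gr}(k,\mathfrak{u}_\Phi^1)$ and hence, composing with $\gamma$, the extension of $Q$.

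The main obstacle I expect is verifying that these rescaled limits span a genuinely $k$-dimensional subspace on every stratum, in particular the deepest ones where many K\"{a}hler roots become simultaneously resonant. This requires identifying the leading-order vectors with elements of $\mathfrak{u}_\Phi^1$ supported on the sub-arrangement of $\Phi^+$ indexed by the nested set labelling the stratum, and then showing that those elements remain linearly independent modulo the subleading corrections. The linear independence of the Steinberg operators $\{t_\beta\}_{\beta \in \Phi^+}$, obtained via the stable-basis argument of Section \ref{S2.3}, is the crucial input; turning it into linear independence of the leading-order spans on every stratum is precisely where the choice of the deConcini-Gaiffi building set, rather than a naive iterated blowup, is essential. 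Carrying out this combinatorial-algebraic matching chart by chart will constitute the technical core of Section \ref{S3}.
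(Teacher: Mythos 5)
Your proposal matches the paper's strategy: factor $Q$ through $\text{Gr}(k,\mathfrak{u}_\Phi^1)$ via the injective $\gamma$ from Theorem \ref{T1}, cover $\widetilde{X_\Sigma}$ by the Moci/deConcini--Gaiffi nested-set charts, rescale the spanning vectors $v_i''$ by the monomial $\prod_{C_j\subset C_i}z_j$ vanishing on the stratum, and conclude the rescaled limits stay $k$-dimensional because their leading terms are the linearly independent $\hbar t_{\beta_{C_i}}$ attached to the adapted basis. This is exactly how Sections \ref{S3.2}--\ref{S3.4} proceed (Theorems \ref{T3.3.2} and \ref{T3.4.1}); the only loose point is that the paper exhibits the limit directly rather than invoking properness of the Grassmannian, which on its own would not suffice along divisorial loci.
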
 
We prove this theorem by decomposing $\widetilde{X_\Sigma}$ into open sets and extending along each open set.  To define these charts, our starting point will be to study the compactification of the complement of an arrangement of subtori, as given in~\cite{M11}.  In Section \ref{S3.2}, we will follow this paper, which itself is based on~\cite{DCP95}, and define the open sets of this toric compactification using the combinatorics of nested sets.  In Section \ref{S3.3}, we will then extend the map $Q$ to this compactification.  In Section \ref{S3.4}, we will use the description of the charts given in the previous sections to define open charts on the compactified toric variety $\widetilde{X_\Sigma}$.  We can modify the results in Section \ref{S3.3} to extend $Q$ to the boundary divisors of $X_\Sigma$.  

In~\cite{I19,IR19}, a parametrized family of Bethe subalgebras in the Yangian $\mathsf{Y}(\mathfrak{g})$ was studied and characterized as the image of a family of algebras in the trigonometric holonomy Lie algebra $\mathfrak{t}^\mathrm{trig}_\Phi$.  This Lie algebra was introduced in~\cite{TL11}, where its presentation was worked out in detail.  This Lie algebra $\mathfrak{t}_\Phi^\mathrm{trig}$ generalizes the rational trigonometric holonomy Lie algebra of Kohno~\cite{K89} and is constructed from the rank--$k$ root system $\Phi$ in the torus $T^k = \mathrm{Hom}(\Lambda,\mathbb{C}^\mathsf{x})$, where $\Lambda$ is the root lattice of $\Phi$.  The family of algebras in $\mathfrak{t}_\Phi^\mathrm{trig}$ coming from~\cite{I19,IR19} was recently studied in Dec 2025 by Ilin and Rybnikov in~\cite{IR25}, following the work of~\cite{TL11} and also of~\cite{AFV16}.  In~\cite{IR25}, the authors studied the family 
\begin{align*}
    \psi\colon T^\mathrm{reg}\longrightarrow \mathrm{Gr}(k,\mathrm{dim}(\ft_\Phi^\mathrm{trig})^1),
\end{align*}
of Bethe subspaces in $\ft^\mathrm{trig}_\Phi$.  They obtained a regular extension to the family 
\begin{align*}
    \overline{\psi}\colon \widetilde{X_\Sigma}\longrightarrow \mathrm{Gr}(k,\mathrm{dim}(\ft_\Phi^\mathrm{trig})^1),
\end{align*}
where $\widetilde{X_\Sigma}$ is the De Concini-Gaiffi compactification of $T^\mathrm{reg}$ as above, and showed that this extended map is injective in the case where $\Phi$ is a root system of Types $A$, $B$, $C$, and $D$~\cite{BDS49}.  In \cite{IR25} the authors also constructed and characterized the ``Bethe bundle,'' $\overline{\psi}^*\cT$ which is defined as the pullback of the tautological bundle $\cT$ over $\mathrm{Gr}(k,\mathrm{dim}(\ft_\Phi^\mathrm{trig})^1)$.  This bundle was found to be isomorphic to the tangent bundle $T\widetilde{X_\Sigma}(-\mathrm{log}D)$, logarithmic at the boundary divisor $D = \widetilde{X_\Sigma}\setminus T^k$.  Similar results will be studied by the author in the future in the hypertoric setting, presently explored in this paper.

\subsection*{Acknowledgements}
The author would like to thank Joel Kamnitzer for suggesting and advising this Ph.D. project, as well as Michael McBreen and Valerio Toledano Laredo for help with the proof of Theorem \ref{T2.2.1}.  The author would also like to thank and Leonid Rybnikov and Alexei Ilin for useful discussions about the open sets that form the open compactification in the paper.  The author was funded by NSERC-CGS-D for the duration of this project and thanks the University of Toronto, McGill University and the Fields Institute for hospitality during the writing of this paper.    

\section{Algebraic Structure of Quantum Multiplication}\label{S2}

\subsection{Hypertoric Varieties and Circuits}\label{S2.1}
In this section, we define a hypertoric variety, and analyze the structure of the quantum multiplication.  

Throughout this paper, fix integers $k,n,d>0$ and a short exact sequence of complex tori
\begin{align*}
    1\tto T^k\tto T^n\tto T^d\tto 1,
\end{align*}
with the corresponding exact sequence of complex Lie algebras
\begin{align*}
    0\tto \ft^k\xrightarrow[]{\;\;[\iota]\;\;} \ft^n\xrightarrow[]{\;\;[a]\;\;} \ft^d\tto 0,
\end{align*} 
where $[a]$ and $[\iota]$ are represented by matrices with integer entries.  Throughout this paper, take $\ft_\bZ^k$, $\ft_\bZ^n$ and $\ft_\bZ^d$ to denote the integer lattices of the Lie algebras $\ft^k$, $\ft^n$ and $\ft^d$, respectively.  Set $\ft_\bR^k = \ft_\bZ^k\otimes_\bZ \bR$ and similarly define $\ft_\bR^n$ and $\ft_\bR^d$.  Then, using this notation, the above exact sequence can also be expressed as an exact sequence of lattices
\begin{align*}
    0\tto \ft_\bZ^k\xrightarrow[]{\;\;[\iota]\;\;} \ft_\bZ^n\xrightarrow[]{\;\;[a]\;\;} \ft_\bZ^d\tto 0.
\end{align*}

\begin{definition}\label{D2.1.1}  
    Start with the natural action of $T^n$ on $T^*\mathbb{C}^n$ as follows:  for every $(t_i)_{i=1}^n\in T^n$ and $(z_i,w_i)_{i=1}^n\in T^*\mathbb{C}^n$ define
    \begin{align*}
        (t_i)_{i=1}^n\cdot (\;(z_i)_{i=1}^n,\;(w_i)_{i=1}^n\;) &= (\;(t_iz_i)_{i=1}^n,\;(t_i^{-1}w_i)_{i=1}^n\;),
    \end{align*}
    where the $z_i$ are coordinates on the base and $w_i$ are coordinates on the fibre of $T^*\mathbb{C}^n$.  This action is Hamiltonian with (complex) moment map 
    \begin{align*}
        \mu_n\colon T^*\mathbb{C}^n&\longrightarrow (\mathfrak{t}^n)^*\\
        (z_i,w_i)_{i=1}^n&\longmapsto z_iw_i. 
    \end{align*}
    The inclusion $T^k\hookrightarrow T^n$ induces an action $T^k$ on $T^*\mathbb{C}^n$ with corresponding moment map 
    \begin{align*}
        \mu_k = \iota^*\circ \mu_n\colon T^*\mathbb{C}^n\longrightarrow (\mathfrak{t}^k)^*.
    \end{align*}
    Given a generic character $\chi\in X^*(T^k)$, we construct the following quotient:
    \begin{align*}
        X &:= T^*\mathbb{C}^n{/\!\!/\!\!/}_{(0,\chi)} T^k\\
        &= \mu_k^{-1}(0){/\!\!/}_\chi T^k\\
        &= \mathrm{Proj}\left(\bigoplus_{m\in \mathbb{N}} \mathbb{C}[\mu_k^{-1}(0)]^{T^k,m\chi}\right),
    \end{align*}
    where the last line is the projective GIT quotient.  The space $X$ admits a natural action by the quotient $T^d = T^n/T^k$ and a conical action of $\mathbb{C}^\mathsf{x}$, which preserves the base and scales the cotangent fibres by weight $1$.  These actions by $T^d$ and by $\mathbb{C}^\mathsf{x}$ both induce a Hamiltonian $T^d\times\mathbb{C}^\mathsf{x}$--action on $X$.  We define a {\it hypertoric variety} to be any symplectic resolution $X\tto X_0$ that is constructed by the above quotient, with the above $G:= T^d\times\mathbb{C}^\mathsf{x}$--action.  The K\"{a}hler torus for $X$ is given by $T^k$.
\end{definition}

Starting with the map 
\begin{align*}
    [a]\colon\ft_\bZ^n\tto \ft_\bZ^d,
\end{align*}
let $a_1,\ldots,a_n\in\ft_\bZ^d$ be the images of the canonical generators of $\ft_\bZ^n$.  The coordinate entries of each of the $a_i$ then form the columns of $[a]$, regarded as a $d\times n$--matrix, so we can write 
\begin{align*}
    [a] = \begin{pmatrix}
        \vline & & \vline \\
        a_1 & \cdots & a_n \\
        \vline & & \vline 
    \end{pmatrix}.
\end{align*}

After choosing a lift $\widehat{\chi}\in (\ft_\mathbb{Z}^n)^*$ of the defining GIT parameter $\chi\in X^*(T^k)$, the vectors $a_1,\ldots,a_n\in\ft_\bZ^d$ determine a hyperplane arrangement $\cA = \{H_i\mid i=1,\ldots,n\}$ via 
\begin{align*}
    H_i = \{x\in(\ft^d_\bR)^*\mid x\cdot a_i + \widehat{\chi}_i = 0\}.
\end{align*}

Thus, the relevant combinatorial data necessary for the construction of a hypertoric variety is the hyperplane arrangement $\cA$. 

\begin{definition}\label{D2.1.2}
    We say a hyperplane arrangement is {\it unimodular} if the corresponding matrix $a = (a_{ij})$ is unimodular; i.e. every set of $d$ linearly independent vectors $(a_{i_1},\ldots,a_{i_d})$ spans $\ft^d_\bR$ over $\bZ$.  We say that a hyperplane arrangement is {\it simple} if every collection of $m$ hyperplanes with nonempty intersection intersects in codimension $m$.  A hyperplane arrangement is {\it smooth} if it is simple and unimodular.    
\end{definition}

\begin{theorem}(Theorems 3.2, 3.3 of~\cite{BD00})
    A hypertoric variety is {\it smooth} iff the corresponding hyperplane arrangement is smooth.
\end{theorem}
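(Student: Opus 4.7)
The plan is to establish both directions by translating the smoothness of the GIT quotient $X = \mu_k^{-1}(0)//_\chi T^k$ into conditions on the stabilizers and on the differential of the moment map, and then matching those conditions to unimodularity and simplicity of $\mathcal{H}$ respectively. The standard criterion is that $X$ is smooth precisely when $T^k$ acts freely on the $\chi$-semistable locus of $\mu_k^{-1}(0)$ and when $0$ is a regular value of $\mu_k$ restricted to that locus. The key combinatorial bookkeeping will be organized by the index set $I(z,w) = \{i : z_i = w_i = 0\}$ attached to each point $(z,w) \in T^*\mathbb{C}^n$.

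For the forward direction, the first step is to compute the stabilizer of an arbitrary point $(z,w)$ inside $T^n$: by inspection of the coordinatewise action it is the coordinate subtorus $T^{I(z,w)}$, so the stabilizer in $T^k$ is $T^k \cap T^{I(z,w)}$. Using $\chi$-semistability, one shows that only subsets $I \subset \{1,\ldots,n\}$ whose complement $\{a_i : i \notin I\}$ spans $\mathfrak{t}^d_\mathbb{R}$ arise. Unimodularity of $a$ then guarantees that the induced map $\mathfrak{t}^k_\mathbb{Z} \to \mathfrak{t}^n_\mathbb{Z}/\mathfrak{t}^{I}_\mathbb{Z}$ is a split injection for each such $I$, so $T^k \cap T^{I(z,w)} = 1$ and the action is free. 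In parallel, a computation of $d\mu_k$ at $(z,w)$ shows its image is spanned by the rows of $\iota$ indexed by $i \notin I(z,w)$, and simplicity of $\mathcal{H}$ forces these to span $(\mathfrak{t}^k)^*$ (since a failure would produce $m$ hyperplanes intersecting in codimension strictly less than $m$). Hence $\mu_k^{-1}(0)^{ss}$ is a smooth submanifold on which $T^k$ acts freely, and the quotient $X$ is smooth.

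For the converse, the plan is to assume $\mathcal{H}$ is \emph{not} smooth and build an explicit singular or orbifold point of $X$. If $\mathcal{H}$ fails to be simple, choose a minimal dependent set $S$ of hyperplanes with nonempty common intersection; the corresponding $(z,w)$ with $I(z,w) = \{1,\ldots,n\} \setminus S$ lies in the semistable locus, yet $d\mu_k$ drops rank there, so a local slice computation shows that $X$ has a non-reduced or singular point. If $\mathcal{H}$ is simple but not unimodular, pick a basis $\{a_{i_1},\ldots,a_{i_d}\}$ of columns with nontrivial cokernel torsion; the stabilizer $T^k \cap T^{\{1,\ldots,n\} \setminus \{i_1,\ldots,i_d\}}$ is then a nontrivial finite cyclic group, producing an honest orbifold (non-smooth) point on $X$.

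The main obstacle I anticipate is cleanly characterizing the semistable locus and matching the allowed index sets $I(z,w)$ to the feasible flats of $\mathcal{H}$; in particular, verifying that the pathological $(z,w)$ constructed in the converse actually survives into $\mu_k^{-1}(0)^{ss,\chi}$ requires a careful argument using the lift $\widehat{\chi}$ and the linear-programming description of $\chi$-semistability. Once that dictionary between semistable points and bounded/feasible regions of the arrangement is set up, both directions reduce to the linear-algebra statements about the columns $a_i$ extracted above, and the theorem falls out by comparing the two characterizations.
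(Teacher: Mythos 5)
The paper states this theorem as a citation to Theorems 3.2 and 3.3 of Bielawski--Dancer and does not give its own proof, so there is no internal argument to compare against; your sketch is the standard line of attack from the literature (free action $+$ regular moment-map value) and is correct in outline. That said, two of the details as you have written them do not survive scrutiny.

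In the forward direction, your claimed characterization of the index sets $I = I(z,w)$ that appear on the semistable locus --- ``only subsets $I$ whose complement $\{a_i : i \notin I\}$ spans $\mathfrak{t}^d_\mathbb{R}$ arise'' --- is incorrect, and it is also not the hypothesis needed to run the split-injection step. A $T^d$-fixed point corresponding to a vertex $v \in \bigcap_{i\in Q} H_i$ of $\mathcal{A}$ (with $|Q|=d$) has $I(z,w) = Q$, and then $\{a_i : i\notin Q\}$ has only $k = n-d$ elements and need not span $\mathfrak{t}^d_\mathbb{R}$; for $T^*\mathbb{P}^2$ (so $n=3$, $d=2$, $k=1$) it has a single element and cannot span. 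The constraint you actually want, and which makes the rest of the chain work, is that $\bigcap_{i\in I(z,w)} H_i \neq \emptyset$. By Lemma \ref{L2.1.1} and simplicity this is equivalent to $\{a_i : i\in I(z,w)\}$ being linearly independent, and it is the independence of \emph{this} set, combined with unimodularity, that forces $\{a_i : i\in I(z,w)\}$ to extend to a $\mathbb{Z}$-basis of $\mathfrak{t}^d_\mathbb{Z}$, which is exactly the split-injection condition $T^k\cap T^{I(z,w)} = \ker(T^{I(z,w)}\to T^d) = 1$. The ``complement spans'' condition neither implies nor is implied by this, so the implication as you chain it is broken.

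The same index-set confusion appears in the converse. With $Q = \{i_1,\dots,i_d\}$ a rational basis that is not a $\mathbb{Z}$-basis, the stabilizer that detects the torsion is $T^k\cap T^{Q}$, not $T^k\cap T^{[n]\setminus Q}$: one has $T^k\cap T^Q = \ker\bigl(T^Q\to T^d\bigr)$, whose component group is the torsion of $\mathfrak{t}^d_\mathbb{Z}/\langle a_i : i\in Q\rangle_\mathbb{Z}$, which is nontrivial by hypothesis. On the other hand $T^{[n]\setminus Q}$ has dimension $k$, so $\ker\bigl(T^{[n]\setminus Q}\to T^d\bigr)$ generically has dimension $k-d$ (positive-dimensional when $k>d$, trivial in small examples), and it is not the group you want. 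Concretely, the pathological point to exhibit should have $z_i = w_i = 0$ precisely for $i \in Q$, so that $I(z,w) = Q$ and its $T^k$-stabilizer is the finite group above. Once both index sets are corrected, your strategy goes through and matches the Bielawski--Dancer argument.
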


Henceforth, we will assume that our parameters $a_1,\ldots,a_n$ and $\chi$ are chosen such that the hyperplane arrangement, and hence the resulting hypertoric variety, is smooth.  We will now introduce the combinatorial data necessary for our study of hypertoric varieties in the rest of this paper.  

\begin{definition}\label{D2.1.3}
    Given an arrangement of hyperplanes $\cA = \{H_i\mid i=1,\dots,n\}$, define a {\it circuit} to be a nonempty subset $S\subset [n] = \{1,\ldots,n\}$ corresponding to a collection of hyperplanes $\{H_i\mid i\in S\}$ for which $\bigcap_{i\in S}H_i = \emptyset$, but is minimal with this property: $\bigcap_{i\in S'}H_i\neq \emptyset$ for all subsets $S'\subsetneq S$.

    Denote 
    \begin{align*}
        \mathscr{S}:= \left\{S\subset [n]\;\biggr\rvert\; S\mathrm{\; circuit\;in\;}\cA\right\}
    \end{align*}
    to be the set of all circuits.  
\end{definition}

We now characterize the circuits of the corresponding hyperplane arrangement in terms of its column vectors.

\begin{lemma}\label{L2.1.1}
    If $\cA$ is a simple hyperplane arrangement, then we have the following bijection 
    \begin{align*}
        \mathscr{S}&\longleftrightarrow \left\{\begin{pmatrix}a_{i_1}\;\ldots\;a_{i_m}\end{pmatrix}\;(d\times m)-\mathrm{submatrix\;of\;}[a] \;\biggr\rvert\; \substack{m\geq 1,\\\mathrm{rk}(a_{i_1}\;\ldots\;\widehat{a_{i_j}}\;\ldots\;a_{i_m}) = m-1\\\;\forall j=1,\ldots,m\\a_{i_1},\ldots,a_{i_m}\mathrm{\;linearly\; dependent}}\right\}\\
        \{i_1,\ldots,i_m\}&\mapstto \begin{pmatrix} a_{i_1}\;\ldots\;a_{i_m}\end{pmatrix}.
    \end{align*}
\end{lemma}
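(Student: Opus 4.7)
The plan is to translate the combinatorial condition defining a circuit into a pure linear algebra condition on the columns $a_{i_1}, \ldots, a_{i_m}$, using simplicity of the arrangement to bridge affine and linear rank conditions.

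First I would analyze when an intersection $\bigcap_{j=1}^m H_{i_j}$ is empty. Packaging the defining equations into a single affine system, the intersection equals the preimage of $-(\widehat{\chi}_{i_1}, \ldots, \widehat{\chi}_{i_m})$ under the linear map $\phi \colon (\ft^d_\bR)^* \to \bR^m$ sending $x \mapsto (x\cdot a_{i_j})_j$. The image of $\phi$ has dimension $\text{rk}(a_{i_1}, \ldots, a_{i_m})$, so whenever the intersection is nonempty, it is an affine subspace of codimension $\text{rk}(a_{i_1}, \ldots, a_{i_m})$ in $(\ft^d_\bR)^*$. If the columns are linearly independent, $\phi$ is surjective and the intersection is automatically nonempty and of codimension $m$. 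If the columns are linearly dependent, then any nonempty preimage has codimension strictly less than $m$ — but simplicity of $\cH$ forbids this for a nonempty intersection of $m$ hyperplanes. Therefore, under the simplicity hypothesis,
\begin{equation*}
    \bigcap_{j=1}^m H_{i_j} = \emptyset \;\Longleftrightarrow\; a_{i_1}, \ldots, a_{i_m} \text{ are linearly dependent.}
\end{equation*}

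With this equivalence in hand, the circuit conditions translate directly. The requirement $\bigcap_{i\in S} H_i = \emptyset$ becomes linear dependence of $\{a_i\}_{i\in S}$. The minimality requirement $\bigcap_{i\in S'} H_i \neq \emptyset$ for every proper $S' \subsetneq S$ becomes linear independence of every proper subfamily of $\{a_i\}_{i\in S}$, and it suffices to test this on the $(m-1)$-element subfamilies, which is exactly the condition $\text{rk}(a_{i_1}, \ldots, \widehat{a_{i_j}}, \ldots, a_{i_m}) = m-1$ for every $j$. The correspondence $\{i_1, \ldots, i_m\} \mapsto (a_{i_1} \; \ldots \; a_{i_m})$ is then a bijection onto the indicated set of submatrices, since the indexing set $S$ can be recovered from the submatrix once the full matrix $[a]$ is fixed.

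The main subtlety is the use of simplicity to replace the affine/Rouché–Capelli criterion (which would normally involve the augmented matrix incorporating $\widehat{\chi}$) by a purely linear criterion on the $a_i$ alone; I would want to state this bridging step carefully, since without simplicity a proper subfamily with dependent columns could still cut out a nonempty intersection (of low codimension) and spoil minimality. Everything else is an unpacking of definitions.
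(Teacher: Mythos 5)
Your proof is correct and follows essentially the same route as the paper: both reduce to the single key equivalence that $\bigcap_{j} H_{i_j}$ is nonempty iff the columns $a_{i_1},\ldots,a_{i_m}$ are linearly independent, using simplicity for the forward implication and surjectivity of the evaluation map (the paper phrases it as row-reduction) for the converse, then unpack the minimality condition. Your Rouch\'e--Capelli packaging is if anything slightly more careful than the paper's, which loosely refers to the affine hyperplanes $H_i$ as ``linear subspaces'' when computing codimension.
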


\begin{proof}
Expanding out $[a] = \begin{pmatrix} a_1\;\ldots\;a_n\end{pmatrix}$, we claim that given a list of column vectors $\{a_{i_1},\ldots,a_{i_m}\}$, the corresponding subarrangement $\{H_{i_j}\mid j=1,\ldots,m\}$ in $(\ft_\bR^d)^*$ has a nonempty intersection iff the vectors  $\{a_{i_j}\}$ are linearly independent (i.e. the corresponding $d\times m$-submatrix has rank $m$).  On the one hand, if the arrangement $\{H_{i_j}\}$ has nonempty intersection, then because the arrangement is simple, we must have
    \begin{align*}
        \codim_{(\ft_\bR^d)^*}\bigcap_{j=1}^m H_{i_j} = m.
    \end{align*}
Since $a_{i_1},\ldots,a_{i_m}$ are the normal vectors to the $H_{i_j}$, it follows that the normal space to the intersection $\bigcap_{j=1}^m H_{i_j}$ is given by $\mathrm{Span}_\mathbb{R}\{a_{i_j}\mid j=1,\ldots,m\}$.  Since the hyperplanes $H_{i_j}$ are linear subspaces, we can write the following: 
    \begin{align*}
        m = \codim_{(\ft_\bR^d)^*}\bigcap_{j=1}^m H_{i_j} = \dim\Span_\bR\{a_{i_j}\mid j=1,\ldots,m\},
    \end{align*}
    which implies that the $\{a_{i_j}\mid j=1,\ldots,m\}$ are linearly independent.  Conversely, if the $\{a_{i_j}\}$ are linearly independent, then by row-reducing the corresponding matrix $\begin{pmatrix}a_{i_1}\;\ldots\;a_{i_m}\end{pmatrix}$, one can take the intersection $\bigcap_{j=1}^m H_{i_j}\subset (\ft_\bR^d)^*$, which is a point in $(\ft_\bR^d)^*$.  
    This implies the lemma.  
\end{proof}

We now introduce a finite set of vectors in $\mathfrak{t}_\mathbb{R}^k\setminus \{0\}$, which will be useful in characterizing these circuits.       

\begin{definition}\label{D2.1.4}
    Let $S = \{i_j\mid j=1,\ldots,m\}\subset [n]$ be a nonempty subset and define
    \begin{align*}
        \Phi^{+}_S &:= \left\{\beta = (\beta_i)_{i=1}^n\in \Ker([a])_\bR\setminus\{0\}= \iota(\ft_\bR^k)\setminus\{0\}\;\;\biggr\rvert\;\;\beta_{i_1} = 1;\;\;\beta_{i_j} = \pm 1,\; i_{j-1}<i_j,\; j=2,\ldots,m;\;\; \beta_i = 0,\; i\not\in S\right\},
    \end{align*}
    and $\Phi^-_S = -\Phi^+_S$.
    We note that $\Phi_S^{\pm}\cap \Phi_{S'}^\pm = \emptyset$ whenever $S\neq S'$.  

    Regarding $([n]\setminus\emptyset,\subset)$ as a poset with respect to the inclusion of subsets, we have 
    \begin{align*}
        \mathscr{S} = \{S\subset [n]\setminus\emptyset\mid \Phi_S^+\neq\emptyset,\mathrm{\;S\;is\;minimal\;with\;this\;property}\}.
    \end{align*}
    Define
    \begin{align*}
        \Phi^{\pm} = \bigsqcup_{S\in\mathscr{S}}\Phi_S^{\pm}\subset \Ker([a])_\bR\setminus \{0\}\simeq \ft_\bR^k\setminus \{0\},
    \end{align*}
    and set $\Phi:= \Phi^+\sqcup\Phi^-$.  We will define a {\it circuit vector} to be any element of $\Phi$.  
\end{definition}

\begin{remark}
    We observe that $\Phi^{\pm}$ spans $\iota((\ft^k)_\bZ)$ and that if $c\beta\in \Phi_S^{\pm}$, then either $c = \pm 1$.  Moreover, $(\alpha,\beta)\in\bZ$ for all $\alpha,\beta\in\iota(\ft_k)_\bR$.  Thus, $\Phi$ satisfies similar properties as usual root systems.  There is no analogue to the reflection property for usual root systems, however.
\end{remark} 

\begin{definition}
    If $\beta\in \Phi$, Define the {\it support} of $\beta\in\ft_\bR^n$ to be 
    \begin{align*}
        \mathrm{Supp}(\beta) = \{i\in [n]\mid \beta_i\neq 0\}
    \end{align*}
    By the minimality condition of $\Phi$ in Definition \ref{D2.1.4}, $\mathrm{Supp}(\beta)$ is a circuit.     
\end{definition}
 
\begin{lemma}\label{L2.1.2}
    Let $\cA$ be a smooth hyperplane arrangement in $(\ft_\bR^d)^*$.  Then there are inverse bijections
    \begin{align*}
        \mathscr{S} &\longleftrightarrow \Phi^+\\
        S&\mapstto \beta_S\\
        \mathrm{Supp}(\beta)&\mapsffrom \beta.
    \end{align*}
\end{lemma}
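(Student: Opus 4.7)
The plan is to exhibit, for each circuit $S = \{i_1 < \cdots < i_m\} \in \mathscr{S}$, a unique element $\beta_S \in \Phi_S^+$, and to observe that for any $\beta \in \Phi^+$ the defining constraints of $\Phi_S^+$ force $\Supp(\beta) = S$, where $S$ is the unique circuit with $\beta \in \Phi_S^+$. The second observation is immediate: by the definition of $\Phi_S^+$ one has $\beta_i = 0$ for $i \notin S$ and $\beta_{i_j} = \pm 1 \neq 0$ for every $j$, and by the disjointness noted in Definition~\ref{D2.1.4} the circuit $S$ is determined by $\beta$. The statement therefore reduces to proving $|\Phi_S^+| = 1$ for each $S \in \mathscr{S}$, whereupon the two maps are inverse by construction.

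To produce $\beta_S$, I would invoke Lemma~\ref{L2.1.1}: the $d \times m$ submatrix $(a_{i_1}\ \cdots\ a_{i_m})$ has rank $m-1$, so its kernel is one-dimensional, spanned by some $(v_1, \ldots, v_m) \in \bR^m$. All $v_j$ must be nonzero, since any vanishing coefficient would yield a linear dependence among $m-1$ of the $a_{i_l}$, contradicting the minimality of the circuit. Normalising to $v_1 = 1$ then gives a unique candidate $\beta_S \in \Ker([a])_\bR$ supported on $S$ with $\beta_{i_1} = 1$; the remaining task is to show that $\beta_{i_j} \in \{-1, +1\}$ for $j \geq 2$.

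This is where unimodularity enters essentially. Since $[a]\colon \ft^n_\bZ \toto \ft^d_\bZ$ is surjective, unimodularity implies that any maximal linearly independent subset of the columns of $[a]$ is in fact a $\bZ$-basis of $\ft^d_\bZ$. Extending $\{a_{i_2}, \ldots, a_{i_m}\}$ (linearly independent by Lemma~\ref{L2.1.1}) to such a $\bZ$-basis $\cB_1$ of $\ft^d_\bZ$ by adjoining further columns of $[a]$, and reading off the relation $a_{i_1} = -\sum_{l=2}^m \beta_{i_l}\, a_{i_l}$ as the expansion of the lattice element $a_{i_1}$ in $\cB_1$, forces each $\beta_{i_l} \in \bZ$. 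Symmetrically, for each $j \geq 2$ the set $\{a_{i_1}, \ldots, \widehat{a_{i_j}}, \ldots, a_{i_m}\}$ is linearly independent by the circuit property, so extending it to a $\bZ$-basis $\cB_j$ and expanding $a_{i_j} = -\beta_{i_j}^{-1}\bigl(a_{i_1} + \sum_{l \neq 1, j} \beta_{i_l}\, a_{i_l}\bigr)$ in $\cB_j$ forces $\beta_{i_j}^{-1} \in \bZ$. Together these integrality statements give $\beta_{i_j} \in \{-1, +1\}$, so $\beta_S \in \Phi_S^+$ is the desired unique element.

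The main obstacle is precisely this last step: one must exploit unimodularity \emph{symmetrically}, extracting integrality of both $\beta_{i_j}$ and $\beta_{i_j}^{-1}$ from two different lattice bases. The circuit property of $S$ is exactly what furnishes the two required linearly independent subsets of columns, so the argument is essentially a pairing of Lemma~\ref{L2.1.1} with the unimodular extension principle; everything else is formal.
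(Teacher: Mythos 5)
Your proof is correct and follows the same underlying strategy as the paper's: reduce to showing $|\Phi_S^+| = 1$ via the rank-$(m-1)$ kernel from Lemma~\ref{L2.1.1}, then use unimodularity to force the kernel generator's entries into $\{0,\pm 1\}$. The paper compresses this last step into a single claim that the unimodular submatrix ``row-reduces'' to a form with $\pm 1$ entries in the final column, without explaining why unimodularity forbids, say, a $\pm 2$. Your symmetric argument --- extending $\{a_{i_2},\ldots,a_{i_m}\}$ to a $\bZ$-basis to get $\beta_{i_j}\in\bZ$, and then extending $\{a_{i_1},\ldots,\widehat{a_{i_j}},\ldots,a_{i_m}\}$ to a second $\bZ$-basis to get $\beta_{i_j}^{-1}\in\bZ$ --- is precisely what justifies that compressed claim, and it is the circuit property that guarantees both required independent subsets. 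So you are not taking a genuinely different route; you are filling in the gap the paper leaves implicit, which is the more complete account of the same proof.
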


\begin{proof}
    Suppose $S = \{i_1,\ldots,i_m\}\subset [n]$ is a circuit with corresponding $d\times m$ submatrix $\begin{pmatrix}a_{i_1}\;\ldots\;a_{i_m}\end{pmatrix}$, according to Lemma \ref{L2.1.1}.  Because $\cA$--and hence $[a]$--is unimodular, we can row-reduce it to a matrix of the following form:
    \begin{align*}
        \begin{pmatrix}
             1 & & & 0 & \pm 1\\
            & \ddots & & \\
            & & \ddots & \\
            0 & & &  1 & \pm 1\\
            \hline \\
            & & 0_{(d-m+1)\times m} & &
        \end{pmatrix}.
    \end{align*}
    The kernel of this submatrix is $1$--dimensional with uniquely defined generating element $(\beta_{S,i})_{i=1}^n$ for which $\beta_{S,i_1} = 1$, $\beta_{S,i_2},\ldots,\beta_{S,i_n} = \pm 1$ and all other $\beta_{S,i} = 0$.  Because the rank of this submatrix is $m-1$, $S$ is minimal and hence $S\in\mathscr{S}$, in the notation of Definition \ref{D2.1.4}.  Therefore, $\beta_S\in\Phi^+$ is the corresponding root in $\Phi^+$. 

    Conversely, if $\beta = (\beta_i)_{i=1}^n\in\Phi^+$, then $S = \{i\in[n]\mid \beta_i\neq 0\}$ is the unique set for which $\beta\in\Phi_S^+$.  To prove that $S$ is a circuit, we note that by the minimality of $S$, $\beta$ corresponds uniquely to the relation $\sum_{i\in S}(\pm 1)a_i = 0$ among the column vectors $a_{i_j}$ for which all subcollections $\{a_{i_j}\mid i_j\in S'\subsetneq S\}$ have no relations, because of the minimality of $S$.  It follows that the column vectors $\{a_i\mid i\in S\}$ form a $d\times m$-submatrix of $a$ for which every $d\times(m-1)$--submatrix is of rank $m-1$.  By Lemma \ref{L2.1.1}, $S$ is a circuit.  
\end{proof}

\begin{remark}
    For the matrix $[a]\colon \mathfrak{t}_{\mathbb{Z}}^n\longrightarrow \mathfrak{t}_{\mathbb{Z}}^d$, one can change coordinates on $\mathfrak{t}_{\mathbb{Z}}^n$ and on $\mathfrak{t}_{\mathbb{Z}}^d$ so that $[a]$ is brought to row-reduced echelon form:
    \begin{align*}
        [a] &= \begin{pmatrix}
        \begin{array}{c|c}
          I_{d\times d} & A_{d\times k}
        \end{array}
        \end{pmatrix}\colon \mathfrak{t}_\mathbb{Z}^n\longrightarrow\mathfrak{t}_\mathbb{Z}^d.        
    \end{align*}
    The kernel is then given by 
    \begin{align*}
        [\iota] = \begin{pmatrix}
            \begin{array}{c}
                -A_{d\times k}\\
                \hline
                I_{k\times k}
            \end{array}            \end{pmatrix}\colon\mathfrak{t}_\mathbb{Z}^k\longrightarrow\mathfrak{t}_{\mathbb{Z}}^n.
    \end{align*}
    Since the columns of $[\iota]$ form a basis for $\iota(\mathfrak{t}_\mathbb{Z}^k)$, it follows by the minimal support condition that $\Phi$ contains the image of a basis $\{\varepsilon_1,\ldots,\varepsilon_k\}\subset T^k$.    
\end{remark}

\begin{remark}
    Given a circuit set $S\subset [n]$, along with the corresponding vector $\beta_S\in \Phi^+$, one has a splitting $S = S^+\sqcup S^-$, where 
    \begin{align*}
        S^+ &= \{i\in S\mid \beta_S = 1\}\\
        S^- &= \{i\in S\mid \beta_S = -1\}.
    \end{align*}
\end{remark}

Using this splitting, one can now write down a generators-and-relations description of $H_{T^d\times\mathbb{C}^\mathsf{x}}^\bullet(X)$.  
\begin{theorem}\label{T2.1.1}\cite{HP04}
    The $T^d\times\mathbb{C}^\mathsf{x}$--equivariant cohomology of $X$ has the following presentation:
    \begin{align*}
        H_{T^d\times\mathbb{C}^\mathsf{x}}^\bullet(X)\simeq \frac{\mathbb{C}[u_1,\ldots u_n,\hbar]}{\langle \prod_{i\in S^+}u_i\cdot\prod_{i\in S^-}(\hbar - u_i)\mid S\mathrm{\;circuit}\rangle}.
    \end{align*}
    Here, $S = S^+\sqcup S^-$ is the splitting of the circuit set $S$ based on the previous remark; the $u_i$ are the divisor classes for the hyperplanes $H_i$; and $\hbar$ is the divisor class for the weight of the $\mathbb{C}^\mathsf{x}$--action.   

    The $H_{T^d\times\mathbb{C}^\mathsf{x}}^\bullet(\mathrm{pt})$--module structure is induced by the dual map $[a]^*\colon (\mathfrak{t}_\mathbb{R}^d)^*\longrightarrow (\mathfrak{t}_\mathbb{R}^n)^*$ on divisors.  
\end{theorem}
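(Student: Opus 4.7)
The plan is to follow the Hausel-Proudfoot strategy, which realizes $H_{T^d\times\mathbb{C}^\mathsf{x}}^\bullet(X)$ as the equivariant cohomology of a GIT quotient and applies Kirwan surjectivity. Since $X = \mu_k^{-1}(0)^{ss}/T^k$, one has $H_{T^d\times\mathbb{C}^\mathsf{x}}^\bullet(X) = H_{T^n\times\mathbb{C}^\mathsf{x}}^\bullet(\mu_k^{-1}(0)^{ss})$, and because $T^*\mathbb{C}^n$ is equivariantly contractible, $H_{T^n\times\mathbb{C}^\mathsf{x}}^\bullet(T^*\mathbb{C}^n) = \mathbb{C}[u_1,\ldots,u_n,\hbar]$ with $u_i$ the $T^n$-weight of the $i$th base coordinate $z_i$ and $\hbar$ the generator for $\mathbb{C}^\mathsf{x}$. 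The conical $\mathbb{C}^\mathsf{x}$-action contracts $X_0$ to a point, which guarantees equivariant formality, and Kirwan surjectivity (in the form proved by Konno for hypertoric varieties) produces a surjection $\kappa\colon\mathbb{C}[u_1,\ldots,u_n,\hbar] \twoheadrightarrow H_{T^d\times\mathbb{C}^\mathsf{x}}^\bullet(X)$. The $H_{T^d\times\mathbb{C}^\mathsf{x}}^\bullet(\mathrm{pt})$-module structure via $[a]^*$ is then read off directly from the inclusion of equivariant parameters $(\mathfrak{t}_\mathbb{R}^d)^* \hookrightarrow (\mathfrak{t}_\mathbb{R}^n)^*$ dual to $[a]$.

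Next I would verify that each circuit relation lies in $\ker\kappa$. For a circuit $S$ with signed splitting $S = S^+\sqcup S^-$ coming from $\beta_S\in\Phi^+$, consider the coordinate subspace $Z_S = \{(z,w) : z_i = 0 \text{ for } i\in S^+,\; w_i = 0 \text{ for } i\in S^-\} \subset T^*\mathbb{C}^n$. Because the base coordinate $z_i$ has $T^n\times\mathbb{C}^\mathsf{x}$-weight $u_i$ while the fiber coordinate $w_i$ has weight $\hbar-u_i$ (by the weight $-1$ action of $T^n$ on fibres and the weight $+1$ action of $\mathbb{C}^\mathsf{x}$), the equivariant Euler class of the normal bundle of $Z_S$ in $T^*\mathbb{C}^n$ equals $\prod_{i\in S^+}u_i\cdot\prod_{i\in S^-}(\hbar-u_i)$. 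Minimality of the circuit $S$ together with the moment map equation paired with $\iota^*(\beta_S)$, which reads $\sum_{i\in S^+}z_iw_i - \sum_{i\in S^-}z_iw_i = 0$ on $\mu_k^{-1}(0)$, forces $Z_S \cap \mu_k^{-1}(0)$ to lie entirely in the unstable locus. Hence the above Euler class restricts to zero in $H_{T^n\times\mathbb{C}^\mathsf{x}}^\bullet(\mu_k^{-1}(0)^{ss})$, giving the claimed relation.

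Finally, I would prove that these circuit relations generate all of $\ker\kappa$ by comparing Hilbert/Poincar\'e series on both sides. Using the Bia{\l}ynicki-Birula decomposition of $X$ for a generic one-parameter subgroup of $T^d$, the Poincar\'e polynomial of $X$ is expressed combinatorially as the $h$-polynomial of the bounded complex of $\mathcal{H}$. On the algebraic side, a Stanley-Reisner / Gr\"obner basis analysis computes the Hilbert series of $\mathbb{C}[u_1,\ldots,u_n,\hbar]/\langle\text{circuit relations}\rangle$ as the same polynomial, once one tensors down by the equivariant parameters. Smoothness of $\mathcal{H}$ (unimodularity plus simplicity) ensures the underlying matroid is representable and that every unstable stratum is indexed by a circuit, so no further relations arise. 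The main obstacle is this Hilbert series matching with the equivariant parameter $\hbar$ switched on; I would handle it by a flatness argument in $\hbar$, reducing to the well-known $\hbar=0$ Konno presentation of the ordinary cohomology.
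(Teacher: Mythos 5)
The paper does not prove Theorem \ref{T2.1.1}; it imports the presentation verbatim from \cite{HaPr_2004} (and it ultimately goes back to Konno), so there is no in-paper argument to compare against. Your sketch reconstructs the standard Kirwan--surjectivity proof, and the overall skeleton (equivariant contractibility of $T^*\mathbb{C}^n$, Kirwan surjectivity for hyperk\"ahler/hypertoric quotients, Euler classes of coordinate subspaces as relation generators, and a rank comparison to close the gap) is the right one. However, the step where you place the circuit relation in the kernel contains a genuine error. You claim that minimality of $S$ plus the paired complex moment map equation $\sum_{i\in S^+}z_iw_i - \sum_{i\in S^-}z_iw_i = 0$ forces $Z_S\cap\mu_k^{-1}(0)$ into the unstable locus. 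But on your $Z_S$ one has $z_i=0$ for $i\in S^+$ and $w_i=0$ for $i\in S^-$, so every monomial $z_iw_i$ with $i\in S$ vanishes identically and the paired equation reads $0=0$; it imposes no constraint and cannot detect unstability. The correct mechanism is the Hilbert--Mumford criterion for GIT (semi)stability with respect to the character $\chi$: for a point of $Z_S$, the one-parameter subgroup of $T^k$ corresponding to $-\iota^{-1}(\beta_S)$ has a limit as $t\to 0$, and it destabilizes precisely when $\langle\chi,\iota^{-1}(\beta_S)\rangle>0$. Reversing the sign of $\beta_S$ exchanges $S^+\leftrightarrow S^-$ and hence $Z_S\leftrightarrow Z_{-S}$, so the lift $\widehat\chi$ is exactly what selects the orientation $S^\pm$ named in the theorem (``the simplex bounded by the $H_i$''). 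The fact that $\chi$ appears nowhere in your unstability argument is the tell that something is missing, since without it one cannot distinguish $S^+$ from $S^-$.

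The final step (no further relations) is also pitched somewhat loosely. Rather than comparing bivariate Hilbert series with a flatness-in-$\hbar$ dance, it is cleaner to observe that both the proposed quotient ring and $H^\bullet_{T^d\times\mathbb{C}^\mathsf{x}}(X)$ are free $H^\bullet_{T^d\times\mathbb{C}^\mathsf{x}}(\mathrm{pt})$-modules of the same finite rank $\lvert X^{T^d}\rvert$: freeness of the cohomology comes from the Bia\l ynicki--Birula affine paving induced by the conical $\mathbb{C}^\mathsf{x}$-action (which is why odd cohomology vanishes), and freeness of the circuit ring follows from the unimodularity/simplicity of $\mathcal{H}$ via the Stanley--Reisner structure. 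A surjection between free modules of equal finite rank over a domain is automatically an isomorphism, and that closes the argument without any further Hilbert-series bookkeeping.
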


We now specify the quantum multiplication operation on $H_{T^d\times\mathbb{C}^\mathsf{x}}^\bullet(X)$.

\begin{theorem}\cite{MBS13}
    For divisors $u\in H_{T^d\times\mathbb{C}^\mathsf{x}}^\bullet(X)$, the quantum multiplication is given by 
    \begin{align*}
        u\star_q(-) = u\cup(-) + \hbar\sum_{\beta\in \Phi^+}\frac{q^\beta}{1-q^\beta}(u,\beta)L_\beta(-),
    \end{align*}
    where $L_\beta(-) := L_{\beta_S}(-)$ is a Steinberg operator, defined in~\cite{MBS13}, and $\Phi^+$, regarded as a subset of $\mathfrak{t}_\mathbb{Z}^k$ is the set of K\"{a}hler roots.   
\end{theorem}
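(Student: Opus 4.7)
The plan is to compute directly from the Gromov-Witten definition $\langle \gamma_1 \star_q \gamma_2, \gamma_3 \rangle = \sum_\beta \langle \gamma_1, \gamma_2, \gamma_3 \rangle_{0,3,\beta} \, q^\beta$ using $(T^d \times \mathbb{C}^\mathsf{x})$-equivariant torus localization on the moduli of stable maps. The first step is to identify the cone of effective curve classes: under the isomorphism $H_2(X,\mathbb{Z}) \simeq X^*(T^k) \simeq \mathfrak{t}^k_\mathbb{Z}$, this cone is generated by the primitive K\"ahler roots $\beta \in \Phi^+$. Concretely, for each circuit $S \in \mathscr{S}$, the vector $\beta_S$ from Lemma \ref{L2.1.2} is realized geometrically as the class of a family of rational curves connecting pairs of $T^d$-fixed points whose chambers in the associated moment polytope are separated by the wall encoded by $S$. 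Every effective class is then a non-negative integer combination of the $\beta \in \Phi^+$.

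The next step is to carry out Atiyah-Bott localization for each such $\beta$. Two standard inputs simplify matters. First, the divisor equation gives $\langle u, \gamma_2, \gamma_3 \rangle_{0,3,\beta} = (u,\beta)\, \langle \gamma_2, \gamma_3 \rangle_{0,2,\beta}$ for $u \in H^2_G(X)$, immediately accounting for the pairing factor $(u,\beta)$. Second, one shows that every torus-fixed stable map of class $k\beta_0$ with $\beta_0 \in \Phi^+$ factors through one of the distinguished rational curves as a $k$-fold cover; summing these contributions over $k \geq 1$ produces the geometric series $\sum_{k \geq 1} q^{k\beta_0} = q^{\beta_0}/(1-q^{\beta_0})$. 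The residual, cover-independent piece is identified with the Steinberg operator $L_{\beta_0}$, realized as a correspondence in $H^{BM}_{2d}(X \times_{X_0} X)$ supported on pairs of points lying on a common rational curve of class $\beta_0$. The overall prefactor of $\hbar$ arises from the $\mathbb{C}^\mathsf{x}$-weight contribution to the obstruction bundle along the cotangent direction, in which the symplectic form scales with weight one.

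The main obstacle is proving that only classes of the form $k\beta_0$ for $\beta_0 \in \Phi^+$ contribute nontrivially, i.e., that every torus-fixed stable map into $X$ has image contained in one of the distinguished $\mathbb{P}^1$'s associated to a circuit. This relies crucially on the smoothness of the hyperplane arrangement (the unimodularity and simplicity hypotheses), which ensures that adjacent $T^d$-fixed points are joined by unique $\mathbb{P}^1$'s of class exactly $\beta_S$, and rules out stray fixed components in $\overline{M}_{0,3}(X,k\beta)$ that could otherwise contribute. A secondary subtlety is the multi-cover calculation itself: one must verify that the local contribution of a $k$-fold cover of such a $\mathbb{P}^1$, after evaluating the equivariant normal bundle and the $\psi$-class contributions, reduces to $q^{k\beta_0}/k$ times the primitive two-point invariant, so that after applying the divisor equation the factors of $1/k$ cancel and the clean geometric series emerges. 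Finally, one must check that the resulting expression is independent of the circuit splitting $S = S^+ \sqcup S^-$ used in Theorem \ref{T2.1.1}, so that $L_{\beta_0}$ is intrinsically defined.
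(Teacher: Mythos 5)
This statement is cited from \cite{McBSh_2013} and the paper provides no proof of its own---it is an external input, not a result established here. So there is no ``paper's own proof'' to compare against; you are in effect sketching a reconstruction of the McBreen--Shenfeld argument.

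Your sketch captures the broad strategy (localization on moduli of stable maps, divisor equation, multi-cover sums producing $q^\beta/(1-q^\beta)$, Steinberg correspondences as the primitive contribution), but it glosses over the points that carry most of the weight in the actual argument. First, the claim that the effective cone of curve classes is generated by the $\beta\in\Phi^+$ is not something one can assert and move on: for hypertoric varieties the relationship between circuits, K\"ahler roots, and the Mori cone requires a dedicated analysis, and the statement that \emph{only} classes $k\beta_0$ with $\beta_0\in\Phi^+$ contribute is a nontrivial vanishing theorem. Second, ``every torus-fixed stable map factors through a distinguished $\mathbb{P}^1$ as a $k$-fold cover'' is false as stated: torus-fixed stable maps generically include nodal configurations (chains and trees of invariant rational curves meeting at fixed points), and one must argue that their contributions either cancel or reorganize. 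In the symplectic-resolution setting, McBreen--Shenfeld handle this via the reduced virtual class (since the ordinary virtual class vanishes for a holomorphic symplectic target) and a careful accounting of the broken contributions; your sketch does not mention the reduced class at all, and without it the unreduced invariants would simply be zero. Third, the appearance of the overall $\hbar$ and the identification of the residual operator with the specific Steinberg correspondence in $H^{BM}_{2d}(X\times_{X_0}X)$ are precisely where the hypertoric combinatorics (Lemmas 5.1 and 5.2 of \cite{McBSh_2013}, used later in this very paper) are needed, and asserting that the multi-cover computation ``reduces to $q^{k\beta_0}/k$ times the primitive two-point invariant'' is the content of the theorem, not an input to it. In short: the outline is pointed in the right direction, but as written it assumes the hard parts.
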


Henceforth, we will take $G = T^d\times\bC^\sfx$.  We now prove a lemma about circuits which will be useful for later.

\begin{lemma}\label{L2.1.3}
    Let $\cF\subset \Phi^+$ be a rank--$2$ flat--i.e., $\cF\subset\Phi^+$ is a subset whose elements span a $2$--dimensional subspace of $\ft_\bR^k$ and is maximal with this property.  Then either $\cF = \{\alpha,\beta\}$, or $\cF = \{\alpha,\beta,\alpha+\beta\}$.    
\end{lemma}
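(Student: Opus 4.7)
The plan is to classify rank-$2$ flats $\mathcal{F} \subset \Phi^+$ by exploiting the rigidity of circuit vectors, whose coordinates lie in $\{-1, 0, 1\}$ with the first nonzero entry normalized to $+1$.

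First I would pick two linearly independent $\alpha, \beta \in \mathcal{F}$ and set $S_\alpha := \mathrm{Supp}(\alpha)$, $S_\beta := \mathrm{Supp}(\beta)$. By Lemma \ref{L2.1.2}, $S_\alpha$ and $S_\beta$ are distinct circuits, and since $\Phi_{S_\alpha}^+ \neq \emptyset$ precludes minimality of any strictly larger $S'$, the circuits form an antichain; hence both $S_\alpha \setminus S_\beta$ and $S_\beta \setminus S_\alpha$ are nonempty. Any $\gamma \in \mathcal{F}$ can be written $\gamma = c_1 \alpha + c_2 \beta$ for some $c_1, c_2 \in \mathbb{R}$. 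Evaluating at $i \in S_\alpha \setminus S_\beta$ gives $\gamma_i = c_1 \alpha_i$ with $\alpha_i \in \{\pm 1\}$ and $\gamma_i \in \{-1, 0, 1\}$, forcing $c_1 \in \{-1, 0, 1\}$; symmetrically $c_2 \in \{-1, 0, 1\}$. The case $c_1 = 0$ forces $\gamma = c_2 \beta$, and since $-\beta \in \Phi^-$, only $\gamma = \beta$ is permitted; likewise for $c_2 = 0$. Therefore any $\gamma \in \mathcal{F} \setminus \{\alpha, \beta\}$ has $c_1, c_2 \in \{\pm 1\}$.

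Next I would examine the intersection $S_\alpha \cap S_\beta$, which must be nonempty: otherwise $\mathrm{Supp}(\gamma) = S_\alpha \cup S_\beta$ would properly contain the circuit $S_\alpha$, contradicting that $\mathrm{Supp}(\gamma)$ is a circuit. At any $i \in S_\alpha \cap S_\beta$, the expression $c_1 \alpha_i + c_2 \beta_i$ takes values in $\{-2, 0, 2\}$, so it must equal $0$, giving $\alpha_i \beta_i = -c_1 c_2$ for all such $i$. Thus $c_1 c_2$ is uniquely determined by $\alpha$ and $\beta$, and $\gamma$ is determined up to overall sign. Only one of the two signs has first nonzero entry $+1$ and therefore lies in $\Phi^+$, so $|\mathcal{F}| \leq 3$.

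Finally, if $|\mathcal{F}| = 3$, with elements $\alpha_1, \alpha_2, \alpha_3 \in \Phi^+$, the previous step provides a linear relation $\epsilon_1 \alpha_1 + \epsilon_2 \alpha_2 + \epsilon_3 \alpha_3 = 0$ with each $\epsilon_i \in \{\pm 1\}$. Let $j$ be the smallest index at which any $\alpha_i$ is nonzero. By the $\Phi^+$ normalization, each $(\alpha_i)_j$ lies in $\{0, +1\}$ with at least one equal to $+1$, so the $j$-th coordinate of $\alpha_1 + \alpha_2 + \alpha_3$ is strictly positive; this rules out $\epsilon_1 = \epsilon_2 = \epsilon_3 = +1$. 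Hence exactly one $\epsilon_i$ has opposite sign, and after relabeling $\mathcal{F} = \{\alpha, \beta, \alpha + \beta\}$. The main subtlety will be this final positivity argument, which prevents three distinct $\Phi^+$-vectors from summing to zero and uses the normalization convention in an essential way; the rest of the argument is a straightforward unpacking of the $\{-1, 0, 1\}$-coordinate constraints.
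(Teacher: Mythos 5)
Your proof is correct and follows the same broad strategy as the paper's: use the fact that circuit-vector coordinates lie in $\{0,\pm 1\}$ (with first nonzero entry $+1$) to bound the coefficients of any $\gamma = c_1\alpha + c_2\beta \in \mathcal{F}$ to $\{0,\pm 1\}$, then analyze coordinates in $S_\alpha \cap S_\beta$. Where you differ is in the final disambiguation: the paper handles the case $\mathcal{F} \subset \{\alpha,\beta,\alpha-\beta\}$ by a ``WLOG take $-\alpha+\beta \notin \mathcal{F}$'' reduction followed by a relabeling $\alpha = (\alpha-\beta)+\beta$, while you extract a linear relation $\epsilon_1\alpha_1 + \epsilon_2\alpha_2 + \epsilon_3\alpha_3 = 0$ with $\epsilon_i \in \{\pm 1\}$ and invoke the normalization convention at the smallest common nonzero index to force a mixed sign pattern. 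Your version is arguably cleaner, since it avoids the explicit WLOG bookkeeping and makes the role of the $\Phi^+$-normalization visible. One small wording issue: you phrase the positivity argument as ruling out $\epsilon_1 = \epsilon_2 = \epsilon_3 = +1$, but by the derivation of the relation (from $\gamma = c_1\alpha + c_2\beta$) you have $\epsilon_3 = -1$ built in, so the case that actually needs excluding is $\epsilon_1 = \epsilon_2 = \epsilon_3 = -1$; fortunately your argument already does this, since $\sum_i (\alpha_i)_j > 0$ rules out both all-equal sign patterns simultaneously. You also establish $S_\alpha \cap S_\beta \neq \emptyset$ as a consequence of the existence of a third element, whereas the paper treats the empty-intersection case as a separate branch; both are fine, but yours streamlines the casework slightly.
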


\begin{example}
    We note that in the Type $A_n$ root system, where 
    \begin{align*}
        \Phi^+ = \{\varepsilon^{ij} := (0,\ldots,1,\ldots,-1,\ldots,0) \mid 1\leq i<j\leq n+1\}\subset (\bR^{n+1})^*,
    \end{align*}
    the rank--$2$ flats are precisely 
    \begin{align*}
        \cF = \{\varepsilon^{ij},\varepsilon^{kl}\},\; i,j,k,l\mathrm{\;distinct},\\
        \cF = \{\varepsilon^{ij}, \varepsilon^{jk}, \varepsilon^{ik} = \varepsilon^{ij} + \varepsilon^{jk}\},\; i<j<k.
    \end{align*}  
\end{example}

\begin{proof}
    Let $\alpha,\beta\in\cF$.  By the remark following Definition \ref{D2.1.4}, we know that the only scalar multiples of $\alpha,\beta$ that are in $\Phi$ must be $\pm\alpha$ and $\pm\beta$.  Because $\Phi = \Phi^+\sqcup \Phi^-$ and because $\Phi^- = -\Phi^+$, we know that if $\alpha,\beta\in\Phi^+$, then $-\alpha\not\in\Phi^+$ and similarly, $-\beta\not\in\Phi^-$.  Thus, $\alpha,\beta$ cannot be collinear and hence, are linearly independent.  Thus, we find that $\cF\subset \{\lambda\alpha + \mu\beta\mid \lambda,\mu\in\bR\}$.  Since all coefficients are to be integers, we immediately deduce that $\cF\subset \{\lambda\alpha+\mu\beta\mid \lambda,\mu\in\bQ\}$.  Fixing such an element $\lambda\alpha + \mu\beta\in\Phi^+$, we determine further the values of $\lambda,\mu$.  Take $\alpha\in\Phi^+_R$ and $\beta\in\Phi^+_S$, for unique circuits $R$ and $S$, as in the proof of Lemma \ref{L2.1.2}.  Since $R,S\in\mathscr{S}$, $R$ and $S$ are minimal, and we cannot have $R\subset S$ or $S\subset R$.  Taking $i\in R\setminus S$ and $j\in S\setminus R$, we obtain the components
    \begin{align*}
        (\lambda \alpha + \mu \beta)_i &= \lambda\alpha_i\\
        (\lambda \alpha + \mu \beta)_j &= \mu \beta_j.
    \end{align*}
    Since the components of all elements in $\Phi^+$ must have entries either $\pm 1$ or $0$, it follows that $\lambda,\mu\in\{0,\pm 1\}$.  Thus, $\cF\subset \{\alpha,\beta,\alpha+\beta,\alpha-\beta,-\alpha+\beta,-\alpha-\beta\}$.  But because $\alpha,\beta\in\Phi^+$, we cannot have $-\alpha-\beta\in\Phi^+$.  Similarly, we cannot have both $\alpha-\beta,-\alpha+\beta\in\cF$.  Without loss of generality, take $-\alpha+\beta\not\in\cF$, so that $\cF\subset \{\alpha,\beta,\alpha+\beta,\alpha-\beta\}$.  We now consider two cases.  If $R\cap S = \emptyset$, then $\alpha+\beta\in\Phi^+_{R\sqcup S}$, and since $R\sqcup S$ is not minimal, we would find that $\alpha\pm\beta\not\in\Phi$, so in this case, $\cF = \{\alpha,\beta\}$.  Now suppose $R\cap S\neq \emptyset$.  Then in choosing $i\in R\cap S$, and noting that $\alpha_i = \pm 1$ and $\beta_i = \pm 1$, we have the following four possibilities for the $i$--th component of $\alpha\pm \beta$:
    \begin{align*}        
        \begin{pmatrix}
            \alpha_i + \beta_i\\
            \alpha_i - \beta_i 
        \end{pmatrix}
        &= \begin{pmatrix}
            1 &  1\\
            1 & -1\\
        \end{pmatrix}\begin{pmatrix}
            \alpha_i \\
            \beta_i
        \end{pmatrix}
        = \begin{cases}
            \begin{pmatrix}
                2 \\
                0
            \end{pmatrix}, &(\alpha_i, \beta_i) = (1,1)\\\\
            \begin{pmatrix}
                0 \\
                2
            \end{pmatrix}, &(\alpha_i, \beta_i) = (1,-1)\\\\
            \begin{pmatrix}
                -2 \\
                0
            \end{pmatrix}, &(\alpha_i, \beta_i) = (-1,-1)\\\\
            \begin{pmatrix}
                0 \\
                -2
            \end{pmatrix}, &(\alpha_i, \beta_i) = (-1,1)
        \end{cases}.
    \end{align*}
    Thus, the remaining possibilities are either $\cF \subset \{\alpha, \beta\}$, $\cF \subset \{\alpha, \beta, \alpha + \beta\}$, $\cF \subset \{\alpha, \beta, \alpha - \beta\}$.  In the third case, we can write $\alpha = (\alpha - \beta) + \beta$ and upon renaming variables, we complete the proof.     
\end{proof}

\subsection{Defining the Algebraic Structure of the Quantum Multiplication}\label{S2.2}

In this section, we consider the problem of extending the domain of this map to the compactification of $T^{\mathrm{reg}}$, defined by De Concini and Gaiffi~\cite{DCG18}.  To do so, we will factor the codomain of this map, using the Holonomy Lie algebra, originally defined in~\cite{K89}.  The following definition generalizes the Lie algebra of~\cite{K89} and was originally given by Toledano Laredo in~\cite{TL11}, where the presentation was worked out in detail.      

\begin{definition} \label{D2.2.1} (c.f. Equations (1) and (2) of~\cite{AFV16}).
    Define $\ft := \ft_\mathbb{C}^k = H^2(X,\bC)$ and recall the finite subset $\Phi^+\subset \ft^*$ as in Definition \ref{D2.1.4}.  We define the {\it holonomy Lie algebra} $\fs_{\Phi^+}$ to be the Lie algebra generated by $\left\{t_\alpha\mid \alpha\in\Phi^+\right\}$, along with the following commutation relations:
    \begin{align*}
        \left[t_\alpha,\sum_{\beta\in W\cap \Phi^+}t_\beta\right] = 0
    \end{align*}
    where $W\subset \ft^*$ is of dimension $2$ and $\alpha\in W\cap \Phi^+$.  
    
    Define the {\it trigonometric holonomy Lie algebra} $\fu_{\Phi^+}$ to be the Lie algebra generated by $\fs_{\Phi^+}$ and $\ft$, viewed as an Abelian Lie algebra, subject to the following commutation relations:
    \begin{align*}
        \left[t_\alpha,\delta(v)\right] = 0,
    \end{align*}
    where $\alpha\in\Phi^+$, $v\in\Ker(\alpha)$ and 
    \begin{align*}
        \delta(v) &= v - \frac{1}{2}\sum_{\beta\in\Phi^+}\beta(v)t_\beta.
    \end{align*}
    
    Define a grading on $\mathfrak{s}_{\Phi^+}$ so that its generating set $\{t_\alpha\mid \alpha\in\Phi^+\}$ is in degree $1$ and define a grading on $\mathfrak{u}_{\Phi^+}$ so that its generating set $\{t_\alpha\mid \alpha\in\Phi^+\}\cup\mathfrak{t}$ is in degree $1$.  Let $\mathfrak{u}_{\Phi^+}^1$ be the first graded piece.  
\end{definition} 

\begin{remark}
    We observe that in the second inhomogeneous commutation relation for the trigonometric holonomy Lie algebra, it is necessary that the quantity $\alpha(v)$ be well-defined, where $\alpha\in\Phi^+$ and $v\in\ft$.  That is, we require $\Phi^+\subset\ft^*$.  In what follows, we will need to replace the Lie algebra $\ft$, appearing in the definition of $\mathfrak{u}_{\Phi^+}$, with $H_G^2(X)$ so that we can describe the action of the classical multiplication operators $u_i\cup(-)$ for $u_i\in H_G^2(X)$.  But in order to do this, we will need to replace $\alpha\in\Phi^+\subset\ft^*$ with elements $\widetilde{\alpha}\in\widetilde{\Phi^+}\subset H_G^2(X)^*$, so that we can again have a well-defined quantity $\widetilde{\alpha}(u_i)$.  
    
    To this end, we will define an extension of $\fu_{\Phi^+}$, using the so-called ``cohomology exact sequence" in Section 4, Equation (6) of \cite{KMBP21}.  This exact sequence will identify $\ft\simeq H_G^2(X)/H_G^2(\mathrm{pt})$ and thus, we can identify the roots $\alpha\in\Phi^+$ with their lifts $\widetilde{\alpha}\in\widetilde{\Phi^+}\subset H_G^2(X)^*$ which vanish on $H_G^2(\mathrm{pt})$, as desired.   
\end{remark}

From the presentation given in Theorem \ref{T2.1.1}, identify $H_G^2(X)\simeq \ft^n\oplus \bC\hbar$ and $H_G^2(\mathrm{pt})\simeq \ft^d\oplus \bC\hbar$.  

\begin{definition}\label{D2.2.2}
    Define the {\it modified trigonometric holonomy Lie algebra} $\wtilde{\fu_{\Phi^+}}$ to be the Lie algebra generated by $\fs_{\Phi^+}$ and $\mathfrak{t}^n\oplus\mathbb{C}\hbar$, with commutator relations for $\fs_{\Phi^+}$ given as above; with the abelian Lie algebra structure on $\mathfrak{t}^n\oplus\mathbb{C}\hbar$; and with ``cross-term'' commutator relations given by  
    \begin{align*}
        \left[t_\alpha, \delta(v)\right] = 0,
    \end{align*}
    for $\alpha\in\Phi^+$, $v\in \Ker(\wtilde{\alpha})$, and  
    \begin{align*}
        \delta(v) = v - \frac{1}{2}\sum_{\wtilde{\beta}\in\wtilde{\Phi^+}}\wtilde{\beta}(v)t_\beta.
    \end{align*}

    Define a grading on $\widetilde{\mathfrak{u}_{\Phi^+}}$ so that its generating set $\{t_\alpha\mid\alpha\in\Phi^+\}\cup (\mathfrak{t}^n\oplus \mathbb{C}\hbar)$ is in degree $1$.  Let $\widetilde{\mathfrak{u}_{\Phi^+}}^1$ be the first graded piece.   

    Define the projection map 
    \begin{align*}
        \pi\colon \widetilde{\mathfrak{u}_{\Phi^+}}^1\longrightarrow \mathfrak{u}_{\Phi^+}^1,
    \end{align*}
    which restricts to the identity map on $\mathfrak{s}_{\Phi^+}$ and projects $\mathfrak{t}^n\oplus \mathbb{C}\hbar\longrightarrow \mathfrak{t}$.  
\end{definition}

\begin{remark}
    In identifying vectors $\beta\in\ft^n_\bR$ with $(\beta,-)\in(\ft^n_\bR)^*$, regard $\widetilde{\Phi^+}$ as a subset of $(\ft^n_\bR)^*$ and write down its elements as $(\beta,-)\in \widetilde{\Phi^+}$.  Recalling the surjective map $[a]\colon \mathfrak{t}_\mathbb{R}^n\longrightarrow \mathfrak{t}_\mathbb{R}^d$ from Section \ref{S2.1}, observe that for all $\alpha\in\mathfrak{t}_\mathbb{R}^d$, we have $(\beta, ([a]^\mathsf{T}\alpha)) = (([a]\cdot \beta),\alpha) = 0$.  Thus, we may regard $\widetilde{\Phi^+}\subset(\mathfrak{t}_\mathbb{R}^n)^*$--the subset of elements $(\beta,-)$ which vanish on $\ft_\mathbb{R}^d$--with the subset $\Phi^+\subset(\ft^n_\bR)^*/(\ft^d_\bR)^*\simeq (\ft^k_\bR)^*$.  Moreover, we may identify elements $\alpha\in(\ft^k_\bR)^*$ with $\alpha\otimes_\bR 1\in \ft^k = (\ft^k_\bR)\otimes_\bR \bC$, to regard $\Phi^+\subset \ft^*$.
\end{remark}

\begin{definition}\label{D2.2.4}
    Set 
    \begin{align*}
        \gamma\colon\wtilde{\fu_{\Phi^+}}^1&\tto E\\
        t_\alpha&\mapstto \hbar L_\alpha(-),\qquad \qquad\;\alpha\in{\Phi^+}\\
        u_i&\mapstto u_i\cup(-),\qquad i=1,\ldots,n\\
        \hbar&\mapstto \hbar\;\mathbb{I}(-).
    \end{align*}
\end{definition}

\begin{theorem}\label{T2.2.1}
    The map $\gamma$ extends to a well-defined morphism of Lie algebras.  
\end{theorem}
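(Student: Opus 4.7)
The plan is to verify that $\gamma$ respects the defining relations of $\widetilde{\mathfrak{u}_{\Phi^+}}$, which come in three families: the abelian relations on $\mathfrak{t}^n \oplus \mathbb{C}\hbar$, the holonomy relations among the $t_\alpha$, and the cross-term relations $[t_\alpha,\delta(v)]=0$ for $v\in\Ker(\widetilde{\alpha})$. The first family translates under $\gamma$ to $[u_i\cup,u_j\cup]=0$ and $[u_i\cup,\hbar\,\mathbb{I}]=0$, both immediate by graded-commutativity of cup product in even cohomological degree.  For the other two families the main input is the commutativity of quantum multiplication on divisors, $[u\star_q,v\star_q]=0$ for all $u,v\in H^2_G(X)$ and all $q\in T^\text{reg}$, which follows from associativity and $\mathbb{S}_3$-symmetry of the genus-$0$, $3$-point invariants for any conical symplectic resolution.

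Using the McBreen--Shenfeld formula and treating $\hbar$ as a formal parameter, I would expand this commutator and separate by powers of $\hbar$.  The $\hbar^1$ coefficient reads
\begin{align*}
    \sum_{\beta\in\Phi^+}\frac{q^\beta}{1-q^\beta}\bigl((v,\beta)[u\cup,L_\beta] - (u,\beta)[v\cup,L_\beta]\bigr) = 0,
\end{align*}
and since the functions $q^\beta/(1-q^\beta)$ are linearly independent as $\beta$ varies over $\Phi^+$ (distinct $\beta$ produce distinct Laurent-monomial expansions), each summand must vanish: $(v,\beta)[u\cup,L_\beta]=(u,\beta)[v\cup,L_\beta]$ for every $\beta$ and all divisors $u,v$.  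In particular, $[L_\alpha,v\cup]=0$ whenever $v\in\Ker(\widetilde{\alpha})$.  The $\hbar^2$ coefficient is
\begin{align*}
    \sum_{\alpha,\beta\in\Phi^+}\frac{q^\alpha q^\beta}{(1-q^\alpha)(1-q^\beta)}(u,\alpha)(v,\beta)[L_\alpha,L_\beta] = 0,
\end{align*}
and applying the Arnold-type partial-fraction identities for the logarithmic $1$-forms $\omega_\beta=d\log(1-q^\beta)$ on the complement of the toric arrangement in $T^k$ extracts, for each $2$-dimensional subspace $W\subset\mathfrak{t}^*$ with $|W\cap\Phi^+|\geq 2$, the associated commutator relation.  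By Lemma \ref{L2.1.3}, $W\cap\Phi^+$ is either $\{\alpha,\beta\}$ (yielding $[L_\alpha,L_\beta]=0$) or $\{\alpha,\beta,\alpha+\beta\}$ (yielding $[L_\alpha,L_\beta+L_{\alpha+\beta}]=0$), matching exactly the defining relations of $\mathfrak{s}_{\Phi^+}$.

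For the cross-term relations, the identity $[t_\alpha,\delta(v)]=0$ under $\gamma$ becomes $\hbar[L_\alpha,v\cup]=\tfrac{\hbar^2}{2}\sum_{\widetilde{\beta}\in\widetilde{\Phi^+}}\widetilde{\beta}(v)[L_\alpha,L_\beta]$.  The left-hand side vanishes by the $\hbar^1$ analysis, and for the right-hand side I would group $\beta$ according to the $2$-dimensional subspace $W_\beta$ it spans with $\alpha$: two-element flats contribute nothing since $[L_\alpha,L_\beta]=0$, while a three-element flat $\{\alpha,\beta,\alpha+\beta\}$ contributes
\begin{align*}
    \widetilde{\beta}(v)[L_\alpha,L_\beta] + \widetilde{\alpha+\beta}(v)[L_\alpha,L_{\alpha+\beta}] = -\widetilde{\alpha}(v)[L_\alpha,L_\beta]
\end{align*}
via the three-term holonomy relation $[L_\alpha,L_{\alpha+\beta}]=-[L_\alpha,L_\beta]$ and the linearity $\widetilde{\alpha+\beta}=\widetilde{\alpha}+\widetilde{\beta}$ of the lifts, which vanishes since $\widetilde{\alpha}(v)=0$.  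The main obstacle I anticipate is the Arnold extraction step in the $\hbar^2$ analysis: isolating the correct linear relations among the rational functions $q^\alpha q^\beta/((1-q^\alpha)(1-q^\beta))$ indexed by the rank-2 flats so as to match exactly the holonomy relations of $\mathfrak{s}_{\Phi^+}$.  This should follow from the combinatorics of logarithmic 2-forms on complements of toric arrangements (compare \cite{Moci_2011}) together with the rank-2 classification of Lemma \ref{L2.1.3}.
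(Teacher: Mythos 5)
Your overall approach coincides with the paper's: both derive the holonomy and cross-term relations from the known commutativity $[u_i\star_q,u_j\star_q]=0$ using the McBreen--Shenfeld formula, Lemma \ref{L2.1.3}'s classification of rank-$2$ flats, and a linear-independence argument over the parameter space.  Your variant of first splitting by $\hbar$-degree and then by rational function is a legitimate reorganization (since $H_G^\bullet(X)$ is a free $H_G^\bullet(\mathrm{pt})$-module, $E$ is $\hbar$-torsion-free and the degree split is injective); the paper instead absorbs the cross-term into $\delta(\cdot)$ via $\frac{q^\beta}{1-q^\beta}=-1+\frac{1}{1-q^\beta}$ so that the linear-independence argument is run once over a single family of rational functions, which avoids having to justify the $\hbar$-splitting at all.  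Your cross-term reconciliation ($[L_\alpha,\delta(v)]=0$ is equivalent to $[L_\alpha,v\cup]=0$ given the flat relations) is correct and mirrors the paper's final reduction.

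The genuine gap is the $\hbar^2$ step, which you flag yourself but do not close.  ``Apply Arnold-type identities for $\omega_\beta=d\log(1-q^\beta)$'' is not itself an argument: the multiplicative arrangement $\{q^\alpha=1\}$ does not satisfy the classical Arnold--Orlik--Solomon relation that works in the additive case, and the relevant identity here is the toric replacement
\begin{align*}
    \frac{1}{(1-q^\alpha)(1-q^\beta)} = -\frac{1}{1-q^{\alpha+\beta}} + \frac{1}{(1-q^\alpha)(1-q^{\alpha+\beta})} + \frac{1}{(1-q^{\alpha+\beta})(1-q^\beta)},
\end{align*}
which must be applied only on $3$-element flats $\{\alpha,\beta,\alpha+\beta\}$ (on $2$-element flats one groups $(\alpha,\beta)$ with $(\beta,\alpha)$ directly), and the outcome must be regrouped so that the operator coefficients become precisely the combinations $[L_\alpha,L_\beta]$, $[L_\alpha,L_\beta+L_{\alpha+\beta}]$, $[L_\alpha+L_{\alpha+\beta},L_\beta]$.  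After that one still needs to prove that the resulting finite family of rational functions
\begin{align*}
    \Bigl\{\tfrac{q^\alpha}{1-q^\alpha}\Bigr\}\;\cup\;\Bigl\{\tfrac{q^\alpha+q^\beta}{(1-q^\alpha)(1-q^\beta)}\Bigr\}\;\cup\;\Bigl\{\tfrac{q^\alpha}{1-q^\alpha}\tfrac{1+q^\beta}{1-q^{\alpha+\beta}},\;\tfrac{q^\beta}{1-q^\beta}\tfrac{1+q^\alpha}{1-q^{\alpha+\beta}}\Bigr\}
\end{align*}
is $E$-linearly independent; the paper does this by picking a minimal $\alpha_0$, clearing the pole at $q^{\alpha_0}=1$, specializing, and iterating.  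Your proposal names the target relations correctly but leaves both the identity application and the independence argument as ``should follow,'' and these are exactly where the work is.
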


\begin{remark}
    To each divisor $u\in \ft^n\oplus\bC\hbar$, one can assign the operator $\nabla_u = \partial_u+u\star(-)$ so that $\partial_uq^\alpha = (\alpha,u)q^\alpha$ for all $q^\alpha\in T^\mathrm{reg}$ and where $u\star(-)\in E$.  This assignment forms a connection on the trivial bundle over $T^\mathrm{reg}$ with fibre $H_G^\bullet(X)$.  This connection, known as the {\it quantum connection}, is the GKZ connection when $X$ is a hypertoric variety~\cite{MBS13}.  The curvature of this connection is 
    \begin{align*}
        [\nabla_{u_i},\nabla_{u_j}] = [u_i\star(-),u_j\star(-)],
    \end{align*}
    so that the flatness of this connection is equivalent to imposing the commutativity of the quantum multiplication operators $u_i\star(-)$ and $u_j\star(-)$ in $E$.  We will reduce the statement of Theorem \ref{T2.2.1} to proving the commutativity of the quantum multiplication operators $u_i\star(-)$ and $u_j\star(-)$.  
\end{remark}

\begin{remark}
    If $X$ were a resolution of a slice in the affine Grassmannian, then the quantum connection would be a trigonometric KZ connection~\cite{D20,D24}, and if $X$ were a Nakajima quiver variety, then the quantum connection would be the trigonometric Casimir connection~\cite{MO12}.  In either of these cases, the condition of the quantum connection being flat is equivalent to the Steinberg operators, together with the classical multiplication operators, satisfying the commutation relations of Definition \ref{D2.2.1}~\cite{TL02,TL11}.   
\end{remark}

\begin{remark}
    The proof of Theorem \ref{T2.2.1} involves demonstrating that the classical multiplication operators and the Steinberg operators satisfy the commutation relations of Definition \ref{D2.2.2}, which is based on Definition \ref{D2.2.1}.  The trigonometric holonomy Lie algebra of Definition \ref{D2.2.1} is in turn based on the holonomy Lie algebra of Toledano Laredo in~\cite{TL11}.  In Section 2 of~\cite{TL11}, the presentation of the holonomy Lie algebra is worked out in detail.  This was done by writing down the trigonometric Casimir connection and proving that the flatness of this connection is equivalent to the trigonometric holonomy Lie algebra relations.  
\end{remark}

\begin{remark}
    The following proof of Theorem \ref{T2.2.1} is analogous to the proof given in Section 2, paragraphs 2.7-2.19 of~\cite{TL11}.  It is also based on ideas of Yaping Yang, when describing the holonomy of connections defined on hyperplane complements in a set of notes written in Fall, 2013.  

    The main difference between the following proof given here and the proof in paragraphs 2.7-2.19 of~\cite{TL11} is the fact that the trigonometric Casimir connection in~\cite{TL11} uses the classical Lie algebra root system, whereas in this present hypertoric setting, the GKZ quantum connection uses the circuit set $\Phi^+$ defined in Definition \ref{D2.1.4}.  In the following example, we construct a hypertoric variety, where the circuit set $\Phi^+$ cannot come from a classical Lie algebra root system.  Thus, for hypertoric varieties studied here, the K\"{a}hler root system--as well as the rank-$2$ K\"{a}hler root subsystems--must be different than the classical Lie algebra root systems studied in~\cite{TL11}.  
\end{remark}

\begin{example}\label{E1}
    We construct a hypertoric variety whose K\"{a}hler root system $\Phi^+$ cannot be a crystallographic Lie algebra root system.  Start with the arrangement 
    \begin{align*}
        \cA = \{H_1,H_2,H_3,H_4,H_5,H_6\},
    \end{align*}
    of hyperplanes in $(\ft_\bR^3)^*$, where 
    \begin{align*}
        H_1 &= \{x\in(\ft_\bR^3)^*\mid x_1 = 0\}\\
        H_2 &= \{x\in(\ft_\bR^3)^*\mid x_2 = 0\}\\
        H_3 &= \{x\in(\ft_\bR^3)^*\mid x_3 = 0\}
    \end{align*}
    \begin{align*}
        H_4 &= \{x\in(\ft_\bR^3)^*\mid x_2 - x_3 = 1\}\\
        H_5 &= \{x\in(\ft_\bR^3)^*\mid x_3 - x_1 = 1\}\\
        H_6 &= \{x\in(\ft_\bR^3)^*\mid x_1 - x_2 = 1\}.
    \end{align*}
    Then the corresponding matrix is given by 
    \begin{align*}
        [a] = \begin{pmatrix}
            1 & 0 & 0 & 0 & -1 & 1 \\
            0 & 1 & 0 & 1 & 0 & -1 \\
            0 & 0 & 1 & -1 & 1 & 0
        \end{pmatrix}\colon\ft_\bR^6\tto\ft_\bR^3,
    \end{align*}
    with kernel 
    \begin{align*}
        [b] = \begin{pmatrix}
            0 & 1 & -1 \\
            -1 & 0 & 1 \\
            1 & -1 & 0 \\
            1 & 0 & 0 \\
            0 & 1 & 0 \\
            0 & 0 & 1
        \end{pmatrix}\colon\ft_\bR^3\tto\ft_\bR^6.
    \end{align*}
    The resulting K\"{a}hler roots are given by the circuit set 
    \begin{align*}
        \Phi^+ = \{\varepsilon_1,\varepsilon_2,\varepsilon_3,\varepsilon_1+\varepsilon_2,\varepsilon_1+\varepsilon_3,\varepsilon_2+\varepsilon_3,\varepsilon_1+\varepsilon_2+\varepsilon_3\}.
    \end{align*}
    In particular, we find that $|\Phi^+| = 7$ and that $|\Phi| = 2|\Phi^+| = 14$, which implies that $\Phi = \Phi^+\sqcup\Phi^-$ cannot come from a crystallographic root system for a semisimple Lie algebra.  
\end{example}

\begin{proof}[Proof of Theorem \ref{T2.2.1}]
    We claim the stronger statement that the trigonometric holonomy relations hold in $E$, 
    \begin{align*}
        [L_\alpha, \sum_{\beta\in\cF}L_\beta] &= 0,\qquad \cF\mathrm{\;rk\; }2\mathrm{\;flat},\; \alpha\in\cF,\\
        [L_\alpha, \delta(u_i)] &= 0,\qquad \delta(u_i) = u_i - \frac{1}{2}\sum_{\beta\in\Phi^+}(\beta,v)\hbar L_\beta,\;\;\;\;\alpha(u_i) = 0,
    \end{align*}
    iff the quantum multiplication operators $u_i\star(-)$, $i=1,\ldots,n$ are commutative,
    \begin{align*}
        [u_i\star(-),u_j\star(-)] = 0, \qquad i,j=1,\ldots,n.
    \end{align*}

    To prove this claim, we compute 
    \begin{align*}
        [u_i\star(-),u_j\star(-)] &= \left[u_i\cup(-) + \hbar\sum_{\alpha\in\Phi^+} \frac{q^\alpha}{1-q^\alpha}(\alpha,u_i) L_\alpha,\;\;\;
        u_j\cup(-) + \hbar\cdot \sum_{\beta\in\Phi^+}\frac{q^\beta}{1-q^\beta}(\beta,u_j)L_\beta\right]\\
        &= \hbar\sum_{\alpha\in\Phi^+}\frac{q^\alpha}{1-q^\alpha}[L_\alpha,\; \alpha_iu_j - \alpha_ju_i] + \hbar^2 \sum_{\alpha,\beta\in\Phi^+} \frac{q^\alpha}{1-q^\alpha}\frac{q^\beta}{1-q^\beta}\frac{1}{2}(\alpha_i\beta_j - \alpha_j\beta_i) [L_\alpha,\; L_\beta],
    \end{align*}
    where $\alpha_i=(\alpha,u_i)$, $\beta_j = (\beta,u_j)$.  Using the identity, $\frac{q^\beta}{1-q^\beta} = -1 + \frac{1}{1-q^\beta}$, we obtain
    \begin{align*}
        [u_i\star(-),u_j\star(-)] &= \hbar\sum_{\alpha\in\Phi^+}\frac{q^\alpha}{1-q^\alpha}\left[L_\alpha,\;\delta(\alpha_iu_j - \alpha_ju_i)\right] + \frac{1}{2}\hbar^2\sum_{\alpha,\beta\in\Phi^+}\frac{q^\alpha}{1-q^\alpha}\frac{1}{1-q^\beta}\det\begin{pmatrix} \alpha_i & \beta_i\\
        \alpha_j & \beta_j\end{pmatrix}[L_\alpha,\;L_\beta],
    \end{align*}
    where 
    \begin{align*}
        \delta(\alpha_i u_j - \alpha_j u_i) = (\alpha_i u_j - \alpha_j u_i) - \frac{1}{2}\hbar\sum_{\beta\in\Phi^+}(\beta,\alpha_iu_j - \alpha_ju_i)L_\beta.
    \end{align*}
    We split the second sum into sums over all rank--$2$ flats $\sum_{\alpha,\beta\in\Phi^+}(\ldots) = \sum_{\cF\subset \Phi^+}\sum_{\alpha,\beta\in\cF}(\ldots)$, and using Lemma \ref{L2.1.3}, we can split the sum over flats into sums of flats of size $2$ and $3$ to get the following:
    \begin{align*}
        [u_i\star(-),u_j\star(-)]
        = \hbar\sum_{\alpha\in\Phi^+}\frac{q^\alpha}{1-q^\alpha}[L_\alpha,\delta(\alpha_iu_j - \alpha_ju_i)] &+         
        \hbar^2\sum_{ \substack{\cF\mathrm{\;rk\; }2\mathrm{\;flat},\\ \alpha\in\cF,\\\cF = \{\alpha,\beta\}}}\frac{1}{2}\det\begin{pmatrix}\alpha_i & \beta_i\\
        \alpha_j & \beta_j\end{pmatrix}\frac{q^\alpha + q^\beta}{(1-q^\alpha)(1-q^\beta)}[L_\alpha, L_\beta]        
        \\+ \hbar^2\sum_{ \substack{\cF\mathrm{\;rk\; }2\mathrm{\;flat},\\ \alpha\in\cF,\\\cF = \{\alpha,\beta,\alpha+\beta\}}}\frac{1}{2}\det\begin{pmatrix}\alpha_i & \beta_i \\
        \alpha_j & \beta_j\end{pmatrix}
        \begin{split}
        &\left( \frac{q^\alpha + q^\beta}{(1-q^\alpha)(1-q^\beta)}[L_\alpha,L_\beta] + \frac{q^\alpha + q^\alpha q^\beta}{(1-q^\alpha)(1-q^\alpha q^\beta)}[L_\alpha,L_{\alpha+\beta}] \right. \\
        &\quad \left. {}+ \frac{q^\alpha q^\beta + q^\beta}{(1-q^\alpha q^\beta)(1-q^\beta)}[L_{\alpha+\beta},L_\beta]  \vphantom{\frac{q^\alpha + q^\beta}{(1-q^\alpha)(1-q^\beta)[L_\alpha,L_\beta]}}\right)
        \end{split}
    \end{align*} 

Using the identity 
\begin{align*}
    \frac{1}{(1-q^\alpha)(1-q^\beta)} = -\frac{1}{1-q^\alpha q^\beta} + \frac{1}{(1-q^\alpha)(1-q^\alpha q^\beta)} + \frac{1}{(1-q^\alpha q^\beta)(1-q^\beta)},
\end{align*}
we rewrite the term in parentheses as follows:
\begin{align*}
    &\qquad \left(-\frac{1}{1-q^\alpha q^\beta} + \frac{1}{(1-q^\alpha)(1-q^\alpha q^\beta)} + \frac{1}{(1-q^\alpha q^\beta)(1-q^\beta)}\right)(q^\alpha + q^\beta)[L_\alpha, L_\beta]\\
    &\qquad+ \frac{q^\alpha + q^\alpha q^\beta}{(1-q^\alpha)(1-q^\alpha q^\beta)}[L_\alpha,L_{\alpha+\beta}] + \frac{q^\alpha q^\beta + q^\beta}{(1-q^\alpha q^\beta)(1-q^\beta)}[L_{\alpha+\beta},L_\beta]\\\\
    &= \frac{q^\alpha + q^\beta}{(1-q^\alpha)(1-q^\alpha q^\beta)}[L_\alpha, L_\beta + L_{\alpha + \beta}] + \frac{q^\alpha + q^\beta}{(1-q^\alpha q^\beta)(1-q^\beta)}[L_\alpha + L_{\alpha + \beta}, L_\beta]\\
    &\qquad- \frac{q^\alpha[L_\alpha + L_{\alpha+\beta}, L_\beta] + q^\beta[L_\alpha, L_\beta + L_{\alpha+\beta}]}{1-q^\alpha q^\beta}\\\\
    &= \frac{q^\alpha + q^\beta - (1-q^\alpha)q^\beta}{(1-q^\alpha)(1-q^\alpha q^\beta)}[L_\alpha, L_{\beta} + L_{\alpha+\beta}]
    + \frac{q^\alpha + q^\beta - (1-q^\beta)q^\alpha}{(1-q^\alpha q^\beta)(1-q^\beta)}[L_\alpha + L_{\alpha + \beta}, L_\beta]\\\\
    &= \frac{q^\alpha}{1-q^\alpha}\frac{1+q^\beta}{1-q^\alpha q^\beta}[L_\alpha, L_\beta + L_{\alpha + \beta}] + \frac{q^\beta}{1-q^\beta}\frac{1+q^\alpha}{1-q^\alpha q^\beta}[L_\alpha + L_{\alpha + \beta}, L_\beta], 
\end{align*}
Thus, we find that 
\begin{align*}
    &[u_i\star(-),u_j\star(-)]
        = \hbar\sum_{\alpha\in\Phi^+}\frac{q^\alpha}{1-q^\alpha}[L_\alpha,\delta(\alpha_iu_j - \alpha_ju_i)] +         
        \hbar^2\sum_{ \substack{\cF\mathrm{\;rk\; }2\mathrm{\;flat},\\ \alpha\in\cF,\\\cF = \{\alpha,\beta\}}}\frac{1}{2}\det\begin{pmatrix}\alpha_i & \beta_i\\
        \alpha_j & \beta_j\end{pmatrix}\frac{q^\alpha + q^\beta}{(1-q^\alpha)(1-q^\beta)}[L_\alpha, L_\beta]        
        \\&+ \hbar^2\sum_{ \substack{\cF\mathrm{\;rk\; }2\mathrm{\;flat},\\ \alpha\in\cF,\\\cF = \{\alpha,\beta,\alpha+\beta\}}}\frac{1}{2}\det\begin{pmatrix}\alpha_i & \beta_i \\
        \alpha_j & \beta_j\end{pmatrix}\left(\frac{q^\alpha}{1-q^\alpha}\frac{1+q^\beta}{1-q^\alpha q^\beta}[L_\alpha, L_\beta + L_{\alpha + \beta}] + \frac{q^\beta}{1-q^\beta}\frac{1+q^\alpha}{1-q^\alpha q^\beta}[L_\alpha + L_{\alpha + \beta}, L_\beta]\right).
\end{align*}
 
We claim that the set
\begin{align*}
    \left\{\frac{q^\alpha}{1-q^\alpha} \right\}_{\alpha\in\Phi^+}\cup\left\{\frac{q^\alpha + q^\beta}{(1-q^\alpha)(1-q^\beta)}\right\}_{\substack{\cF\mathrm{\;rk\; }2\mathrm{\;flat},\\ \alpha\in\cF,\\\cF = \{\alpha,\beta\}}}\cup\left\{\frac{q^\alpha}{(1-q^\alpha)}\frac{1+q^\beta}{(1-q^\alpha q^\beta)},\;\frac{q^\beta}{(1-q^\beta)}\frac{1+q^\alpha}{(1-q^\alpha q^\beta)}\right\}_{\substack{\cF\mathrm{\;rk\; }2\mathrm{\;flat},\\ \alpha\in\cF,\\\cF = \{\alpha,\beta,\alpha+\beta\}}}
\end{align*}
forms a set of linearly independent generators in the ring
\begin{align*}
    E\otimes_{\mathbb{C}}\mathbb{C}[T^\mathrm{reg}] = E\otimes_\mathbb{C}\mathbb{C}[q^\alpha\mid\alpha\in\Phi^+]_{\prod_{\alpha\in\Phi^+}(1-q^\alpha)}, 
\end{align*}
viewed as a module over $E$.  That is to say that there are no linear combinations among the elements in the above set with coefficients in $E$.  To prove this claim, suppose we had the following:
\begin{align*}
    &\sum_{\alpha\in\Phi^+}M_\alpha \frac{q^\alpha}{1-q^\alpha} + \sum_{\substack{\cF\mathrm{\;rk\; }2\mathrm{\;flat},\\ \alpha\in\cF,\\\cF = \{\alpha,\beta\}}}M_{\alpha,\beta}\frac{q^\alpha + q^\beta}{(1-q^\alpha)(1-q^\beta)}\\
    &+ \sum_{\substack{\cF\mathrm{\;rk\; }2\mathrm{\;flat},\\ \alpha\in\cF,\\\cF = \{\alpha,\beta,\alpha+\beta\}}} \left(M_{\alpha,\alpha+\beta} \frac{q^\alpha}{(1-q^\alpha)}\frac{1+q^\beta}{(1-q^\alpha q^\beta)} + M_{\beta,\alpha+\beta} \frac{q^\beta}{(1-q^\beta)}\frac{1+q^\alpha}{(1-q^\alpha q^\beta)}\right) = 0,
\end{align*}
for all $M_{\alpha,\beta}, M_{\alpha,\alpha+\beta}, M_{\alpha+\beta,\beta}\in E$.  Define a partial ordering on $\Phi^+$ so that $\alpha\leq \alpha+\beta$.  Then, since there are finitely many circuits in an arrangement $\cA$ (as there are at most finitely many hyperplanes, we may invoke Lemma \ref{L2.1.3} to conclude that $\Phi^+$ is finite, so that minimal elements in $\Phi^+$ exist).  These would correspond to ``generalized simple roots.''  Choosing one such minimal $\alpha_0$, we can multiply both sides by $1-q^{\alpha_0}$, and obtain the following relation:
\begin{align*}
    &\sum_{\alpha\in\Phi^+}M_\alpha \;q^\alpha\frac{1-q^{\alpha_0}}{1-q^\alpha} + \sum_{\substack{\cF\mathrm{\;rk\; }2\mathrm{\;flat},\\ \alpha\in\cF,\\\cF = \{\alpha,\beta\}}}M_{\alpha,\beta}\frac{1-q^{\alpha_0}}{1-q^\alpha}\frac{q^\alpha + q^\beta}{1-q^\beta}\\
    &+ \sum_{\substack{\cF\mathrm{\;rk\; }2\mathrm{\;flat},\\ \alpha\in\cF,\\\cF = \{\alpha,\beta,\alpha+\beta\}}} \left(M_{\alpha,\alpha+\beta}\;q^\alpha\frac{1-q^{\alpha_0}}{(1-q^\alpha)}\frac{1+q^\beta}{(1-q^\alpha q^\beta)} + M_{\beta,\alpha+\beta}\;(1-q^{\alpha_0}) \frac{q^\beta}{(1-q^\beta)}\frac{1+q^\alpha}{(1-q^\alpha q^\beta)}\right) = 0,
\end{align*}
as an equality in 
\begin{align*}
    E\otimes_\mathbb{C}\mathbb{C}[q^\alpha\mid \alpha\in\Phi^+]_{\prod_{\alpha\in\Phi^+\setminus\{\alpha_0\}}(1-q^{\alpha})}.
\end{align*}
Taking the image of this relation under the following quotient:
\begin{align*}
    E\otimes_{\mathbb{C}}\mathbb{C}[q^\alpha\mid \alpha\in\Phi^+]_{\prod_{\alpha\in\Phi^+\setminus\{\alpha_0\}}(1-q^\alpha)}&\longrightarrow E\otimes_{\mathbb{C}}\frac{\mathbb{C}[q^\alpha\mid \alpha\in\Phi^+]_{\prod_{\alpha\in\Phi^+\setminus\{\alpha_0\}}(1-q^\alpha)}}{\langle 1 - q^{\alpha_0}\rangle},
\end{align*}
we have the following:
\begin{align*}
    &M_{\alpha_0} + \sum_{\substack{\cF\mathrm{\;rk\; }2\mathrm{\;flat},\\ \alpha\in\cF,\\\cF = \{\alpha_0,\beta\}}}M_{{\alpha_0},\beta}\frac{1 + q^\beta}{1-q^\beta}
    + \sum_{\substack{\cF\mathrm{\;rk\; }2\mathrm{\;flat},\\ \alpha\in\cF,\\\cF = \{\alpha_0,\beta,\alpha_0+\beta\}}} M_{{\alpha_0},{\alpha_0}+\beta}\;\frac{1+q^\beta}{1-q^\beta} = 0.
\end{align*}

Since no two elements of $\Phi^+$ are collinear, each $\beta\in\Phi^+$ appearing in the terms of the above sums must be distinct.  It follows that the rational functions $\frac{1+q^\beta}{1-q^\beta}$ for each $\beta$ must be linearly independent, so we have $M_{\alpha_0,\beta} = 0$ and $M_{\alpha_0,\alpha_0+\beta} = 0$ for all $\beta$.  Running through all minimal elements $\alpha$ in this manner, we find that all coefficients $M_{\alpha,\alpha+\beta} = M_{\beta,\alpha+\beta} = 0$ and similarly, $M_{\alpha,\beta} = 0$.  It thus follows that the above rational functions are linearly independent.  

 This proves that $[u_i\star(-),u_j\star(-)] = 0$ for all $i,j$ iff the following relations hold: 
 \begin{align*}
     [L_\alpha,\delta(\alpha_iu_j - \alpha_ju_i)] &= 0\qquad\forall\;i,j=1,\ldots,n\\
     [L_\alpha, L_\beta] &= 0
     \qquad\forall\;\cF\mathrm{\;rk\;}2\mathrm{\;flat},\;\cF = \{\alpha,\beta\}\\
     [L_\alpha, L_\beta + L_{\alpha + \beta}] = [L_\alpha + L_{\alpha + \beta}, L_\beta] &= 0\qquad \forall\; \cF\mathrm{\;rk\;}2\mathrm{\;flat},\;\cF = \{\alpha,\beta,\alpha+\beta\}.
 \end{align*}

We now study each equation.  In the first above equation, we observe that $(\alpha,\alpha_i u_j - \alpha_j u_i) = \alpha_i \alpha_j - \alpha_j \alpha_i = 0$ and moreover, if any arbitrary $u\in \mathfrak{t}^n\oplus\mathbb{C}\hbar$ has $(\alpha, u) = 0$, then we may choose $v\in \mathfrak{t}^n\oplus\mathbb{C}\hbar$ for which $(\alpha, v) = 1$ and write $u = (\alpha, v)u - (\alpha, u)v$, so every element in $\mathrm{Ker}(\alpha)$ can be written as $(\alpha, v)u - (\alpha, u)v$.  Expressing $u$ and $v$ in terms of a basis, we find that $\Ker(\alpha)$ is generated by $\alpha_i u_j - \alpha_j u_i$, for $i,j=1,\ldots,n$.  Thus, the inhomogeneous condition that
\begin{align*}
    [L_\alpha, \delta(u)] = 0\qquad\; \forall\; u\in\Ker(\alpha)
\end{align*}
is equivalent to stating that 
\begin{align*}
    [L_\alpha, \delta(\alpha_i u_j - \alpha_j u_i)] = 0\qquad \forall i,j=1,\ldots,n.
\end{align*} 

We now turn to the second equation. If $\alpha\in\Phi^+$ and $\cF$ is a rank--$2$ flat containing $\alpha$ and $\abs{\cF} = 2$, then the second equation is trivially equivalent to the homogeneous commutator condition on the Steinberg operators:
\begin{align*}
    [L_\alpha, L_\beta] = [L_\alpha,\sum_{\beta\in\cF} L_\beta] = 0.
\end{align*} 
As for the third equation, suppose $\cF$ is a rank--$2$ flat of the form $\cF = \{\alpha,\beta,\alpha+\beta\}$.  Then the above conditions give 
\begin{align*}
    [L_\alpha, L_\beta] + [L_\alpha, L_{\alpha+\beta}] &= 0\\
    [L_\alpha,L_\beta] + [L_{\alpha+\beta},L_\beta] &= 0.
\end{align*}
Subtracting these two equations and using the anticommutativity, we find that
\begin{align*}
    [L_{\alpha+\beta},L_\alpha + L_\beta] = 0.
\end{align*}
So that taken together, the equations
\begin{align*}
    [L_\alpha,L_\beta + L_{\alpha+\beta}] &= 0\\
    [L_\beta, L_\alpha + L_{\alpha+\beta}] &= 0\\
    [L_{\alpha+\beta},L_\alpha + L_\beta] &= 0
\end{align*}
are precisely the homogeneous commutativity equations for $\cF$.  This proves the claimed equivalence between the trigonometric holonomy commutation relations and the commutation relations among the quantum multiplication, thereby completing the proof.
\end{proof}
\subsection{Injectivity of the map $\gamma$}\label{S2.3}

In this section, we show that $\gamma$ of Definition \ref{D2.2.4} and Theorem \ref{T2.2.1} is injective.  This will amount to showing that the Steinberg operators $\{L_\alpha\mid \alpha\in{\Phi^+}\}$ for the quantum cohomology of a hypertoric variety are linearly independent.  These operators are indexed by circuits in a certain hyperplane arrangement.  We will first give a more general characterization of the circuits we have defined in Definition \ref{D2.1.4} in terms of matroidal circuits.  Then we will use this characterization to obtain an explicit matrix form of the $L_\alpha$ and then, using this, we will be able to prove that the $L_\alpha$ are linearly independent. 

\begin{definition}\label{D2.3.1}
    A collection $\cC\subset \cP([n])$ of subsets of $[n] = \{1,\ldots,n\}$ is said to be a {\it collection of matroidal circuits} if it satisfies the following axioms:
    \begin{enumerate}
        \item $\emptyset\notin \cC$.
        \item For any pair $S,S'\in \cC$, if $S'\subset S$ then $S' = S$. 
        \item For any pair $S,T\in\cC$, if there exists an element $i\in S\cap T$ then there exists a $U\in\cC$ for which $U\subset (S\cup T)\setminus\{i\}$.
    \end{enumerate}
    Define a {\it matroidal circuit} to be any element of $\cC$.
\end{definition}

\begin{definition}\label{D2.3.2}
    Let $\cA = \{H_i\mid i=1,\ldots,n\}$ be a simple hyperplane arrangement in $(\mathfrak{t}_\mathbb{R}^d)^*$.  We say $S\subset [n]$ is {\it independent} if the corresponding set of normal vectors $\{a_i\mid i\in S\}\subset (\ft_\bR^d)^*$ is a linearly independent set.  Otherwise, we say that $S\subset [n]$ is {\it dependent}.
\end{definition}

\begin{lemma}\label{L2.3.1}
    Let $\cA = \{H_i\mid i=1,\ldots,n\}$ be a simple hyperplane arrangement in $(\ft_\bR^d)^*$ and let
    \begin{align*}
    \Psi = \{\;S\subset[n]\mid S\neq \emptyset,\; S\mathrm{\;is\;a\;minimally\; dependent\;set}\}
    \end{align*}
    be the collection of all circuits, in the sense of Definition \ref{D2.1.3} and Lemma \ref{L2.1.1}.  Then $\Psi$ forms a collection of matroidal circuits.  
\end{lemma}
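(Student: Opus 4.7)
The plan is to recognize this as the classical statement that the circuits of a vector (linear) matroid satisfy the matroidal circuit axioms, and verify each axiom in turn using the minimality description of $\Psi$ supplied by Lemma \ref{L2.1.1}.

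First I would dispose of the easy axioms. Axiom (a) is immediate: $\emptyset$ contains no vectors and so is (vacuously) independent, hence not minimally dependent. For axiom (b), suppose $S' \subset S$ with both $S, S' \in \Psi$. Since $S$ is minimally dependent, every proper subset of $S$ is independent; in particular, if $S' \subsetneq S$ then $S'$ would be independent, contradicting $S' \in \Psi$. Hence $S' = S$.

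The substantive content is axiom (c), the \emph{circuit elimination axiom}. Given distinct $S, T \in \Psi$ and $i \in S \cap T$, I would extract explicit linear dependencies from the minimality hypothesis. Because $S$ is minimally dependent, the vectors $\{a_j \mid j \in S\}$ satisfy a relation $\sum_{j \in S} c_j a_j = 0$; moreover every coefficient $c_j$ is nonzero, since if some $c_{j_0} = 0$ the relation would certify that $S \setminus \{j_0\}$ is already dependent, contradicting minimality. Similarly there is a relation $\sum_{j \in T} d_j a_j = 0$ with all $d_j \neq 0$. Rescaling so that $c_i = d_i$ and subtracting, one obtains
\begin{align*}
    \sum_{j \in (S \cup T) \setminus \{i\}} e_j\, a_j \;=\; 0,
\end{align*}
where $e_j = c_j$ for $j \in S \setminus T$, $e_j = -d_j$ for $j \in T \setminus S$, and $e_j = c_j - d_j$ for $j \in (S \cap T) \setminus \{i\}$. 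Since $S \neq T$, the symmetric difference $S \triangle T$ is nonempty, and for any $j \in S \triangle T$ the coefficient $e_j$ equals $\pm c_j$ or $\mp d_j$, hence is nonzero. Thus the relation is nontrivial, so the set $(S \cup T) \setminus \{i\}$ is dependent. Any finite dependent set contains a minimally dependent subset (take any inclusion-minimal dependent subset), which is an element $U \in \Psi$ with $U \subset (S \cup T) \setminus \{i\}$, verifying axiom (c).

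The only real obstacle is ensuring that the subtraction yields a nontrivial relation, which I would handle by the $S \neq T$ argument above; the degenerate case $S = T$ does not need to be addressed because the axioms are applied to distinct circuits. With all three axioms verified, $\Psi$ forms a collection of matroidal circuits in the sense of Definition \ref{D2.3.1}.
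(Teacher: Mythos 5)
Your proof is correct, but you take a genuinely different route from the paper. You give the standard "constructive" proof for representable matroids: extract the unique-up-to-scale full-support dependence relation on each of $S$ and $T$ (using minimality to ensure every coefficient is nonzero), rescale so the coefficients at $i$ agree, and subtract, producing an explicit nontrivial relation supported in $(S\cup T)\setminus\{i\}$; nontriviality is secured because $S\neq T$ forces $S\triangle T\neq\emptyset$, and any $j\in S\triangle T$ lies in $(S\cup T)\setminus\{i\}$ with $e_j\neq 0$. The paper instead argues by rank: it picks $j\in S\setminus T$, extends the independent set $S\setminus\{j\}$ to a maximal independent $R\subset S\cup T$, observes that dependence of $S$ and $T$ forces $j\notin R$ and some $t\in T\setminus R$ with $t\neq j$ (so $|R|\le|S\cup T|-2$), and then invokes the fact that all maximal independent subsets of $S\cup T$ have the same cardinality to conclude $(S\cup T)\setminus\{i\}$, having cardinality $|S\cup T|-1>|R|$, cannot be independent. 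Your argument buys an explicit witness of dependence and is in some sense sharper (it in fact shows the stronger circuit-elimination form), at the small cost of bookkeeping with coefficients; the paper's argument is coordinate-free and avoids writing out relations, but leans on the cardinality-of-bases fact and leaves a small step implicit (that the two omitted elements $j$ and $t$ are distinct). Both are sound.
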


\begin{remark}
    If $\cA$ is smooth, then Lemma \ref{L2.1.2} implies that $\Psi\simeq \Phi^+$, where $\Phi^+$ is defined in Definition \ref{D2.1.3}.  Thus, Axiom (c) satisfied by $\Psi$ implies that if $\alpha,\beta\in{\Phi^+}$ and $\alpha_i = \pm \beta_i\neq 0$, then there exists a $\gamma\in{\Phi^+}$ for which $\mathrm{Supp}(\gamma)\subset \mathrm{Supp}(\alpha\mp\beta)$.  
\end{remark}

\begin{remark}
    We should note that if $\alpha,\beta\in {\Phi^+}$ are roots in a smooth hyperplane arrangement defined by $[a]\colon \ft_\bR^n\tto \ft_\bR^d$, then if $\mathrm{Supp}(\alpha)\cap \mathrm{Supp}(\beta) = \emptyset$, then $\alpha+\beta$ is not a circuit.  However, the converse need not be true.  Consider, for example the (smooth) hypertoric variety defined by the matrix 
    \begin{align*}
        [a] = \begin{pmatrix}
            1 & 0 & -1 & 0 & -1 \\
            0 & 1 & 0 & -1 & -1
        \end{pmatrix},
    \end{align*}
    The circuits $\alpha = \begin{pmatrix} 0 & 0 & 1 & 1 & -1\end{pmatrix}^\sfT$, $\beta = \begin{pmatrix} 1 & 1 & 0 & 0 & 1\end{pmatrix}^\sfT$ have overlapping support, but $\alpha+\beta = \begin{pmatrix}1 & 1 & 1 & 1 & 0 \end{pmatrix} = \begin{pmatrix}1 & 0 & 1 & 0 & 0 \end{pmatrix}^\sfT + \begin{pmatrix}0 & 1 & 0 & 1 & 0 \end{pmatrix}^\sfT$ is not minimally supported, and hence not a circuit.  
\end{remark}

We now prove a lemma that will be useful for explicitly evaluating the Steinberg operators on the stable basis.     

\begin{lemma}\label{L2.3.2}
    Suppose $X$ is a smooth hypertoric variety with $\cA = \{H_i\mid i=1,\ldots,n\}\subset (\ft_\bR^d)^*$ given by the (smooth) arrangement and let $u_1,\ldots,u_n,\hbar$ be the corresponding generating divisors of $H_G^\bullet(X)$.  For any subset $M\subset [n]$, we will denote $u_M:= \prod_{l\in M}u_l$.
    
    If $P\subset [n]$ is any independent subset and if $\alpha\in {\Phi^+}$ is the sign vector corresponding to the circuit $S = \mathrm{Supp}(\alpha)$, as in Lemma \ref{L2.1.2}, then we have 
    \begin{align*}
        L_\alpha(u_P) = u_{P\setminus S}\cdot L_\alpha(u_{P\cap S}).
    \end{align*}
\end{lemma}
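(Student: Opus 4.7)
The plan is to reduce the lemma to the claim that $L_\alpha$ is $\mathbb{C}[u_i : i \notin S]$-linear, i.e.\ that $[L_\alpha, u_i \cup (-)] = 0$ for every $i \notin S = \mathrm{Supp}(\alpha)$. Once this is established, factoring $u_Q = u_{Q \setminus S} \cdot u_{Q \cap S}$ and peeling off the factors $u_i$ for $i \in Q \setminus S$ one at a time immediately yields the stated identity. The independence hypothesis on $Q$ guarantees that the monomials $u_Q$, $u_{Q \cap S}$ and $u_{Q \setminus S}$ remain nonzero in $H^\bullet_G(X)$ under the circuit relations of Theorem \ref{T2.1.1} and fit naturally into a basis, so that the identity can later be interpreted as matrix entries of $L_\alpha$.

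To prove the commutation, I would rely on the description of $L_\alpha$ from \cite{McBSh_2013} as a convolution operator with a Steinberg class $[Z_\alpha] \in H^{BM}_{2d}(X \times_{X_0} X)$, so that
\[
L_\alpha(\eta) = (p_2)_*\bigl( p_1^* \eta \cup [Z_\alpha] \bigr),
\]
where $p_1, p_2 \colon X \times_{X_0} X \to X$ are the two projections. The geometric statement to establish is that for every $i \notin S$,
\[
p_1^* u_i \big|_{Z_\alpha} = p_2^* u_i \big|_{Z_\alpha}.
\]
Given this, the projection formula immediately yields $L_\alpha(u_i \cup \eta) = u_i \cup L_\alpha(\eta)$, and iterating over $Q \setminus S$ closes the argument.

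The main obstacle is verifying this pullback equality on $Z_\alpha$. The expected reason is that, in the McBreen-Shenfeld construction, the support of the Steinberg correspondence $Z_\alpha$ is built from $\mathbb{P}^1$-families parametrizing wall crossings across precisely the hyperplanes indexed by the circuit $S$; equivalently, $Z_\alpha$ is the pullback of a correspondence on a smaller hypertoric slice associated to $S$ under a natural map forgetting the directions transverse to $S$. Since the divisor $u_i$ for $i \notin S$ is pulled back from this transverse factor, its two restrictions to $Z_\alpha$ must coincide. Unwinding this structural description, and in particular matching it against the explicit formulas for $L_\alpha$ furnished by \cite{McBSh_2013}, is the technical heart of the proof.
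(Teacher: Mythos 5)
Your proposed reduction hinges on the claim that $L_\alpha$ commutes with $u_i\cup(-)$ whenever $i\notin S = \mathrm{Supp}(\alpha)$, i.e.\ that $[L_\alpha, u_i\cup(-)] = 0$ as operators. This is false in general, and the lemma is strictly weaker: it only asserts the factorization formula on the particular elements $u_Q$ with $Q$ independent. The relation available from Theorem~\ref{T2.2.1} is $[L_\alpha, \delta(u_i)] = 0$, which unwinds to
\begin{align*}
[L_\alpha,\, u_i\cup(-)] \;=\; \tfrac{1}{2}\hbar\left[L_\alpha,\; \textstyle\sum_{\beta\in\Phi^+}\beta_i L_\beta\right].
\end{align*}
Grouping the right side by rank--$2$ flats $\cF\ni\alpha$ (Lemma~\ref{L2.1.3}), the flats $\{\alpha,\beta\}$ contribute zero and the flats $\{\alpha,\beta,\alpha+\beta\}$ contribute $\beta_i[L_\alpha,L_\beta+L_{\alpha+\beta}]=0$ using $\alpha_i=0$. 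But there is a third type: a flat $\{\gamma,\delta,\gamma+\delta\}$ where $\alpha=\gamma+\delta$ is the \emph{top} element. There $\gamma_i+\delta_i = \alpha_i = 0$, so $\delta_i=-\gamma_i$, and the contribution becomes $\gamma_i[L_\alpha, L_\gamma - L_\delta] = 2\gamma_i[L_\alpha, L_\gamma]$, which is generically nonzero. A concrete failure occurs in the example at the end of Section~\ref{S2.3}: with $\alpha = \beta_{124}$, $S = \{1,2,4\}$, and $i=3\notin S$, one has $\Phi^+ = \{\beta_{23},\beta_{134},\beta_{124}\}$ a single rank--$2$ flat with $\beta_{124}=\beta_{23}+\beta_{134}$, and the explicit matrices $[L^G]$ given there yield $[L_{124},u_3] = \hbar[L_{124},L_{23}]\neq 0$. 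Consequently, the geometric claim that $p_1^*u_i\rvert_{Z_\alpha} = p_2^*u_i\rvert_{Z_\alpha}$ for $i\notin S$ cannot hold either, since via the projection formula it would force the operator commutation.

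The paper's proof works around precisely this obstruction. Rather than killing the commutator $[L_\alpha,u_i]$ outright, it peels off one $u_i$ at a time and shows that the correction term $\tfrac{1}{2}\hbar\bigl(L_\alpha(\sum_\beta\beta_i L_\beta(u_{Q_0})) - \sum_\beta\beta_i L_\beta(L_\alpha(u_{Q_0}))\bigr)$ vanishes \emph{when evaluated on the specific element $u_{Q_0}$}, by a delicate induction on $|Q|$ that invokes the vanishing Lemma~\ref{L2.3.3}, the explicit evaluation Lemma~\ref{L2.3.4}, and the matroid circuit exchange axiom (c) via Lemma~\ref{L2.3.1}. The hypothesis that $Q$ is independent is essential to ensure $Q\cup S$ contains no circuit other than $S$, which is what makes the correction terms die. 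Your proposal, if it worked, would prove the lemma for arbitrary $Q$ with no independence assumption, which is a red flag that the commutation-operator route is too strong.
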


To prove this lemma, we will need a couple of technical results.  The first main result is a vanishing result.     

\begin{lemma}(Lemma 5.1 of~\cite{MBS13}).
    Consider a circuit $S = \mathrm{Supp}(\alpha)$ and a subset $M\subset\mathcal{A}$ such that for all $i\in S\setminus M$, the set $M\sqcup\{i\}$ contains no circuits.  Then for any splitting $M = M^+\sqcup M^-$, we have 
    \begin{align*}
        L_\alpha\left(u_M\right) = 0.
    \end{align*}
\end{lemma}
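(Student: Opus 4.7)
The plan is to reduce the claim to an explicit localization computation using the description of $L_\alpha$ given in McBreen--Shenfeld. There, the Steinberg operator is a correspondence supported on the components of $X\times_{X_0}X$ cut out by the circuit $S=\mathrm{Supp}(\alpha)$, and its action on $H_G^\bullet(X)$ can be unpacked via equivariant localization at the $T^d$-fixed points of $X$. The $T^d$-fixed points $p_B$ are indexed by the bases $B\subset [n]$ of the underlying matroid (equivalently, feasible sign vectors on $\mathcal{A}$), and the restriction $u_i|_{p_B}$ is an explicit linear combination of the equivariant parameters, with a well-known vanishing pattern determined by whether $i\in B$ and by the sign vector at $p_B$.

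First I would write $u_M=\prod_{l\in M}u_l$ in the fixed-point basis and observe that its coefficient at $p_B$ is supported only on bases $B$ that interact with $M$ in a controlled combinatorial fashion (essentially, bases that ``use'' only elements outside $M$ to detect each $u_l$). Next I would recall that $L_\alpha$ pairs fixed points $p_B$ and $p_{B'}$ whose indexing bases differ by the circuit move associated with $S$; this is the hypertoric analogue of the fact that Steinberg correspondences connect fixed points related by a wall-crossing along the hyperplane $\{t\in T^k\mid \alpha(t)=1\}$. Consequently, each nonzero summand of $L_\alpha(u_M)$ at a fixed point $p_{B'}$ arises from a pair $(B,B')$ whose symmetric difference lies in $S$, and requires a subset of $M$ together with some $i\in S\setminus M$ to close up into a circuit inside the arrangement.

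The combinatorial hypothesis is then invoked: since for every $i\in S\setminus M$ the set $M\sqcup\{i\}$ contains no circuit, no subset of $M$ can be completed by a single element of $S$ to a dependent set. This rules out precisely the circuit completions needed for the $L_\alpha$-correspondence to connect a fixed point supported by $u_M$ to any target fixed point. Hence every summand in the expansion of $L_\alpha(u_M)$ vanishes, giving $L_\alpha(u_M)=0$.

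The main obstacle is step three: translating the matroidal ``no-completion'' hypothesis into a clean vanishing statement on the fixed-point expansion of $L_\alpha(u_M)$. Concretely, one has to match the chambers labelling components of the Steinberg variety with the splittings $M=M^+\sqcup M^-$ permitted by the hypothesis, and check that the Euler-class factors appearing in the localization formula for $L_\alpha$ annihilate every surviving contribution. The matroidal exchange axiom from Lemma \ref{L2.3.1}, together with the support characterization of circuit vectors in Lemma \ref{L2.1.2}, should suffice to keep this bookkeeping finite and transparent.
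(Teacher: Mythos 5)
The paper does not prove this lemma; it is cited verbatim as Lemma 5.1 of McBreen--Shenfeld and used as a black box, so there is no internal proof here for you to be measured against.

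Judged on its own terms, your write-up is a strategy outline whose central step is left open, as you yourself concede (``the main obstacle is step three''). That obstacle is not bookkeeping; it \emph{is} the lemma. You assert that $L_\alpha$ connects fixed points $p_B,p_{B'}$ whose symmetric difference lies in $S$, but this structural fact about the Steinberg correspondence is never derived, and in the present paper the closest such statement (Lemma \ref{L2.3.11}) is itself established downstream of this lemma via Lemma \ref{L2.3.2}, so importing it here would make the chain of deductions circular. You also appeal to a ``well-known vanishing pattern'' for $u_i|_{p_B}$ without stating what it is, so the step where you expand $u_M$ at fixed points and discard contributions currently has no content. A proof that actually closes the gap would need to work with the explicit class defining $L_S$ in McBreen--Shenfeld (or with the cohomology relation $\prod_{i\in S^+}u_i\prod_{i\in S^-}(\hbar-u_i)=0$ of Theorem \ref{T2.1.1}) and show by support or degree considerations that the convolution against $\pi_1^*u_M$ vanishes under the hypothesis that no $M\sqcup\{i\}$, $i\in S\setminus M$, contains a circuit. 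Lemmas \ref{L2.1.2} and \ref{L2.3.1} are reasonable inputs, but they do not replace that calculation.
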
  

\begin{lemma}\label{L2.3.3}
    Suppose $S = \mathrm{Supp}(\alpha)$ is a circuit in a smooth hyperplane arrangement $\cA$ and $M\subset S$ has $\abs{M}\leq \abs{S}-2$.  Then 
    \begin{align*}
        L_\alpha(u_M) = 0.
    \end{align*}
\end{lemma}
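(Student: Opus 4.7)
The plan is to reduce this vanishing statement directly to the preceding lemma (the cited Lemma 5.1 of \cite{McBSh_2013}), which asserts that $L_\alpha(u_M) = 0$ whenever $M \subset \mathcal{A}$ has the property that for every $i \in S \setminus M$ the set $M \sqcup \{i\}$ contains no circuit. So the entire task is to verify this no-circuit hypothesis in our setting.

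First I would observe that since $M \subset S$ with $|M| \le |S|-2$, for any $i \in S \setminus M$ the set $M \sqcup \{i\}$ is contained in $S$ with cardinality at most $|S|-1$, hence is a \emph{proper} subset of $S$. Next I would invoke the minimality property of the circuit $S$: every proper subset of $S$ is independent in the matroid sense of Definition \ref{D2.3.1} (and Definition \ref{D2.3.2}). In particular, $M \sqcup \{i\}$ is independent.

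It remains to check that an independent set cannot contain any circuit at all. This is immediate: if $S' \subset M \sqcup \{i\}$ were a circuit, then by independence of $M \sqcup \{i\}$ the vectors $\{a_j \mid j \in S'\}$ would be linearly independent, contradicting the fact that $S'$ is (minimally) dependent. Thus $M \sqcup \{i\}$ contains no circuit for every $i \in S \setminus M$, and the cited lemma applies (with the splitting $M = M^+ \sqcup M^-$ inherited from the splitting $S = S^+ \sqcup S^-$ associated to $\alpha$, since $M \subset S$), yielding $L_\alpha(u_M) = 0$.

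The argument is essentially bookkeeping: all the real work sits in the preceding vanishing lemma from \cite{McBSh_2013}. The only subtle point to be careful about is to note that the condition $|M| \le |S|-2$ is sharp for the proof in the sense that it guarantees $M \sqcup \{i\}$ remains a \emph{proper} subset of $S$; without this, $M \sqcup \{i\}$ could equal $S$ and itself be a circuit, in which case the hypothesis of the previous lemma would fail and the conclusion could genuinely be nonzero (corresponding to the leading nonvanishing term of $L_\alpha$ on $u_{S \setminus \{i\}}$). So the proof really just amounts to unpacking the matroidal minimality of $S$ and quoting the previous result.
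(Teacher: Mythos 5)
Your proof is correct and is essentially the paper's own argument: both verify the hypothesis of Lemma 5.1 of \cite{McBSh_2013} by noting that $M \sqcup \{i\}$ is a proper subset of the circuit $S$ and hence contains no circuit by minimality, then conclude. You simply spell out the intermediate matroid-theoretic steps a bit more explicitly; the route is identical.
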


\begin{proof}
     Since we know that $M\subset S$ and $|S\setminus M| = |S| - |M|\geq 2$, it follows that for any $i\in S\setminus M$, we have $M\sqcup\{i\}\subsetneq S$ must contain no circuits by the minimality of $S$.  Thus, we can apply Lemma 5.1 of~\cite{MBS13} and conclude.  
\end{proof}

The next lemma can then be used to obtain the following 

\begin{lemma}(Lemma 5.2 of~\cite{MBS13}
    Suppose $S = \mathrm{Supp}(\alpha)$ is a circuit, $S = S^+\sqcup S^-$ a splitting according to the remarks following Lemma \ref{L2.1.2}, and let
    \begin{align*}
        v_l &= \begin{cases}
            u_l,&l\in S^+\\
            \hbar - u_l,&l\in S^-
        \end{cases}\\\\
        w_l = \hbar - v_l &=\begin{cases}
            \hbar - u_l, &l\in S^+\\
            u_l,&l\in S^-
            \end{cases}.
    \end{align*}
    Then for $M = S\setminus i$ for $i\in S$, we have 
    \begin{align*}
        \hbar L_\alpha(v_M) = (-1)^{|S|}w_S.
    \end{align*}
\end{lemma}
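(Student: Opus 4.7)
The plan is to reduce $\hbar L_\alpha(v_M)$ to the evaluation of $L_\alpha$ on the single top-degree monomial $u_M$, and then extract its value from the explicit description of the Steinberg operator for hypertoric varieties given in \cite{McBSh_2013}.

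\textbf{Step 1 (reduction via the vanishing lemma).} I expand each factor, using $v_l = u_l$ for $l \in M \cap S^+$ and $v_l = \hbar - u_l$ for $l \in M \cap S^-$, to write
\[
v_M \;=\; \sum_{N \subseteq M\cap S^-}(-1)^{|N|}\,\hbar^{|M\cap S^-|-|N|}\, u_{(M\cap S^+)\cup N}.
\]
Every monomial $u_A$ appearing here satisfies $A \subseteq M \subsetneq S$, so $|A| \le |M| = |S|-1$. By Lemma~\ref{L2.3.3}, $L_\alpha(u_A) = 0$ whenever $A \subseteq S$ and $|A| \le |S|-2$; hence only the unique term with $A = M$ (corresponding to $N = M\cap S^-$) survives, yielding $L_\alpha(v_M) = (-1)^{|M\cap S^-|}\, L_\alpha(u_M)$.

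\textbf{Step 2 (evaluation of $L_\alpha(u_M)$).} For $|M| = |S|-1$ the monomial $u_M$ is the top surviving degree in the $u_l$'s indexed by $S$, where the only nontrivial relation is the circuit relation $v_S = \prod_{l\in S^+}u_l \prod_{l\in S^-}(\hbar-u_l) = 0$ from Theorem~\ref{T2.1.1}. I will invoke the McBreen--Shenfeld description of $L_\alpha$ as a Steinberg correspondence in $H_{2d}^{BM}(X\times_{X_0}X)$ and localize at the torus fixed points of $X$: the two fixed points exchanged by this correspondence are adjacent across the wall corresponding to $\alpha$ and differ exactly in the single index $i$. Writing the localization formula, the residual weights of these fixed points assemble into the product $w_S = \prod_{l\in S^+}(\hbar-u_l)\prod_{l\in S^-}u_l$ divided by $\hbar$ (the one linear constraint coming from the circuit), with an overall sign determined by $|S^+|$, $|S^-|$, and whether $i\in S^+$ or $i\in S^-$.

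\textbf{Step 3 (sign bookkeeping and conclusion).} Combining the sign $(-1)^{|M\cap S^-|}$ from step~1 with the sign from step~2, the case analysis $i \in S^+$ vs.\ $i \in S^-$ will collapse uniformly to the single answer $\hbar L_\alpha(v_M) = (-1)^{|S|}\,w_S$, independent of the choice of $i \in S$. In particular, this independence is a consistency check: picking different $i$'s yields the same relation, mirroring the single circuit relation $v_S=0$.

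The main obstacle is step~2: unpacking the explicit localization of the Steinberg correspondence at the two adjacent fixed points and tracking the sign precisely. Step~1 is a formal consequence of the vanishing lemma, and step~3 is elementary once the signs in step~2 are pinned down; the substance of the proof lies in reading the local geometry of $X\times_{X_0}X$ around the wall cut out by $\alpha = \alpha_S$.
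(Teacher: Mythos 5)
This statement is Lemma~5.2 of \cite{McBSh_2013}, which the paper \emph{cites} rather than proves; there is no proof of it in the paper to compare against. (The paper only proves the reformulated Lemma~\ref{L2.3.4} \emph{assuming} this cited result.) You should therefore be aware that your proposal is attempting something the paper treats as an external black box.

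On the merits of your sketch: Step~1 is correct and self-contained given Lemma~\ref{L2.3.3} (the vanishing $L_\alpha(u_A)=0$ for $A\subsetneq S$ with $|A|\le|S|-2$), and in fact the paper performs the same expansion, in the opposite direction, inside the proof of Lemma~\ref{L2.3.4}: your $(-1)^{|M\cap S^-|}$ is exactly the $\alpha_i(-1)^{|S^-|}$ appearing there. However, Steps~2 and~3 are not a proof but a plan for one. You never actually evaluate $L_\alpha(u_M)$ --- you assert that localizing the Steinberg correspondence at the two fixed points straddling the wall for $\alpha$ ``assembles into'' $w_S/\hbar$ with some sign, and then assert that the signs will ``collapse uniformly.'' Neither claim is established: you don't identify the relevant fixed points, don't write the localization formula, don't compute the tangent/normal weights, and don't verify that the answer is independent of the choice of $i\in S$ (which, as you correctly flag, is a nontrivial consistency condition coming from the single relation $v_S=0$). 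As it stands this is a gap in exactly the place where the content of McBreen--Shenfeld's Lemma~5.2 lives; Step~1 reduces the problem but does not solve it.
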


We can reformulate the above lemma as follows:

\begin{lemma}\label{L2.3.4}
    Suppose $S = \mathrm{Supp}(\alpha)$ is a circuit and let $M = S\setminus i$ for some $i\in S$.  Let 
    \begin{align*}
        z_l = -\alpha_l w_l = \begin{cases}
            u_l-\hbar, &l\in S^+\\
            u_l, &l\in S^-
        \end{cases}.
    \end{align*}
    Then we have 
    \begin{align*}
        \hbar L_\alpha(u_M) = \alpha_i z_S.
    \end{align*}
\end{lemma}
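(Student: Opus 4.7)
The plan is to deduce Lemma \ref{L2.3.4} directly from Lemma 5.2 of \cite{McBSh_2013} by expanding $v_M$ as a polynomial in the $u_l$ and $\hbar$, applying $L_\alpha$ linearly, and using the vanishing Lemma \ref{L2.3.3} to collapse the sum to a single term.

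First I would write $v_M = \prod_{l \in M \cap S^+} u_l \cdot \prod_{l \in M \cap S^-}(\hbar - u_l)$ and expand the second product to get
\begin{align*}
    v_M = \sum_{T \subset M \cap S^-} \hbar^{|T|}\,(-1)^{|(M\cap S^-)\setminus T|}\, u_{Q_T},
\end{align*}
where $Q_T = (M \cap S^+) \sqcup ((M \cap S^-) \setminus T)$. Since $M = S \setminus \{i\}$, one has $|Q_T| = |S| - 1 - |T|$, so whenever $|T| \geq 1$ the subset $Q_T \subset S$ satisfies $|Q_T| \leq |S|-2$, and Lemma \ref{L2.3.3} yields $L_\alpha(u_{Q_T}) = 0$. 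Only the term $T = \emptyset$ survives, giving $L_\alpha(v_M) = (-1)^{|M \cap S^-|} L_\alpha(u_M)$.

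Next I would convert signs. Since $\alpha_l = +1$ for $l \in S^+$ and $\alpha_l = -1$ for $l \in S^-$, we have $\prod_{l \in S} \alpha_l = (-1)^{|S^-|}$, and $|M \cap S^-| = |S^-| - [i \in S^-]$, so $(-1)^{|M\cap S^-|} = (-1)^{|S^-|}\cdot \alpha_i = \alpha_i \prod_{l\in S}\alpha_l$. Thus
\begin{align*}
    L_\alpha(u_M) = \alpha_i \prod_{l \in S} \alpha_l \cdot L_\alpha(v_M).
\end{align*}
Multiplying by $\hbar$ and invoking Lemma 5.2 of \cite{McBSh_2013} gives $\hbar L_\alpha(u_M) = \alpha_i \prod_{l \in S}\alpha_l \cdot (-1)^{|S|} w_S = \alpha_i \prod_{l\in S}(-\alpha_l w_l) = \alpha_i z_S$, as claimed.

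There is no substantive obstacle; the main point to watch is the bookkeeping of the two sign factors, namely the $(-1)^{|(M\cap S^-)\setminus T|}$ arising from expansion and the $(-1)^{|S|}$ from the prior lemma, combined with the characterization $\alpha_l = \pm 1$. The only analytic input is the vanishing lemma \ref{L2.3.3}, which forces all but one term in the expansion to drop out.
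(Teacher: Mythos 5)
Your proof is correct and takes essentially the same approach as the paper: both expand $v_M$ and use the vanishing Lemma \ref{L2.3.3} together with $\mathbb{C}[\hbar]$-linearity to reduce $L_\alpha(v_M)$ to a single term proportional to $L_\alpha(u_M)$, then invoke Lemma 5.2 of \cite{McBSh_2013} and match the resulting signs against $z_S$. The only difference is cosmetic — you write the sign factor uniformly as $(-1)^{|M\cap S^-|}$ while the paper splits into the cases $i\in S^+$ and $i\in S^-$.
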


\begin{proof}
    By definition of $S$ and $\alpha$, we have
    \begin{align*}
        \alpha_i = \begin{cases}
            1, &i\in S^+\\
            -1, &i\in S^-
        \end{cases}.
    \end{align*}
    Then in using the $\bC[\hbar]$--module compatibility of $L_\alpha$, together with Lemma \ref{L2.3.3}, we write the left-hand side of Lemma 5.2 of~\cite{MBS13} as follows
    \begin{align*}
        \hbar L_\alpha(v_M) &= \begin{cases}
            (-1)^{|S^-|}\hbar L_\alpha(u_M),&i\in S^+\\
            -(-1)^{|S^-|}\hbar L_\alpha(u_M),&i\in S^-
        \end{cases}\\
        &= \alpha_i\;(-1)^{|S^-|} \hbar L_\alpha(u_M)
    \end{align*}
    On the other hand, we can write $w_l = -\alpha_lz_l$ and rewrite the right-hand side of Lemma 5.2 of~\cite{MBS13} as
    \begin{align*}
        (-1)^{|S|}w_S &= \prod_{l\in S}(-w_l)\\
        &=\prod_{l\in S}\alpha_lz_l\\
        &= (-1)^{|S^-|}z_S.
    \end{align*}
    Thus, in combining the left-hand and right-hand sides, we obtain the following 
    \begin{align*}
        \alpha_i (-1)^{|S^-|}\hbar L_\alpha(u_M) &= (-1)^{|S^-|}z_S\\
        \hbar L_\alpha(u_M) &= \alpha_i z_S,
    \end{align*}
    completing the proof.  
\end{proof}

\begin{proof}[Proof of Lemma \ref{L2.3.2}]
    We prove this by induction on $\abs{P}$.  Let $\alpha\in{\Phi^+}$ and $S = \mathrm{Supp}(\alpha)$.  For the base case, if $P = \emptyset$, then $P\setminus S = \emptyset$ and $P\cap S = \emptyset$, and the lemma follows trivially.

    Now assume the inductive hypothesis that the lemma holds for all $P'$ for which $\abs{P'}<\abs{P}$.  If $P\subset S$, then we are done.  Otherwise, select $i\in P\setminus S$ and write $P = \{i\}\sqcup P_0$, so that $u_P = u_i u_{P_0}$.  By Theorem \ref{T2.2.1} and the fact that $\alpha(u_i) = 0$, the Steinberg operators $L_\alpha$ satisfy the following relation:
    \begin{align*}
        [L_\alpha,u_i] = \frac{1}{2}\hbar\left[L_\alpha,\sum_{\beta\in{\Phi^+}}\beta_i L_\beta\right],
    \end{align*}
    Evaluating both sides on $u_{P_0}$, we find that 
    \begin{align*}
        L_\alpha(u_P) - u_iL_\alpha(u_{P_0}) = \frac{1}{2}\hbar L_\alpha\left(\sum_{\beta\in{\Phi^+}}\beta_i L_\beta(u_{P_0})\right) - \frac{1}{2}\hbar\sum_{\beta\in{\Phi^+}}\beta_i L_\beta\left(L_\alpha(u_{P_0})\right). 
    \end{align*}

    If we can prove that both terms on the right-hand side vanish, then we will have $L_\alpha(u_P) = u_iL_\alpha(u_{P_0})$ and the result will follow by the inductive hypothesis:
    \begin{align*}
        L_\alpha(u_P) &= u_i\; L_\alpha(u_{P_0})\\
        &= u_i\; u_{P_0\setminus S}\; L_\alpha(u_{P_0\cap S})\\
        &= u_{P\setminus S}\; L_\alpha(u_{P\cap S}).
    \end{align*}

    We first claim that $\beta_i L_\beta(u_{P_0}) = 0$ for all $\beta\in{\Phi^+}$.  Fix a $\beta\in{\Phi^+}$ and set $T = \mathrm{Supp}(\beta)$.  We invoke the inductive hypothesis on $u_{P_0}$ to obtain $L_\beta(u_{P_0}) = u_{P_0\setminus T}\cdot L_\beta(u_{P_0\cap T})$ and using Lemma \ref{L2.3.3}, we find that $L_\beta(u_{P_0}) = L_\beta(u_{P_0\cap T}) = 0$ if $\abs{T\setminus P_0}\geq 2$.  So consider the case where $T\setminus P_0 = \{j\}$.  If $j\neq i$, then $i\not\in T = (P_0\cap T)\sqcup\{j\}$, so $\beta_i = 0$.  But if $j = i$, then we have $T = \{i\}\sqcup (P_0\cap T)\subset P$, which contradicts the assumption that $P$ contains no circuits. It follows that $\beta_i L_\beta(u_{P_0}) = 0$ for all $\beta\in{\Phi^+}$, proving the claim.    

    We next claim that 
    \begin{align*}
        L_\alpha(u_{P_0}) = \begin{cases}
            \alpha_j\; z_{P_0\cup S},& S = (S\cap P_0)\sqcup\{j\},\; j\neq i\\
            0, &\mathrm{otherwise},
        \end{cases}
    \end{align*}
    where
    \begin{align*}
        z_l = \begin{cases}
            u_l - \hbar, &l\in S^+\\
            u_l, &l\in S^-\\
            u_l, &l\in P_0\setminus S
        \end{cases}.
    \end{align*}
    In the case where $S = (S\cap P_0)\sqcup \{j\}$, and $j\neq i$, this is given by Lemma \ref{L2.3.4}, together with the inductive hypothesis on $P_0$.  Otherwise, if $i = j$, then similar to before, we find $S = (S\cap P_0)\sqcup\{i\}\subset P$, contradicting the assumption that $P$ contains no circuits.  The only remaining case to consider is where $\abs{S\setminus P_0}\geq 2$, in which case $L_\alpha(u_{P_0}) = 0$ by Lemma \ref{L2.3.3}.  This proves the second claim.    
    
    We now claim that $\beta_i L_\beta(L_\alpha(u_{P_0})) = 0$ for all $\beta\in {\Phi^+}$.  Fixing such a $\beta$, with $T = \mathrm{Supp}(\beta)$, the second above claim implies  
    \begin{align*}
        \beta_i L_\beta\left(L_\alpha(u_{P_0})\right) 
        &= \begin{cases}
            \alpha_j\beta_i L_\beta(z_{P_0\sqcup\{j\}}), &S = (S\cap P_0)\sqcup\{j\},\; j\neq i\\
            0,&\mathrm{otherwise}
        \end{cases},
    \end{align*}
    because $P_0\cup S = P_0\sqcup\{j\}$.  Using the relation $\prod_{i\in S^+}u_i\prod_{i\in S^-}(\hbar - u_i) = 0$ in $H_G^\bullet(X)$ for the given circuit $S$, we can write $z_{P_0\sqcup\{j\}}$ as a sum of squarefree monomials $u_{P_1}$, for $P_1\subset P_0\sqcup\{j\}$, with $\abs{P_1}\leq \abs{P_0}$ and where the $P_1$ contains no circuits.  Using the inductive hypothesis on each such $P_1$ and Lemma \ref{L2.3.3}, we find that $\beta_i L_\beta(z_{P_0\sqcup\{j\}}) = 0$ if $\abs{T\setminus(P_0\sqcup\{j\})}\geq 2$ or if $i\notin T$. 
    Thus, it suffices to consider the case where $T\setminus(P_0\sqcup\{j\}) = \{i\}$, and where $S\setminus P_0 = \{j\}$, with $j\neq i$.  But this implies $T = \{i\}\sqcup(T\cap (P_0\sqcup\{j\}))\subset \{i\}\sqcup P_0\sqcup\{j\} = P\cup S$.  If we can show that the $S$ is the only circuit contained in $P\cup S$, then the fact that $i\in T$ implies $T\neq S$, which is a contradiction.  This would then imply the claim that $\beta_i L_\beta\left(L_\alpha(u_{P_0})\right) = 0$, as needed.     

    To prove that $P\cup S$ contains only the circuit $S$, suppose $T'\subset P\cup S$ is any other circuit such that $T'\neq S$.  Recalling that $S\setminus P = \{j\}$, if $j\not\in T'$, then $T\subset P$, contradicting the assumption that $P$ contains no circuits.  Thus, we must have $j\in T'\cap S$.  By Lemma \ref{L2.3.1}, Axiom (c), there must be a circuit $U'$ for which $U'\subset (S\cup T')\setminus\{j\}\subset (P\cup S)\setminus\{j\}\subset P$, which also contradicts the assumption that $P$ contains no circuits.  This proves that $P\cup S$ only contains the circuit $S$, as needed.       
\end{proof}

\begin{remark}
    Lemmas \ref{L2.3.2}, \ref{L2.3.3} and \ref{L2.3.4} together can be used to express the matrix forms of the Steinberg operators $L_S$ in the stable basis, whose basis elements are indexed by the $G = T^d\times\bC^\sfx$--fixed points of $X$.  These fixed points are in bijection with the vertices of $\cA$ which are in turn bijective with full-rank $d\times d$ submatrices of $a = \begin{pmatrix}a_1\dots a_n\end{pmatrix}$.  This can then be used to show that the set $\{L_\alpha\mid \alpha\in{\Phi^+}\}$ are linearly independent and hence that $\gamma\colon \wtilde{\fu_{\Phi^+}}^1\to E$ is injective.  
\end{remark}

We now find an explicit description of the Steinberg operators $\{L_\alpha\mid \alpha\in{\Phi^+}\}$ using the basis constructed from the fixed point basis.  We have the following description of the fixed-point set of a hypertoric variety:

\begin{proposition}\label{P2.3.1}  Let $X$ be a hypertoric variety, defined in Definition \ref{D2.1.1}, whose hyperplane arrangement is specified by $\cA$.  Then the $G = T^d\times\mathbb{C}^\mathsf{x}$-fixed points of $X$ are in bijection with the vertices of $\cA$.  Equivalently, the $G$--fixed points are in bijection with the $d\times d$ full rank submatrices of $[a]$.  In particular, the fixed point set $X^G$ is finite.   
\end{proposition}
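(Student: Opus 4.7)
The plan is to directly characterize the $G$-fixed $T^k$-orbits on the semistable locus $\mu_k^{-1}(0)^{ss}$ by analyzing the supports of representative pairs $(z,w)$, then match the resulting combinatorial data to vertices of the arrangement $\mathcal{A}$.

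First I would reduce the problem to the zero section. Since $\mathbb{C}^\mathsf{x}$ scales only the cotangent fibres by weight one, a point $[z,w] \in X$ is $\mathbb{C}^\mathsf{x}$-fixed iff for every $\lambda \in \mathbb{C}^\mathsf{x}$ there exists $s_\lambda \in T^k$ (viewed inside $T^n$ via $\iota$) with $s_{\lambda,i}\, z_i = z_i$ and $s_{\lambda,i}^{-1}\, w_i = \lambda w_i$ for each $i$. If both $z_i$ and $w_i$ were nonzero, we would simultaneously need $s_{\lambda,i} = 1$ and $\lambda = 1$, a contradiction. Hence $z_i w_i = 0$ for every $i$, and the moment-map equation $\mu_k(z,w) = \iota^*(z_i w_i)_i = 0$ is automatic on this locus.

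Next I would match $T^d$-fixedness to a spanning condition on the columns of $[a]$. By an analogous orbit-equivalence argument, $[z,w]$ is $T^d$-fixed iff, writing $S = \{i : z_i \neq 0\} \sqcup \{i : w_i \neq 0\}$ (disjoint thanks to the previous step), the composite $T^k \hookrightarrow T^n \twoheadrightarrow T^S$ is surjective; dually, this requires the columns $\{a_i : i \in [n]\setminus S\}$ to span $\mathfrak{t}^d_{\mathbb{R}}$. Genericity of $\chi$ makes $T^k$ act freely on the stable locus, and combined with simplicity of $\mathcal{A}$ the dimension count forces $|[n]\setminus S| = d$ with $\{a_i : i \in [n]\setminus S\}$ linearly independent. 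Setting $B := [n]\setminus S$, the intersection $\bigcap_{i \in B} H_i$ is then a single vertex $v$ of $\mathcal{A}$, giving a map $X^G \to \{\text{vertices of }\mathcal{A}\}$.

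For the inverse map, given a vertex $v$ corresponding to an independent $d$-subset $B$, one exhibits a unique $G$-fixed orbit whose representatives satisfy $z_i = w_i = 0$ for $i \in B$ and whose sign pattern $\{z_i \text{ vs.\ } w_i \text{ nonzero}\}_{i \in [n]\setminus B}$ is determined by the signs of the defining affine functions $v\cdot a_j + \widehat{\chi}_j$ for $j \in [n]\setminus B$; the values themselves form a single $T^k$-orbit since $T^k$ acts freely and transitively on the corresponding $(\mathbb{C}^\mathsf{x})^d$-configuration. Combining with Lemma \ref{L2.1.1}, vertices of $\mathcal{A}$ are in bijection with full rank $d \times d$ submatrices of $[a]$, and finiteness of $X^G$ follows from finiteness of $d$-subsets of $[n]$.

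The main obstacle is the uniqueness half of the third step: checking that the GIT stability condition associated to $\chi$ pins down exactly one feasible sign pattern at each vertex $v$, and that the resulting $T^k$-orbit is genuinely stable (not just semistable). This is where the smoothness of the arrangement, via the generic choice of $\chi$ so that semistable equals stable and the $T^k$-action on the stable locus is free, does the essential work.
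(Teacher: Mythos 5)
The paper offers no argument of its own; it defers entirely to Proposition 3.2 of Hausel--Holm, so there is no internal proof to compare against. Assessing your sketch directly: the first two reductions are sound. The $\mathbb{C}^\mathsf{x}$-fixedness argument forces $z_iw_i=0$ for every $i$, so the condition $\mu_k=0$ is automatic; $T^d$-fixedness translates to $T^n=T^k\cdot T^{[n]\setminus S}$, hence $\{a_i : i\in[n]\setminus S\}$ spans $\mathfrak{t}^d_\mathbb{R}$; and freeness of $T^k$ on the stable locus gives $T^k\cap T^{[n]\setminus S}=\{1\}$, whence the dimension count yields $|[n]\setminus S|=d$ and linear independence. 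Every $G$-fixed orbit therefore determines a vertex $v=\bigcap_{i\in B}H_i$ with $B=[n]\setminus S$ a $d$-element independent set, which by Lemma~\ref{L2.1.1} corresponds to a full-rank $d\times d$ submatrix of $[a]$.

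The genuine gap, which you flag yourself without closing, is the GIT stability check needed to upgrade this assignment to a bijection. You must show that for a fixed independent $d$-subset $B$ there is exactly one sign pattern $\varepsilon\in\{\pm1\}^S$ on $S=[n]\setminus B$ giving a $\chi$-stable point, namely $\varepsilon_j=\mathrm{sign}(v\cdot a_j+\widehat{\chi}_j)$: uniqueness is what makes the forward map injective, and existence with the asserted signs is what makes your inverse land in $X$. The tool is the Hilbert--Mumford (King) numerical criterion for the linear $T^k$-action on $T^*\mathbb{C}^n$. For a cocharacter $\lambda$ of $T^k$, the limit $\lim_{t\to 0}\lambda(t)\cdot(z,w)$ exists iff $\varepsilon_i\,\iota(\lambda)_i\geq 0$ for all $i\in S$, so $(z,w)$ is $\chi$-semistable iff $\chi\in\mathrm{cone}\{\varepsilon_i\,\iota^*e_i : i\in S\}$. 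Because $\{\iota^*e_i : i\in S\}$ is a basis of $(\mathfrak{t}^k)^*$ (the isomorphism $T^k\cong T^S$ you already used, forced by unimodularity and independence of $B$), these $2^k$ cones are the closed orthants in that basis and a generic $\chi$ lies in the interior of exactly one; writing $\widehat{\chi}=-[a]^*v+\sum_{i\in S}c_i\varepsilon_ie_i$ and comparing coordinates gives $c_i\varepsilon_i=v\cdot a_i+\widehat{\chi}_i$ with $c_i>0$, confirming your sign rule. Also a small slip: the stable orbit over a vertex lives in $(\mathbb{C}^\mathsf{x})^S\cong(\mathbb{C}^\mathsf{x})^k$, not $(\mathbb{C}^\mathsf{x})^d$.
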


\begin{proof}
    The proof is given in the proof of Proposition 3.2 of~\cite{HH05}.
\end{proof}

\begin{definition}\label{D2.3.3}\cite{MO12}
    Let $P\in X^G$ be a fixed point which we will describe using the corresponding subset $P = \{i_1,\ldots,i_d\}\subset[n]$ and let $[a_P] = \begin{pmatrix}a_{i_1}&\cdots&a_{i_d}\end{pmatrix}$ be the corresponding submatrix.  Then the $G$--action on $X$ induces a $T^d$--action on the normal space $N_P X\simeq T_P X$ whose weights are the $\{a_{i_j}\mid j=1,\ldots,d\}$.  
    
    The union of all weights at all points $P\in X^G$ is given by $\{a_i\mid i\in [n]\}$, which generate all one-dimensional spaces in $\Sigma_{\cA_0}^\vee$.  Taking ${H'}_{i'}$ to be the hyperplanes ($d-1$--dimensional faces) in $\Sigma_{\cA_0}^\vee$, forming the arrangement $\cA_0^\vee$, we obtain the {\it chamber decomposition} of $T^d$
    \begin{align*}
        \ft_\bR^d\setminus\bigcup_{{H'}_{i'}\in \cA_0^\vee}{H'}_{i'} = \bigsqcup_j\fC_j,
    \end{align*}
    where the {\it chambers} $\fC_i$ are the $d$--dimensional open cones in $\Sigma_{\cA_0}^\vee$. 

    We say that $\tau\in\ft_\bR^d$ is {\it generic} if $\tau\in \fC_j$ for some unique $j$.  Given such $\tau$, one can decompose the normal bundle $N_{X,P} = N_{+,P}\oplus N_{-,P}$ into positive and negative weight spaces with respect to $\tau$.  Using the symplectic form on $X$, we have $N_{+,P}^\vee = N_{-,P}\oplus \bC_{\hbar,P}$, where $\bC_{\hbar,P}$ encodes the weight of the residual $\bC^\sfx$--action.  Hence, the $T^d$--equivariant Euler class of $N_{+,P}$ can be written as 
    \begin{align*}
        \mathrm{eu}_{T^d}(N_{+,P}) 
        &= (-1)^{\frac{1}{2}\mathrm{codim}Z}\mathrm{eu}_{T^d}(N_{+,P}^\vee)\\
        &= (-1)^{\frac{1}{2}\mathrm{codim}Z}\mathrm{eu}_{T^d}(N_{-,P})\in H_{T^d}^\bullet(\{P\}),
    \end{align*} 
    and the $T^d$--equivariant Euler class of $N_{X,P}$ can be written as 
    \begin{align*}
        \mathrm{eu}_{T^d}(N_{X,P}) 
        &= \mathrm{eu}_{T^d}(N_{+,P})\;\mathrm{eu}_{T^d}(N_{-,P})\\
        &= (-1)^{\frac{1}{2}\mathrm{codim}Z}\mathrm{eu}_{T^d}(N_{+,P})^2\in H_{T^d}(\{P\}).
    \end{align*}
    Combining these, we can set $\varepsilon_P = \pm \mathrm{eu}_{T^d}(N_{-,P})\in H_{T^d}^\bullet(\{P\})$ and we find 
    \begin{align*}
        \varepsilon_P^2 = (-1)^{\frac{1}{2}\mathrm{codim} Z}\;\mathrm{eu}_{T^d}(N_{X,P}). 
    \end{align*}
    We define a {\it polarization} $\varepsilon$ to be a choice of $\varepsilon_P$ for each point $P\in X^G$.  Since each $\varepsilon_P$ is uniquely determined up to a choice of sign, the data of a polarization is equivalent to a choice of sign $\pm 1$ for each $P\in X^G$. 
\end{definition}

\begin{remark}
    We can identify any $\tau\in \ft_\bZ^d$ uniquely with a one-parameter subgroup $\mathrm{exp}(\tau)\colon T^1\tto T^d$, and under this identification, $\tau$ is generic iff $X^{\mathrm{exp}(\tau)} = X^{T^d}$.  
\end{remark}       

We now define the stable basis that we will be using.

\begin{theorem}(Theorem 3.3.4 of~\cite{MO12})
    Let $X$ be a symplectic resolution with a $G$--action for which the fixed-point set $X^G$ is finite.  Then we have the decomposition $H_G^\bullet(X^G) = \bigoplus_{P\in X^G}H_G^\bullet(\{P\})$ and for a given chamber $\mathfrak{C}$ and polarization $\varepsilon$, there is a unique $H_G^\bullet(\mathrm{pt})$--linear map 
    \begin{align*}        \mathrm{Stab}_{\mathfrak{C},\varepsilon}\colon H_G^\bullet(X^G)&\tto H_G^\bullet(X)\\
        \gamma_P&\longmapsto \Gamma_P
    \end{align*}
    for all $\gamma_P\in H_G^\bullet(\{P\})$ in one of the direct factors of $H_G^\bullet(X^G)$ and for all $P\in X^G$ which satisfies the following properties:
    \begin{enumerate} 
         \item Support: $\mathrm{Supp}(\Gamma)\subset \mathrm{Attr}(\{P\})$.
         \item Normalization: $\Gamma\rvert_P = \varepsilon_P\cup \gamma_P$.
         \item Degree in $\bC[\ft^d]$: Using $H_G^\bullet(\{P\}) = \bC[\hbar]\otimes_\bC\bC[\ft^d]$, the degree in $\ft^d$ satisfies $\mathrm{deg}_{T^d}\Gamma\rvert_{Z'}\leq \frac{1}{2}\mathrm{codim }Z'$ for $Z'\leq P$,  
    \end{enumerate} 
    where $\Gamma_P = \mathrm{Stab}_{\mathfrak{C},\varepsilon}(\gamma_P)$.
\end{theorem}

\begin{remark}
    Because of the normalization axiom, and by the Atiyah-Bott Localization formula, the stable basis map $\mathrm{Stab}_{\mathfrak{C},\varepsilon}$ becomes an isomorphism after localizing $H_{T^d}^\bullet(\{P\})$ at $\varepsilon_P = \pm \mathrm{eu}_{T^d}(N_{-,P})$ for each $P$.   
\end{remark}

\begin{remark}
    We can then obtain an explicit description of the Steinberg operators $L_S$ by restricting along this map.  The result we need is the following.  
\end{remark}

\begin{theorem} \label{T2.3.1} (Theorem 4.6.1 of~\cite{MO12})
    The following diagram commutes for any $\mathfrak{C}$, $\varepsilon$ and circuit $S$:
    \begin{center}
\begin{tikzcd}[ampersand replacement = \&]
H_G^\bullet(X^G) \arrow[rr, "{\mathrm{Stab}_{\mathfrak{C},\varepsilon}}"] \arrow[d, "L_S^G"'] \&  \& H_G^\bullet(X) \arrow[d, "L_S"'] \\
H_G^\bullet(X^G) \arrow[rr, "{\mathrm{Stab}_{\mathfrak{C},\varepsilon}}"]                     \&  \& H_G^\bullet(X),                  
\end{tikzcd}
    \end{center}
    where $L_S$ is viewed as a subvariety in $H_{2d}^{BM}(X\times_{X_0} X)$ and $L_S^G$ is a restriction of this correspondence to a subvariety in $H_{2d}^{BM}(X^G\times X^G)$ (c.f. Lemma 3.4.2 of~\cite{MO12}).  
\end{theorem}

We now provide an explicit description of the stable basis map in the case of a hypertoric variety.

\begin{definition}
    Let $X$ be a smooth hypertoric variety, specified by the matrix
    \begin{align*}
        [a]\colon\mathfrak{t}_\mathbb{R}^n&\longrightarrow\mathfrak{t}_\mathbb{R}^d
    \end{align*}
    and let $\tau\in\ft_\bR^d$ be a generic cocharacter.  Let $P\in X^G$, with corresponding $d\times d$--submatrix $[a_{P}]$, we can write out the coordinates of $\tau$ as $\tau = \begin{pmatrix}\tau_1&\cdots &\tau_n\end{pmatrix}^\sfT$.  The genericity of $\tau$ implies that all coordinates of $\tau$, namely $(a_{P}^{-1}\cdot \tau)_j$, are nonzero, so take $\epsilon\in \{\pm 1\}^d$ to be a sign vector recording the signs of these entries.  Define 
    \begin{align*}
        v_i = \begin{cases}
            u_i, &\epsilon_i = 1\\
            \hbar - u_i, &\epsilon_i = -1
        \end{cases},
    \end{align*}
    and take 
    \begin{align*}
        v_{P} = \prod_{i\in P}v_i.
    \end{align*}
    Define $\varepsilon$ to be the polarization which assigns to each $P\in X^G$, the polynomial $v_{P}$, and let $\fC\subset\mathfrak{t}_\mathbb{R}^d$ be the unique chamber containing $\tau$.  Define $\mathrm{Stab}_{([a],\tau)} := \mathrm{Stab}_{\fC,\varepsilon}$.      
\end{definition}

\begin{theorem} (Theorem 3.3.5 of~\cite{S13})\label{T2.3.2}
For $X$ a smooth hypertoric variety and $\tau\in\mathfrak{t}_\mathbb{R}^d$ a generic cocharacter, the stable basis map is given by 
\begin{align*}
    \mathrm{Stab}_{([a],\tau)}\colon H_G^\bullet(\mathrm{pt})^{\oplus m}&\longrightarrow H_G^\bullet(X)\\
    e_{P}&\longmapsto v_{P}.
\end{align*}
\end{theorem}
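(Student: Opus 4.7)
The plan is to verify that the assignment $e_Q \mapsto v_Q$ satisfies the three defining axioms of the stable basis from Theorem 3.3.4 of \cite{MO_2012}: support, normalization, and the degree bound. Since the stable basis is uniquely characterized by these axioms, this will force $\mathrm{Stab}_{([a],\tau)}(e_Q) = v_Q$.

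The normalization axiom will be essentially tautological under the chosen polarization. By construction, $\varepsilon_p = v_Q|_p$ at the fixed point $p$ corresponding to $Q$, so since $e_Q$ evaluates to $1 \in H_G^\bullet(p)$ at $p$ and to $0$ at every other fixed point, the required identity $v_Q|_p = \varepsilon_p \cup e_Q|_p$ holds on the nose. The essential work is split between the support axiom and the degree bound. For the support axiom, I would use the explicit hypertoric presentation of Theorem \ref{T2.1.1} to compute the restriction $u_i|_{p'}$ for each fixed point $p'$ indexed by a subset $Q' \subset [n]$: if $i \in Q'$ then $u_i|_{p'}$ vanishes (up to a possible $\hbar$-shift dictated by the polarization of the normal bundle), while if $i \notin Q'$ it evaluates to a specific $T^d$-weight read off from $[a_{Q'}]^{-1}$. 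The sign vector $\epsilon$ was defined precisely from the signs of $(a_Q^{-1}\tau)_j$, and this is exactly the data that distinguishes the positive and negative weight spaces of $\tau$ on $T_p X$; therefore each factor $v_i$ of $v_Q$ can be matched with a weight of the attracting part of $N_p X$, and vanishes at any $p'$ lying outside $\mathrm{Attr}(p)$.

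For the degree bound, I would observe that $v_Q$ is a product of $d = \tfrac{1}{2}\dim X$ divisors, so $\deg_{T^d} v_Q|_{p'} \le d$ is automatic, and the bound $\tfrac{1}{2}\mathrm{codim}(p') = d$ is saturated at the top. For $p' < p$ in the attracting order, one needs to show that the restriction has strictly smaller $T^d$-degree; this follows because one of the factors $v_i$ must specialize to a polynomial containing $\hbar$ rather than a pure $T^d$-weight, which drops the $T^d$-degree by one for each such factor. Matching these factors against the codimension of the attracting stratum through $p'$ gives the required inequality.

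The main obstacle will be the support computation in Step 2: one needs a clean combinatorial description of $\mathrm{Attr}(p)$ in terms of the circuit data of $\mathcal{H}$, together with a book-keeping argument matching the sign choices $\epsilon_i$ with the signs of the $T^d$-weights at non-attracted fixed points $p'$. This is essentially a careful unravelling of the hypertoric presentation of $H_G^\bullet(X)$, but the matching is dictated by the $\tau$-sign data and so should go through cleanly. Once the three axioms are verified, the uniqueness clause of Theorem 3.3.4 of \cite{MO_2012} completes the proof.
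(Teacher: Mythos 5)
The paper does not actually prove this statement; it is cited verbatim from Theorem~3.3.5 of Shenfeld's thesis and used as an external ingredient, so there is no internal proof to compare against. Your strategy — verify the support, normalization, and degree axioms of \cite{MO_2012} Theorem~3.3.4 and invoke uniqueness — is the standard way such formulas are established and is almost certainly the route taken in the cited source.

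That said, the sketch has one genuine soft spot that should not be waved away as tautological. Calling the normalization axiom ``essentially tautological under the chosen polarization'' pushes all the content into the assertion that the assignment $p\mapsto v_Q|_p$ is in fact a legal polarization, i.e.\ that $v_Q|_p = \pm\mathrm{eu}_{T^d}(N_{-,p})$ in $H^\bullet_{T^d}(\{p\})$. That is not a definitional unwinding: you must identify the $T^d\times\bC^{\sfx}$--weights of $T_pX$ with the classes $\{u_i|_p,\ \hbar-u_i|_p : i\in Q\}$ via Theorem~\ref{T2.1.1}, and check that the sign vector $\epsilon$ coming from $[a_Q]^{-1}\tau$ selects exactly the repelling (negative) half of those weights. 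This is the same weight-matching you defer to the support step; it should be done once, up front, since both the validity of the polarization \emph{and} the support axiom hinge on it. Separately, your degree-bound argument proves $\deg_{T^d} v_Q|_{Z'} \le d$ globally, but the axiom requires $\le \tfrac{1}{2}\operatorname{codim} Z'$ at \emph{every} stratum $Z' < p$; the claim that each factor hitting $\hbar$ drops the $T^d$-degree by one and that the number of such factors is at least $d - \tfrac{1}{2}\operatorname{codim} Z'$ needs to be tied to the combinatorics of which indices $i\in Q$ also lie in $Q'$. Neither point is wrong in spirit, but both need to be made precise before ``the uniqueness clause completes the proof.''
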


\begin{lemma}\label{L2.3.5}
    The map $\mathrm{Stab}_\tau$ is injective.  
\end{lemma}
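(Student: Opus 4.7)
The plan is to deduce injectivity of $\mathrm{Stab}_\tau$ directly from the support and normalization axioms of the stable basis, exploiting the fact that $H_G^\bullet(\mathrm{pt}) \simeq \mathbb{C}[\hbar,\mathfrak{t}^d]$ is an integral domain. Concretely, I would compose with the restriction map $H_G^\bullet(X) \to \bigoplus_{p \in X^G} H_G^\bullet(\mathrm{pt})$ and exhibit the resulting matrix $M_{Q',Q} := v_Q|_{p_{Q'}}$ as triangular with nonzerodivisor diagonal entries, which forces any $H_G^\bullet(\mathrm{pt})$--linear relation among the $v_Q$'s to be trivial.

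First, the generic cocharacter $\tau$ induces an attractor partial order $\preceq$ on the (finite, by Proposition \ref{P2.3.1}) set $X^G$ by declaring $p' \preceq p$ iff $p' \in \overline{\mathrm{Attr}(p)}$, where $\mathrm{Attr}(p) = \{x \in X \mid \lim_{t \to 0} \tau(t) \cdot x = p\}$; this is a genuine partial order because a real moment map for $\tau$ strictly decreases along nonconstant flow lines, precluding cycles. The support axiom of Theorem 3.3.4 of \cite{MO_2012} then gives $v_Q|_{p_{Q'}} = 0$ whenever $p_{Q'} \not\preceq p_Q$, and the normalization axiom gives $v_Q|_{p_Q} = \varepsilon_{p_Q}$, which by Definition \ref{D2.3.3} is, up to sign, the equivariant Euler class of $N_{-,p_Q}$, i.e., a nonzero product of $T^d$--weights and hence a nonzerodivisor in $H_G^\bullet(\mathrm{pt})$.

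Given these two facts the induction is standard. Assume for contradiction that $\sum_Q c_Q v_Q = 0$ with $c_Q \in H_G^\bullet(\mathrm{pt})$ not all zero, and choose $Q_0$ maximal in $\preceq$ among the indices with $c_Q \neq 0$. Restricting to $p_{Q_0}$, any term with $Q \neq Q_0$ must vanish: either $p_{Q_0} \not\preceq p_Q$ and the support axiom kills $v_Q|_{p_{Q_0}}$, or $p_{Q_0} \preceq p_Q$ and then $p_{Q_0} \prec p_Q$ would contradict maximality of $Q_0$, forcing $Q = Q_0$. Hence $c_{Q_0}\cdot \varepsilon_{p_{Q_0}} = 0$, and the nonzerodivisor property of $\varepsilon_{p_{Q_0}}$ yields $c_{Q_0} = 0$, a contradiction.

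The only point requiring care is confirming that the support axiom really produces triangularity with respect to an order on $X^G$ in which finite nonempty subsets have maximal elements; this reduces to the geometric fact above that $\preceq$ is a partial order. Alternatively, one could bypass the induction entirely by invoking Atiyah--Bott localization: the restriction map is an isomorphism after inverting the Euler classes $\varepsilon_p$, so $M$ is invertible over the localized ring, and since its determinant is, up to sign, the product $\prod_Q \varepsilon_{p_Q}$ of nonzerodivisors in $\mathbb{C}[\hbar,\mathfrak{t}^d]$, the matrix $M$ is already injective before localization, which is precisely the claim.
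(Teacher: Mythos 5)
Your proof is correct, and your main argument takes a genuinely different, more hands-on route than the paper. The paper argues via the commutative square comparing $\mathrm{Stab}_{([a],\tau)}$ with its base change to the localization of $H_G^\bullet(\mathrm{pt})$ at all the attracting Euler classes: the localized arrow is an isomorphism by Atiyah--Bott, the inclusion of $H_G^\bullet(\mathrm{pt})^{\oplus m}$ into its localization is injective because $H_G^\bullet(\mathrm{pt})$ is a domain, and a diagram chase gives injectivity of $\mathrm{Stab}$. You instead extract triangularity of the fixed-point restriction matrix directly from the support and normalization axioms, run a descending induction on the attractor order, and use that the diagonal entries $\varepsilon_{p_Q}$ are nonzerodivisors; this has the merit of producing an explicit triangular transition matrix (more information than bare injectivity) and of being more elementary, but it does require verifying that the attractor relation is a genuine partial order, which the paper simply absorbs into the Atiyah--Bott isomorphism. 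Your ``alternative'' paragraph is in spirit the paper's argument, though note that to identify $\det M = \pm\prod_Q \varepsilon_{p_Q}$ you still rely on the triangularity you were trying to bypass; the cleaner way to run that version, and the one the paper effectively uses, is just to observe that $\det M$ becomes a unit after localization and hence is a nonzero, and therefore nonzerodivisor, element of the domain $H_G^\bullet(\mathrm{pt})$ — no exact value of the determinant is needed.
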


\begin{proof}
    Denoting $H_G^\bullet(\mathrm{pt})_{\mathrm{loc}}$ to be the localization of $H_G^\bullet(\mathrm{pt})$ formed by inverting all Euler classes $e(N_{P,-})$ at all points $P\in X^G$, and defining $H_G^\bullet(X)_{\mathrm{loc}} = H_G^\bullet(X)\otimes_{H_G^\bullet(\mathrm{pt})}H_G^\bullet(\mathrm{pt})_\mathrm{loc}$ we have the following commutative diagram:
    \begin{center}
    \begin{tikzcd}
    H_G^\bullet(\mathrm{pt})^{\oplus (n+1)} \arrow[rr, "{\mathrm{Stab}_{([a],\tau)}}"] \arrow[d, hook] &  & H_G^\bullet(X) \arrow[d]  \\
    H_G^\bullet(\mathrm{pt})_{\mathrm{loc}}^{\oplus (n+1)} \arrow[rr, "\simeq"]                                      &  & H_G^\bullet(X)_\mathrm{loc}.
    \end{tikzcd}    
    \end{center}
    Here, the bottom horizontal map is the induced map on localization.  By the remark following our recollection of Theorem 3.3.4 of~\cite{MO12}, we know that this induced map is an isomorphism.  Because  $H_G^\bullet(\mathrm{pt})\simeq \bC[u_1,\ldots,u_{n+1},\hbar]$ is a domain, the left vertical map is an injection.  Thus, the result follows.   
\end{proof}

We will now give a description of the stable basis map and its restriction $L^G$ to the fixed-point set in a variety of examples, where $X\simeq T^*\mathbb{P}^n$ is a hypertoric variety with one circuit, constructed via the exact sequence
\begin{align*}
    0\longrightarrow\mathfrak{t}_\mathbb{R}^1&\longrightarrow \mathfrak{t}_\mathbb{R}^{n+1}\longrightarrow\mathfrak{t}_\mathbb{R}^n\longrightarrow 0,
\end{align*}
and where $\tau\in\mathfrak{t}_\mathbb{R}^n$ is generic.  We will then use this to prove that the Steinberg operators $\{L_\alpha\mid \alpha\in\Phi^+\}$ are linearly independent.     

\begin{lemma}\label{L2.3.6}
    Suppose $X = T^*\mathbb{P}^n$.  If $\tau = \begin{pmatrix}n & \cdots & 1\end{pmatrix}^\mathsf{T}$, then the stable basis map is given as follows 
    \begin{align*}
        \mathrm{Stab}_{([a],\tau)}\colon H_G^\bullet(\mathrm{pt})^{\oplus (n+1)}&\tto H_G^\bullet(T^*\bP^n)\\
        e_{i}&\mapstto u_1\cdot\ldots\cdot u_{i-1}\cdot (\hbar - u_{i+1})\cdot\ldots\cdot (\hbar - u_{n+1}).
    \end{align*}
    
    The restriction of the Steinberg operator $L$ to $L^G$, in the manner of Theorem 4.6.1 of~\cite{MO12}, is given by the following:
    \begin{align*}
        [L^G] = [(-1)^{1+i+j}].
    \end{align*}
\end{lemma}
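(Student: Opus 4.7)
The plan is to combine the stable basis formula of Theorem 3.3.5 of \cite{ShenfeldThesis} with a direct computation of $L$ using Lemmas \ref{L2.3.3} and \ref{L2.3.4}, exploiting the extremely simple circuit combinatorics of $T^*\bP^n$. First I would fix the matrix $[a] = \begin{pmatrix}e_1 & \cdots & e_n & -\mathbf{1}\end{pmatrix}$ realizing the defining short exact sequence of $T^*\bP^n$; the unique circuit is $S = [n+1]$ with root $\alpha = (1,\ldots,1)$, so $S^+ = S$ and $S^- = \emptyset$, and by Proposition \ref{P2.3.1} the fixed points $p_1,\ldots,p_{n+1}$ are indexed by $Q_i = [n+1]\setminus\{i\}$. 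To pin down the polarization at each $p_i$, solve the linear system $[a_{Q_i}]\lambda = \tau$ with $\tau = (n, n-1, \ldots, 1)^\mathsf{T}$: the block structure yields $\lambda_{n+1} = -(n+1-i)$ and $\lambda_k = i - k$ for $k\in Q_i\cap [n]$, so $\epsilon_k = +1$ for $k<i$ and $\epsilon_k = -1$ for $k>i$. This produces the stable basis formula $v_{Q_i} = u_1\cdots u_{i-1}(\hbar - u_{i+1})\cdots(\hbar - u_{n+1})$ immediately.

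For the matrix of $L^G$, I would use the commutative diagram of Theorem 4.6.1 of \cite{MO_2012} together with injectivity of Stab (Lemma \ref{L2.3.5}) to reduce to computing $L(v_{Q_i})$ in $H_G^\bullet(X)$ and expanding in $\{v_{Q_j}\}$. Expanding
\[
v_{Q_i} = \sum_{J\subseteq\{i+1,\ldots,n+1\}}(-1)^{|J|}\hbar^{n+1-i-|J|}\,u_{\{1,\ldots,i-1\}\cup J}
\]
and applying $L$ termwise, one notes that $|\{1,\ldots,i-1\}\cup J| \leq n$ throughout, so Lemma \ref{L2.3.3} kills every contribution except the one with $J = \{i+1,\ldots,n+1\}$, where $M = Q_i = S\setminus\{i\}$. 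Lemma \ref{L2.3.4} evaluates the surviving term as $\hbar L(u_{Q_i}) = \alpha_i z_S = \prod_l(u_l-\hbar)$, yielding
\[
\hbar L(v_{Q_i}) = (-1)^{n+1-i}\prod_l(u_l-\hbar) = (-1)^i\prod_l(\hbar - u_l).
\]

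The remaining step is to expand $\prod_l(\hbar-u_l)$ in the stable basis. I would prove the telescoping identity $u_k v_{Q_k} = (\hbar-u_{k+1})v_{Q_{k+1}}$ for $1\leq k\leq n$, which is immediate from the formula for $v_{Q_k}$, together with the boundary case $u_{n+1}v_{Q_{n+1}} = u_S = 0$ coming from the circuit relation of Theorem \ref{T2.1.1}. Starting from $\prod_l(\hbar - u_l) = (\hbar-u_1)v_{Q_1} = \hbar v_{Q_1} - u_1 v_{Q_1}$ and iterating the substitution, an easy induction gives
\[
\prod_l(\hbar - u_l) = \hbar\sum_{j=1}^{n+1}(-1)^{1+j}v_{Q_j}.
\]
Combining this with the previous display and using that $\hbar$ is a non-zero-divisor in $H_G^\bullet(X) \simeq \bC[u_1,\ldots,u_{n+1},\hbar]/(u_1\cdots u_{n+1})$, we conclude $L(v_{Q_i}) = \sum_j (-1)^{1+i+j}v_{Q_j}$; applying injectivity of Stab yields $L^G(e_i) = \sum_j (-1)^{1+i+j}e_j$, i.e.\ $[L^G]_{ji} = (-1)^{1+i+j}$.

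The main obstacle I anticipate is sign-tracking: the final $(-1)^{1+i+j}$ is assembled from three separate contributions (the polarization signs coming out of $[a_{Q_i}]^{-1}\tau$, the factor $(-1)^{n+1-i}$ emerging from the expansion of the $(\hbar - u_j)$ factors in $v_{Q_i}$, and the alternating sign produced by the telescoping), and some care is needed to ensure that the three parities conspire correctly.
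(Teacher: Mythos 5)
Your proposal is correct and follows essentially the same route as the paper's own proof: compute the polarization signs from $[a_{Q_i}]^{-1}\tau$ to get the stable basis formula, evaluate $\hbar L(v_{Q_i})$ via the vanishing/evaluation lemmas, and then convert $\prod_l(\hbar - u_l)$ back into the stable basis via the telescoping identity $u_k v_{Q_k} = (\hbar - u_{k+1})v_{Q_{k+1}}$ (the paper writes out this same telescoping term by term rather than isolating it as a separate identity). The one place you add genuine detail is the intermediate step where the paper just says ``direct computation using Lemmas 5.1 and 5.2 of \cite{McBSh_2013}'': your expansion of $v_{Q_i}$ into monomials $u_M$ with $|M|\le n$ and the observation that Lemma \ref{L2.3.3} kills every term except $M = Q_i$ makes it transparent why only the single term survives, which is a worthwhile clarification.
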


\begin{proof}
    This proof is a direct application of Theorem \ref{T2.3.1} and Theorem \ref{T2.3.2}.  
    
    In order to derive the stable basis map for $X = T^*\mathbb{P}^n$, we study the signs of $[a_{P}]^{-1}\tau$.  Using this map, together with Theorem 4.6.1 of~\cite{MO12}, we can deduce the matrix entries of $[L^G]$. 
    
    We observe that for $X = T^*\bP^n$, the hyperplane arrangement $\cA$ is the arrangement of $n+1$ hyperplanes bounding an $n$--simplex, and the fixed points of the $G = T^n\times\bC^\sfx$--action on $X$ are indexed by the $n+1$--vertices of this simplex.  We first specify the sign vector $\varepsilon$  The matrix specifying this arrangement is given by 
    \begin{align*}
        [a] = \begin{pmatrix}
            1 & & & -1\\
            & \ddots & & \vdots\\
            & & 1 & -1        \end{pmatrix}\colon\mathfrak{t}^{n+1}\longrightarrow \mathfrak{t}^n,
    \end{align*}
    so the vertices of the simplex correspond uniquely to the subsets $P_i = [n+1]\setminus\{i\}$ for each $i=1,\ldots,n+1$.  For each $i=1,\ldots,n$, the corresponding submatrix is
    \begin{align*}
        [a_i]:= [a_{P_i}] &= \begin{pmatrix}
            \begin{array}{c|c|c}
                \mathbb{I}_{i-1} & 0 & \substack{\\-1\\\vdots\\-1\\}\\       
                \hline
                0 & 0 & -1\\
                \hline
                0 & \mathbb{I}_{n-i} & \substack{\\-1\\\vdots\\-1\\}
            \end{array}
        \end{pmatrix}
    \end{align*}    
    with the $0$s filling the $i$th row.
    The inverse matrix is given by 
    \begin{align*}
        [a_i]^{-1} &= \begin{pmatrix}
            \begin{array}{c|c|c}
                \mathbb{I}_{i-1}  & \substack{\\-1\\\vdots\\-1\\} & 0\\
                \hline
                0 & \substack{\\-1\\\vdots\\-1\\} & \mathbb{I}_{n-i}\\
                \hline
                 0 & -1 & 0
            \end{array}
        \end{pmatrix}
    \end{align*}
    where it is the $i$th column that has the $-1$--entries.  If $i = n+1$, then 
    \begin{align*}
        [a_i]:= [a_{P_i}] = \mathrm{id}_{n\times n},
    \end{align*}
    and $[a_i]^{-1} = [a_i]$.

    Taking $\tau = \begin{pmatrix}n & \ldots & 1\end{pmatrix}^\mathsf{T}\in \ft_\bR^n$, the sign vector $\epsilon_{P_i}$, for $i = 1,\ldots,n$, is given by $\begin{pmatrix}+ & \ldots & + & - & 
    \ldots & -\end{pmatrix}^\sfT$ with $(i-1)$ ``$+$'' signs and $(n-i+1)$ ``$-$'' signs.  If $i=n+1$ then $\epsilon_{P_i}$ is given by $\begin{pmatrix} + & \ldots & \ldots & + \end{pmatrix}^\sfT$.  In this case, Theorem 3.3.5 of~\cite{S13} gives
    \begin{align*}
        \mathrm{Stab}_{([a],\tau)}\colon H_G^\bullet(\mathrm{pt})^{\oplus (n+1)}&\tto H_G^\bullet(T^*\bP^n)\\
        e_{i}&\mapstto u_1\cdot\ldots\cdot u_{i-1}\cdot (\hbar - u_{i+1})\cdot\ldots\cdot (\hbar - u_{n+1}).
    \end{align*}
    This proves the first statement of the claim.  
    
    Since $T^*\bP^n$ only contains one circuit, there is only one Steinberg operator $L$.  A direct computation using Lemmas 5.1 and 5.2 of~\cite{MBS13} gives
    \begin{align*}
        (-1)^i\hbar L\circ\mathrm{Stab}_{([a],\tau)}(e_i)
        &= (\hbar - u_1)\cdot\ldots\cdot(\hbar - u_{n+1})\\
       &= \hbar\cdot (\hbar - u_2)\cdot\ldots\cdot (\hbar - u_{n+1}) - \hbar\cdot u_1\cdot (\hbar - u_3)\cdot\ldots\cdot (\hbar - u_{n+1}) \\
       &\qquad +\ldots + (-1)^{n+2}\hbar\cdot u_1\cdot\ldots\cdot u_n + u_1\cdot\ldots\cdot u_{n+1}.
    \end{align*}
    Because $u_1\cdot\ldots\cdot u_{n+1} = 0$ in $H_G^\bullet(T^*\bP^n)$, we thus find that 
    \begin{align*}
        (-1)^i\hbar L\circ\mathrm{Stab}_{([a],\tau)}(e_i) 
        &= \hbar\;\mathrm{Stab}_{([a],\tau)}(e_1) - \hbar\;\mathrm{Stab}_{([a],\tau)}(e_2) + \ldots + (-1)^{n+2}\hbar\;\mathrm{Stab}_{([a],\tau)}(e_{n+1})\\
        L\circ \mathrm{Stab}_{([a],\tau)}(e_i) &= \sum_{j=1}^{n+1}(-1)^{1+i+j}\;\mathrm{Stab}_{([a],\tau)}(e_j) \\
        &= \mathrm{Stab}_{([a],\tau)}\left(\sum_{j=1}^{n+1}(-1)^{1+i+j}e_j\right).
    \end{align*}
    On the other hand, by Proposition 4.6.1 of~\cite{MO12}, we find that $L\circ\mathrm{Stab}_{([a],\tau)}(e_i) = \mathrm{Stab}_{([a],\tau)}\circ L^G(e_i)$.  Thus,  
    \begin{align*}
        \mathrm{Stab}_{([a],\tau)}\left(L^G(e_i)\right) = \mathrm{Stab}_{([a],\tau)}\left(\sum_{j=1}^{n+1}(-1)^{1+i+j}e_j\right).
    \end{align*}

    By Lemma \ref{L2.3.5}, we can then write 
    \begin{align*}
         L^G(e_i) = \sum_{j=1}^{n+1}(-1)^{1+i+j}e_j
     \end{align*}
     and conclude that in matrix form, $[L^G] = [(-1)^{1+i+j}]$, proving the second statement.  
\end{proof}

\begin{lemma}\label{L2.3.7}
    Now suppose $X = T^*\mathbb{P}^n$, and $\tau\in\mathfrak{t}_\mathbb{R}^n$ is an arbitrary generic character.  Then there exists a permutation $\sigma\in S_{n+1}$ so that the stable basis map is given by
    \begin{align*}
         \mathrm{Stab}_{([a],\tau)}\colon H_G^\bullet(\mathrm{pt})^{\oplus (n+1)}&\tto H_G^\bullet(X)\\
         e_{\sigma(i)}&\mapstto u_{\sigma(1)}\cdot\ldots\cdot u_{\sigma(i-1)}\cdot (\hbar - u_{\sigma(i+1)})\cdot\ldots\cdot (\hbar - u_{\sigma(n+1)}),
     \end{align*}
     and the restriction of the Steinberg variety is 
     \begin{align*}
         [L^G] = [\sigma][(-1)^{1+i+j}][\sigma]^{-1},
     \end{align*}
     where $[\sigma]e_i = e_{\sigma(i)}$.  
\end{lemma}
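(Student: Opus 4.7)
The plan is to reduce to Lemma \ref{L2.3.6} via the natural $S_{n+1}$-symmetry of $T^*\mathbb{P}^n$, or equivalently, to redo the same sign-vector computation with an arbitrary ordering in place of the standard one. Given a generic $\tau\in\mathfrak{t}_\mathbb{R}^n$, I first choose a lift $\widetilde{\tau}\in\mathfrak{t}_\mathbb{R}^{n+1}$ (for example, the unique lift with $\widetilde{\tau}_{n+1}=0$). Genericity of $\tau$ is preserved under the quotient $\mathfrak{t}_\mathbb{R}^{n+1}\twoheadrightarrow\mathfrak{t}_\mathbb{R}^n$ because it depends only on the differences $\widetilde{\tau}_i-\widetilde{\tau}_j$, and so translates into the condition that the entries $\widetilde{\tau}_1,\ldots,\widetilde{\tau}_{n+1}$ are all distinct. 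Let $\sigma\in S_{n+1}$ be the unique permutation such that $\widetilde{\tau}_{\sigma(1)}>\widetilde{\tau}_{\sigma(2)}>\cdots>\widetilde{\tau}_{\sigma(n+1)}$.

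Next I compute the sign vector $\epsilon_{Q_{\sigma(i)}}$ at each fixed point $Q_{\sigma(i)}=[n+1]\setminus\{\sigma(i)\}$ directly from the matrix form of $[a_{\sigma(i)}]^{-1}\widetilde{\tau}$ given in the proof of Lemma \ref{L2.3.6}. Tracing through this formula, the entries of $[a_{\sigma(i)}]^{-1}\widetilde{\tau}$ are precisely the weight differences $\widetilde{\tau}_{\sigma(k)}-\widetilde{\tau}_{\sigma(i)}$, which are positive exactly when $k<i$. Consequently, $\epsilon_{Q_{\sigma(i)}}$ has a $+$ at each position $\sigma(k)$ with $k<i$ and a $-$ at each position $\sigma(k)$ with $k>i$. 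Applying Theorem 3.3.5 of \cite{ShenfeldThesis} then yields the claimed formula
\[
\mathrm{Stab}_{([a],\tau)}(e_{\sigma(i)}) = u_{\sigma(1)}\cdots u_{\sigma(i-1)}\cdot(\hbar-u_{\sigma(i+1)})\cdots(\hbar-u_{\sigma(n+1)}).
\]

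For the matrix of $L^G$, I would repeat the computation at the end of the proof of Lemma \ref{L2.3.6} with the ordering permuted by $\sigma$: using Lemmas 5.1 and 5.2 of \cite{McBSh_2013} one again obtains $(-1)^{i}\hbar L\circ\mathrm{Stab}_{([a],\tau)}(e_{\sigma(i)})=(\hbar-u_{\sigma(1)})\cdots(\hbar-u_{\sigma(n+1)})$, expand the product, invoke the circuit relation $u_1\cdots u_{n+1}=0$, and recollect terms using the stable basis formula above. This produces $L^G(e_{\sigma(i)})=\sum_{j=1}^{n+1}(-1)^{1+i+j}e_{\sigma(j)}$, which by Lemma \ref{L2.3.5} uniquely determines $L^G$; rewriting in the basis $\{e_1,\ldots,e_{n+1}\}$ gives $[L^G]=[\sigma][(-1)^{1+i+j}][\sigma]^{-1}$ as claimed. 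Alternatively, one can invoke the $S_{n+1}$-equivariance of the Steinberg correspondence (which is intrinsic to $X$ since $S_{n+1}$ acts on $T^*\mathbb{P}^n$ by coordinate permutation), which directly conjugates the matrix of Lemma \ref{L2.3.6} by $[\sigma]$.

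The main (and only nontrivial) obstacle is bookkeeping: verifying that the signs of $[a_{\sigma(i)}]^{-1}\widetilde{\tau}$ genuinely agree with the sign pattern $(+,\ldots,+,-,\ldots,-)$ in the chosen ordering, since the matrix $[a_{\sigma(i)}]$ has an asymmetric form with the $-1$s in the $\sigma(i)$-th column. This is a direct but notationally delicate linear-algebra check; once it is in place, both the stable basis formula and the matrix $[L^G]$ follow by the same manipulations used in Lemma \ref{L2.3.6}.
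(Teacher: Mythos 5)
Your proposal is correct and follows essentially the same route as the paper: normalize by setting $\widetilde{\tau}_{n+1}=0$, sort the coordinates by a permutation $\sigma$, read off the sign vectors of $[a_{Q_{\sigma(i)}}]^{-1}\widetilde{\tau}$ (which you correctly identify as the differences $\widetilde{\tau}_{\sigma(k)}-\widetilde{\tau}_{\sigma(i)}$), apply Theorem 3.3.5 of \cite{ShenfeldThesis}, and then rerun the computation from Lemma \ref{L2.3.6} to conjugate $[L^G]$ by $[\sigma]$. The only addition beyond the paper's argument is your alternative remark via $S_{n+1}$-equivariance, which is a reasonable but unnecessary variant.
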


\begin{proof}
    This is another direct calculation which uses Theorems \ref{T2.3.1} and \ref{T2.3.2} and adapts the proof of Lemma \ref{L2.3.6}.  
\end{proof}

\begin{example}
    Take $X = T^*\bP^4$ and let $\tau = \begin{pmatrix}\tau_1& \tau_2& \tau_3\end{pmatrix}^\sfT$ be such that $\tau_2>\tau_4=0>\tau_3>\tau_1$.  Then take $\sigma\in S_4$ to be $\sigma(1) = 2$, $\sigma(2) = 4$, $\sigma(3) = 3$ and $\sigma(4) = 1$.  From the above matrix descriptions of $[a_i]^{-1}$, we find that the sign vectors of $[a_i]^{-1}\tau$ are given as follows:
    \begin{align*}
        \mathrm{sgn}([a_2]^{-1}\tau) &= \begin{pmatrix}   - & - & - \end{pmatrix}^\sfT\\
        \mathrm{sgn}([a_4]^{-1}\tau) &= \begin{pmatrix}   - & + & - \end{pmatrix}^\sfT\\
        \mathrm{sgn}([a_3]^{-1}\tau) &= \begin{pmatrix}
          - & + & + \end{pmatrix}^\sfT\\
        \mathrm{sgn}([a_1]^{-1}\tau) &= \begin{pmatrix}
          + & + & + \end{pmatrix}^\sfT,
    \end{align*}
    and it follows that
    \begin{align*}
        \mathrm{Stab}_{([a],\tau)}\colon H_G^\bullet(\mathrm{pt})^{\oplus 4}&\tto H_G^\bullet(T^*\bP^4)\\
        e_2&\mapstto (\hbar - u_4)\cdot (\hbar - u_3)\cdot (\hbar - u_1)\\
        e_4&\mapstto u_2\cdot (\hbar - u_3)\cdot (\hbar - u_1)\\
        e_3&\mapstto u_2\cdot u_4\cdot (\hbar - u_1)\\
        e_1&\mapstto u_2\cdot u_4\cdot u_3.
    \end{align*}    
    Hence, we have
    \begin{align*}
        [\sigma] = \begin{pmatrix}
            0 & 0 & 0 & 1 \\
            1 & 0 & 0 & 0 \\
            0 & 0 & 1 & 0 \\
            0 & 1 & 0 & 0
        \end{pmatrix},
    \end{align*}
    and 
    \begin{align*}
        [L^G] =\sigma[(-1)^{1+i+j}]\sigma^{-1} = \begin{pmatrix}
            -1 & 1 & 1 & -1 \\
            1 & -1 & -1 & 1 \\
            1 & -1 & -1 & 1 \\
            -1 & 1 & 1 & -1
        \end{pmatrix}.
    \end{align*}
\end{example}

\begin{lemma}\label{L2.3.8}
    Let $X\simeq T^*\mathbb{P}^n$ be a hypertoric variety specified by the matrix $[a] = \begin{pmatrix} a_1&\cdots&a_n & a_{n+1}\end{pmatrix}$, where $a_i\in \ft_\bR^n$ are column vectors; where $a_i\in\ft_\bR^n$ with $a_{n+1} = - (a_1+\ldots + a_n)$; and where every $n\times n$ submatrix is invertible.

    Write out
    \begin{align*}
        [a] = \begin{pmatrix}\vline & & \vline\\
            a_1&\ldots& a_n\\
            \vline & & \vline 
        \end{pmatrix}
        \begin{pmatrix}
            1 &  &  & -1 \\
            & \ddots &  & \vdots \\
            &  &  1 & -1 
        \end{pmatrix},
    \end{align*}
    and take
    \begin{align*}
        [a]_0 = \begin{pmatrix}
            1 &  &  & -1 \\
            & \ddots &  & \vdots \\
            &  &  1 & -1 
        \end{pmatrix}   
    \end{align*}
    so that 
    \begin{align*}
        [a] = \left(a_i\right)\cdot[a_0].
    \end{align*}

    Then $\tau\in\mathfrak{t}_\mathbb{R}^{n+1}$ is generic iff all coordinate entries of $\left(a_i\right)^{-1}\cdot\tau$ are nonzero and unequal, and we have
    \begin{align*}
        \mathrm{Stab}_{([a],\;\tau)} = \mathrm{Stab}_{(\left(a_i\right)\cdot [a]_0,\; \tau)} = \mathrm{Stab}_{([a]_0,\;\left(a_i\right)^{-1}\cdot\tau)},
    \end{align*}
    as an equality of maps 
    \begin{align*}
        H_G^\bullet(\mathrm{pt})^{\oplus (n+1)}&\longrightarrow H_G^\bullet(X).
    \end{align*}
\end{lemma}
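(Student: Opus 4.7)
The plan is to reduce everything to the standard case $[a]_0$ already treated in Lemmas \ref{L2.3.6} and \ref{L2.3.7} by tracking how the change-of-basis $(a_i)$ on the left of $[a]$ transforms the relevant data. The crucial observation is the factorization $[a_{Q_i}] = (a_i)\cdot [a_0]_{Q_i}$ at each $n$-element subset $Q_i = [n+1]\setminus\{i\}$, which follows immediately from $[a] = (a_i)\cdot [a]_0$. Writing $\tau' := (a_i)^{-1}\tau$, this yields the key identity
\begin{align*}
    [a_{Q_i}]^{-1}\tau = [a_0]_{Q_i}^{-1}\tau'
\end{align*}
for every $i = 1, \ldots, n+1$. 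Both the genericity condition from Definition \ref{D2.3.3} and the explicit formula for the stable basis in Theorem 3.3.5 of \cite{ShenfeldThesis} depend on $\tau$ only through the vectors $[a_{Q_i}]^{-1}\tau$, so both claims of the lemma should fall out of this identity.

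For the genericity part, recall from Definition \ref{D2.3.3} that $\tau \in \ft_\bR^n$ is generic iff it lies in a unique chamber of the arrangement $\cA_0^\vee$ — equivalently, iff at every fixed point $p_{Q_i}$ the coordinates of the weight-decomposition $[a_{Q_i}]^{-1}\tau$ are all nonzero. By the key identity, this is the same as requiring that all coordinates of $[a_0]_{Q_i}^{-1}\tau'$ be nonzero for every $i$. Using the explicit inverses $[a_0]_{Q_i}^{-1}$ computed in the proof of Lemma \ref{L2.3.6} (and adopting the convention $\tau'_{n+1} := 0$), the entries of $[a_0]_{Q_i}^{-1}\tau'$ are precisely the differences $\tau'_j - \tau'_i$ for $j \in [n+1]\setminus\{i\}$. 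Ranging $i$ over $[n+1]$, the combined nonvanishing of all these entries is exactly the requirement that the values $\tau'_1, \ldots, \tau'_n$ be nonzero and pairwise distinct, which is the condition in the statement.

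For the equality of the two stable basis maps, I would apply Theorem 3.3.5 of \cite{ShenfeldThesis} in both settings: each map sends $e_Q$ to $v_Q = \prod_{l \in Q} v_l$, where the factor $v_l \in \{u_l,\, \hbar - u_l\}$ is determined by the sign of the $l$-th entry of $[a_Q]^{-1}\tau$ for the first map and of $[a_0]_Q^{-1}\tau'$ for the second. The key identity shows that these two sign vectors coincide at every fixed point $p_Q$, so $v_Q \in H_G^\bullet(X)$ is literally the same polynomial in both cases, and hence the two stable basis maps agree as morphisms $H_G^\bullet(\mathrm{pt})^{\oplus (n+1)} \tto H_G^\bullet(X)$. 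The real substance of the proof is therefore the genericity bookkeeping — assembling the nonvanishing condition from $Q_{n+1}$ (where $[a_0]_{Q_{n+1}} = I_n$ and one reads off directly that $\tau'_l \neq 0$) with the pairwise-distinctness conditions coming from the remaining $Q_i$; the stable basis equality is then a direct consequence of the explicit formula.
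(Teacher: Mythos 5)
Your proof is correct and follows essentially the same route as the paper: the key step in both is the factorization $[a_Q]^{-1}\tau = [a_0]_Q^{-1}\cdot\bigl((a_i)^{-1}\tau\bigr)$ at each fixed point $Q$, and then observing that both the polarization sign vectors and the explicit stable basis formula of Shenfeld depend on $\tau$ only through these vectors. You simply spell out the genericity bookkeeping (reading off the entries $\tau'_j - \tau'_i$ from the explicit inverses of Lemma \ref{L2.3.6}) in slightly more detail than the paper does, but the argument is the same.
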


\begin{proof}
    Given a $T^n$--fixed point $P\subset [n+1]$, we observe that the corresponding polarization sign vectors are given by 
    \begin{align*}
        [a_{P}]^{-1}\cdot \tau
        &= [a_{P}]^{-1}\left(a_i\right)\cdot \left(a_i\right)^{-1}\tau\\
        &= \left(\left(a_i\right)^{-1}[a_{P}]\right)^{-1}\cdot\left(\left(a_i\right)^{-1}\tau\right)\\
        &= [a_0]^{-1}\cdot \left(\left(a_i\right)^{-1}\tau\right).
    \end{align*}
    Thus, the genericity of $\tau$ is equivalent to saying that the coordinate entries of $\left(a_i\right)^{-1}\tau$ are nonzero and unequal.  Moreover, the computations for the polarization and stable basis of $X$, as presented by the matrix $[a]$, are equivalent to the corresponding computations for $X$ as presented as $[a]_0$.  
\end{proof}

\begin{lemma}\label{L2.3.9}
    Let $X$ be a hypertoric variety given by the matrix 
    \begin{align*}
        [a] = \begin{pmatrix}1 &  &  & -\varepsilon_1\\ & \ddots & & \vdots \\ & & 1 & -\varepsilon_n \end{pmatrix}.
    \end{align*}
    Then the stable basis map is given by 
    \begin{align*}
        \mathrm{Stab}_{([a],\tau)}\colon H_G^\bullet(\mathrm{pt})^{\oplus (n+1)}&\tto H_G^\bullet(X)\\
        e_i&\mapstto v_1\cdot\ldots\cdot v_{i-1}\cdot (\hbar - v_{i+1})\cdot\ldots\cdot (\hbar - v_{n+1}),
    \end{align*}
    where 
    \begin{align*}
        v_i = \begin{cases}
            u_i,& \varepsilon_i>0\\
            \hbar - u_i,& \varepsilon_i<0
        \end{cases},        
    \end{align*}
    for $i=1,\ldots,n$ and where $v_{n+1} = u_{n+1}$.  
    
    Furthermore, there exists some $\sigma\in S_{n+1}$ for which  
    \begin{align*}
        [L^G] = \sigma[(-1)^{1+i+j}]\sigma^{-1}.
    \end{align*}
\end{lemma}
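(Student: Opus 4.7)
The plan is to identify $X$ with $T^*\bP^n$ via an explicit change of basis and then pull the stable basis formula and Steinberg operator matrix of Lemma~\ref{L2.3.7} through the identification. Setting $D' = \mathrm{diag}(\varepsilon_1,\ldots,\varepsilon_n)\in \GL_n(\bZ)$ and $D = \mathrm{diag}(\varepsilon_1,\ldots,\varepsilon_n,1)\in \GL_{n+1}(\bZ)$, a direct matrix multiplication gives $[a] = D'\cdot [a]_0\cdot D$, where $[a]_0$ is the standard matrix for $T^*\bP^n$ used in Lemmas~\ref{L2.3.7}--\ref{L2.3.8}. The codomain change $D'$ is handled exactly as in Lemma~\ref{L2.3.8}: it transforms the cocharacter $\tau$ to $D'\tau$ without altering the underlying variety. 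The domain change $D$ lifts to the symplectic automorphism $\phi\colon T^*\bC^{n+1}\to T^*\bC^{n+1}$ acting as the identity on coordinates with $\varepsilon_i = +1$ (and on the $(n{+}1)$-st coordinate) and by $(z_i,w_i)\mapsto (w_i,-z_i)$ on coordinates with $\varepsilon_i = -1$; one checks directly that $\phi$ intertwines the two $T^1$-actions associated to the kernels of $[a]$ and $[a]_0$, satisfies $\mu_{T^1,0}\circ\phi = \mu_{T^1}$, and therefore descends to an isomorphism $X\xrightarrow{\sim}T^*\bP^n$.

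At the level of $G$-equivariant cohomology, $\phi^*$ sends the divisor class $u_i^0$ on $T^*\bP^n$ to $v_i$ for $i\leq n$ (since $\phi^{-1}(\{z_i=0\}) = \{w_i=0\}$, of class $\hbar - u_i$, whenever $\varepsilon_i=-1$), and sends $u_{n+1}^0$ to $u_{n+1}$. The three defining axioms of the stable basis are preserved under pull-back by $\phi$, so after identifying fixed points on the two sides one has $\mathrm{Stab}_{([a],\tau)} = \phi^*\circ \mathrm{Stab}_{([a]_0,D'\tau)}$. Applying Lemma~\ref{L2.3.7} to $T^*\bP^n$ at cocharacter $D'\tau$, with associated permutation $\sigma\in S_{n+1}$, and pushing through $\phi^*$ yields exactly the claimed stable basis formula. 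For the matrix of $L^G$, the unique circuit for $X$ is $S = [n+1]$ with circuit vector $\alpha = (\varepsilon_1,\ldots,\varepsilon_n,1)^\mathsf{T}$, so that $i\leq n$ lies in $S^+$ or $S^-$ according to the sign of $\varepsilon_i$ and $n+1\in S^+$; the circuit relation of Theorem~\ref{T2.1.1} therefore reads $v_1\cdots v_{n+1}=0$, formally identical to the relation $u_1\cdots u_{n+1}=0$ driving the proof of Lemma~\ref{L2.3.6}. Since $\phi$ also transports the Steinberg correspondence from $T^*\bP^n$ to $X$, the computation of Lemma~\ref{L2.3.6} carries over verbatim with $u_j$ replaced by $v_j$, giving $[L^G] = [(-1)^{1+i+j}]$ in the specifically ordered case and $[L^G] = \sigma[(-1)^{1+i+j}]\sigma^{-1}$ in general after conjugating by $[\sigma]$ and invoking injectivity of the stable basis (Lemma~\ref{L2.3.5}) as in Lemma~\ref{L2.3.7}.

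The principal obstacle will be the careful verification that $\phi$ genuinely descends to a well-defined isomorphism of the GIT quotients, carrying all the required data (cocharacters, polarizations, fixed-point identifications, and Steinberg correspondences) in the manner asserted. If the conceptual reduction via $\phi$ proves delicate to justify, a brute-force alternative is available: for each fixed point $Q_i = [n+1]\setminus\{i\}$, solving $[a_{Q_i}]x=\tau$ explicitly yields $x_j = \tau_j - \varepsilon_i\varepsilon_j\tau_i$ for $j\neq i$, $j\leq n$, and $x_{n+1} = -\varepsilon_i\tau_i$; for $\tau$ with $\varepsilon_1\tau_1 > \cdots > \varepsilon_n\tau_n > 0$ these signs match exactly the polarization prescribed by the lemma, so Theorem~3.3.5 of \cite{ShenfeldThesis} directly supplies the stable basis formula, and the Steinberg operator computation then proceeds via Lemmas~5.1 and~5.2 of \cite{McBSh_2013} as in the proof of Lemma~\ref{L2.3.6}, but with each $u_j$ replaced by the corresponding $v_j$.
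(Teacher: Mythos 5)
Your two-track presentation correctly anticipates where the difficulty lies. The fallback ``brute-force'' argument is precisely the paper's own proof: you compute $[a_{Q_i}]^{-1}\tau$ explicitly, read off the sign vector, invoke Theorem 3.3.5 of \cite{ShenfeldThesis} for the stable basis, and then Lemmas 5.1/5.2 of \cite{McBSh_2013} for the Steinberg operator. Your explicit solution $x_j = \tau_j - \varepsilon_i\varepsilon_j\tau_i$ for $j\neq i$ and $x_{n+1} = -\varepsilon_i\tau_i$ is correct, and for $\tau$ with $\varepsilon_1\tau_1 > \cdots > \varepsilon_n\tau_n > 0$ the resulting signs do reproduce the polarization in the statement of the lemma. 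That route is airtight.

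The primary ``conceptual'' route, however, has a gap that is not merely a matter of delicacy. The symplectomorphism $\phi$ sending $(z_i,w_i)\mapsto(w_i,-z_i)$ on the coordinates with $\varepsilon_i=-1$ does \emph{not} commute with the conical $\bC^{\mathsf{x}}$-action of Definition \ref{D2.1.1}, which scales only the fibre coordinate $w$: for such an $i$, $\phi(z_i,sw_i) = (sw_i,-z_i)$ whereas $s\cdot\phi(z_i,w_i) = (w_i,-sz_i)$. Nor does $\phi$ commute with the ambient $T^{n+1}$-action. What is true is that $\phi$ is equivariant with respect to an automorphism $\Theta$ of $T^{n+1}\times\bC^{\mathsf{x}}$ (the one fixing $s$, sending $t_i\mapsto t_i^{-1}s$ when $\varepsilon_i=-1$, and fixing $t_i$ otherwise); its pullback on characters sends $e_i\mapsto \hbar - e_i$ exactly when $\varepsilon_i = -1$, and fixes $\hbar$. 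It is only through this $\Theta$-twist that ``$\phi^*$ sends $u_i^0$ to $v_i$'' has a precise meaning, and it is only after descending $\Theta$ to $T^d$ (where it becomes $D'$, as you observe) that the cocharacter transforms as claimed. You cannot simply say $\phi^*$ is defined on $H_G^\bullet$ and transports the divisor classes, the chamber, the polarization, the stable-basis axioms, and the Steinberg correspondence ``exactly as in Lemma \ref{L2.3.8}'': Lemma \ref{L2.3.8} changes only codomain coordinates, which leaves the $T^{n+1}\times\bC^{\mathsf{x}}$-action and all classes untouched, whereas your $\phi$ changes the domain and twists the group. The bookkeeping required to make the reduction rigorous is comparable in length to the direct computation. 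I would lead with the brute-force argument and, if you wish, keep the $\phi$-reduction as a remark with the $\Theta$-twist spelled out.
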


\begin{proof}
    Combine Theorems \ref{T2.3.1} and \ref{T2.3.2} together with Lemma \ref{L2.3.8}.  
\end{proof}

\begin{lemma}\label{L2.3.10}
    Let $X$ be a hypertoric variety given by the full-rank $n\times(n+1)$--matrix $[a] = \begin{pmatrix}a_1 & \cdots & a_n & a_{n+1}\end{pmatrix}$ for which $\varepsilon_1 a_1+\ldots+\varepsilon_n a_n + a_{n+1} = 0$ for $\varepsilon_i\in \{\pm 1\}$.

    Then the stable basis map is given by 
    \begin{align*}
        \mathrm{Stab}_{([a],\tau)}\colon H_G^\bullet(\mathrm{pt})^{\oplus (n+1)}&\tto H_G^\bullet(X)\\
        e_i&\mapstto v_1\cdot\ldots\cdot v_{i-1}\cdot (\hbar - v_{i+1})\cdot\ldots\cdot (\hbar - v_{n+1}),
    \end{align*}
    where 
    \begin{align*}
        v_i = \begin{cases}
            u_i,& \varepsilon_i>0\\
            \hbar - u_i,& \varepsilon_i<0
        \end{cases},        
    \end{align*}
    for $i=1,\ldots,n$ and where $v_{n+1} = u_{n+1}$.  
    
    Furthermore, there exists some $\sigma\in S_{n+1}$ for which  
    \begin{align*}
        [L^G] = \sigma[(-1)^{1+i+j}]\sigma^{-1}.
    \end{align*}
\end{lemma}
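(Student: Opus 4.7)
The plan is to reduce Lemma \ref{L2.3.10} directly to Lemma \ref{L2.3.9} by means of the factorization trick established in Lemma \ref{L2.3.8}. The dependency relation $\varepsilon_1 a_1 + \ldots + \varepsilon_n a_n + a_{n+1} = 0$ immediately yields a factorization
\begin{align*}
    [a] = (a_i)_{i=1}^n \cdot M, \qquad M := \begin{pmatrix} 1 & & & -\varepsilon_1 \\ & \ddots & & \vdots \\ & & 1 & -\varepsilon_n \end{pmatrix},
\end{align*}
where $(a_i)_{i=1}^n$ denotes the invertible $n\times n$ matrix whose columns are $a_1,\ldots,a_n$, and $M$ is precisely the special matrix studied in Lemma \ref{L2.3.9}.

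First I would observe that Lemma \ref{L2.3.8} extends verbatim to this signed setting. For any fixed-point subset $Q\subset[n+1]$, one has $(a_i)^{-1}[a_Q] = M_Q$, and hence
\begin{align*}
    [a_Q]^{-1}\tau = M_Q^{-1}\cdot\left((a_i)^{-1}\tau\right).
\end{align*}
This identity directly transfers the polarization sign vectors: $\tau$ is generic for $[a]$ iff $\tau' := (a_i)^{-1}\tau$ is generic for $M$, and moreover $\mathrm{Stab}_{([a],\tau)} = \mathrm{Stab}_{(M,\tau')}$ under the canonical identification of cohomology rings coming from the common set of hyperplane divisors $u_1,\ldots,u_{n+1}$.

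With this reduction in hand, the remainder of the proof is immediate. Applying Lemma \ref{L2.3.9} to the pair $(M,\tau')$ yields the claimed stable basis formula, since the elements $v_i$ depend only on the signs $\varepsilon_i$ and not on the cocharacter. For the matrix form $[L^G] = \sigma[(-1)^{1+i+j}]\sigma^{-1}$, one notes that $L^G$ is intrinsic to the hypertoric variety --- independent of the particular presentation $[a]$ --- so the conjugate-permutation structure from Lemma \ref{L2.3.9} transports directly to the present setting.

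The one point requiring mild care will be cleanly articulating the identification $H_G^\bullet(X)\simeq H_G^\bullet(X_M)$ that allows the transport of stable basis formulas, together with the canonical identification of the hyperplane divisors $u_1,\ldots,u_{n+1}$ on both sides. This is implicit in Lemma \ref{L2.3.8} but worth making explicit, since the formula in the statement is expressed in terms of the $u_i$. I do not expect any substantively new difficulty beyond what was already handled in Lemmas \ref{L2.3.8} and \ref{L2.3.9}.
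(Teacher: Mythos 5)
Your proposal is correct and follows essentially the same route as the paper: factor $[a] = (a_i)_{i=1}^n \cdot M$ with $M$ the signed matrix from Lemma~\ref{L2.3.9}, then combine Lemmas~\ref{L2.3.8} and~\ref{L2.3.9}. The paper's proof is literally a one-liner stating exactly this; your note that the genericity criterion in Lemma~\ref{L2.3.8} needs the obvious sign-adjustment when transporting to $M$ is a fair observation but does not change the argument.
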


\begin{proof}
    Write out 
    \begin{align*}
        [a] = \begin{pmatrix}\vline & & \vline\\
            a_1&\ldots& a_n\\
            \vline & & \vline 
        \end{pmatrix}
        \begin{pmatrix}
            1 &  &  & -\varepsilon_1 \\
            & \ddots &  & \vdots \\
            &  &  1 & -\varepsilon_n 
        \end{pmatrix},
    \end{align*}
    and combine Lemmas \ref{L2.3.8} and Lemma \ref{L2.3.9}.     
\end{proof}

From Lemma \ref{L2.3.2}, we can now reduce the problem of calculating the Steinberg operators $[L^G]$ to computing $[L^G]$ in the foregoing examples.  We now turn to the problem of demonstrating the linear independence of the Steinberg operators.

\begin{theorem}\label{T2.3.3}
    Let $X$ be a smooth hypertoric variety specified by a matrix $[a] = \begin{pmatrix}a_1&\cdots&a_n\end{pmatrix}\colon \ft_\bR^n\tto \ft_\bR^d$, and with an action on $X$ by the group $G = T^d\times\bC^\sfx$.  Then the Steinberg operators $\{L_S\mid S\mathrm{\;circuit}\}$ forms a linearly independent set.   
\end{theorem} 

We characterize the action of $L_S$ on $\mathrm{Stab}_{([a],\;\tau)}e_{P}$ for each fixed point $P\subset[n]$ of $X$.        

\begin{lemma}\label{L2.3.11}
    For $X$ a smooth hypertoric variety of Theorem \ref{T2.3.3}, and for $P\in X^G$ a fixed point, we have $L_S(\mathrm{Stab}_{([a],\;\tau)}e_P) \neq 0$ iff $\abs{S\setminus P} = 1$.  
\end{lemma}

\begin{proof}
    This follows immediately from Lemma \ref{L2.3.2} and Lemma 5.1 of~\cite{MBS13}.  
\end{proof}

\begin{lemma}\label{L2.3.12}
    The following map is a bijection:
    \begin{align*}
        \left\{S\subset [n+1]\;\;\biggr\rvert\;\; S\mathrm{ circuit},\;\abs{S\setminus P} = 1\right\}&\longrightarrow [n+1]\setminus P\\
        S&\mapstto S\setminus P.
    \end{align*}
\end{lemma}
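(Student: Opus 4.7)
The plan is to prove surjectivity and injectivity separately, using the matroidal characterization of circuits established in Lemma \ref{L2.3.1} together with the identification of the fixed point $Q$ as an independent set. By Proposition \ref{P2.3.1} and Definition \ref{D2.3.2}, the subset $Q \subset [n+1]$ corresponding to a $G$-fixed point indexes a full-rank $d \times d$ submatrix of $[a]$, so $\{a_i \mid i \in Q\}$ is linearly independent; equivalently, $Q$ contains no circuit.

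For surjectivity, fix $j \in [n+1]\setminus Q$. Since the columns indexed by $Q$ already span $\mathfrak{t}_\mathbb{R}^d$, the vector $a_j$ lies in their span, so $Q \cup \{j\}$ is dependent and therefore contains at least one circuit $S$. Any such circuit must contain $j$: otherwise $S \subset Q$, contradicting the independence of $Q$. Hence $S \setminus Q = \{j\}$, establishing that the map is surjective.

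For injectivity, suppose $S \neq S'$ are two circuits with $S\setminus Q = S'\setminus Q = \{j\}$. Then $j \in S \cap S'$ and $S \cup S' \subset Q \cup \{j\}$. By Axiom (c) of Definition \ref{D2.3.1} (the matroid circuit elimination axiom, which Lemma \ref{L2.3.1} verifies for arrangement circuits), there exists a circuit $U$ with $U \subset (S \cup S')\setminus \{j\}$. But $(S \cup S')\setminus \{j\} \subset Q$, so $U \subset Q$, contradicting the fact that $Q$ contains no circuit. Thus $S = S'$, so the map is injective.

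This is the standard matroid statement that every element outside a basis has a unique fundamental circuit with respect to it, specialized to the circuit matroid of the arrangement $\mathcal{A}$. There is no real obstacle here; the only point to keep in mind is that the argument relies on treating $Q$ as an independent set rather than as the vertex datum for a fixed point, which is a direct translation via Definition \ref{D2.3.2} and Proposition \ref{P2.3.1}. This lemma then feeds into Theorem \ref{T2.3.1} by showing, together with Lemma \ref{L2.3.11}, that each Steinberg operator $L_S$ acts nontrivially on exactly one stable basis vector indexed by a prescribed coordinate of $[n+1]\setminus Q$, which is the linear independence one needs.
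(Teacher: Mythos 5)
Your proof is correct and takes essentially the same route as the paper: both reduce the lemma to the standard fact about fundamental circuits of elements outside a basis, with injectivity following from the circuit elimination axiom verified in Lemma~\ref{L2.3.1} and surjectivity from the dependence of $Q\cup\{j\}$. The only cosmetic difference is that the paper's surjectivity argument is constructive (it writes $a_j=\sum_{i\in Q'}\varepsilon_i a_i$ via unimodularity and exhibits $S_j=Q'\sqcup\{j\}$ explicitly), while you argue non-constructively that some circuit in $Q\cup\{j\}$ must exist and contain $j$; both are valid, and you supply more detail on the injectivity step than the paper, which merely cites Lemma~\ref{L2.3.1}.
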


\begin{proof}
    The fact that the map is injective follows by Lemma \ref{L2.3.1}.  Surjectivity follows because the set $\{a_i\mid i\in P\}$ forms a basis for $\ft_\bR^d$, so for any $j\in [n+1]\setminus P$, we can write $a_j = \sum_{i\in P'\subset P}\varepsilon_i a_i$, where $\varepsilon_i = \pm 1$ by the unimodularity of $[a]$.  The circuit is then defined as $S_j:= P'\sqcup\{j\}$.  
\end{proof}

\begin{lemma}\label{L2.3.13}
    Suppose we have a $d\times n$--matrix $[a]\colon \ft_\bR^n\tto \ft_\bR^d$ and a partition $P = P''\sqcup P'\subset [n+1]$, so that $[a]$ can be written as  
    \begin{align*}
        [a] = \begin{pmatrix}
            [a]_{P''} \\\\
            \hline\\
            [a]_{P'}
        \end{pmatrix},
    \end{align*}
    where $[a]_{P''}$ and $[a]_{P'}$ are $\abs{P''}\times n$-- and $\abs{P'}\times n$--matrices, respectively.  Suppose $[a_P] = \mathrm{id}_{n\times n}$ and let $\tau = (\tau_{P''},\tau_{P'})\in \ft_\bR^d = \ft_\bR^{\abs{P''}}\oplus \ft_{\bR}^{\abs{P'}}$ be a generic character (i.e. $\tau_i\neq 0$ for all $i$).  Then, 
    \begin{align*}
        \mathrm{Stab}_{([a],\;\tau)}e_{P} = 
        \left(\mathrm{Stab}_{([a]_{P''},\;\tau_{P''})}e_{P''}\right)\left(\mathrm{Stab}_{([a]_{P'},\;\tau_{P'})}e_{P'}\right).  
    \end{align*}
\end{lemma}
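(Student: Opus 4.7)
My approach is to apply the explicit formula for the stable basis map from Theorem 3.3.5 of \cite{ShenfeldThesis} to each of the three stable basis maps appearing in the claimed equality, and then verify that the resulting monomials agree as elements of $H_G^\bullet(X)$.

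First, I would evaluate the left-hand side. Since $[a_Q] = \mathrm{id}_{n\times n}$, we have $[a_Q]^{-1}\tau = \tau = (\tau_T, \tau_{T'})$, so the sign vector $\epsilon$ recording the signs of its coordinate entries splits cleanly as $\epsilon = (\epsilon_T, \epsilon_{T'})$, where $\epsilon_T$ and $\epsilon_{T'}$ are the sign vectors of $\tau_T$ and $\tau_{T'}$ respectively. Shenfeld's formula then yields $\mathrm{Stab}_{([a],\tau)}(e_Q) = \prod_{i \in Q}v_i$, with each $v_i$ equal to $u_i$ if $\epsilon_i > 0$ and $\hbar - u_i$ if $\epsilon_i < 0$.

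Next, I would apply the same formula to the smaller matrices $[a]_T$ and $[a]_{T'}$. The row partition exhibiting $[a]$ in block form puts $[a_Q] = \mathrm{id}$ into block-diagonal form with respect to the decomposition $Q = T \sqcup T'$, so that the submatrices $([a]_T)_T$ and $([a]_{T'})_{T'}$ are the identities $\mathrm{id}_{\abs{T}\times\abs{T}}$ and $\mathrm{id}_{\abs{T'}\times\abs{T'}}$. The computations of $([a]_T)_T^{-1}\tau_T$ and $([a]_{T'})_{T'}^{-1}\tau_{T'}$ then reduce to $\tau_T$ and $\tau_{T'}$, so Shenfeld's formula gives $\mathrm{Stab}_{([a]_T,\tau_T)}(e_T) = \prod_{i\in T}v_i$ and $\mathrm{Stab}_{([a]_{T'},\tau_{T'})}(e_{T'}) = \prod_{i\in T'}v_i$, with the signs controlled by $\epsilon_T$ and $\epsilon_{T'}$ exactly as before.

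Finally, because $Q = T \sqcup T'$ is a disjoint union and each $v_i$ depends only on $\mathrm{sign}(\tau_i)$, which appears as a coordinate of either $\tau_T$ or $\tau_{T'}$ but not both, the product factors as $\prod_{i\in Q}v_i = \big(\prod_{i\in T}v_i\big)\big(\prod_{i\in T'}v_i\big)$, yielding the claim. The main subtlety I anticipate is purely notational: one must track carefully that the indexing of $T, T'$ as subsets of $Q$ corresponds correctly to the row decomposition $\ft_\bR^d = \ft_\bR^{\abs{T}}\oplus \ft_\bR^{\abs{T'}}$, and verify that each block matrix $[a]_T, [a]_{T'}$ continues to define a (smooth) hypertoric variety on which the explicit formula of Shenfeld remains valid. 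Once this bookkeeping is in place, the factorization itself is an immediate consequence of the splitting of the sign vector of $\tau$ across the direct sum.
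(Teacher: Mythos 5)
Your proposal is correct and follows essentially the same route as the paper's own proof: apply Shenfeld's explicit formula for $\mathrm{Stab}_{([a],\tau)}$, observe that $[a_Q]=\mathrm{id}$ forces the sign vector $[a_Q]^{-1}\tau = \tau$ to split as $(\epsilon_T,\epsilon_{T'})$ across the decomposition $Q = T\sqcup T'$, and then conclude by factoring the monomial $\prod_{i\in Q}v_i = \big(\prod_{i\in T}v_i\big)\big(\prod_{i\in T'}v_i\big)$. The one point you flag but do not carry out --- that $[a]_T$ and $[a]_{T'}$ still define smooth hypertoric varieties so that Shenfeld's formula applies --- is exactly the step the paper dispatches by noting that row submatrices of a simple, unimodular matrix remain simple and unimodular; otherwise the two arguments match.
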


\begin{proof}
    We observe that because $[a_{P}] = \mathrm{id}_{d\times d}$, we have $[a_{P}]_{P''} = \mathrm{id}_{\abs{P''}\times\abs{P''}}$ and similarly, $[a_{P}]_{P'} = \mathrm{id}_{\abs{P'}\times\abs{P'}}$.  Thus, the genericity of $\tau$ is equivalent to the genericity of both $\tau_{P''}$ and $\tau_{P'}$.  Note also that the matrices $[a]_{P''}$ and $[a]_{P'}$ are simple and unimodular and hence, by Lemma \ref{L2.1.1}, define smooth hypertoric varieties.  The result then follows because both sides evaluate to 
    \begin{align*}
        \prod_{i\in P}v_i = \prod_{i\in P''}v_i\cdot \prod_{i\in P'}v_i,
    \end{align*}
    where 
    \begin{align*}
        v_i = \begin{cases}
            u_i, &\tau_i>0\\
            \hbar - u_i, &\tau_i<0
        \end{cases}.
    \end{align*}
\end{proof}

\begin{proof}[Proof of Theorem \ref{T2.3.3}]
    Suppose we have 
    \begin{align*}
        \sum_{S\mathrm{ circuit }}\lambda_S L_S = 0,
    \end{align*}
    where $\lambda_S\in\bC$.  We evaluate this sum at $\mathrm{Stab}_{([a],\;\tau)}e_{P}$.  Since $\mathrm{Stab}_{([a],\;\tau)}e_{P} = \mathrm{Stab}_{([a_{P}]^{-1}[a],\;[a_{P}]^{-1}\tau)}e_{P}$, we can assume that $[a_{P}] = \mathrm{id}_{d\times d}$.  It follows from Lemmas \ref{L2.3.11} and \ref{L2.3.12} that
    \begin{align*}
        \sum_{j\in [n+1]\setminus P}\lambda_j L_{S_j}(\mathrm{Stab}_{([a],\;\tau)}e_{P}) = 0.
    \end{align*}
    For each $j\in [n+1]\setminus P$, with corresponding circuit $S = S_j$, we partition $P = (P\cap S)\sqcup (P\setminus S)$, and we take $\tau = (\tau_{P\cap S},\tau_{P\setminus S})\in \ft_\bR^d = \ft_\bR^{\abs{P\cap S}}\oplus \ft_\bR^{\abs{P\setminus S}}$.  By Lemmas \ref{L2.3.13} and \ref{L2.3.2}, we have the following calculation:
    \begin{align*}
        L_{S}(\mathrm{Stab}_{([a],\;\tau)}e_{P})
        &= L_{S}\left(\mathrm{Stab}_{([a]_{P\cap S},\;\tau_{P\cap S})}e_{P\cap S}\right)\cdot \left(\mathrm{Stab}_{([a]_{P\setminus S},\;\tau_{P\setminus S})}e_{P\setminus S}\right)\\
        &= \mathrm{Stab}_{([a]_{P\cap S},\;\tau_{P\cap S})}L_S^G(e_{P\cap S})\cdot \left(\mathrm{Stab}_{([a]_{P\setminus S},\;\tau_{P\setminus S})}e_{P\setminus S}\right).
    \end{align*}

    From Lemma \ref{L2.3.10}, we find that 
    \begin{align*}
        L_S^G e_{P\cap S} = \sum_{i\in S}\varepsilon_i e_{S\setminus i},
    \end{align*}
    where $\varepsilon = \pm 1$.  Thus, we find that 
    \begin{align*}
        L_{S}(\mathrm{Stab}_{([a],\;\tau)}e_{P})
        &= \mathrm{Stab}_{([a]_{P\cap S},\;\tau_{P\cap S})}\left(\sum_{i\in S}\varepsilon_i e_{S\setminus i}\right)\cdot \left(\mathrm{Stab}_{([a]_{P\setminus S},\;\tau_{P\setminus S})}e_{P\setminus S}\right)\\
        &= \sum_{i\in S} \varepsilon_i \left(\mathrm{Stab}_{([a]_{P\cap S},\;\tau_{P\cap S})}e_{S\setminus i}\right)\cdot \left(\mathrm{Stab}_{([a]_{P\setminus S},\;\tau_{P\setminus S})}e_{P\setminus S}\right).
    \end{align*}
    Applying Lemma \ref{L2.3.13} again, we find that 
    \begin{align*}
        L_{S}(\mathrm{Stab}_{([a],\;\tau)}e_{P})
        &= \sum_{i\in S}\varepsilon_i\; \mathrm{Stab}_{([a],\;\tau)}\;e_{(P\sqcup{j})\setminus i}.
    \end{align*}
    Thus, in iterating over all $j$, we find that 
    \begin{align*}
        \sum_{j\in [n+1]\setminus P}\lambda_j\; L_{S_j}(\mathrm{Stab}_{([a],\;\tau)}\;e_{P}) &= 0\\
        \sum_{j\in [n+1]\setminus P}\sum_{i\in S_j} \lambda_j\varepsilon_i\;\mathrm{Stab}_{([a],\;\tau)}\;e_{(P\sqcup{j})\setminus i} &= 0\\
        \mathrm{Stab}_{([a],\;\tau)}\left(\sum_{j\in [n+1]\setminus P}\sum_{i\in S_j}\lambda_j\varepsilon_i\; e_{(P\sqcup{j})\setminus i} \right) &= 0. 
    \end{align*}
    By Lemma \ref{L2.3.5}, the stable basis map is injective.  Therefore, we have 
    \begin{align*}
        \sum_{j\in [n+1]\setminus P}\sum_{i\in S_j}\lambda_j\varepsilon_i\; e_{(P\sqcup{j})\setminus i} = 0.
    \end{align*}
    Because the set
    \begin{align*}
        \left\{e_{P\sqcup j\setminus i}\;\;\biggr\rvert\;\; j\in [n+1]\setminus P,i\in S_j\right\}\setminus \{e_{P}\}
    \end{align*}
    is linearly independent, it follows that $\lambda_j = 0$ for all $j\in [n+1]\setminus P$.  Equivalently, $\lambda_S = 0$ for all circuits $S$ for which $\abs{S\setminus P} = 1$.  Iterating over all $G$--fixed points of $X$, indexed by $P\subset[n+1]$, we find that $\lambda_S = 0$ for all $S$, as was to be shown.  
\end{proof}

\begin{example}
    Suppose $X$ is a hypertoric variety specified by the matrix 
    \begin{align*}
        [a] = \begin{pmatrix}
            1 & 0 &  0 & -1 \\
            0 & 1 & -1 & -1
        \end{pmatrix}.
    \end{align*}
    Then we have the following vectors:
    \begin{align*}
        \Phi^+ = \{\beta_{124},\beta_{23},\beta_{134}\},
    \end{align*}
    where  
    \begin{align*}
        \beta_{124} &= \begin{pmatrix} 1 & 1 & 0 & 1 \end{pmatrix}^\sfT\\
        \beta_{23} &= \begin{pmatrix} 0 & 1 & 1 & 0 \end{pmatrix}^\sfT\\
        \beta_{134} &= \begin{pmatrix} 1 & 0 & -1 & 1 \end{pmatrix}^\sfT, 
    \end{align*}
    and the fixed point vertices are given by the subsets $P_1 = \{1,2\}$, $P_2 = \{1,3\}$, $P_3 = \{1,4\}$, $P_4 = \{2,4\}$, and $P_5 = \{3,4\}$.  The generating divisors for $H_{T^2\times \bC^\sfx}^\bullet(X)$ are given by $u_1,u_2,u_3,u_4,\hbar$ and the relations are $u_1u_2u_4 = 0$, $u_2u_3 = 0$ and $u_1(\hbar - u_3)u_4 = 0$.  Choosing the cocharacter $\tau = \begin{pmatrix}2 & 1 \end{pmatrix}^\sfT$, the stable basis maps are given by 
    \begin{align*}
        \mathrm{Stab}_{([a],\;\tau)}\colon H_G^\bullet(\mathrm{pt})^{\oplus 5}&\tto H_G^\bullet(X)\\
        e_{P_1}&\mapstto u_1u_2\\
        e_{P_2}&\mapstto u_1(\hbar - u_3)\\
        e_{P_3}&\mapstto u_1(\hbar - u_4)\\
        e_{P_4}&\mapstto (\hbar - u_2)(\hbar - u_4)\\
        e_{P_5}&\mapstto u_3(\hbar - u_4).
    \end{align*}
    It follows that with respect to the ordered basis, $\{e_{P_1},e_{P_2},e_{P_3},e_{P_4},e_{P_5}\}$, we can use Lemma \ref{L2.3.2} to obtain the following description of the Steinberg operators:
    \begin{align*}
        [L_{124}^G] &= \begin{pmatrix}
            -1 & 0 &  1 & -1 & 0 \\
             0 & 0 &  0 &  0 & 0 \\
             1 & 0 & -1 &  1 & 0 \\
            -1 & 0 &  1 & -1 & 0 \\
             0 & 0 &  0 &  0 & 0
        \end{pmatrix},\\
        [L_{23}^G] &= \begin{pmatrix}
            -1 &  1 & 0 &  0 &  0 \\
             1 & -1 & 0 &  0 &  0 \\
             0 &  0 & 0 &  0 &  0 \\
             0 &  0 & 0 & -1 &  1 \\
             0 &  0 & 0 &  1 & -1
        \end{pmatrix},\\
        [L_{134}^G] &= \begin{pmatrix}
            0 &  0 &  0 & 0 &  0 \\
            0 & -1 &  1 & 0 & -1 \\
            0 &  1 & -1 & 0 &  1 \\
            0 &  0 &  0 & 0 &  0 \\
            0 & -1 &  1 & 0 & -1
        \end{pmatrix}.
    \end{align*}
    Upon inspection, one can see that these matrices are linearly independent.  
\end{example}

\begin{corollary}(Main Theorem \ref{T1})
    Let $X = T^*\mathbb{C}^n///_{0,\chi}T^k$ be a Hypertoric variety with an action of $G = T^d\times\mathbb{C}^\mathsf{x}$, where $d = n-k$ and define the following map:
    \begin{align*}
        \gamma\colon \widetilde{\mathfrak{u}_{\Phi^+}}&\longrightarrow E\\
        t_\alpha&\longmapsto \hbar L_\alpha\\
        u_i&\longmapsto u_i\cup(-)\\
        \hbar&\longmapsto \hbar\;\mathbb{I}(-),
    \end{align*}
    where $\alpha\in\Phi^+$ and $i=1,\ldots,n$.  Then $\gamma$ is well-defined on the level of Lie algebras and the restricted map
    \begin{align*}
        \gamma\colon\widetilde{\mathfrak{u}_{\Phi^+}}^1\longrightarrow E
    \end{align*}
    is injective.   
\end{corollary}
\subsection{Codomain of the map $Q$}\label{S2.4}
From the previous two sections, we have the map 
\begin{align*}
    \gamma_*\colon \mathrm{Gr}(n+1,\widetilde{\mathfrak{u}_{\Phi^+}}^1)&\longrightarrow \mathrm{Gr}(n+1,E),
\end{align*}
which is defined by taking images of subspaces in $\widetilde{\mathfrak{u}_{\Phi^+}}^1$ under $\gamma$.  
Recalling from Definition \ref{D2.2.2} the projection map $\pi\colon \widetilde{\mathfrak{u}_{\Phi^+}}^1\longrightarrow\mathfrak{u}_{\Phi^+}^1$, we have the induced map 
\begin{align*}
    \pi^*\colon \mathrm{Gr}(k,\mathfrak{u}_{\Phi^+}^1)&\longrightarrow \mathrm{Gr}(n+1,\widetilde{\mathfrak{u}_{\Phi^+}}^1),
\end{align*}
which is given by taking pullbacks.  We can now factor the map  
\begin{align*}
    Q\colon T^\mathrm{reg}&\longrightarrow \mathrm{Gr}(n+1,E)\\
    q&\longmapsto \{u\star_q(-)\mid u\in H_G^2(X)\}=\mathrm{Span}_\mathbb{C}\{u_i\star_q(-)\mid i=1,\ldots,n\}\oplus \mathbb{C}\hbar,
\end{align*} 
through the composition 
\begin{align*}
    \gamma_*\circ \pi^*\colon \mathrm{Gr}(k,\mathfrak{u}_{\Phi^+}^1)&\longrightarrow \mathrm{Gr}(n+1,E).
\end{align*}

\begin{theorem}
Let $H_G^2(X) = \mathfrak{t}_\mathbb{C}^k\oplus H_G^2(\mathrm{pt})$ be as in the introduction, let $u_1,\ldots,u_k$ be generators of $\mathfrak{t}_\mathbb{C}^k$ and let $u_{k+1},\ldots,u_n,\hbar$ be the generators of $H_G^2(\mathrm{pt})$.  For each $i=1,\ldots,n$ and $q\in T^\mathrm{reg}$, define 
\begin{align*}
    H_i^\mathrm{trig}(q) = u_i + \sum_{\alpha\in\Phi^+}\frac{q^\alpha}{1-q^\alpha}\alpha_i t_\alpha
\end{align*}
and let
\begin{align*}
    Q'\colon T^\mathrm{reg}&\longrightarrow \mathrm{Gr}(k,\mathfrak{u}_{\Phi^+}^1)\\
    q&\longrightarrow \mathrm{Span}_\mathbb{C}\{H_i^\mathrm{trig}(q)\mid i=1,\ldots,k\}. 
\end{align*}

Then the map $Q$ fits into the following commutative diagram:
\begin{center}
\begin{tikzcd}
T^\mathrm{reg} \arrow[rr, "Q"] \arrow[rrd, "Q'"', dashed] &  & {\mathrm{Gr}(n+1,E)}                               \\
                                                        &  & {\mathrm{Gr}(k,\mathfrak{u}_{\Phi^+}^1)}. \arrow[u, "\gamma_*\circ \pi^*"']
\end{tikzcd}
\end{center}
\end{theorem}

\begin{proof}
    Set up the following decomposition: $H_G^2(X)=\mathfrak{t}_\mathbb{C}^k\oplus H_G^2(\mathrm{pt})$, so we can choose the generators $u_1,\ldots,u_n,\hbar$ of $H_G^2(X)$ so that $u_1\ldots,u_k$ generate $\mathfrak{t}_\mathbb{C}^k$ and $u_{k+1},\ldots,u_{n},\hbar$ generate $H_G^2(\mathrm{pt})$.  Then, we have 
    \begin{align*}
        Q'\colon T^\mathrm{reg}&\longrightarrow \mathrm{Gr}(k,\mathfrak{u}_{\Phi^+}^1)\\
    q&\longrightarrow \mathrm{Span}_\mathbb{C}\left\{u_i + \sum_{\alpha\in\Phi^+}\frac{q^\alpha}{1-q^\alpha}\alpha_i t_\alpha\;\biggr\rvert\; i=1,\ldots,k\right\}\\
    \end{align*}
    We then have 
    \begin{align*}
    \pi^*\colon \mathrm{Gr}(k,\mathfrak{u}_{\Phi^+}^1)&\longrightarrow \mathrm{Gr}(n+1,\widetilde{\mathfrak{u}_{\Phi^+}}^1)\\
    \mathrm{Span}_\mathbb{C}\left\{u_i + \sum_{\alpha\in\Phi^+}\frac{q^\alpha}{1-q^\alpha}\alpha_i t_\alpha\;\biggr\rvert\; i=1,\ldots,k\right\}&\longmapsto \mathrm{Span}_\mathbb{C}\left\{\;u_i + \sum_{\alpha\in\Phi^+}\frac{q^\alpha}{1-q^\alpha}\alpha_i t_\alpha\biggr\rvert\;i=1,\dots,n\right\}\oplus \bC\hbar.
    \end{align*}
    and 
    \begin{align*}
        \gamma_*\colon \mathrm{Gr}(n+1,\widetilde{\mathfrak{u}_{\Phi^+}}^1)&\longrightarrow \mathrm{Gr}(n+1,E)\\
        \mathrm{Span}_\mathbb{C}\left\{\;u_i + \sum_{\alpha\in\Phi^+}\frac{q^\alpha}{1-q^\alpha}\alpha_i t_\alpha\biggr\rvert\;i=1,\dots,n\right\}\oplus \bC\hbar&\longmapsto \mathrm{Span}_\bC\left\{u_i\star_q(-)\mid i=1,\ldots,n\right\}\oplus\mathbb{C}\hbar 
    \end{align*}
    Therefore, we find that $Q = \gamma_*\circ\pi^*\circ Q'$.  
\end{proof}

\section{The De Concini--Gaiffi Compactification}\label{S3}

\subsection{The Toric Variety}\label{S3.1}
In this section, we will introduce a toric variety $X_\Sigma$, containing $T^\mathrm{reg}$ as an open dense subset, which will be the first step in constructing the compactified parameter space for the quantum cohomology.

We recall from Definition \ref{D2.1.4} the subset $\Phi = \Phi^+\sqcup\Phi^-\subset(\mathfrak{t}_\mathbb{Z}^k)^*\simeq X^*(T^k)=:\Lambda$ of circuit vectors, where $\iota\colon\mathfrak{t}_\mathbb{Z}^k\hookrightarrow\mathfrak{t}_\mathbb{Z}^n$ is the inclusion.  By Lemma \ref{L2.1.2}, we identify $\Phi^+$ with the set of all circuits of the defining hyperplane arrangement $\mathcal{A}$, and recall that $\Phi^- = -\Phi^+$.      

\begin{definition}\label{D3.1.1}
    If $\alpha\in\Phi$, define the subvariety
    \begin{align*}
        H_\alpha := \left\{t\in T^k\mid \alpha(t)-1=0\right\}, 
    \end{align*}
    and let $\cH_{T^k} := \{H_\alpha\mid \alpha\in{\Phi}\}$ be the corresponding arrangement.  
\end{definition}

\begin{remark}
    We observe that because the $k$ coordinate entries of each $\alpha\in\Phi$ are in $\{0,\pm 1\}$, the GCD of all entries is $1$, so that the $H_\alpha$ are all connected subtori of $T^k$.  Thus, $\cH_{T^k}$ is an arrangement of subtori.  
\end{remark}

We now turn our attention to the parameter space for the quantum cohomology, given by $T^{\mathrm{reg}} = H^2(X,\mathbb{C}^\mathsf{x})\setminus\bigcup_{\alpha\in\Phi}H_\alpha = T^k\setminus\bigcup_{\alpha\in\Phi}H_\alpha$.  The arrangement $\cH_{T^k} = \{H_\alpha\mid \alpha\in\Phi\}$ linearizes to the hyperplane arrangement $\cH_{T^k}^\mathrm{lin} = \{h_\alpha\mid \alpha\in\Phi\}$ in $\mathfrak{t}_\mathbb{R}^k$, where 
\begin{align*}
    h_\alpha = \{v\in\mathfrak{t}_\mathbb{R}^k\mid \langle \alpha, v\rangle = 0\}.
\end{align*}

\begin{definition}
    For each $\alpha\in(\mathfrak{t}_\mathbb{R}^k)^*$, let $h_\alpha^+ = \{v\in\mathfrak{t}_\mathbb{R}^k\mid \langle \alpha, v\rangle\geq 0\}$ denote the corresponding positive half-space. 
    
    Given the central hyperplane arrangement $\cH_{T^k}^{\mathrm{lin}}$, the complement $\mathfrak{t}_{\mathbb{R}}^k\setminus\bigcup_{\alpha\in\Phi}h_\alpha$ is a disjoint union of open cones.  Each open cone $\sigma$ is of the form 
    \begin{align*}
        \sigma = \mathrm{Int}(h_{\alpha_1}^{+}\cap\ldots\cap h_{\alpha_l}^{+}),
    \end{align*}
    for some choice of $\alpha_1,\ldots,\alpha_l\in\Phi$, where $\mathrm{Int}(S) = S\setminus\partial S$ denotes the interior of a topological space $S$.  Let $\Sigma$ denote the collection of all such cones and let $\Sigma(k)$ denote the collection of open $k$--dimensional cones.  Then this fan defines a normal toric variety
    \begin{align*}
        X_\Sigma := \bigcup_{\sigma\in\Sigma(k)}U_\sigma,\qquad\mathrm{ where }\qquad U_\sigma = \bigcup_{\sigma\in\Sigma(k)}\mathrm{Spec\;}\mathbb{C}[\sigma^\vee\cap X^*(T^k)].
    \end{align*}
    and where $\sigma^\vee = \mathrm{Cone}(\alpha_1,\ldots,\alpha_k) = \mathbb{Z}_{\geq 0}\cdot\alpha_1 + \ldots + \mathbb{Z}_{\geq 0}\cdot \alpha_k$.  
\end{definition}

\begin{remark}
    An explicit description of the chart $U_\sigma$, corresponding to $\sigma$ is given by   
    \begin{align*}
        U_\sigma \simeq \mathrm{Spec\;}\mathbb{C}[q^{\alpha_1},\ldots,q^{\alpha_l}].
    \end{align*}
\end{remark}

We recall the following basic facts about toric varieties.

\begin{definition}
    Let $\Sigma$ be a fan in $\bR^k$.    
    \begin{enumerate}
        \item We say that $\Sigma$ is {\it complete} if its cones cover $\bR^k$.
        \item We say that $\Sigma$ is {\it polytopal} if it is the normal fan of some polytope.  
        \item We say that each $\sigma\in\Sigma$ is {\it simplicial} if its minimal generators of $\sigma$ form an $\mathbb{Q}$--linearly independent set.  We say that $\Sigma$ is {\it simplicial} if all its cones are simplicial.  
        \item We say that each $\sigma\in\Sigma$ is {\it regular} if its minimal generators form part of a $\mathbb{Z}$--basis for $\mathfrak{t}_\mathbb{R}^k$.  We say that $\Sigma$ is {\it regular} if all its cones are regular. 
    \end{enumerate}
\end{definition}

\begin{lemma}\label{L3.1.1}
    Let $\Sigma$ be a fan and $X_\Sigma$ the corresponding toric variety.  
    \begin{enumerate}  
        \item $\Sigma$ is complete iff $X_\Sigma$ is proper.
        \item $\Sigma$ is polytopal iff $X_\Sigma$ is projective.
        \item $\Sigma$ is simplicial iff $X_\Sigma$ is $\mathbb{Q}$--factorial.
        \item $\Sigma$ is regular iff $X_\Sigma$ is smooth.
    \end{enumerate}
\end{lemma}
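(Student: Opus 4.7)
The plan is to reduce each statement to a cone-by-cone affine analysis by using the basic dictionary between affine toric charts $U_\sigma$ and strongly convex rational polyhedral cones $\sigma$. In each case I would first prove the statement locally (on a single chart $U_\sigma$ in terms of $\sigma$) and then glue; all four parts are standard (see, for example, Chapter 3 of Cox--Little--Schenck), so the proof proposal is a sketch of the main ideas rather than a full derivation.

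For part (a), I would use the valuative criterion of properness together with the description of one-parameter subgroups. Given a DVR $R$ with fraction field $K$, a $T^k$-equivariant map $\operatorname{Spec} K \to X_\Sigma$ corresponds (after translating by a point of $T^k$) to a cocharacter $\lambda_v\colon \operatorname{Spec} K \to T^k$ for some $v \in \mathfrak{t}_\mathbb{Z}^k$. Such a map extends to $\operatorname{Spec} R \to U_\sigma \subset X_\Sigma$ precisely when $v \in \sigma$, because then the limit $\lim_{t\to 0}\lambda_v(t)$ exists in $U_\sigma$. Hence existence of a unique extension for every $v$ is equivalent to the cones of $\Sigma$ covering $\mathfrak{t}_\mathbb{R}^k$, i.e., to $\Sigma$ being complete.

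For part (b), one direction is constructive: if $\Sigma$ is the inner normal fan of a lattice polytope $P$, then the lattice points of a sufficiently divisible multiple $mP$ define a closed immersion $X_\Sigma \hookrightarrow \mathbb{P}^N$ in the usual way, giving projectivity. Conversely, a projective embedding yields a torus-invariant ample divisor $D$; the polytope $P_D$ of global sections of $\mathcal{O}(D)$ is a lattice polytope whose normal fan is $\Sigma$, realizing $\Sigma$ as polytopal. Parts (c) and (d) reduce to single-chart statements. For (c), the divisor class group of $U_\sigma$ is computed by the exact sequence
\begin{align*}
    M \longrightarrow \bigoplus_{\rho \in \sigma(1)} \mathbb{Z}\cdot D_\rho \longrightarrow \operatorname{Cl}(U_\sigma) \longrightarrow 0,
\end{align*}
where $M = X^*(T^k)$ and the first map sends $m \mapsto \sum_\rho \langle m, u_\rho\rangle D_\rho$ with $u_\rho$ the primitive ray generator. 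This group is finite (torsion) iff the $u_\rho$ are $\mathbb{Q}$-linearly independent, i.e., $\sigma$ is simplicial; $\mathbb{Q}$-factoriality of $U_\sigma$ is equivalent to torsion class group, and globally one glues since $\mathbb{Q}$-factoriality is local. For (d), if $\sigma$ is regular then its ray generators extend to a $\mathbb{Z}$-basis of $\mathfrak{t}_\mathbb{Z}^k$, which exhibits $U_\sigma \simeq \mathbb{A}^{\dim\sigma} \times (\mathbb{C}^\mathsf{x})^{k-\dim\sigma}$ and hence smooth; conversely, smoothness of $U_\sigma$ at the unique torus-fixed orbit closure forces the Zariski cotangent space to be generated by the $u_\rho$'s, which must therefore be part of a $\mathbb{Z}$-basis.

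The main potential obstacle is not mathematical difficulty but rather the volume of standard bookkeeping: parts (c) and (d) both require care in identifying the class group or the local ring at a torus-fixed point with the combinatorial data of $\sigma$, and the reverse implications in (b) and (d) require constructing the polytope (respectively basis) from the geometric datum. Since these are classical and contained in the references \cite{dCG_2018} implicitly relies on, I would state each equivalence and cite the standard proofs rather than reproduce them in full.
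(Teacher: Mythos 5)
The paper does not actually prove Lemma~\ref{L3.1.1}: it is stated as a standard background fact about toric varieties (the paper cites \cite{Zie_1998} for the adjacent polytopal statement, but gives no argument for the lemma itself). Your sketch is a correct reproduction of the standard textbook proofs (Cox--Little--Schenck Chapter~3 and~4): the valuative criterion via one-parameter subgroup limits for (a), the ample divisor / normal-fan correspondence for (b), the class group presentation $M\to\bigoplus_\rho\mathbb{Z}D_\rho\to\operatorname{Cl}(U_\sigma)\to 0$ and its rank for (c), and the identification $U_\sigma\simeq\mathbb{A}^{\dim\sigma}\times(\mathbb{C}^{\mathsf{x}})^{k-\dim\sigma}$ for regular cones plus the cotangent-space argument for the converse in (d). Since the paper only cites the result, there is no alternative proof in the source to compare against, and your account is consistent with the references the paper implicitly relies on. One small point worth making explicit if you expand (a): the reduction of the valuative criterion to torus-equivariant maps (equivalently, to limits of one-parameter subgroups) is itself a nontrivial but standard step, usually justified by noetherian reduction to traits centered on the dense torus orbit.
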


\begin{remark}
    Polytopal fans are complete and regular fans are simplicial.  Correspondingly, projective varieties are proper and smooth toric varieties are $\mathbb{Q}$--factorial.
\end{remark}

The following lemma is standard.

\begin{definition}
    Given a finite set $\Phi\subset(\mathbb{R}^k)^*$ and a corresponding hyperplane arrangement 
    \begin{align*}
        \mathcal{H} = \{\langle \alpha\rangle^\perp\subset \mathbb{R}^k\mid \alpha\in\Phi\},
    \end{align*}
    we define the \textit{zonotope of $\mathcal{H}$} to be the following set
    \begin{align*}
        \sum_{\alpha\in\Phi}\;[-1,1]\cdot\alpha = \left\{\sum_{\alpha\in\Phi}c_\alpha\cdot \alpha\in (\mathbb{R}^k)^*\;\biggr\rvert\;c_\alpha\in[-1,1] \right\}.
    \end{align*}
\end{definition}

The following lemma is standard.

\begin{lemma}[Section 7.3 of~\cite{Z98}]
    The fan $\Sigma$ defined by the hyperplane arrangement $\cH_{T^k}^{\mathrm{lin}} = \{h_\alpha\mid\alpha\in\Phi\}$ is polytopal whose corresponding polytope is the zonotope of $\cH_{T^k}^\mathrm{lin}$.  Thus, $X_\Sigma$ is projective.
\end{lemma}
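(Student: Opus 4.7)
The plan is to construct the polytope explicitly: take
\[
Z \;=\; \sum_{\alpha \in \Phi^+}[-\alpha,\alpha] \;\subset\; (\mathfrak{t}_\mathbb{R}^k)^*,
\]
the Minkowski sum of the line segments determined by the positive circuit vectors. This is the zonotope associated with $\mathcal{H}_\Phi^{\text{lin}}$ in the usual sense. The task reduces to showing that the normal fan of $Z$ in $\mathfrak{t}_\mathbb{R}^k$ agrees with $\Sigma$; once this is done, Lemma \ref{L3.1.1}(b) immediately yields that $X_\Sigma$ is projective.

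First I would match top-dimensional cones. A vertex of $Z$ has the form $z_\epsilon = \sum_{\alpha \in \Phi^+} \epsilon_\alpha \alpha$ for a sign vector $\epsilon \in \{\pm 1\}^{\Phi^+}$ that is \emph{realizable}, meaning there exists $v \in \mathfrak{t}_\mathbb{R}^k$ with $\epsilon_\alpha \langle \alpha, v\rangle > 0$ for every $\alpha \in \Phi^+$. By the standard computation of the normal cone of a Minkowski sum,
\[
N_{z_\epsilon}(Z) \;=\; \left\{v \in \mathfrak{t}_\mathbb{R}^k \mid \epsilon_\alpha \langle \alpha, v\rangle \geq 0 \text{ for all } \alpha \in \Phi^+ \right\} \;=\; \bigcap_{\alpha \in \Phi^+} \epsilon_\alpha \, h_\alpha^+,
\]
which is exactly the closure of a top-dimensional chamber of $\mathfrak{t}_\mathbb{R}^k \setminus \bigcup_\alpha h_\alpha$, i.e.\ an element of $\Sigma(k)$. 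Conversely every chamber arises this way from its sign vector, so the top-dimensional cones of $\Sigma$ and of the normal fan $\Sigma(Z)$ coincide.

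To extend to the whole fan, I would use that both $\Sigma$ and $\Sigma(Z)$ are determined by their maximal cones via face relations: a cone of $\Sigma$ is the intersection of the closures of the chambers containing it, and a cone of $\Sigma(Z)$ is the intersection of the normal cones at the vertices of the face of $Z$ it points to. Since the matching on top-dimensional cones is compatible with taking intersections of closures indexed by the same sign patterns, the two fans agree as fans. The projectivity of $X_\Sigma$ then follows from Lemma \ref{L3.1.1}(b). The only mildly subtle point is checking realizability of sign vectors, but this is precisely the statement that every chamber of the central arrangement $\mathcal{H}_\Phi^{\text{lin}}$ is nonempty, which holds tautologically.
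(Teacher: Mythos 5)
The paper does not give a proof, citing Ziegler Section 7.3, and your argument is exactly the standard proof recorded there: identify the zonotope $Z=\sum_{\alpha\in\Phi^+}[-\alpha,\alpha]$, compute the normal cone at a vertex $z_\epsilon$ of a Minkowski sum as the intersection of the normal cones of the summands, observe this recovers the closed chambers of $\mathcal{H}_\Phi^{\text{lin}}$, and then extend to lower-dimensional faces by the common-refinement/face-lattice compatibility. Your proof is correct and is essentially the argument the paper implicitly relies on via the citation.
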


\begin{conjecture}
    The fan $\Sigma$ defined by the hyperplane arrangement $\cH_{T^k}^{\mathrm{lin}} = \{h_\alpha\mid\alpha\in\Phi\}$ is regular, so $X_\Sigma$ is smooth.
\end{conjecture}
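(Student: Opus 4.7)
The plan is to show that each maximal cone of $\Sigma$ has its primitive ray generators forming a $\bZ$-basis of $\ft^k_\bZ$. Equivalently---since $\Sigma$ is the normal fan of the zonotope $Z_{\Phi^+} = \sum_{\alpha \in \Phi^+}[0,1]\cdot \alpha$---I would argue that $Z_{\Phi^+}$ is a smooth lattice polytope. The starting point is to unpack the unimodularity hypothesis on $\cH$: Definition \ref{D2.1.2} says that every linearly independent $d$-subset of columns of $[a]$ forms a $\bZ$-basis of $\ft^d_\bZ$, which (for integer matrices) is precisely the classical total unimodularity condition on $[a]$. A standard result from matroid theory then asserts that the kernel of a totally unimodular matrix admits a totally unimodular basis, so $[\iota]$ itself may be chosen totally unimodular; this is what forces each $\alpha \in \Phi^+$ to have entries in $\{0,\pm 1\}$ as a vector in $\ft^n_\bZ$.

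Next I would characterize the rays of $\Sigma$ combinatorially. Each ray $\rho$ is cut out by $k-1$ hyperplanes $h_{\alpha_1},\ldots,h_{\alpha_{k-1}}$, so its primitive generator $v_\rho \in \ft^k_\bZ$ is, up to sign, the vector of signed $(k-1)\times(k-1)$ minors of the matrix with rows $\alpha_1,\ldots,\alpha_{k-1}$. Total unimodularity of $[\iota]$ forces these minors to lie in $\{0,\pm 1\}$, so each $v_\rho$ is itself a $\{0,\pm 1\}$-vector (in fact, a ``cocircuit-like'' vector for the dual matroid). For a maximal cone $\sigma$ with ray generators $v_{\rho_1},\ldots,v_{\rho_k}$, I would then compute $\det(v_{\rho_1},\ldots,v_{\rho_k})$ via Cauchy--Binet applied to suitable submatrices of $[\iota]$; total unimodularity should force the determinant to be $\pm 1$, giving the desired $\bZ$-basis property.

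The main obstacle is establishing \emph{simpliciality} of each chamber: that each maximal cone $\sigma$ is bounded by exactly $k$ hyperplanes, or equivalently that each vertex of $Z_{\Phi^+}$ is incident to exactly $k$ edges. For a generic arrangement this holds automatically, but the circuit arrangement is highly non-generic, and non-simplicial cones would already obstruct $\bQ$-factoriality of $X_\Sigma$ by Lemma \ref{L3.1.1}. I would try to deduce simpliciality from the structure of totally unimodular matroids, most likely via Seymour's decomposition theorem, which expresses every regular matroid as a $1$-, $2$-, or $3$-sum of graphic matroids, cographic matroids, and the sporadic matroid $R_{10}$. The graphic case reduces to the smoothness of graphical zonotopes (essentially the permutohedral setting), which is classical; the cographic case should follow by matroid duality; and $R_{10}$ could be verified by direct computation. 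I expect this combinatorial step to absorb most of the work, and it is conceivable that simpliciality fails in some cographic or $R_{10}$ subcase, in which case the conjecture would need to be strengthened with an additional hypothesis.
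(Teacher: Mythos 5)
The paper does not prove this statement---it is stated as a \emph{conjecture}, and the two remarks that immediately follow it say explicitly ``We will assume this conjecture to be true in what follows'' and ``It is not known by the author how to show that this fan is regular.'' So there is no paper proof to compare against; you are attempting something the author left open.

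That said, your proposal correctly decomposes the problem and locates the genuine obstruction. Your observation that total unimodularity of the cocircuit configuration promotes simpliciality to regularity is sound: once a maximal cone is known to be bounded by exactly $k$ of the hyperplanes $h_\alpha$, its primitive ray generators are $\{0,\pm 1\}$-vectors extracted from a totally unimodular matrix, and any $k$ that span have determinant $\pm 1$. What remains, as you say, is \emph{simpliciality}: that every chamber of $\mathcal{H}_\Phi^{\text{lin}}$ is bounded by exactly $k$ hyperplanes. This is where the conjecture is genuinely hard, and your proposal does not close the gap---you say so yourself, noting that Seymour's decomposition might fail to settle the cographic or $R_{10}$ cases. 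The paper's own suggested route is different: it points to Cuntz's classification of crystallographic arrangements \cite{Cuntz2011, Cuntz2019} rather than to Seymour's theorem. Cuntz's theorem classifies exactly those arrangements for which chambers are simplicial and the rays form lattice bases, so matching the discriminantal arrangements $\mathcal{H}_\Phi^{\text{lin}}$ against that classification would prove the conjecture directly if it applied; your Seymour-based route would instead reduce it to verifying simpliciality on a small list of matroid building blocks and showing the property survives $1$-, $2$-, and $3$-sums. Neither has been carried out, and it is not obvious that the arrangements arising from hypertoric data are forced to be simplicial---the resonance arrangement in rank $\ge 4$ is a natural place to look for a counterexample, should one exist. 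In short: your partial analysis is correct and honest about the gap, but the conjecture remains open, in the paper as well.
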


\begin{remark}
    We will assume this conjecture to be true in what follows.  That is, we will restrict our attention to hypertoric varieties $X$ whose corresponding (linearized) discriminental arrangement $\cH_{T^k}^{\mathrm{lin}}$ cuts out a regular fan in $\mathfrak{t}_\mathbb{R}^k$.  Note that this conjecture is satisfied for the symplectic dual hypertoric varieties $X = T^*\mathbb{P}^n$ and $X = \widetilde{\mathbb{C}^2/\mathbb{Z}_{n+1}}$, whose fan $\Sigma$ is induced from the type $A_n$ root arrangement.   
\end{remark}

\begin{remark}
    It is not known by the author how to show that this fan is regular.  In order to do so, one could invoke the classification of crystallographic arrangements given by Cuntz~\cite{C11, C21}.  
\end{remark}

\begin{remark}
    In this case where $\Sigma$ is regular and polytopal, it follows that for each $\sigma\in\Sigma$, the corresponding open set $U_\sigma\subset X_\Sigma$ is given by $U_\sigma=\mathbb{A}_\mathbb{C}^k$.
\end{remark}

\begin{remark}
    By Section 8 of~\cite{DCG18}, we know that if $\Sigma$ is a regular fan, then $\Sigma$ will not need to be subdivided further in order to serve as a valid input for the toric variety $X_\Sigma$ given by the construction by De Concini and Gaiffi in~\cite{DCG18}. 
\end{remark}

\begin{remark}
    Given a cone $\sigma\in \Sigma(k)$ and given an element $\alpha\in\Phi$, we find that either $\mathrm{Supp}(\sigma)\subset h_{\alpha}^+$ or $\mathrm{Supp}(\sigma)\subset h_{-\alpha}^-$.  In writing out $\sigma = h_{\alpha_1}^+\cap\ldots\cap h_{\alpha_k}^+$, it follows that either $\alpha = \sum_{i=1}^k a_i\alpha_i$, where $a_i\geq 0$, or $-\alpha = \sum_{i=1}^k a_i\alpha_i$, where $a_i\leq 0$.  Thus, in corresponding each $\alpha\in \Phi$ to coordinates $q^\alpha$, each subvariety $H_\alpha = \{q^\alpha=1\}\subset T^k$ is given in the local coordinate chart $U_\sigma = \mathrm{Spec\;}\mathbb{C}[q_1,\ldots,q_k]$ as $q_1^{a_1}\cdot\ldots\cdot q_k^{a_k} = 1$ for $a_1,\ldots,a_k\in\mathbb{N}$, where $\pm\alpha = a_1\alpha_1+\ldots+a_k\alpha_k$.  
\end{remark}
\subsection{Compactifying Toric Arrangements}\label{S3.2}
In this section, we will discuss compactifications of toric arrangements, following the construction given by~\cite{M11}.  
Fix a torus $T^k$, and fix a finite set $\Phi^+\subset \Lambda$ of circuit vectors and for each $\alpha\in\Phi^+$, take subtori $H_\alpha\subset T^k$ and $\cH_{\Phi^+}$ to be as in Definition \ref{D3.1.1}.  For convenience, we will use the following notation.  If $T^k$ is written in terms of coordinates, as $T^k = \mathrm{Spec\;}\mathbb{C}[q_1^{\pm},\ldots,q_k^\pm]$, then for each $\alpha = (\alpha_1,\ldots,\alpha_k)\in\Phi^+$, we will write $q^\alpha:= q_1^{\alpha_1}\cdot\ldots\cdot q_k^{\alpha_k}$.  In particular, we can write 
\begin{align*}
    H_\alpha = \mathrm{Spec\;}\mathbb{C}[q_1^\pm,\ldots,q_k^\pm]/\langle q^\alpha-1\rangle.
\end{align*}
In this section, we describe a partial compactification of the parameter space 
\begin{align*}
    T^\mathrm{reg} = T^k\setminus\bigcup_{\alpha\in\Phi^+}H_\alpha,
\end{align*}
in terms of open charts, following~\cite{M11}.  The construction will involve taking the subvarieties $H_\alpha$ and blowing up along the connected components of their intersections.  The charts will then have a description in terms of maximal nested sets, given in Definition \ref{D3.2.6}.

Given a collection of subtori $H_{\alpha_1},\ldots,H_{\alpha_l}\in \cH_{\Phi^+}$, the intersection $H_{\alpha_1}\cap\ldots\cap H_{\alpha_l}$ need not be connected in general.  The example below is illustrative.  

\begin{example}
    Consider the hypertoric variety of Example \ref{E1} defined by the map 
    \begin{align*}
        [a]=\begin{pmatrix}
            1 & 0 & 0 &  0 & -1 &  1 \\
            0 & 1 & 0 &  1 &  0 & -1 \\
            0 & 0 & 1 & -1 &  1 &  0
        \end{pmatrix}\colon \mathfrak{t}_\mathbb{R}^6\longrightarrow\mathfrak{t}_\mathbb{R}^3,
    \end{align*}
    whose corresponding circuit vectors are given by $\Phi^+ = \{\varepsilon_1,\varepsilon_2,\varepsilon_3,\varepsilon_1+\varepsilon_2,\varepsilon_1+\varepsilon_3,\varepsilon_2+\varepsilon_3,\varepsilon_1+\varepsilon_2+\varepsilon_3\}$.  By simultaneously solving the equations
    \begin{align*}
        q_1q_2 &= 1\\
        q_1q_3 &= 1\\
        q_2q_3 &= 1,
    \end{align*}
    one finds that $H_{\varepsilon_1+\varepsilon_2}\cap H_{\varepsilon_1+\varepsilon_3}\cap H_{\varepsilon_2+\varepsilon_3} = \{(1,1,1),(-1,-1,-1)\}$.
\end{example}

Returning to the general case, we make the following definition.

\begin{definition}\label{D3.2.1}
    Define a {\it layer} to be any connected component of the intersection $H_{\alpha_1}\cap\ldots\cap H_{\alpha_l}$, for some $l\in\{0,\ldots,k\}$ and some $\alpha_1,\ldots,\alpha_l\in\Phi^+$.  Let $\cC(\Phi^+)$ be the set of all possible layers.  
    Equip $(\cC(\Phi^+),\subset)$ with the structure of a poset with the ordering relation the inclusion of subsets.    
\end{definition}

\begin{remark}
    By convention, we will take $\bigcap_{\alpha\in\emptyset\subset A}H_{\alpha} = T^k$ to be a layer.  
\end{remark}

\begin{remark}
    We can describe a layer more explicitly as follows.  Start with the characters $\alpha_1,\ldots,\alpha_m\in\Phi^+$ for some $m>0$, and consider the intersection $H_{\alpha_1}\cap\ldots\cap H_{\alpha_m}$ which is given as the kernel of the following sequence:
    \begin{align*}
        1\longrightarrow H_{\alpha_1}\cap\ldots\cap H_{\alpha_m}\longrightarrow T^k\xrightarrow[]{\;\;[\alpha]\;\;}T^m
    \end{align*}
    where $[\alpha]$ has the following form:
    \begin{align*}
        [\alpha] = \begin{pmatrix}
            \alpha_1^{(1)}&\cdots & \alpha_1^{(k)}\\
            \vdots & \ddots & \vdots\\
            \alpha_m^{(1)} & \cdots & \alpha_m^{(k)}
        \end{pmatrix},
    \end{align*}
    with entries in $\mathbb{Z}$.  By changing the coordinates on $T^k$ and $T^m$ one can find invertible matrices $[K]\in \mathrm{GL}_k(\mathbb{Z})$ and $[M]\in \mathrm{GL}_m(\mathbb{Z})$ so that $[\alpha]$ can be written in Smith normal form~\cite{MITOCW11}:
    \begin{align*}
        [M][\alpha][K] 
        = \begin{pmatrix}
            \begin{array}{ccc|ccc}
            d_1 & & 0 & & &   \\
            & \ddots & & & 0 & \\
            0 & & d_l & & &  \\
            \hline
            &0 & & &0
            \end{array}
        \end{pmatrix}
    \end{align*}
    for integers $d_1,\ldots,d_l>0$ with $d_1\mid \ldots\mid d_l$.  Thus, we have 
    \begin{align*}
        H_{\alpha_1}\cap\ldots\cap H_{\alpha_m} 
        &\simeq \mathrm{Spec\;}\frac{\mathbb{C}[q_1^{\pm},\ldots,q_k^\pm]}{\langle q_1^{d_1}-1,\ldots,q_l^{d_l}-1\rangle}\\
        &\simeq \mathrm{Spec\;}\frac{\mathbb{C}[q_1^{\pm}]}{\langle q_1^{d_1}-1\rangle}\times\ldots\times \mathrm{Spec\;}\frac{\mathbb{C}[q_l^{\pm}]}{\langle q_l^{d_l}-1\rangle}\times \mathrm{Spec\;}\mathbb{C}[q_{l+1}^\pm,\ldots,q_k^{\pm}].
    \end{align*}
    Thus, each connected component $C\subset H_{\alpha_1}\cap\ldots\cap H_{\alpha_l}$ is of the form $C = \{(\zeta_{d_1}^{i_1},\ldots,\zeta_{d_l}^{i_l})\}\times T^{k-l}$, where $\zeta_{d_j}$ is the $d_j$--root of unity and $0\leq i_j< d_j$.  In particular, if we take the point $p = ((\zeta_{d_1}^{i_1},\ldots,\zeta_{d_l}^{i_l}),(1,\ldots,1))\in T^k$, then $p^{-1}C = (H_{\alpha_1}\cap\ldots\cap H_{\alpha_l})_1$ is a layer containing $1$ and $C = p\;C_1$ is a coset of $C_1$.   
\end{remark}    

\begin{definition}\label{D3.2.2}
    If $\Delta\subset\Lambda$ is a sublattice, then we define the {\it completion of the sublattice} to be 
    \begin{align*}
        \overline{\Delta}:= \langle \Delta\rangle_{\bC}\cap \Lambda.
    \end{align*}
    If $\Phi^+$ is a finite set, and $A\subset \Phi^+$ is a subset, then we say that $A = \bigsqcup_i A_i$ is a {\it decomposition of }$A$ if we have 
    \begin{align*}
        \overline{\langle A\rangle_\bZ} = \bigoplus_i\overline{\langle A_i\rangle_\bZ}
    \end{align*}
    for $A_i\neq \emptyset$, where we are taking completions of the sublattices $\langle A_i\rangle_\bZ, \langle A\rangle_\bZ\subset \Lambda$.  We say that $A$ is {\it irreducible} if $A$ does not admit such a decomposition.  In this decomposition, if each of the $A_i$ are themselves irreducible, then they are termed {\it irreducible factors} of $A$.    
\end{definition}

\begin{definition}\label{D3.2.3}
    If $A\subset \Phi^+$ is a subset of the finite set $\Phi^+$, then we define the {\it completion} of $A$ as 
    \begin{align*}
      \overline{A} := \langle A\rangle_\mathbb{C}\cap \Phi^+,
    \end{align*}
    and we say that $A$ is {\it complete} if $\overline{A} = A$.      
\end{definition}

\begin{example}
    For the finite set $\{(1,1),(1,-1)\}\subset \mathbb{Z}^2$, then we have 
    \begin{align*}
        \bC^2 &= \langle \;\{(1,1),(1,-1)\}\;\rangle_\bC\\
        &= \langle\;(1,1)\;\rangle_\bC\oplus \langle\;(1,-1)\;\rangle_\bC
        = \bC\cdot (1,1)\oplus \bC\cdot (1,-1),
    \end{align*}
    while on the other hand we have 
    \begin{align*}
        \bZ^2 &= \overline{\langle\; \{(1,1),(1,-1)\}\;\rangle_\bZ}\\
        &\supsetneq \overline{\langle\; (1,1)\;\rangle_\bZ}\oplus \overline{\langle\;(1,-1)\;\rangle_\bZ} = \bZ\cdot (1,1)\oplus \bZ\cdot (1,-1).  
    \end{align*}
    Therefore, $\{(1,1),(1,-1)\}$ is $\bC$--reducible, but not ($\mathbb{Z}$--)reducible.
\end{example}

\begin{example}
    For the finite set $\{(1,1,0),(1,0,1),(0,1,1)\}\subset \bZ^3$ we have 
    \begin{align*}
        \bC^3 &= \langle\;\{(1,1,0),(1,0,1),(0,1,1)\}\;\rangle_\bC\\
        &= \langle\;(1,1,0)\;\rangle_\bC\oplus \langle\;(1,0,1)\;\rangle_\bC\oplus \langle\;(0,1,1)\;\rangle_\bC = \bC\cdot (1,1,0)\oplus \bC\cdot (1,0,1)\oplus \bC\cdot (0,1,1),
    \end{align*}
    while on the other hand we have, 
    \begin{align*}
        \bZ^3 &= \overline{\langle\;\{(1,1,0),(1,0,1),(0,1,1)\}\;\rangle_\bZ}\\
        &\supsetneq \overline{\langle\;(1,1,0)\; \rangle_\bZ}\oplus\overline{\langle\;(1,0,1)\;\rangle_\bZ}\oplus \overline{\langle\;(0,1,1)\; \rangle_\bZ} = \bZ\cdot (1,1,0)\oplus \bZ\cdot (1,0,1)\oplus \bZ\cdot (0,1,1).
    \end{align*}
    Therefore, $\{(1,1,0),(1,0,1),(0,1,1)\}$ is $\bC$--reducible, but not ($\bZ$--)reducible.
\end{example}

We now turn to characterize which hyperplanes can intersect to form a given layer $C\in\cC(\Phi^+)$. 

\begin{lemma}\label{L3.2.1}
    The following map is an inclusion-reversing bijection: 
    \begin{align*}
        \left\{A\subset \Phi^+\mid \overline{A} = A\right\}&\longrightarrow\left\{C\in\cC(\Phi^+)\mid 1\in C\right\}\\
        A&\longmapsto C(A):= \left(\bigcap_{\alpha\in A}H_\alpha\right)_{1},
    \end{align*}
    where $\left(\bigcap_{\alpha\in A}H_\alpha\right)_{1}$ is the connected component of $\bigcap_{\alpha\in A}H_\alpha$ which contains $1\in T^k$.  
    
    The inverse is given by 
    \begin{align*}
        \left\{C\in\cC(\Phi^+)\mid 1\in C\right\}&\longrightarrow\left\{A\subset \Phi^+\mid \overline{A} = A\right\}\\
        C&\longmapsto \Phi^+_C := \{\alpha\in\Phi^+\mid H_\alpha\supset C\}.
    \end{align*}
\end{lemma}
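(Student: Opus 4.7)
The plan is to prove this via character-theoretic duality for tori. The key observation is that a layer $C \in \cC(\Phi^+)$ containing $1 \in T^k$ is itself a subtorus of $T^k$: the full intersection $\bigcap_{\alpha \in A} H_\alpha$ is a closed subgroup of $T^k$, and its identity component—which is the connected component containing $1$—is automatically a subtorus. For any such subtorus $C$, its \emph{annihilator} $\mathrm{Ann}(C) := \{\alpha \in \Lambda \mid \alpha|_C \equiv 1\}$ is a saturated sublattice of $\Lambda$ (meaning $\Lambda/\mathrm{Ann}(C)$ is torsion-free), and the assignment $C \leftrightarrow \mathrm{Ann}(C)$ sets up a standard bijection between subtori of $T^k$ and saturated sublattices of $\Lambda$. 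Under this correspondence, the identity component of $\bigcap_{\alpha \in A} H_\alpha$ has annihilator equal to the saturation $\overline{\langle A\rangle_{\bZ}} = \langle A\rangle_{\bC}\cap \Lambda$, and this is the bridge between the topological notion of layer and the algebraic notion of completion $\overline{A} = \langle A\rangle_{\bC}\cap\Phi^+$.

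First I would verify that both maps are well-defined. The map $A \mapsto C(A)$ is well-defined by Definition \ref{D3.2.1}. For $C \mapsto \Phi^+_C$, I must check completeness: $\Phi^+_C = \mathrm{Ann}(C) \cap \Phi^+$, so if $\beta \in \langle \Phi^+_C\rangle_{\bC} \cap \Phi^+$ then $\beta$ lies in the $\bC$-span of elements of $\mathrm{Ann}(C)$ intersected with $\Lambda$, i.e.\ in the saturation of $\langle \Phi^+_C\rangle_{\bZ}$ inside $\Lambda$. Since $\mathrm{Ann}(C)$ is already saturated and contains $\Phi^+_C$, this saturation is contained in $\mathrm{Ann}(C)$, so $\beta \in \Phi^+_C$, giving $\overline{\Phi^+_C} = \Phi^+_C$.

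Second I would check that the two compositions are identities. For $A$ complete, the annihilator of $C(A)$ in $\Lambda$ equals $\overline{\langle A\rangle_{\bZ}} = \langle A\rangle_{\bC}\cap \Lambda$, and intersecting with $\Phi^+$ yields $\overline{A} = A$. Conversely, if $C$ is any layer containing $1$, by definition $C$ arises as the identity component of some intersection $\bigcap_{\beta \in B} H_\beta$ with $B \subset \Phi^+$, so $\mathrm{Ann}(C) = \overline{\langle B\rangle_{\bZ}}$. The chain $\overline{\langle B\rangle_{\bZ}} \subset \overline{\langle \Phi^+_C\rangle_{\bZ}} \subset \mathrm{Ann}(C) = \overline{\langle B\rangle_{\bZ}}$ forces equality, and passing back through the annihilator correspondence gives $C(\Phi^+_C) = C$. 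The inclusion-reversing property is then immediate: $A \subset A'$ gives $\bigcap_{\alpha \in A} H_\alpha \supset \bigcap_{\alpha \in A'} H_\alpha$, and identity components of closed subgroups containing $1$ preserve this inclusion.

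The main subtlety—and the step I expect to require the most care—is correctly passing between the multiplicative side (intersections of subtori, connected components) and the additive side (sublattices and their saturations). In particular, the reason one must take \emph{connected components} on the geometric side and \emph{completions} $\overline{A}$ on the lattice side is precisely that the identity component of $\bigcap_{\beta\in B} H_\beta$ has annihilator equal to the saturation of $\langle B\rangle_{\bZ}$, not to $\langle B\rangle_{\bZ}$ itself (as illustrated by Example \ref{E1}, where the full intersection has two components but only one contains the identity). Once this saturation-vs-span distinction is handled cleanly, the rest of the argument is formal.
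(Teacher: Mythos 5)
Your proof is correct, and it is essentially the same argument as the paper's, viewed through a cleaner lens. Both proofs ultimately rest on the same character-group duality: the paper writes out the split exact sequence $1\to C\to T^k\to T^l\to 1$ and its dual $0\to\Lambda_C\to\Lambda\to X^*(C)\to 0$, then identifies $\Phi^+_C = \Lambda_C\cap\Phi^+$, which is precisely your annihilator correspondence $\Phi^+_C = \mathrm{Ann}(C)\cap\Phi^+$. What you do differently is front-load the dictionary between subtori of $T^k$ and saturated sublattices of $\Lambda$, so that all four claims (well-definedness of $C\mapsto\Phi^+_C$, both composite identities, inclusion-reversal) become formal consequences of one fact: the annihilator of the identity component of $\bigcap_{\alpha\in A}H_\alpha$ is the saturation $\langle A\rangle_\bC\cap\Lambda$. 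The paper instead proves the completeness of $\Phi^+_C$ from scratch by factoring $H_{m\alpha}$ into the disjoint sheets $\{\alpha(t)=\zeta_m^i\}$ and invoking connectedness of $C$; this is more concrete but is really re-deriving the saturatedness of $\mathrm{Ann}(C)$ in hands. Your version buys you brevity and transparency (the saturation-vs-span distinction that drives Example \ref{E1} becomes the organizing principle rather than an incidental observation); the paper's version buys you a proof that is self-contained at the level of roots of unity, without appealing to the abstract subtorus--sublattice dictionary. One small point of care, which you flag yourself but which is worth double-checking in the write-up: what is saturated in your argument is $\mathrm{Ann}(C) = \langle A\rangle_\bC\cap\Lambda$, not necessarily the $\bZ$-span $\langle A\rangle_\bZ$. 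Your chain of inclusions in the composite $C\mapsto\Phi^+_C\mapsto C(\Phi^+_C)$ only uses monotonicity of saturation and $\mathrm{Ann}(C)$ being saturated, so it handles this correctly, but it is a place where a slip from $\overline{\langle B\rangle_\bZ}$ to $\langle B\rangle_\bZ$ would introduce a real error.
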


\begin{remark}
    It can be verified directly that if $A = \emptyset$, then $C(\emptyset) = T^k$ and if $A = \Phi^+$, then $C(\Phi^+) = \{1\}$.  Moreover, $\Phi^+_{T^k} = \emptyset$ and $\Phi^+_{\{1\}} = \Phi^+$.   
\end{remark}

\begin{proof}
    We first observe that the above maps are well-defined.  Let $C\in\cC(\Phi^+)$ be a layer containing $1\in C$.  We first show that $\overline{\Phi^+_C} = \langle \Phi^+_C\rangle_{\mathbb{C}}\cap \Phi^+ = \Phi^+_C$.  Suppose $\alpha\in \overline{\Phi^+_C}$.  Since $\langle \Phi^+_C\rangle_{\mathbb{C}}\cap \Phi^+ = \langle \Phi^+_C\rangle_{\mathbb{Q}}\cap \Phi^+$, it follows that for some $m,m_1,\ldots,m_{l}\in\mathbb{Z}$, we can write $m\alpha = m_1\alpha_1 + \ldots + m_{l}\alpha_{l}$, where $\alpha_j\in\Phi^+_C$, $j=1,\ldots,l$.  Thus, we find that $H_{a\alpha}\supset C$.  But $H_{a\alpha} = \bigsqcup_{i=1}^m\left\{t\in T^k\mid \alpha(t)-\zeta_m^i = 0\right\}$.  Since $1\in C$ by assumption and since $C$ is connected, it follows that $C\subset \left\{t\in T^k\mid \alpha(t) - 1 = 0\right\} = H_\alpha$, proving that $\alpha\in\Phi^+_C$ and hence, $\overline{\Phi^+_C} = \Phi^+_C$.

    On the other hand, if $A\subset \Phi^+$ is a complete subset, then we know that $1\in \bigcap_{\alpha\in\Phi^+}H_\alpha\subset \bigcap_{\alpha\in A}H_\alpha$, so $\left(\bigcap_{\alpha\in A}H_\alpha\right)_{1}$ is such a layer which contains $1\in T^k$.  

    We now show that the above two maps are inverses to each other.  We observe that by the definition of a layer, Definition \ref{D3.2.1}, the map $A\longmapsto C(A)$ is surjective.  If we can show that $\Phi^+_{C(A)} = A$, then it will also follow that $C(\Phi^+_C) = C(\Phi^+_{C(A)}) = C(A) = C$, and we would be done.  
    
    To prove the claim, let $C\in\cC(\Phi^+)$ be a layer containing $1$ and write it as $C = C(A)$.  choose a linearly independent set $\{\alpha_1,\ldots,\alpha_l\}\subset A$ so that we can write $C = H_{\alpha_1}\cap\ldots\cap H_{\alpha_l}$, and $A = \overline{\{\alpha_1,\ldots,\alpha_l\}}$.  Then $C$ is a subtorus which fits into the following split exact sequence of abelian groups:
    \begin{align*}
        1\longrightarrow C \longrightarrow T^k\xrightarrow[]{\begin{pmatrix} - & \alpha_1 & - \\
        &\vdots &\\
        - & \alpha_l & - \end{pmatrix}} T^l\longrightarrow 1.
    \end{align*}

    The induced sequence of character groups is given by 
    \begin{align*}
        0\longrightarrow \Lambda_C\longrightarrow \Lambda\longrightarrow X^*(C)\longrightarrow 0,
    \end{align*}
    and by definition of $\Phi^+_C$ and $\Lambda_C$, we have $\Phi^+_C = \Lambda_C\cap \Phi^+ = \overline{\{\alpha_1,\ldots,\alpha_l\}} = A$ as the corresponding complete subset.  Thus, $\Phi^+_{C(A)} = \Phi^+_C = A$, proving the claim.  
\end{proof}

We now generalize the situation to the case to layers that don't contain $1\in T^k$.  We will first adapt Definition \ref{D3.2.3} to the more general setting.    

\begin{definition}\label{D3.2.4}
    Define $\cC_0(\Phi^+)$ to be the set of zero-dimensional layers.  In particular, $\cC_0(\Phi^+)\subset T^k$.
    
    For any zero-dimensional layer $p\in\cC_0(\Phi^+)$, define 
    \begin{align*}
        \Phi^+_p := \Phi^+_{\{p\}} = \{\alpha\in \Phi^+\mid p\in H_\alpha\}.
    \end{align*} 

    We define the {\it completion in $\Phi^+_p$} of the finite set $A$ as $\overline{(A)}_p = \langle A\rangle_\mathbb{C}\cap \Phi^+_p = \overline{A}\cap \Phi^+_p$, where $\overline{A}$ is the completion as in Definition \ref{D3.2.3}.  We say that $A$ is {\it complete in $\Phi^+_p$} if $\overline{(A)}_p = A$.  
\end{definition}

\begin{lemma}\label{L3.2.2}
    The following map is an inclusion-reversing bijection: 
    \begin{align*}
        \left\{A\subset \Phi^+_p\mid \overline{(A)}_p= A\right\}&\longrightarrow \left\{C\in\cC(\Phi^+)\mid p\in C\right\}\\
        A&\longmapsto C_p(A):= \left(\bigcap_{\alpha\in A}H_\alpha\right)_p.
    \end{align*}
    The inverse is given by 
    \begin{align*}
        \left\{C\in\cC(\Phi^+)\mid p\in C\right\}&\longrightarrow \left\{A\subset \Phi^+_p\mid \overline{(A)}_p= A\right\}\\
        C&\longmapsto \Phi_{C,p}^+ := \{\alpha\in\Phi_p^+\mid H_\alpha\supset C\}
    \end{align*}
\end{lemma}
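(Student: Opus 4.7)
The plan is to reduce Lemma 3.2.2 to Lemma 3.2.1 via a translation argument. The key observation is that translation by $p^{-1}$, giving the automorphism $\tau\colon T^k\to T^k$, $t\mapsto p^{-1}t$, carries $p$ to $1$; and since each $H_\alpha$ for $\alpha\in\Phi^+_p$ is a subtorus containing $p$, we have $\tau(H_\alpha)=p^{-1}H_\alpha=H_\alpha$ for all such $\alpha$. Thus $\tau$ restricts to an automorphism of the sub-arrangement $\mathcal{H}_{\Phi^+_p}:=\{H_\alpha\mid\alpha\in\Phi^+_p\}$.

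First, I would observe that every layer $C\in\mathcal{C}(\Phi^+)$ containing $p$ is actually a layer of $\mathcal{H}_{\Phi^+_p}$: writing $C$ as a connected component of an intersection $H_{\beta_1}\cap\ldots\cap H_{\beta_l}$, the containment $p\in C$ forces $p\in H_{\beta_i}$, hence $\beta_i\in\Phi^+_p$, for every $i$. Consequently $\tau$ yields an inclusion-preserving bijection
\begin{align*}
    \{\,C\in\mathcal{C}(\Phi^+)\mid p\in C\,\}\;\longleftrightarrow\;\{\,C'\text{ layer of }\mathcal{H}_{\Phi^+_p}\mid 1\in C'\,\},\qquad C\mapsto p^{-1}C,
\end{align*}
and under this bijection, for any $A\subset\Phi^+_p$ one has $\tau\bigl((\bigcap_{\alpha\in A}H_\alpha)_p\bigr)=(\bigcap_{\alpha\in A}H_\alpha)_1$.

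Next, I would apply the argument of Lemma 3.2.1 to the sub-arrangement $\mathcal{H}_{\Phi^+_p}$ in place of $\mathcal{H}_{\Phi^+}$. The proof of Lemma 3.2.1 only used the subgroup structure of the $H_\alpha$'s together with the Smith normal form, and so goes through verbatim once $\Phi^+$ is replaced by $\Phi^+_p$: it delivers an inclusion-reversing bijection between subsets $A\subset\Phi^+_p$ with $\overline{(A)}_p=A$ and layers of $\mathcal{H}_{\Phi^+_p}$ containing $1$, given by $A\mapsto\bigl(\bigcap_{\alpha\in A}H_\alpha\bigr)_1$, with inverse $C'\mapsto\{\alpha\in\Phi^+_p\mid H_\alpha\supset C'\}$.

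Composing these two bijections yields exactly the map $A\mapsto C_p(A)=(\bigcap_{\alpha\in A}H_\alpha)_p$ claimed in the statement; the inverse $C\mapsto\Phi^+_{C,p}$ then follows by unwinding, using that $H_\alpha\supset C$ with $p\in C$ forces $\alpha\in\Phi^+_p$. The main obstacle is checking that the notion of completeness appearing in the Lemma 3.2.1-style proof applied to $\Phi^+_p$ genuinely coincides with $\overline{(A)}_p=A$ as defined in Definition 3.2.4; this reduces to the elementary fact that the characters of $T^k$ trivial on the subtorus $\tau(C)$ are precisely $\overline{\Lambda_{\tau(C)}}\cap\Lambda$, and intersecting this with $\Phi^+_p$ recovers $\Phi^+_{C,p}$ because any $\alpha\in\Phi^+$ with $H_\alpha\supset C$ automatically lies in $\Phi^+_p$.
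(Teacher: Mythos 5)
Your proof is correct and takes essentially the same approach as the paper: apply Lemma~\ref{L3.2.1} to the sub-arrangement $\mathcal{H}_{\Phi^+_p}$, transport via translation by $p^{\pm 1}$, and identify layers of $\mathcal{C}(\Phi^+_p)$ through $p$ with layers of $\mathcal{C}(\Phi^+)$ through $p$. You in fact supply a justification (via $p\in C\subset H_{\beta_i}$ forcing $\beta_i\in\Phi^+_p$) for the set equality $\{C\in\mathcal{C}(\Phi^+_p)\mid p\in C\}=\{C\in\mathcal{C}(\Phi^+)\mid p\in C\}$ that the paper asserts without comment.
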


\begin{proof}
    Let $\cC(\Phi^+_p)\subset \cC(\Phi^+)$ be the sub-poset of connected components of intersections $H_{\alpha_1}\cap \ldots\cap H_{\alpha_l}$ for $\alpha_1,\ldots,\alpha_l\in \Phi^+_p$.  Then we can replace the set $\Phi^+$ with $\Phi^+_p$ in Lemma \ref{L3.2.1} to obtain the following bijection:
    \begin{align*}
        \left\{A\subset \Phi^+_p\mid \overline{(A)}_p= A\right\}&\longrightarrow \left\{C\in\cC(\Phi^+_p)\mid 1\in C\right\}\\
        A&\longmapsto C(A).
    \end{align*}
    From the remark immediately following Definition \ref{D3.2.1}, we have the following bijection:
    \begin{align*}
        \left\{C\in\cC(\Phi^+_p)\mid 1\in C\right\}&\longrightarrow \left\{C\in\cC(\Phi^+_p)\mid p\in C\right\}\\
        C_1&\longmapsto p\;C_1.
    \end{align*}
    Combining these two bijections with the following equality of sets 
    \begin{align*}
        \left\{C\in\cC(\Phi^+_p)\mid p\in C\right\} = \left\{C\in\cC(\Phi^+)\mid p\in C\right\}
    \end{align*}
    shows that $C_p(-)$ is a bijection, proving the first statement.

    As for the second statement, start with a layer $C\in\cC(\Phi^+)$ for which $p\in C$.  Replacing $\Phi^+$ for $\Phi_p^+$ in the second statement of Lemma \ref{L3.2.1} for the inverse bijection, it follows that the map 
    \begin{align*}
        \left\{C\in\cC(\Phi^+_p)\mid 1\in C\right\}&\longrightarrow \left\{A\subset \Phi^+_p\mid \overline{(A)}_p= A\right\}\\
        C&\longmapsto \Phi_{C,p}^+
    \end{align*}
    is the inverse to $A\mapsto C_p(A)$.  We next precompose this inverse bijection with the following map
    \begin{align*}
        \left\{C\in\cC(\Phi^+_p)\mid p\in C\right\}&\longrightarrow\left\{C\in\cC(\Phi^+_p)\mid 1\in C\right\}\\
        C_p&\longmapsto p^{-1}C_p,
    \end{align*}
    by observing that for any $C_p\in \cC(\Phi^+_p)$ containing $p$, for any $t\in C_p$ and for any character $\alpha\in \Phi_p^+ \subset X^*(T^k)$, we have $\alpha(p^{-1}t) = \alpha(p)^{-1}\alpha(t) = \alpha(t)$, for all $t\in C_p$ by definition of $\Phi_p^+$.  Thus, for all $C\in\cC(\Phi^+)$ containing $p$ the inverse bijection is given by  
    \begin{align*}
        C&\longmapsto \Phi_{p^{-1}C,p}^+ = \{\alpha\in\Phi_p^+\mid H_\alpha\supset p^{-1}C\} = \{\alpha\in\Phi_p^+\mid H_\alpha\supset C\} = \Phi_{C,p}^+,
    \end{align*}      
    completing the proof.  
\end{proof}

\begin{remark}
    For any layer $C\in\cC(\Phi_p^+)$ containing $p$, we also have the equality of sets 
    \begin{align*}
        \Phi_{C,p}^+ = \{\alpha\in\Phi_p^+\mid H_\alpha\supset C\} = \{\alpha\in \Phi^+\mid H_\alpha\supset C\} = \Phi_C^+,
    \end{align*}
\end{remark}

\begin{definition}\label{D3.2.5}
    For any $p\in\cC_0(\Phi^+)$, define
    \begin{align*}
        \cC_p(\Phi^+) = \{C\in\cC(\Phi^+)\mid p\in C\}.
    \end{align*}  
    Then Lemma \ref{L3.2.2} may be understood as a bijection between complete subsets in $\Phi^+_p$ and layers in $\cC_p(\Phi^+)$.
    
    For any $C\in\cC_p(\Phi^+)$, we say that $C\subset \bigcap_i C_i$ is a {\it decomposition in $\cC_p(\Phi^+)$} of $C$ if we have $C = C(A)$ and $C_i = C_p(A_i)$ and $C_i\neq C$ for all $i$; and if $A = \bigsqcup_i A_i$ is a decomposition of $A$, in the sense of Definition \ref{D3.2.2}.  We say that $C$ is {\it irreducible in $\cC_p(\Phi^+)$} if it admits no such decomposition.  For the decomposition $C\subset \bigcap_i C_i$ in $\cC_p(\Phi^+)$, if $C_i$ are themselves irreducible in $\cC_p(\Phi^+)$, then we say that the $C_i$ are {\it irreducible factors in $\cC_p(\Phi^+)$} of $C$.  Let $\cI_p(\Phi^+)$ be the set of irreducible layers in $\Phi_p^+$ for each $p$ and let $\cI(\Phi^+) = \bigcup_{p\in\cC_0(\Phi^+)}\cI_p(\Phi^+)$.        
\end{definition}

\begin{remark}
    By comparing Definition \ref{D3.2.4} and Definition \ref{D3.2.5}, it follows that the bijection of Lemma \ref{L3.2.2} takes decompositions into complete factors in $\Phi^+_p$ to decompositions in $\cC_p(\Phi^+)$ and hence, this bijection takes complete irreducible subsets in $\Phi^+_p$ to irreducible subsets in $\cC_p(\Phi^+)$. 

    Moreover, we find that if $p,q\in \cC_0(\Phi^+)$ are two basepoints, and $C\in \cC_p(\Phi^+)\cap \cC_q(\Phi^+)$, then if $C = \bigcap_i C_i$ is a decomposition in $\Phi_p^+$ then we know that $p,q\in C\subset C_i$ for each $i$ and hence, $C = \bigcap_i C_i$ is a decomposition in $\cC_q(\Phi^+)$ as well.  Thus, if $C\in\cI_p(\Phi^+)$ and $q\in C$, then $C\in\cI_q(\Phi^+)$ as well. 
\end{remark}

\begin{lemma}
    Let $p\in \cC(\Phi^+)$.  Then $\Phi^+_p\subset \Phi^+$ is complete in the sense of Definition \ref{D3.2.3} iff $p = 1\in T^k$. 
\end{lemma}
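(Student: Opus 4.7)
The plan is to prove the two directions separately, interpreting $p$ as a zero-dimensional layer in $\mathcal{C}_0(\Phi^+)$ so that both $\Phi^+_p$ from Definition \ref{D3.2.4} and the comparison $p = 1 \in T^k$ are well-typed.

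The reverse implication is immediate: if $p = 1$, then $\alpha(1) = 1$ for every character $\alpha \in \Phi^+$, so $\Phi^+_1 = \Phi^+$, which is vacuously complete since the completion of a subset of $\Phi^+$ never exceeds $\Phi^+$.

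For the forward implication, assume $\Phi^+_p$ is complete. Because each $H_\alpha$ is a connected subtorus (the coordinates of each $\alpha \in \Phi^+$ have $\gcd = 1$, as noted after Definition \ref{D3.1.1}), the intersection $\bigcap_{\alpha \in \Phi^+_p} H_\alpha$ is a finite union of translates of a single identity subtorus whose codimension equals the rank of $\langle \Phi^+_p\rangle_\mathbb{Z}$ inside $\Lambda = X^*(T^k)$. The zero-dimensionality of the component containing $p$ therefore forces this rank to be $k$, so $\langle \Phi^+_p\rangle_\mathbb{C} = \Lambda_\mathbb{C}$, and hence $\overline{\Phi^+_p} = \langle \Phi^+_p\rangle_\mathbb{C} \cap \Phi^+ = \Phi^+$. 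Completeness then gives $\Phi^+_p = \Phi^+$, which is to say $\alpha(p) = 1$ for every $\alpha \in \Phi^+$. The remark following Lemma \ref{L2.1.2} produces a $\mathbb{Z}$-basis $\{\varepsilon_1, \ldots, \varepsilon_k\}$ of $\Lambda$ contained in $\Phi$; after negating individual elements if necessary (which preserves the hypothesis $\alpha(p) = 1$, since $(-\alpha)(p) = \alpha(p)^{-1} = 1$), this basis lies in $\Phi^+$. Since a $\mathbb{Z}$-basis of the character lattice separates points of $T^k$, the system $\varepsilon_i(p) = 1$ for $i = 1, \ldots, k$ forces $p = 1$.

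The main step requiring care is the rank--codimension bookkeeping in the forward direction: one must invoke that the intersection of algebraic subtori cut out by primitive characters has codimension equal to the rank of the $\mathbb{Z}$-span of those characters, so that the zero-dimensionality hypothesis on $p$ translates into full rank for $\Phi^+_p$. Everything else is a direct chase through the definitions of completion and $\Phi^+_p$, combined with the existence of a lattice basis inside $\Phi$ established in Section \ref{S2.1}.
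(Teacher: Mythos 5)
Your proof is correct and follows essentially the same route as the paper's: both hinge on showing $\overline{\Phi^+_p} = \Phi^+$ and then using the $\mathbb{Z}$-basis of $\Lambda$ sitting inside $\Phi$ to pass between ``every $\alpha\in\Phi^+$ equals $1$ at $p$'' and ``$p = 1$'' (the paper phrases the hard direction contrapositively, starting from $p\neq 1$, but the content is identical). Your derivation of $\overline{\Phi^+_p} = \Phi^+$ via the rank--codimension count, using that the zero-dimensionality of the layer $\{p\}$ forces $\langle\Phi^+_p\rangle_{\mathbb{Z}}$ to have full rank, is in fact a cleaner version of the paper's argument, which reaches the same conclusion through the Smith normal form presentation of $p$.
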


\begin{proof}
    We first observe that $\Phi^+_1 = \Phi^+$, because $1\in H_\alpha$ for every $\alpha\in\Phi^+$, by definition.  Thus, $\Phi^+_p$ is complete for $p = 1$.  
    
    To prove the converse, suppose $p\neq 1$.  Then we show that $\Phi^+_p$ is not complete.  We do so by showing that $\overline{\Phi^+_p}\setminus\Phi^+_p\neq \emptyset$.    Recall from the remark following Lemma \ref{L2.1.2} that the set $\Phi^+$ of circuit vectors contains a standard basis $\varepsilon_1,\ldots,\varepsilon_k\in X^*(T^k)$.  By the remark following Definition \ref{D3.2.1}, we can write $p = (\zeta_{d_1}^{i_1},\ldots,\zeta_{d_k}^{i_k})$, for $d_1,\ldots,d_k\in\mathbb{Z}$ and for $0\leq i_j<d_j$, $j=1,\ldots,k$.  Thus, there exists some $j_0$ for which $\zeta_{d_{j_0}}^{i_{j_0}}\neq 1$.  Thus, for this particular $j_0$, we find that $\varepsilon_{j_0}(p)\neq 1$.  Therefore, $\varepsilon_p\in \Phi^+\setminus\Phi^+_p$.  We claim that $\overline{\Phi^+_p} = \Phi^+$.  Then this claim implies that $\overline{\Phi^+_p}\setminus\Phi^+_p = \Phi^+\setminus\Phi^+_p \neq \emptyset$, completing the proof.  
    
    To prove the claim we observe that for each $j=1,\ldots,k$ we have $d_j\varepsilon_j\in \Phi^+_p$ because  and hence, $\varepsilon_j = d_j^{-1}\cdot d_j\varepsilon_j\in \langle \Phi^+_p\rangle_{\mathbb{Q}}\cap \Phi^+ = \overline{\Phi^+_p}$.  So $\overline{\Phi^+_p}\subset \Phi^+$ is a sublattice that contains $\varepsilon_1,\ldots,\varepsilon_k$ and therefore, $\overline{\Phi^+_p} = \Phi^+$.          
\end{proof}

\begin{example}
    In the example given at the beginning of this section, we find that $\Phi^+_{(1,1,1)} = \Phi^+$, which is complete in the sense of Definition \ref{D3.2.3} while $\Phi^+_{(-1,-1,-1)} = \{\varepsilon_1+\varepsilon_2,\varepsilon_1+\varepsilon_3,\varepsilon_2+\varepsilon_3\}$ is not.
\end{example}

We now turn to the question of blowing up a given toric layer $C\in\cC(\Phi^+_C)$ inside $T^k$.

\begin{lemma}\label{L3.2.3}
    Let $C\in\cC(\Phi^+)$ be a layer in $T^k$, and let $\mathfrak{a}_C = \langle q^\alpha - a\mid \alpha\in\overline{\Lambda_C},\; a = \alpha\rvert_C\rangle\subset \mathbb{C}[T^k]$ be the ideal of polynomial functions on $T^k$ which vanish on $C$.  Then $C$ is a locally complete intersection, in the sense that $\mathfrak{a}_C$ is generated by a regular sequence. 
    That is, one can write $\mathfrak{a}_C = \langle f_1,\ldots,f_l\rangle$ for generators $f_1,\ldots,f_l\in \mathfrak{a}_C$, $l\leq k$ satisfying the property that 
    \begin{enumerate}
        \item $\mathbb{C}[T^k]/\langle f_1,\ldots,f_l\rangle\neq 0$
        \item For each $i=1,\ldots,l$, we have $f_i$ is a nonzerodivisor in $\mathbb{C}[T^k]/\langle f_1,\ldots,f_{i-1}\rangle$.  
    \end{enumerate}
\end{lemma}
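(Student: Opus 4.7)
The plan is to put $T^k$ in coordinates adapted to $C$ in which $\mathfrak{a}_C$ is visibly generated by $l = \mathrm{rk}(\overline{\Lambda_C})$ elements of the form ``coordinate minus root of unity,'' and then to observe that the successive partial quotients are Laurent polynomial rings, hence integral domains, so the sequence is regular.

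The first step is to note that the sublattice $\overline{\Lambda_C} \subset \Lambda$ is saturated by construction: if $\beta \in \Lambda$ satisfies $m\beta \in \Lambda_C$ for some $m \geq 1$, then $\beta \in \langle \Lambda_C \rangle_{\mathbb{C}} \cap \Lambda = \overline{\Lambda_C}$. Hence $\Lambda/\overline{\Lambda_C}$ is finitely generated and torsion-free, so free, and the short exact sequence
\[
0 \longrightarrow \overline{\Lambda_C} \longrightarrow \Lambda \longrightarrow \Lambda/\overline{\Lambda_C} \longrightarrow 0
\]
splits. I then choose a $\mathbb{Z}$-basis $\alpha_1, \ldots, \alpha_l$ of $\overline{\Lambda_C}$ and extend it by $\alpha_{l+1}, \ldots, \alpha_k$ to a $\mathbb{Z}$-basis of $\Lambda$. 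Setting $q'_i := q^{\alpha_i}$ yields an identification $\mathbb{C}[T^k] \simeq \mathbb{C}[(q'_1)^{\pm}, \ldots, (q'_k)^{\pm}]$.

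The second step is to make the constants $\zeta_i := \alpha_i \rvert_C$ explicit. Pick any point $p \in C$ and apply Lemma \ref{L3.2.2} to present $C$ via the complete subset $A = \Phi^+_{C,p}$, so that $\beta \rvert_C = 1$ for every $\beta \in A$. For each $i \leq l$, some positive multiple $m_i \alpha_i$ lies in $\langle A \rangle_{\mathbb{Z}}$, whence $\alpha_i(t)^{m_i} = 1$ for all $t \in C$; since $C$ is connected, $\alpha_i$ is constant on $C$ with value $\zeta_i = \alpha_i(p)$, an $m_i$-th root of unity. I then claim
\[
\mathfrak{a}_C = \langle q'_1 - \zeta_1, \ldots, q'_l - \zeta_l \rangle.
\]
The inclusion $\supset$ is immediate. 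For $\subset$, any generator $q^\beta - \beta \rvert_C$ with $\beta \in \overline{\Lambda_C}$ can be written uniquely as $\beta = \sum_{i=1}^l c_i \alpha_i$ with $c_i \in \mathbb{Z}$, so that modulo the right-hand side $q^\beta = \prod_i (q'_i)^{c_i} \equiv \prod_i \zeta_i^{c_i} = \beta(p) = \beta \rvert_C$.

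Finally, I verify the regular sequence property. For each $0 \leq j \leq l$, the partial quotient
\[
\mathbb{C}[T^k]/\langle q'_1 - \zeta_1, \ldots, q'_j - \zeta_j \rangle \;\simeq\; \mathbb{C}[(q'_{j+1})^{\pm}, \ldots, (q'_k)^{\pm}]
\]
is a nonzero integral domain; hence $q'_{j+1} - \zeta_{j+1}$ is a nonzerodivisor in it, and the $j = l$ quotient is nonzero. The main conceptual point is the saturation used in the first step: without it, a $\mathbb{Z}$-generating set for $\Lambda_C$ could fail to be primitive in $\Lambda$ and would not extend to an integral basis of $\Lambda$, obstructing the passage to the adapted coordinates $q'_i$.
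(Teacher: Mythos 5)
Your proof is correct and follows essentially the same strategy as the paper's: change coordinates on $T^k$ so that $\mathfrak{a}_C$ becomes generated by elements of the form ``coordinate minus root of unity,'' then check regularity directly. The difference lies in how you reach those coordinates. The paper takes a $\mathbb{Z}$--basis of $\overline{\Lambda_C}$, applies Smith normal form to bring the ideal to $\langle q_j^{d_j}-a_j\rangle$ with possibly $d_j>1$, and then factors each $q_j^{d_j}-a_j$ into linear terms and invokes connectedness of $C$ to single out the one factor lying in $\mathfrak{a}_C$. You instead note up front that $\overline{\Lambda_C}=\langle\Lambda_C\rangle_\mathbb{C}\cap\Lambda$ is a saturated sublattice, so $\Lambda/\overline{\Lambda_C}$ is free and a $\mathbb{Z}$--basis of $\overline{\Lambda_C}$ extends directly to one of $\Lambda$; this produces the desired generators $q_i'-\zeta_i$ in one step. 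Your observation in fact explains why the paper's detour is harmless: the invariant factors $d_j$ of a basis of a saturated sublattice are automatically all $1$, so the Smith normal form step and the subsequent factoring are vacuous. You also make explicit what the paper leaves to ``by inspection'' by exhibiting each partial quotient as a Laurent polynomial ring, hence a domain. One very minor imprecision: to conclude torsion-freeness of $\Lambda/\overline{\Lambda_C}$ you want to apply your saturation argument with $m\beta\in\overline{\Lambda_C}$ rather than $m\beta\in\Lambda_C$; the same computation works verbatim since $\overline{\Lambda_C}\subset\langle\Lambda_C\rangle_\mathbb{C}$.
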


\begin{proof}
    Let $\alpha_1,\ldots,\alpha_l\in\overline{\Lambda_C}$ be a $\mathbb{Z}$--basis for $\overline{\Lambda_C}$ and let $a_1,\ldots,a_l\in\mathbb{C}$ be the constant values assumed by the $\alpha_1',\ldots,\alpha_l'$ on $C$ (so the $a_j$ are roots of unity).  Then we can write $\mathfrak{a}_C = \langle q^{\alpha_j}-a_j\mid j=1,\ldots,l\rangle$.  In following the remark immediately after Definition \ref{D3.2.1}, we form the full rank matrix from the components of the $\alpha_j$ as  
    \begin{align*}
        [\alpha] = \begin{pmatrix}
            (\alpha_1)^{(1)}&\cdots & (\alpha_1)^{(k)}\\
            \vdots & \ddots &\vdots\\
            (\alpha_l)^{(1)} &\cdots &(\alpha_l)^{(k)}
        \end{pmatrix}.
    \end{align*}
    By repeatedly changing the generators of the ideal $\mathfrak{a}_C$ and by judiciously changing coordinates on $T^k$, we can bring the matrix $[\alpha]$ to Smith normal form.  Thus,  $\mathfrak{a}_C = \langle q_j^{d_j} - a_j\mid j=1,\ldots,l\rangle\subset \mathbb{C}[q_1^{\pm},\ldots,q_k^{\pm}]$ for some $d_1,\ldots,d_l\in\mathbb{Z}$, with $a_1,\ldots,a_l\in\mathbb{C}$.  But in factoring $q_j^{d_j}-a_j = \prod_{j'=1}^{d_j}\left(q-\zeta_{d_j}^{j'}\right)$ into roots of unity, we find that the connectedness of $C$, together with the fact that $q_j^{d_j}-a_j$ are generators of $\mathfrak{a}_C$, imply that one such factor $q_j-\zeta_{d_j}^{j'}\in\mathfrak{a}_C$.  Repeating this argument for each $j=1,\ldots,l$, it follows that the generators of $\mathfrak{a}_C$ must be written as $\mathfrak{a}_C = \langle q_1-\zeta_{d_1}^{i_1},\ldots,q_l-\zeta_{d_l}^{i_l} \rangle$.  By inspection, the sequence $\left\{q_1-\zeta_{d_1}^{i_1},\ldots,q_l-\zeta_{d_l}^{i_l}\right\}$ satisfies properties (a) and (b) in the statement of the Lemma and hence, it is a regular sequence.     
\end{proof}

\begin{remark}
    In particular, since $C$ is closed, this proof implies that $C = V(q_1-\zeta_{d_1}^{i_1},\ldots,q_l-\zeta_{d_l}^{i_l}) = V(\mathfrak{a}_C) \subset T^k$.
\end{remark}

The following lemma is standard (see, e.g. Theorem 22.3.8 of~\cite{V25}).  

\begin{lemma}
    If $A$ is a commutative ring and $\mathfrak{a}\subset A$ is generated by a regular sequence, then there is a natural isomorphism
    \begin{align*}
        \mathrm{Sym}^n_A(\mathfrak{a}/\mathfrak{a}^2)\simeq \mathfrak{a}^n/\mathfrak{a}^{n+1}.
    \end{align*}
\end{lemma}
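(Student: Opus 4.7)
The plan is to produce an explicit natural surjection
\begin{align*}
\phi_n \colon \mathrm{Sym}^n_A(\mathfrak{a}/\mathfrak{a}^2) \twoheadrightarrow \mathfrak{a}^n/\mathfrak{a}^{n+1}
\end{align*}
that exists for \emph{any} ideal $\mathfrak{a}\subset A$, and then use the regular sequence hypothesis to prove injectivity. The surjection is constructed as follows: multiplication in $A$ gives $\mathfrak{a}^{\otimes n} \to \mathfrak{a}^n$, which descends to a well-defined map $(\mathfrak{a}/\mathfrak{a}^2)^{\otimes_{A/\mathfrak{a}} n}\to \mathfrak{a}^n/\mathfrak{a}^{n+1}$ since $\mathfrak{a}^2 \cdot \mathfrak{a}^{n-1}\subset \mathfrak{a}^{n+1}$, and by the symmetry of the multiplication this factors through the symmetric power. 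Surjectivity is immediate because $\mathfrak{a}^n$ is generated by $n$-fold products of elements of $\mathfrak{a}$.

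For injectivity, write $\mathfrak{a} = (f_1,\ldots,f_r)$ where $f_1,\ldots,f_r$ is a regular sequence (this is the content of the hypothesis, which we can take by the previous lemma). The first step is to invoke the classical fact that for a regular sequence, $\mathfrak{a}/\mathfrak{a}^2$ is a free $A/\mathfrak{a}$-module of rank $r$ on the classes $\bar f_1,\ldots,\bar f_r$. This identifies $\mathrm{Sym}^n_A(\mathfrak{a}/\mathfrak{a}^2)$ with the degree-$n$ piece of $(A/\mathfrak{a})[x_1,\ldots,x_r]$, so we are reduced to showing: whenever $P\in A[x_1,\ldots,x_r]$ is a homogeneous polynomial of degree $n$ whose evaluation $P(f_1,\ldots,f_r)$ lies in $\mathfrak{a}^{n+1}$, all coefficients of $P$ lie in $\mathfrak{a}$. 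I would prove this by induction on $r$. The base case $r=1$ is transparent: if $af_1^n = bf_1^{n+1}$, then since $f_1^n$ is a non-zero-divisor (a product of non-zero-divisors), we conclude $a = bf_1\in \mathfrak{a}$.

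For the inductive step, set $\mathfrak{b} = (f_1,\ldots,f_{r-1})$, separate out the power of $x_r$ by writing $P = \sum_{j=0}^n P_j\cdot x_r^j$ with $P_j \in A[x_1,\ldots,x_{r-1}]$ homogeneous of degree $n-j$, and exploit the fact that $f_r$ is a non-zero-divisor modulo $\mathfrak{b}$ together with the inductive hypothesis applied to the regular sequence $f_1,\ldots,f_{r-1}$. The main obstacle will be this inductive step: the relation $P(f_1,\ldots,f_r)\in\mathfrak{a}^{n+1}$ mixes $\mathfrak{b}^{\,\bullet}$ and powers of $f_r$ in a nontrivial way, and extracting the desired conclusion on each $P_j$ requires carefully tracking how the filtration by powers of $f_r$ interacts with the $\mathfrak{b}$-adic filtration. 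The cleanest way around this is to use the Koszul complex $K_\bullet(f_1,\ldots,f_r)$, which is exact in positive degrees precisely because $f_1,\ldots,f_r$ is a regular sequence; the first syzygies among the $f_i$ are then generated by the trivial Koszul relations $f_i f_j - f_j f_i = 0$, and an induction on $n$ upgrades this statement on first syzygies into the desired identification of $\mathfrak{a}^n/\mathfrak{a}^{n+1}$ with the $n$-th symmetric power.
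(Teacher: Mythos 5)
The paper itself gives no proof of this lemma; it simply cites \cite{foag} and moves on, so there is no argument of the paper's to compare against. Your job, then, is to be judged on its own.

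Your setup is correct and standard: the natural surjection $\mathrm{Sym}^n_A(\mathfrak{a}/\mathfrak{a}^2)\twoheadrightarrow \mathfrak{a}^n/\mathfrak{a}^{n+1}$ exists for any ideal, and the reduction of injectivity to the statement ``if $P\in A[x_1,\dots,x_r]$ is homogeneous of degree $n$ and $P(f_1,\dots,f_r)\in\mathfrak{a}^{n+1}$, then every coefficient of $P$ lies in $\mathfrak{a}$'' is the right one, using the standard fact that $\mathfrak{a}/\mathfrak{a}^2$ is free over $A/\mathfrak{a}$ on $\bar f_1,\dots,\bar f_r$. The base case $r=1$ is also fine.

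The gap is exactly where you flag it: you never actually prove the inductive step, and both escape routes you sketch are materially incomplete. In the direct filtration argument, writing $P=\sum_j P_j x_r^j$ and using $\mathfrak{a}^{n+1}=\sum_{k}\mathfrak{b}^{n+1-k}f_r^k$ does not let you isolate the $P_j$ without first establishing a nontrivial auxiliary fact, typically that $f_r$ remains a non-zero-divisor on $A/\mathfrak{b}^m$ for every $m$ (or an equivalent statement about the compatibility of the two filtrations); this is itself a theorem requiring the regular sequence hypothesis and is not something you can wave at. The Koszul handoff is likewise not a one-liner: exactness of $K_\bullet(f_1,\dots,f_r)$ gives you that the module of first syzygies among the $f_i$ is spanned by the Koszul relations, but passing from this degree-one statement to the full isomorphism $\mathrm{Sym}_{A/\mathfrak{a}}(\mathfrak{a}/\mathfrak{a}^2)\xrightarrow{\sim}\mathrm{gr}_\mathfrak{a}(A)$ is precisely the content of the theorem that ideals generated by regular sequences are of linear type (the Rees-algebra form of the statement), and the claimed ``induction on $n$ upgrades this'' is where that entire proof lives. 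As written, the injectivity half of the lemma — which is the only half that uses the hypothesis — is not established.
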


\begin{definition}
    Let $C\in\cC(\Phi^+)$ be any layer in $T^k$ and let $\mathfrak{a}_C$ be the corresponding ideal.  By Lemma \ref{L3.2.3}, we can choose generators $\mathfrak{a}_C = \langle q^{\alpha_j}-a_j\mid j=1,\ldots,l\rangle\subset \mathbb{C}[T^k]$, so that $C = V(\mathfrak{a}_C)$.  
    The blow-up of $T^k$ along $V(\mathfrak{a}_C)$ is given by 
    \begin{align*}
        \mathrm{Bl}_{C}T^k 
        &= \mathrm{Proj}_{\mathbb{C}[T^k]}\left(\mathbb{C}[T^k]\oplus \mathfrak{a}_C\oplus \mathfrak{a}_C^2\oplus \ldots\right),
    \end{align*}
    with a natural projection map $\mathrm{Bl}_{C}T^k\longrightarrow T^k$.  The exceptional fibre can be computed using Lemma \ref{L3.2.3}:
    \begin{align*}
        \mathrm{E}_{C}T^k
        &= \mathrm{Proj}\left(\frac{\mathbb{C}[T^k]}{\mathfrak{a}_C}\oplus \frac{\mathfrak{a}_C}{\mathfrak{a}_C^2}\oplus \frac{\mathfrak{a}_C^2}{\mathfrak{a}_C^3}\oplus\ldots\right)\\
        &= \mathrm{Proj}\left(\mathrm{Sym}^\bullet_{\frac{\mathbb{C}[T^k]}{\mathfrak{a}_C}}\frac{\mathfrak{a}_C}{\mathfrak{a}_C^2}\right) \\
        &= \mathbb{P}\left(\frac{\mathfrak{a}_C}{\mathfrak{a}_C^2}\right)^*
    \end{align*}
    with a natural map $E_CT^k\longrightarrow C$.
    
    In defining $N_T(C) = (\mathfrak{a}_C/\mathfrak{a}_C^2)^*$, the exceptional fibre $E_C T^k = \mathbb{P}(N_{T^k}C)$, forms a trivial bundle over $C$.  
    
    Define $\mathbb{P}_C$ to be the fibre of $E_CT^k$ over $C$.  If we write $\mathfrak{a}_C = \langle q^{\alpha_j}-a_j\mid j=1,\ldots,l\rangle$, then we have $\mathbb{P}_C = \mathbb{P}^l$. 
\end{definition}

From this description, we have the following lemma:

\begin{lemma}
    The blow-up $\mathrm{Bl}_{C}T^k$ is given as the closure of the image of the following embedding
    \begin{align*}
       \iota\times \varphi_C\;\colon\;T^k\setminus C &\longrightarrow T^k\times \mathbb{P}_C\\
       t&\longmapsto (t,[\alpha_1(t)-a_1,\ldots, \alpha_l(t)-a_l]),
    \end{align*}
    where $\{\alpha_1,\ldots,\alpha_l\}\in\overline{\Lambda_C}$ is an integral basis, and $a_1,\ldots,a_l\in \mathbb{C}$ are the constant values assumed by $\alpha_i$ on $C$.      
\end{lemma}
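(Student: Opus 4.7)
The plan is to identify $\Bl_C T^k$ with the closure of the graph of the rational map defined by the generators of $\mathfrak{a}_C$, via the standard $\Proj$-to-graph-closure correspondence for blow-ups.

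First, by Lemma \ref{L3.2.3}, choosing an integral basis $\{\alpha_1,\ldots,\alpha_l\}$ of $\overline{\Lambda_C}$ produces functions $f_j := q^{\alpha_j} - a_j$ which form a regular sequence generating $\mathfrak{a}_C$, with $C = V(f_1,\ldots,f_l)$ scheme-theoretically. Hence at each point $t \in T^k \setminus C$ at least one $f_j(t)$ is nonzero, so the assignment $t \mapsto [f_1(t) : \cdots : f_l(t)]$ defines an honest morphism $\varphi_C \colon T^k \setminus C \to \mathbb{P}_C$. Pairing this with the open immersion $\iota$ yields the embedding $\iota \times \varphi_C$, whose image is precisely the graph of $\varphi_C$ inside $T^k \times \mathbb{P}_C$.

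Next, I invoke the standard description of blow-ups as graph closures: for any Noetherian scheme $X$ with an ideal sheaf $\mathcal{I}$ globally generated by sections $s_1,\ldots,s_l$, the blow-up $\Bl_{V(\mathcal{I})} X = \Proj \bigoplus_{n\geq 0} \mathcal{I}^n$ is canonically isomorphic to the scheme-theoretic closure in $X \times \mathbb{P}^{l-1}$ of the graph of $(s_1 : \cdots : s_l)$ on $X \setminus V(\mathcal{I})$. The isomorphism arises from the surjection of graded $\mathcal{O}_X$-algebras $\Sym^\bullet_{\mathcal{O}_X}(\mathcal{O}_X^{\oplus l}) \twoheadrightarrow \bigoplus_n \mathcal{I}^n$, $e_j \mapsto s_j$, whose $\Proj$ realizes $\Bl_{V(\mathcal{I})} X$ as a closed subscheme of $\mathbb{P}^{l-1}_X = X\times\mathbb{P}^{l-1}$ restricting to the graph embedding on the complement of $V(\mathcal{I})$. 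Applying this with $X = T^k$, $\mathcal{I} = \mathfrak{a}_C$, and $s_j = f_j$ gives exactly the claim.

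The one genuine subtlety is ensuring no spurious components appear in the $\Proj$, i.e., that the closure of the graph coincides with all of $\Bl_C T^k$ rather than a proper subscheme. This is automatic once $\mathfrak{a}_C$ is generated by a regular sequence, since the symmetric-algebra lemma stated just before this one yields $\mathfrak{a}_C^n/\mathfrak{a}_C^{n+1} \cong \Sym^n(\mathfrak{a}_C/\mathfrak{a}_C^2)$, and the Koszul resolution of $\mathbb{C}[T^k]/\mathfrak{a}_C$ controls the kernel of the symmetric-algebra surjection. Thus Lemma \ref{L3.2.3} supplies exactly the input needed to make the graph-closure description clean, and the lemma follows.
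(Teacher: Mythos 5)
The paper states this lemma without proof (``From this description, we have the following lemma:''), treating it as an immediate consequence of the preceding definition of $\mathrm{Bl}_C T^k$ as $\mathrm{Proj}$ of the Rees algebra together with Lemma~\ref{L3.2.3}. Your argument is the standard one the paper is implicitly relying on, and it is correct.

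One small calibration: you attribute to Lemma~\ref{L3.2.3} the claim that the specific functions $f_j = q^{\alpha_j} - a_j$ form a regular sequence, whereas that lemma literally only says $\mathfrak{a}_C$ admits \emph{some} regular generating sequence (exhibited after a Smith-normal-form change of coordinates). The upgrade to ``the $f_j$ themselves are regular'' is fine because $\mathbb{C}[T^k]$ is Cohen--Macaulay and $\mathfrak{a}_C = (f_1,\ldots,f_l)$ has height $l$, but it is worth flagging that extra step. Also, for the ``no spurious components'' point, integrality of $\mathbb{C}[T^k]$ already guarantees the Rees algebra is a domain, so the closure of the graph over $T^k \setminus C$ is all of $\mathrm{Bl}_C T^k$ regardless of the regular sequence property; the regular sequence is what makes the exceptional fibre a projective bundle $\mathbb{P}(\mathfrak{a}_C/\mathfrak{a}_C^2)^*$, which is how the paper defines $\mathbb{P}_C$, so you do still need it to identify the target $\mathbb{P}^{l-1}$ with $\mathbb{P}_C$ in the statement. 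Your reasoning reaches the right conclusion via slightly heavier machinery than is strictly necessary for the closure claim alone, but it matches the intent of the paper.
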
 

\begin{definition}
    The collection of maps $\{\varphi_C\mid C\in\cI(\Phi^+)\}$, together with the embedding $\iota\colon (T^k)^{\mathrm{reg}}\hookrightarrow T^k$ defines the map 
    \begin{align*}
        \psi = \iota\times \prod_{C\in\cI(\Phi^+)}\varphi_C\;\colon\; (T^k)^{\mathrm{reg}}\hookrightarrow T^k\times\prod_{C\in\cI(\Phi^+)}\mathbb{P}_C.
    \end{align*}
    Define $Z_{\Phi^+} = \overline{\psi((T^k)^{\mathrm{reg}})}$.
\end{definition} 

\begin{remark}
    We note that the map $\varphi_C$ is independent of the basis $\{\alpha_1,\ldots,\alpha_l\}$ chosen and hence, $Z_\Phi^+$ is also basis--independent. 
\end{remark}

\begin{lemma}
    The space $Z_{\Phi^+}$ is obtained by successively blowing up the strata in $\cI(\Phi^+)$ in order of increasing dimension.   
\end{lemma}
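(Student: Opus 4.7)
The plan is to produce a morphism from the iterated blow-up $Y$ onto $Z_\Phi^+$ and then verify that it is an isomorphism using the universal property of blowing up together with an induction on the dimension of the strata being blown up.

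First I would set up the induction. Let $Y_0 = T^k$, and for $j = 1, \ldots, k$ let $Y_j \to Y_{j-1}$ be the blow-up along the disjoint union of the proper transforms of those irreducible layers $C \in \mathcal{I}$ with $\dim C = j-1$. Write $Y := Y_k$. The induction hypothesis to maintain at each stage is that the proper transforms in $Y_j$ of the irreducible layers $C \in \mathcal{I}$ of dimension $\geq j$ are smooth and meet transversally; this is what makes the next blow-up center a disjoint union of smooth subvarieties, so that $Y_{j+1}$ is again smooth. This transversality is the crux, and it follows from the combinatorics of nested sets together with Lemma \ref{L3.2.3}: after blowing up a lower-dimensional irreducible layer $C'$ contained in two higher-dimensional irreducibles $C_1, C_2$, the two proper transforms meet only in the locus where $C_1 \cap C_2$ properly contains $C'$, so iterating separates them.

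Next I would construct a morphism $Y \to Z_\Phi^+$. For each irreducible $C \in \mathcal{I}$, by Lemma \ref{L3.2.3} the ideal $\mathfrak{a}_C$ is generated by the regular sequence $\{q^{\alpha_j} - a_j\}$, so the rational map $\varphi_C$ is the classical one associated to blowing up $\mathfrak{a}_C$. By the universal property of the blow-up, $\varphi_C$ extends to a morphism on any variety over $T^k$ on which $\mathfrak{a}_C \cdot \mathcal{O}$ is an invertible sheaf. By the induction above, $\mathfrak{a}_C \cdot \mathcal{O}_Y$ is principal once $C$ has been blown up at stage $\dim C + 1$. For a reducible layer $C$, the description of $\mathcal{C}_p(\Phi^+)$ in Definition \ref{D3.2.5} expresses $C$ as the intersection of its irreducible factors, and consequently the map $\varphi_C$ factors through the $\varphi_{C_i}$ for its irreducible factors. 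Thus we obtain a morphism
\[
\Psi \colon Y \longrightarrow T^k \times \prod_{C \in \mathcal{I}} \mathbb{P}_C,
\]
which agrees with $\psi$ on the preimage of $(T^k)^{\mathrm{reg}}$. Since $(T^k)^{\mathrm{reg}}$ is dense in $Y$ and $Z_\Phi^+$ is closed, $\Psi$ factors through a morphism $\widetilde{\Psi} \colon Y \to Z_\Phi^+$.

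Finally I would show $\widetilde\Psi$ is an isomorphism. It is birational, since both sides contain $(T^k)^{\mathrm{reg}}$ as a common dense open and $\widetilde\Psi$ restricts to the identity there. It is proper, since $Y \to T^k$ is proper (as a composition of blow-ups along closed centres) and $Z_\Phi^+ \to T^k$ is separated. The content of the proof is that $\widetilde\Psi$ is bijective on closed points: this is checked chart-by-chart, using the nested-set open charts on $Y$ provided by the De Concini--Procesi / Moci construction, and comparing the local coordinates there with the homogeneous coordinates on each $\mathbb{P}_C$ factor of $Z_\Phi^+$. The hard part is this last local comparison, because one must check that the fibre of $\widetilde\Psi$ over a boundary point of $Z_\Phi^+$ determined by a nested collection $\mathcal{S} \subset \mathcal{I}$ is a single reduced point, and this relies on the transversality of the proper transforms established in the induction step. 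Once bijectivity is secured, $Y$ being smooth (hence normal) combined with Zariski's Main Theorem promotes $\widetilde\Psi$ to an isomorphism, completing the proof.
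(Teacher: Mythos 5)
The paper's ``proof'' of this lemma is a bare citation: it points to Theorem 1.3 of Li Li's wonderful compactification paper and Remark 2.4 of Moci's paper, and delegates the entire argument to those references. Your proposal is a genuinely different kind of proof: instead of citing, you re-derive the key equivalence directly, and your strategy (universal property of blow-up to produce a morphism $Y\to Z_\Phi^+$, then birational $+$ proper $+$ bijective on points $+$ Zariski's Main Theorem) is in fact the same strategy Li Li uses to prove the cited theorem in general. So the content matches, but the paper buys brevity by outsourcing, while you buy self-containedness at the cost of a much longer argument with many details to discharge.

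Two cautions about the details you are deferring. First, your inductive hypothesis that the proper transforms ``are smooth and meet transversally'' is stronger than what is actually true or needed: the correct invariant (this is Li Li's Definition 2.1, restated as Definition~\ref{D3.4.1} in the paper) is that the proper transforms together with the accumulated exceptional divisors form an \emph{arrangement of subvarieties}, i.e.\ pairwise clean intersections with each intersection a disjoint union of members of the arrangement. Clean is weaker than transversal, and in a toric arrangement two layers of complementary dimension can easily meet in a set of codimension strictly less than the sum of codimensions, so transversality genuinely fails in general; the right condition to propagate is the arrangement condition. Second, you need to verify up front that the layers $\mathcal{C}(\Phi^+)$ together with the choice of building set $\mathcal{I}$ actually satisfy these arrangement and building-set axioms before any of the blow-up machinery applies---this verification is precisely what Moci's Remark 2.4 supplies, and Lemma~\ref{L3.2.3} (local complete intersection) is an ingredient but not the whole story; the building-set condition (Definition~\ref{D3.2.5} and the characterization in Definition~\ref{D3.2.6}) is also essential. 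With the inductive invariant corrected and the building-set verification made explicit, your sketch would be a legitimate, if lengthy, alternative to the citation.
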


\begin{proof}
    See Theorem 1.3 of~\cite{L09} and Remark 2.4 of~\cite{M11}.   
\end{proof}

We now define the open sets that are used to form an open cover of $Z_\Phi^+$.  Throughout, the remainder of this paper, we take $p\in\cC_0(\Phi^+)$.    

\begin{definition}
    Define a {\it flag} $\mathcal{F}^*$ to be a nested sequence of complete subsets $\emptyset \subsetneq A_1\subsetneq\ldots\subsetneq A_s = \Phi^+_p$ for some $p$.  We say that $\mathcal{F}^*$ is a {\it maximal flag} if we have $s = k$.  By completeness, we observe that $\mathrm{dim}\langle A_i\rangle_{\mathbb{C}}<\mathrm{dim}\langle A_{i+1}\rangle_{\mathbb{C}}$ so the maximal flags are precisely those flags which are not contained in a larger flag.  Define a {\it flag of layers} $\mathcal{F}$ as a nested sequence of layers $T^k \supsetneq C_1\supsetneq\ldots\supsetneq C_s = \{p\}$ for some $p$.  This flag $\mathcal{F}$ is said to be a {\it maximal flag of layers} if $s = k$.
\end{definition}    

\begin{definition}\label{D3.2.6}
    For any $p$, define a {\it (maximal) nested set} $\mathscr{S}_p^*$ as a collection of complete subsets in $\mathcal{P}(\Phi^+_p)$ satisfying the following property: there exists a (maximal) flag $\mathcal{F}^* = (\emptyset \subsetneq A_1\subsetneq \ldots\subsetneq A_{s-1}\subsetneq A_s = \Phi^+_p)$ for which $\mathscr{S}_p^*$ is the set of all irreducible factors in $\Phi^+_p$ of the $A_i$, $i=1,\ldots,s$.  Let $\mathcal{M}_p^*$ be the set of all maximal nested sets corresponding to flags in $\Phi^+_p$.  
    
    For any $p$, define a {\it (maximal) nested set of layers} $\mathscr{S}_p\subset \cI_p(\Phi^+)$ as a collection of layers in $\cC_p(\Phi^+)$ satisfying the following property: there exists a (maximal) flag of layers $\mathcal{F} = (T^k\supsetneq C_1\supsetneq \ldots\supsetneq C_s = \{p\})$ for which $\mathscr{S}_p$ is the set of all irreducible factors in $\cC_p(\Phi^+)$ of the $C_i$, $i=1,\ldots,s$.  Let $\mathcal{M}_p$ be the set of all maximal nested sets of layers corresponding to flags of layers terminating at $\Phi^+_p$.  
    
    Define $\mathcal{M} = \bigsqcup_{p\in\cC_0(\Phi^+)}\mathcal{M}_p$.
\end{definition}

\begin{remark}
    It is the set $\mathcal{M}$ that will be used to index the coordinate charts on the partial compactification of $T^\mathrm{reg}$.   
\end{remark}

\begin{remark}
    Definition \ref{D3.2.6} is equivalent to saying that the nested sets $\mathscr{S}_p^*$ are precisely the sets for which any mutually incomparable elements $B_1,\ldots,B_r\in\mathscr{S}_p^*$ in $\Phi^+_p$ have a complete union $B:= B_1\cup\ldots\cup B_r$, which presents $B$ as a decomposition into irreducible factors $B_1,\ldots,B_r$ in $\Phi^+_p$.  Moreover, $\mathscr{S}_p^*$ is maximal iff it is not contained in a larger nested set.
\end{remark}

\begin{lemma}
    For each $p\in\cC_0(\Phi^+)$, the bijection of Lemma \ref{L3.2.2} induces the following bijection:
    \begin{align*}
        \mathcal{M}_p^*&\longrightarrow \mathcal{M}_p\\
        \mathscr{S}_p^* = \{A_1,\ldots,A_k\}&\longmapsto \{C_p(A_1),\ldots,C_p(A_k)\} = \mathscr{S}_p
    \end{align*}
\end{lemma}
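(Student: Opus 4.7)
My plan is to establish this bijection directly by transporting the flag data through Lemma \ref{L3.2.2} and invoking the remark following Definition \ref{D3.2.5} to translate the decomposition structure.

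First, I would verify well-definedness of the forward map. Start with $\mathcal{S}_p^* \in \mathcal{M}_p^*$ arising from a maximal flag $\emptyset \subsetneq A_1 \subsetneq \ldots \subsetneq A_k = \Phi^+_p$. Since the bijection of Lemma \ref{L3.2.2} is inclusion-reversing, applying $C_p(-)$ term-by-term yields a strictly decreasing chain of layers $T^k \supsetneq C_p(A_1) \supsetneq \ldots \supsetneq C_p(A_k) = \{p\}$. The maximality of the original flag forces $\dim\langle A_i\rangle_{\mathbb{C}}$ to increase by one at each step, which by the dimension formula for $C_p(A_i)$ (i.e.\ $\dim C_p(A_i) = k - \dim\langle A_i\rangle_{\mathbb{C}}$) gives a strictly decreasing chain of layers of lengths $k, k-1, \ldots, 0$, hence a maximal flag of layers terminating at $\{p\}$. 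By the remark after Definition \ref{D3.2.5}, the bijection $A \mapsto C_p(A)$ sends decompositions in $\Phi^+_p$ to decompositions in $\mathcal{C}_p(\Phi^+)$, and consequently sends irreducible factors to irreducible factors. Therefore the image $\{C_p(A) \mid A \in \mathcal{S}_p^*\}$ coincides precisely with the set of irreducible factors in $\mathcal{C}_p(\Phi^+)$ of the layers $C_p(A_i)$, producing an element of $\mathcal{M}_p$.

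Second, I would construct the inverse using the other direction of Lemma \ref{L3.2.2}, namely $C \mapsto \Phi^+_{C,p}$. Given $\mathcal{S}_p \in \mathcal{M}_p$ arising from a maximal flag of layers $T^k \supsetneq C_1 \supsetneq \ldots \supsetneq C_k = \{p\}$, the same argument run in reverse yields a maximal flag of complete subsets $\emptyset \subsetneq \Phi^+_{C_1,p} \subsetneq \ldots \subsetneq \Phi^+_{C_k,p} = \Phi^+_p$, and irreducibility is once again preserved by the remark after Definition \ref{D3.2.5}. Thus the assignment $\mathcal{S}_p \mapsto \{\Phi^+_{C,p} \mid C \in \mathcal{S}_p\}$ defines a well-defined map $\mathcal{M}_p \to \mathcal{M}_p^*$. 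Since the two constructions amount to applying the mutually inverse bijections of Lemma \ref{L3.2.2} element-by-element, they are mutually inverse as maps on the nested set level.

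The main (and essentially only) subtle point is the preservation of irreducibility, which requires the translation between decompositions in $\Phi^+_p$ and decompositions in $\mathcal{C}_p(\Phi^+)$. This is however already recorded in the remark following Definition \ref{D3.2.5}: a decomposition $A = A_1 \sqcup A_2$ with $\langle A\rangle_{\mathbb{Z}} = \langle A_1\rangle_{\mathbb{Z}} \oplus \langle A_2\rangle_{\mathbb{Z}}$ translates under $C_p(-)$ to $C_p(A) \subset C_p(A_1) \cap C_p(A_2)$ witnessing a decomposition in $\mathcal{C}_p(\Phi^+)$, because the identification $\Lambda_{C_p(A)} = \langle A\rangle_{\mathbb{Z}}$ intertwines the two direct-sum conditions. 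With this translation in hand, both directions of the bijection are immediate consequences of Lemma \ref{L3.2.2} applied elementwise.
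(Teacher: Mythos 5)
Your proof is correct and follows the same approach as the paper's (which is in fact left truncated in the source): transport the flag data through the inclusion-reversing bijection of Lemma \ref{L3.2.2} and invoke the remark after Definition \ref{D3.2.5} to match decompositions and hence irreducible factors. Your writeup actually completes the argument the paper leaves unfinished, including the dimension-counting step ensuring maximality is preserved.
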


\begin{proof}
    This follows directly from the definitions in Lemma \ref{L3.2.2}.     
\end{proof}

\begin{example}
    Recall the hypertoric variety of Example \ref{E1}, where 
    \begin{align*}
        \Phi^+ = \{\varepsilon_1,\varepsilon_2,\varepsilon_3,\varepsilon_1+\varepsilon_2, \varepsilon_1+\varepsilon_3,\varepsilon_2+\varepsilon_3,\varepsilon_1+\varepsilon_2+\varepsilon_3\}\subset (\mathfrak{t}_\mathbb{Z}^k)^*,
    \end{align*}
    whose points are $1=(1,1,1)$ and $-1=(-1,-1,-1)$ and where 
    \begin{align*}
        \Phi^+_1 &= \Phi^+\\
        \Phi^+_{-1} &= \{\varepsilon_1+\varepsilon_2,\varepsilon_1+\varepsilon_3,\varepsilon_2+\varepsilon_3\}.
    \end{align*}
    Then the complete subsets of $\Phi_1^+$ and $\Phi_{-1}^+$ fit into the following poset diagram:
    \begin{center}
\begin{tikzcd}
                                                                                                                                                  &  & \emptyset                                                                                                                    &  &                                                                                                                                                          &  & \emptyset                                                                        \\
\{\varepsilon_i\} \arrow[rru, no head]                                                                                                            &  & \{\varepsilon_i+\varepsilon_j\} \arrow[u, no head]                                                                           &  & \{\varepsilon_i+\varepsilon_j+\varepsilon_k\} \arrow[llu, no head]                                                                                       &  & \{\varepsilon_i+\varepsilon_j\} \arrow[u, no head]                               \\
{\{\varepsilon_i,\varepsilon_j,\varepsilon_i+\varepsilon_j\}} \arrow[u, no head, shift right] \arrow[rru, no head] \arrow[u, no head, shift left] &  & {\{\varepsilon_i+\varepsilon_j,\varepsilon_i+\varepsilon_k\}} \arrow[u, no head, shift right] \arrow[u, no head, shift left] &  & {\{\varepsilon_i,\varepsilon_j+\varepsilon_k,\varepsilon_i+\varepsilon_j+\varepsilon_k\}} \arrow[u, no head] \arrow[llu, no head] \arrow[llllu, no head] &  & {\{\varepsilon_i+\varepsilon_j,\varepsilon_i+\varepsilon_k\}} \arrow[u, no head] \\
                                                                                                                                                  &  & \Phi_1^+ \arrow[u, no head] \arrow[llu, no head] \arrow[rru, no head]                                                        &  &                                                                                                                                                          &  & \Phi_{-1}^+,
                                                                                                                                                  \arrow[u, no head]                                                  
\end{tikzcd}
    \end{center}
    where $\{i,j,k\}= \{1,2,3\}$, and where $i,j,k$ are fixed for each of the sets in the above lattice.  This poset of complete sets corresponds bijectively to the following layers:
    \begin{center}
\begin{tikzcd}
                                                                                                  &  & T^k                                                                                    &  &                                                                                 &  & T^k                                    \\
\{q_i=1\} \arrow[rru, no head]                                                                    &  & \{q_iq_j=1\} \arrow[u, no head]                                                        &  & \{q_iq_jq_k=1\} \arrow[llu, no head]                                            &  & \{q_iq_j=1\} \arrow[u, no head]        \\
\{q_i=q_j=1\} \arrow[u, no head, shift left] \arrow[u, no head, shift right] \arrow[rru, no head] &  & \{q_iq_j = q_iq_k = 1\} \arrow[u, no head, shift left] \arrow[u, no head, shift right] &  & \{q_i=q_jq_k=1\} \arrow[u, no head] \arrow[llu, no head] \arrow[llllu, no head] &  & \{q_iq_j=q_iq_k=1\} \arrow[u, no head] \\
                                                                                                  &  & {\{1\}} \arrow[llu, no head] \arrow[u, no head] \arrow[rru, no head]             &  &                                                                                 &  & {\{-1\}.} \arrow[u, no head]   
\end{tikzcd}
    \end{center}

    The only complete sets in $\Phi_1^+$, and in $\Phi_{-1}^+$ that are reducible are $\{\varepsilon_i+\varepsilon_j\}\sqcup \{\varepsilon_i+\varepsilon_k\}$.  

    An example of nested set in $\Phi_1^+$ and one in $\Phi_{-1}^+$ are given by the following posets:
    \begin{center}
\begin{tikzcd}
                   & \{\varepsilon_1\}                                                                &   &                                 &                                                     &                                 &                        \\
\mathscr{S}_1^* =  & {\{\varepsilon_1,\varepsilon_2,\varepsilon_1+\varepsilon_2\}} \arrow[u, no head] & ; & \{\varepsilon_1+\varepsilon_2\} &                                                     & \{\varepsilon_2+\varepsilon_3\} &  = \mathscr{S}_{-1}^*. \\
                   & \Phi_1^+ \arrow[u, no head]                                                      &   &                                 & \Phi_{-1}^+ \arrow[lu, no head] \arrow[ru, no head] &                                 &                       
\end{tikzcd}
    \end{center}
    These correspond bijectively to the following nested sets of layers $\mathscr{S}_1$ and $\mathscr{S}_{-1}$ in $\cC_1(\Phi^+)$ and $\cC_{-1}(\Phi^-)$, respectively:
    \begin{center}
\begin{tikzcd}
                 & \{q_1=1\}                        &   &              &                                                          &              &                     \\
\mathscr{S}_1 =  & \{q_1=q_2=1\} \arrow[u, no head] & ; & \{q_1q_2=1\} &                                                          & \{q_2q_3=1\} & = \mathscr{S}_{-1}. \\
                 & \{1\} \arrow[u, no head]         &   &              & {\{-1\}} \arrow[lu, no head] \arrow[ru, no head] &              &                    
\end{tikzcd}
    \end{center}

    The other nested sets of layers are given by the following:
    \begin{align*}
        \mathscr{S}_{-1}^* = &\left\{\{\varepsilon_i+\varepsilon_j\},\{\varepsilon_j+\varepsilon_k\},\Phi_{-1}^+\right\},\\
        \mathscr{S}_{1}^* = &\left\{\{\varepsilon_i+\varepsilon_j\},\{\varepsilon_j+\varepsilon_k\},\Phi_1^+\right\},\;\left\{\{\varepsilon_i\},\{\varepsilon_i,\varepsilon_j,\varepsilon_i+\varepsilon_j\},\Phi_1^+\right\},\;
        \left\{\{\varepsilon_i+\varepsilon_j\},\{\varepsilon_i,\varepsilon_j,\varepsilon_i+\varepsilon_j\},\Phi_1^+\right\},\\
        &\left\{\{\varepsilon_i\},\{\varepsilon_i,\varepsilon_j+\varepsilon_k,\varepsilon_i+\varepsilon_j+\varepsilon_k\},\Phi_1^+\right\}, \left\{\{\varepsilon_j+\varepsilon_k\},\{\varepsilon_i,\varepsilon_j+\varepsilon_k,\varepsilon_i+\varepsilon_j+\varepsilon_k\},\Phi_1^+\right\},\\
        &\left\{\{\varepsilon_i+\varepsilon_j+\varepsilon_k\},\{\varepsilon_i,\varepsilon_j+\varepsilon_k,\varepsilon_i+\varepsilon_j+\varepsilon_k\},\Phi_1^+\right\},
    \end{align*}
    where \{i,j,k\} = \{1,2,3\}.  Notice that in the second example below Definition \ref{D3.2.3}, the sets $\Phi_1^+$ and $\Phi_{-1}^+$ are irreducible sets.   
\end{example}

We will now define an open cover of $Z_{\Phi^+}$ whose open sets are indexed by the maximal nested sets of layers centered at any $p\in\cC_0(\Phi^+)$.  

\begin{definition}
    Given a (maximal) nested set of layers $\mathscr{S}$, we say that $\mathcal{B}^\mathscr{S}$ is an {\it adapted basis to $\mathscr{S}$} if it is an integral basis for $\Lambda$ so that for every layer $C\in\mathscr{S}$, the intersection $\mathcal{B}^\mathscr{S}\cap \overline{\Lambda_C}$ is an integral basis for $\overline{\Lambda_C}$.
\end{definition}

\begin{remark}
    For any maximal nested set of layers $\mathscr{S}$, such an adapted basis to $\mathscr{S}$ always exists.  (See Lemma 3.9 of~\cite{M11}).    
\end{remark}

\begin{example}
    In the above example, an example of an adapted basis to $\mathscr{S}_1$ is $\{\varepsilon_1,\varepsilon_1+\varepsilon_2,\varepsilon_3\}$ and an example of an adapted basis to $\mathscr{S}_{-1}$ is $\{\varepsilon_1+\varepsilon_2,\varepsilon_2,\varepsilon_2+\varepsilon_3\}$.  Notice that in the latter case, not all the vectors are in $\Phi_{-1}^+$.  Indeed, since $\Phi_{-1}^+$ is not complete, no elements in $\Phi_{-1}^+$ could form a basis for $\Lambda\simeq \mathbb{Z}^3$.  
\end{example}

\begin{lemma}\label{L3.2.4}(Lemma 3.8(ii) of~\cite{M11})
    Given $C\in\cI(\Phi^+)$, which is not minimal, there is a unique $C'\in\mathscr{S}$ which is maximal among all elements $D\in\mathscr{S}$ such that $D\subset C$.  Let $\overline{C}$ denote this element, and call it the {\it core} of $C$.  
    
    If $C\in\mathscr{S}$, then there is a unique element which is the maximum among all elements $D\in \mathscr{S}$ for which $D\subsetneq C$.  We denote this unique element $s(C)$, and call it the {\it successor} of $C$ in $\mathscr{S}$.   
\end{lemma}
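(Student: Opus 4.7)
The plan is to translate both claims to the world of complete subsets via the inclusion-reversing bijection of Lemma~\ref{L3.2.2}, and then to exploit the characterization of nested sets in the remark following Definition~\ref{D3.2.6}: mutually incomparable elements $B_1,\ldots,B_r$ of a nested set $\mathcal{S}^*_p$ are precisely the irreducible factors in $\Phi^+_p$ of their union $B_1 \cup \cdots \cup B_r$, which forces the direct-sum decomposition $\langle B_i\rangle_\mathbb{Z} \cap \langle B_j\rangle_\mathbb{Z} = 0$ for distinct $i,j$, and in particular $B_i \cap B_j = \emptyset$ since $0 \notin \Phi^+$.

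For part (a), I would first fix $p$ with $\mathcal{S} = \mathcal{S}_p \in \mathcal{M}_p$ and assume $C \in \mathcal{I}$ is an irreducible layer containing $p$ (otherwise the set is empty and there is nothing to prove). To produce a candidate inside $C$, I would decompose $\Phi^+_p = B_1 \sqcup \cdots \sqcup B_m$ into its irreducible factors in $\Phi^+_p$. These are pairwise disjoint subsets of $\Phi^+$, so the complete subset $A := \Phi^+_C \subset \Phi^+_p$ partitions as $A = \bigsqcup_i (A \cap B_i)$. Irreducibility of $A$ in $\Phi^+_p$ forces this partition to be trivial, so $A \subset B_{i_0}$ for a unique $i_0$, yielding $C_p(B_{i_0}) \in \mathcal{S}$ with $C_p(B_{i_0}) \subset C$.

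For uniqueness, I would suppose $D_1, D_2 \in \mathcal{S}$ are both maximal in $\{D \in \mathcal{S} : D \subset C\}$ and assume for contradiction that they are incomparable. Then $\Phi^+_{D_1}$ and $\Phi^+_{D_2}$ are incomparable members of $\mathcal{S}^*_p$, so by the characterization above $\Phi^+_{D_1} \cap \Phi^+_{D_2} = \emptyset$. The inclusions $D_i \subset C$ translate to $\Phi^+_C \subset \Phi^+_{D_1} \cap \Phi^+_{D_2}$, whence $\Phi^+_C = \emptyset$ and $C = T^k$---contradicting that $C \in \mathcal{I}$ is a proper irreducible layer. So $D_1$ and $D_2$ are comparable, and maximality then forces $D_1 = D_2$.

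Part (b) follows by the same mechanism applied to $C \in \mathcal{S}$: two maximal $D_1, D_2 \in \mathcal{S}$ with $D_i \subsetneq C$ cannot be incomparable, for otherwise $\Phi^+_C = \emptyset$ as above, contradicting that $C \in \mathcal{S}$ corresponds to a nonempty irreducible complete subset. Existence of at least one $D \in \mathcal{S}$ strictly inside $C$ (when $C$ is not minimal in $\mathcal{S}$) is obtained by locating $C$ in the defining flag $T^k \supsetneq C_1 \supsetneq \cdots \supsetneq \{p\}$ for $\mathcal{S}$ and extracting an irreducible factor of the next layer down. The main obstacle I anticipate is the careful bookkeeping needed to verify that the bijection of Lemma~\ref{L3.2.2} converts incomparability of layers into incomparability of their associated complete subsets, so that the nested-set condition of Definition~\ref{D3.2.6} can be invoked uniformly---routine, but worth writing out explicitly to make sure the direct-sum step is legitimate in both parts.
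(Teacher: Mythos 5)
Your proof is correct and takes essentially the same approach as the paper's: in both, the key observation is that two incomparable elements of $\mathcal{S}$ have disjoint associated complete subsets, so any two elements of $\mathcal{S}$ contained in $C$ both contain the nonempty $\Phi^+_C$ and are therefore forced to be comparable. Your write-up is in fact more thorough than the paper's, which gives only a brief sketch of the uniqueness argument for part (a); you additionally supply the existence step (via the decomposition of $\Phi^+_p$ into irreducible factors) and handle part (b) explicitly.
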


\begin{proof}
    Suppose $C = C(A)\in\cI$ is not minimal and $C' = C(A'),C'' = C(A'')\in\mathscr{S}$ for $A',A''\in\mathscr{S}^*$ are distinct elements with $C',C''\subset C$.  Then we know that $A',A''\supset A$, are distinct subsets which satisfy $A'\supset A$ and $A''\supset A$.  Thus, $A'\cap A''\supset A$ and $A'\cup A''$ is not a decomposition.  By Definition \ref{D3.2.6}, we know that $A$ and $A''$ must be comparable.     
\end{proof}

\begin{example}
    For the above example, in $\mathscr{S}_1$, the successor to $\{q_1=1\}$ is $\{q_1=q_2=1\}$, and the successor to $\{q_1=q_2=1\}$ is $\{1\}$.  In $\mathscr{S}_{-1}^*$, the successors to $\{q_1q_2=1\}$ and $\{q_2q_3=1\}$ are both $\{-1\}$.  
\end{example}

\begin{definition}
    Given a maximal nested set of layers $\mathscr{S}$, define a map 
    \begin{align*} 
        p_\mathscr{S}\colon \Lambda &\longrightarrow \mathscr{S}\\
        \alpha&\longmapsto C_{\alpha},
    \end{align*}
    where $C_{\alpha}$ is the largest layer in $\mathscr{S}$ such that $H_\alpha\supset C_\alpha$.  
\end{definition}

\begin{lemma} (Lemma 3.10 of~\cite{M11})
    The map $p_{\mathscr{S}}$, when restricted to the adapted basis $\mathcal{B}^\mathscr{S}$, is a bijection.  
\end{lemma}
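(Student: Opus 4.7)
Since $\mathcal{B}^{\mathcal{S}}$ is an integral basis for $\Lambda \simeq \mathbb{Z}^{k}$, one has $|\mathcal{B}^{\mathcal{S}}| = k$; and $|\mathcal{S}| = k$ since a maximal nested set of layers has cardinality equal to the rank (Proposition 3.2 of \cite{Moci_2011}). With equal finite cardinalities, it suffices to establish surjectivity of the restriction $p_{\mathcal{S}}|_{\mathcal{B}^{\mathcal{S}}}$. Throughout I read the defining condition ``$H_\beta \supset C_\beta$'' as $\beta \in \overline{\Lambda_{C_\beta}}$ (i.e.\ $\beta$ is constant on $C_\beta$), which is the natural formulation when the basepoint $p$ of a layer is not the identity of $T^{k}$.

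Fix $C \in \mathcal{S}$ and let $D_1, \ldots, D_r$ denote its upper covers in $\mathcal{S}$, namely the minimal elements of $\{C'' \in \mathcal{S} : C'' \supsetneq C\}$. Every $C'' \in \mathcal{S}$ strictly above $C$ satisfies $C'' \supseteq D_i$ for some $i$, so $\sum_{C'' \supsetneq C} \overline{\Lambda_{C''}} = \sum_{i=1}^{r} \overline{\Lambda_{D_i}}$. The crux of the argument is the strict inclusion of $\mathbb{C}$-subspaces $\sum_{i} \langle A_{D_i}\rangle_{\mathbb{C}} \subsetneq \langle A_C\rangle_{\mathbb{C}}$. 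Were equality to hold, the $\Phi^+_p$-completion of $\bigcup_i A_{D_i}$ would equal $A_C$. But by the remark after Definition \ref{D3.2.6}, the mutually incomparable collection $\{D_i\}$ has complete union $\bigcup_i A_{D_i}$ whose irreducible factorization is precisely $\{A_{D_i}\}$; combined with the same $\mathbb{C}$-span as $A_C$ and completeness of $A_C$, this forces $\bigcup_i A_{D_i} = A_C$. For $r \geq 2$ this exhibits $A_C$ as a decomposition into irreducible factors, contradicting its irreducibility; for $r = 1$ it gives $A_{D_1} = A_C$, contradicting $D_1 \supsetneq C$.

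Granted this strict inclusion, the rank identity $\operatorname{rk} \sum_i \overline{\Lambda_{D_i}} = \dim_{\mathbb{C}} \sum_i \langle A_{D_i}\rangle_{\mathbb{C}} < \dim_{\mathbb{C}} \langle A_C\rangle_{\mathbb{C}} = \operatorname{rk} \overline{\Lambda_C}$ follows. The adapted-basis hypothesis gives that $\mathcal{B}^{\mathcal{S}} \cap \overline{\Lambda_C}$ and each $\mathcal{B}^{\mathcal{S}} \cap \overline{\Lambda_{D_i}}$ are integral bases of the respective sublattices; the union $\bigcup_i \bigl(\mathcal{B}^{\mathcal{S}} \cap \overline{\Lambda_{D_i}}\bigr)$ is a $\mathbb{Z}$-linearly independent subset of $\mathcal{B}^{\mathcal{S}}$ contained in $\sum_i \overline{\Lambda_{D_i}}$, and therefore has size at most $\operatorname{rk} \sum_i \overline{\Lambda_{D_i}} < \operatorname{rk} \overline{\Lambda_C} = |\mathcal{B}^{\mathcal{S}} \cap \overline{\Lambda_C}|$. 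Any $\beta_C$ in the set difference lies in $\overline{\Lambda_C}$ but in no $\overline{\Lambda_{C''}}$ for $C'' \in \mathcal{S}$ strictly above $C$, and hence satisfies $p_{\mathcal{S}}(\beta_C) = C$.

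The principal obstacle is the crux step: pinning down how the irreducibility of $A_C$ rules out the equality $\sum_i \langle A_{D_i}\rangle_{\mathbb{C}} = \langle A_C\rangle_{\mathbb{C}}$, together with the verification that the characterization of nested sets from the remark after Definition \ref{D3.2.6} applies to the upper-cover family $\{D_i\}$ (this last point is automatic, since the $D_i$ are pairwise incomparable by their minimality). A secondary subtlety is the reinterpretation of $H_\beta \supset C_\beta$ as $\beta \in \overline{\Lambda_{C_\beta}}$ noted above, which is necessary whenever $p \neq 1 \in T^k$ so that adapted basis elements need not themselves lie in $\Phi^+_p$.
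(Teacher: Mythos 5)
The paper does not actually prove this lemma — it cites Lemma 3.10 of Moci and moves on — so there is no internal argument to compare yours against; I am judging the proposal on its own.

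Your proof is correct, and it takes the route one would expect (cardinality reduction to surjectivity, then a dimension count inside $\overline{\Lambda_C}$). Two points are done particularly well. First, the reinterpretation of the condition ``$H_\beta \supset C_\beta$'' as ``$\beta$ is constant on $C_\beta$,'' i.e.\ $\beta \in \overline{\Lambda_{C_\beta}}$, is not cosmetic: under the literal reading the paper's own worked example $p_{\mathcal{S}_{-1}}(\varepsilon_2) = \{-1\}$ is false, since $\varepsilon_2(-1) = -1 \neq 1$ so $H_{\varepsilon_2} \not\supset \{-1\}$. Your reading is the one that makes the definition and the lemma coherent when $p \neq 1$. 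Second, the crux strict inclusion $\sum_i \langle A_{D_i}\rangle_{\mathbb{C}} \subsetneq \langle A_C\rangle_{\mathbb{C}}$ is argued exactly right: the $D_i$ form an antichain, so by the nested-set characterization $\bigcup_i A_{D_i}$ is complete with irreducible factors $\{A_{D_i}\}$; if the $\mathbb{C}$-spans agreed, completeness of both sides would force $\bigcup_i A_{D_i} = A_C$, contradicting irreducibility of $A_C$ when $r \geq 2$ and giving $D_1 = C$ when $r = 1$. The adapted-basis count then hands you a $\beta_C \in (\mathcal{B}^{\mathcal{S}} \cap \overline{\Lambda_C}) \setminus \bigcup_i \overline{\Lambda_{D_i}}$, and passing from the upper covers $D_i$ to arbitrary $C'' \supsetneq C$ in $\mathcal{S}$ is correct since any such $C''$ lies above some $D_i$, whence $\overline{\Lambda_{C''}} \subset \overline{\Lambda_{D_i}}$.

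One small point you should make explicit rather than leave implicit: concluding $p_{\mathcal{S}}(\beta_C) = C$ from ``$\beta_C \in \overline{\Lambda_C}$ and $\beta_C \notin \overline{\Lambda_{C''}}$ for $C'' \supsetneq C$'' presupposes that $p_{\mathcal{S}}$ is well-defined, i.e.\ that for nonzero $\alpha$ the set $\{C' \in \mathcal{S} : \alpha \in \overline{\Lambda_{C'}}\}$ is a chain and so has a unique maximum. This too is a one-line consequence of the nested-set characterization you already invoke: if $C', C'' \in \mathcal{S}$ are incomparable, then $A_{C'} \cup A_{C''}$ decomposes into irreducible factors $A_{C'}, A_{C''}$, so $\Lambda_{C'} \cap \Lambda_{C''} = 0$ and hence $\overline{\Lambda_{C'}} \cap \overline{\Lambda_{C''}} = 0$, ruling out an incomparable competitor for a nonzero $\beta_C$. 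Adding that sentence closes the argument; everything else is sound, and it would serve as a reasonable self-contained replacement for the bare citation.
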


\begin{definition}
    Denote the inverse bijection
\begin{align*}
    \mathscr{S}&\longrightarrow\mathcal{B}\\
    C&\longrightarrow \alpha_C,
\end{align*}
    so that for $\mathscr{S} = \{C_1,\ldots,C_k\}$, we have $\mathcal{B}^\mathscr{S} = \{\alpha_{C_1},\ldots,\alpha_{C_k}\}$.  
\end{definition}

\begin{example}
    In the above example for $\mathscr{S}_1$, we can define the map 
    \begin{align*}
        p_{\mathscr{S}_1}\colon \mathbb{Z}^3&\longrightarrow\mathscr{S}_1\\
        \varepsilon_1&\longmapsto \{q_1=1\}\\
        \varepsilon_2&\longmapsto \{q_1=q_2=1\}\\
        \varepsilon_3&\longmapsto \{1\}
    \end{align*}
    and, for example, we have $p_{\mathscr{S}_1}(\varepsilon_1+\varepsilon_2) = \{q_1=q_2=1\}$.  

    In the above example, for $\mathscr{S}_{-1}$, we can define the map 
    \begin{align*}
        p_{\mathscr{S}_{-1}}\colon\mathbb{Z}^3&\longrightarrow\mathscr{S}_{-1}\\
        \varepsilon_1+\varepsilon_2&\longmapsto \{q_1q_2=1\}\\
        \varepsilon_2&\longmapsto \{-1\}\\
        \varepsilon_2+\varepsilon_3&\longmapsto \{q_2q_3=1\}
    \end{align*}
    and, for example, we have $p_{\mathscr{S}_{-1}}(\varepsilon_1) = \{-1\}$.

    By inspection, we see that $p_{\mathscr{S}_1}$ and $p_{\mathscr{S}_{-1}}$ restrict to bijections on adapted bases.  
\end{example}

\begin{definition}
    Let $p\in\cC_0(\Phi^+)$ and fix a maximal nested set of layers $\mathscr{S} = \{C_1,\ldots,C_k\}$ in $\cC_p(\Phi^+)$ with $\mathcal{B}^\mathscr{S} = \{\alpha_{C_1},\ldots,\alpha_{C_k}\}$ an adapted basis of $\Lambda\simeq \mathbb{Z}^k$ and let $a_i = \alpha_{C_i}(p)$.    

    Consider the following map:
    \begin{align*}
        \overline{f^\mathscr{S}}\colon \mathbb{C}^k&\longrightarrow \mathbb{C}^k\\
        \left(z_{C_i}\mid i=1,\ldots,k\right)&\longmapsto \left(a_i+\prod_{C_j\subset C_i}z_{C_j}\;\biggr\rvert\; i=1,\ldots,k\right),
    \end{align*}
    let 
    \begin{align*}
        \mathcal{U}_\mathscr{S} = \overline{f^\mathscr{S}}^{-1}\left(T^k\setminus \bigcup_{p\not\in C}C\right) =  \overline{f^\mathscr{S}}^{-1}\left(\;(\mathbb{C}^k\setminus\{0\})\setminus\bigcup_{p\not\in C}C\;\right),
    \end{align*}
    and define 
    \begin{align}
        f^\mathscr{S} = \overline{f^\mathscr{S}}\rvert_{\mathcal{U}_\mathscr{S}}\colon \mathcal{U}_\mathscr{S}&\longrightarrow T^k.
    \end{align}
    Using the adapted basis, $\{\alpha_{C_k}\}$, we have the isomorphism
    \begin{align*}
        T^k&\xrightarrow[]{\;\;\sim\;\;} T^k\\
        t&\longmapsto (\alpha_{C_1}(t),\ldots,\alpha_{C_k}(t)),  
    \end{align*}
    so for a given $(z_{C_1},\ldots,z_{C_k})\in\mathcal{U}_\mathscr{S}$, there is a unique $t\in T^k$ for which the following equality in $T^k$ holds:
    \begin{align*}
        \alpha_{C_i}(t) - a_i = \prod_{\substack{D\subset C_i\\D\in \mathscr{S}}}z_D.
    \end{align*} 
\end{definition}

\begin{lemma}\label{L3.2.5}            
    Fix an $\alpha\in\Phi_p^+$ and let $C = p_\mathscr{S}(\alpha)\in\mathscr{S}$ be the unique maximal layer in $\mathscr{S}$ on which $\alpha\rvert_C = a$ is a constant.  Then there exists a regular function $p((z_i)_{i=1}^k)$ on $\mathbb{C}^k$, with $p(0) \neq 0$, so that $\alpha(t) - a$ can be written in terms of the $(z_{C_i})_{i=1}^k$ as follows: 
    \begin{align*}
        \alpha(t) - a = p_\alpha((z_{C_i})_{i=1}^k)\prod_{\substack{D\subset C\\D\in\mathscr{S}}}z_D,
    \end{align*}
    as an equality in $T^k$.  
\end{lemma}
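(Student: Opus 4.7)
The plan is to expand $\alpha$ in the adapted basis $\cB^{\cS}$ and then read off the desired factorization from a direct computation with the defining relations of $\overline{f^{\cS}}$. Since $\cB^{\cS}$ is an integral basis of $\Lambda$, write $\alpha = \sum_{i=1}^k n_i\,\alpha_{C_i}$. The hypothesis that $\alpha\rvert_C = a$ is constant places $\alpha\in\overline{\Lambda_C}$, and because $\{\alpha_{C_j}:C_j\supset C\}$ is a basis of $\overline{\Lambda_C}$, this forces $n_i = 0$ whenever $C_i\not\supset C$. Let $i_0$ denote the unique index with $C_{i_0} = C$. The auxiliary claim, essential for non-vanishing at the end, is that $n_{i_0}\neq 0$.

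Granting this claim, the remainder is a direct computation. By Definition \ref{D3.2.7}, $\alpha_{C_i}(t) = a_i + w_i$ with $w_i := \prod_{D\in\cS,\; D\subset C_i}z_D$. For each $i$ with $n_i\neq 0$ we have $C_i\supset C$, so we may factor $w_i = Z_C\cdot v_i$, where
\begin{align*}
Z_C \;:=\; \prod_{\substack{D\in\cS\\ D\subset C}}z_D, \qquad v_i \;:=\; \prod_{\substack{D\in\cS\\ D\subset C_i,\; D\not\subset C}}z_D.
\end{align*}
The index set defining $v_{i_0}$ is empty, so $v_{i_0} = 1$; for every $i\neq i_0$ the variable $z_{C_i}$ appears in $v_i$ (since $C_i\in\cS$, $C_i\subset C_i$, and $C_i\not\subset C$), so $v_i\rvert_{z=0} = 0$. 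Writing $a = \alpha(p) = \prod_i a_i^{n_i}$ (every $a_i$ a nonzero root of unity),
\begin{align*}
\alpha(t) - a \;=\; a\bigl(F(Z_C) - 1\bigr), \qquad F(Z) \;:=\; \prod_i\bigl(1 + Zv_i/a_i\bigr)^{n_i},
\end{align*}
and $F$ is regular in a Zariski neighborhood of $z = 0$ in $\bC^k$ with $F(0) = 1$ (each factor is a unit there since $a_i\neq 0$). Hence $F(Z) - 1 = Z\cdot G(Z)$ with $G$ regular near $0$ and $G(0) = F'(0) = \sum_i n_i v_i/a_i$. Setting $p_\alpha(z) := a\,G(Z_C)$ produces the claimed factorization $\alpha(t) - a = p_\alpha(z)\,Z_C$, and
\begin{align*}
p_\alpha(0) \;=\; a\cdot\sum_i n_i\,v_i(0)/a_i \;=\; a\,n_{i_0}/a_{i_0} \;\neq\; 0,
\end{align*}
using the auxiliary claim.

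The hard step is therefore the auxiliary claim $n_{i_0}\neq 0$, which does not follow from the adapted-basis property alone: for non-flag nested sets the elements of $\cS$ strictly containing $C$ can be mutually incomparable, and a general $\alpha\in\overline{\Lambda_C}$ may have its basis support spread across such incomparable elements without ever involving $\alpha_C$ -- as witnessed in the $\cS_{-1}$ of the preceding example by $\varepsilon_1-\varepsilon_3 = \alpha_{C_1}-\alpha_{C_3}$, which is constant only on $C = \{p\}$ yet has $n_{i_0} = 0$. The crucial extra ingredient is that $\alpha\in\Phi_p^+$ is a circuit: the minimal-support condition on the image of $\alpha$ in $\ft_\bZ^n$, combined with the successor/core structure of Lemma \ref{L3.2.4}, forces any $\alpha$ with $n_{i_0} = 0$ to in fact lie in some single $\overline{\Lambda_{C'}}$ with $C'\supsetneq C$ in $\cS$, contradicting the maximality of $C = p_{\cS}(\alpha)$.
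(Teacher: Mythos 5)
The computational core of your argument — expand $\alpha$ in the adapted basis, observe that the coefficients are supported on $\{\alpha_{C_i}: C_i\supseteq C\}$, and peel off $Z_C = \prod_{D\subset C}z_D$ via a first-order expansion at $Z_C = 0$ — is correct and in fact follows the same strategy as the paper's own proof, just more cleanly executed. Your explicit evaluation $p_\alpha(0) = a\,n_{i_0}/a_{i_0}$ makes transparent what the paper leaves implicit: the whole lemma reduces to the single assertion $n_{i_0}\neq 0$, where $n_{i_0}$ is the coefficient of $\alpha_C$. Your counterexample $\varepsilon_1-\varepsilon_3 = \alpha_{C_1}-\alpha_{C_3}$ in $\mathcal{S}_{-1}$ is valid and correctly shows that this fails for non-circuits, so the hypothesis $\alpha\in\Phi_p^+$ is genuinely load-bearing. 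It is worth noting that the paper's proof glosses over exactly this point: its closing sentence ``$p_{\alpha_C}(0,\ldots,0)\neq 0$, by definition of the $\beta_i$'' asserts the non-vanishing without any argument that the factor coming from the $\alpha_C^{m_C}$ term is nonzero, which is again just $m_C\neq 0$. So your scrutiny surfaces a real omission.

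However, the last paragraph of your proposal names the ingredients for the auxiliary claim (``minimal-support,'' ``successor/core structure'') without developing them, and this is the only non-routine step, so there is a genuine gap as written. One way to close it: let $D_1,\ldots,D_r$ be the children of $C$ in $\mathcal{S}$, i.e.\ the elements $D$ with $s(D)=C$ from Lemma \ref{L3.2.4}; they are mutually incomparable, every $C_i\supsetneq C$ contains exactly one $D_l$, and by adaptedness $L:=\mathrm{span}_\mathbb{Z}\{\alpha_{C_i}:C_i\supsetneq C\}=\bigoplus_l\overline{\Lambda_{D_l}}\subset\langle A\rangle_\mathbb{Q}$ where $A:=\bigsqcup_l\Phi^+_{D_l}$. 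The nested-set axiom (the remark after Definition \ref{D3.2.6}) guarantees that $A$ is complete in $\Phi_p^+$ and that the $\Phi^+_{D_l}$ are its irreducible factors. Hence if $n_{i_0}=0$ then $\alpha\in L\cap\Phi_p^+\subset\langle A\rangle_\mathbb{Q}\cap\Phi_p^+=A$, so $\alpha\in\Phi^+_{D_l}$ for some $l$ and therefore $\alpha$ is constant on $D_l\supsetneq C$, contradicting $C=p_\mathcal{S}(\alpha)$. This is really the completeness axiom at work rather than the minimal-support property of circuits in $\mathfrak{t}^n_\mathbb{Z}$ per se, so your attribution is slightly off even though the intuition (circuit-ness is essential) is right.
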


\begin{example}
    In the above example for $\mathscr{S}_1$, take $C_i = p_{\mathscr{S}_1}(\varepsilon_i)$ for $i=1,2,3$.  Then we have
    \begin{align*}
        \overline{f^{\mathscr{S}_1}}(z_{C_1},z_{C_2},z_{C_3}) = (1+z_{C_1}z_{C_2}z_{C_3}, 1+z_{C_2}z_{C_3}, 1+z_{C_3}) = (\varepsilon_1(t),\varepsilon_2(t),\varepsilon_3(t))\in T^k.
    \end{align*}
    Our domain is given by 
    \begin{align*}
        \mathcal{U}_{\mathscr{S}_1} = \overline{f^{\mathscr{S}_1}}^{-1}(\mathbb{C}^3\setminus\{0\})
    \end{align*}
    and $f^{\mathscr{S}_1} = \overline{f^{\mathscr{S}_1}}\rvert_{\mathcal{U}_{\mathscr{S}_1}}$.  
    We recall that $p_{\mathscr{S}_1}(\varepsilon_1+\varepsilon_2) = C_2$, and that $(\varepsilon_1+\varepsilon_2)\rvert_{C_2} = 1$, so using multiplicative notation, we have $(\varepsilon_1+\varepsilon_2)(t) = \varepsilon_1(t)\varepsilon_2(t)$, and we obtain the following:
    \begin{align*}
        \varepsilon_1(t)\varepsilon_2(t) - 1 
        &= (1+z_{C_1}z_{C_2}z_{C_3})(1+z_{C_2}z_{C_3}) - 1\\
        &= (1 + z_{C_1} + z_{C_1}z_{C_2}z_{C_3})z_{C_2}z_{C_3},
    \end{align*}
    so that in the statement of the above lemma,
    \begin{align*}
        p_{\varepsilon_1+\varepsilon_2}(z_{C_1},z_{C_2},z_{C_3}) = 1 + z_{C_1} + z_{C_1}z_{C_2}z_{C_3}.
    \end{align*}

    Now consider the above example for $\mathscr{S}_{-1}$.  Take $C_1 = p_{\mathscr{S}_{-1}}(\varepsilon_1+\varepsilon_2)$, $C_2 = p_{\mathscr{S}_{-1}}(\varepsilon_2)$, and $C_3 = p_{\mathscr{S}_{-1}}(\varepsilon_2+\varepsilon_3)$ for $i=1,2,3$.  Then we have
    \begin{align*}
        \overline{f^{\mathscr{S}_{-1}}}(z_{C_1},z_{C_2},z_{C_3}) = (1+z_{C_1}z_{C_2}, -1+z_{C_2}, 1+z_{C_2}z_{C_3}) = (\;(\varepsilon_1+\varepsilon_2)(t),\varepsilon_2(t),(\varepsilon_2+\varepsilon_3)(t)\;)\in T^k.
    \end{align*}
    Here, our domain is given by 
    \begin{align*}
        \mathcal{U}_{\mathscr{S}_{-1}} = \overline{f^{\mathscr{S}_{-1}}}^{-1}\left(\left(\mathbb{C}^3\setminus\{0\}\right)\setminus \left(\bigcup_{i=1}^3\{q_i=1\}\cup\{q_1q_2q_3=1\}\right)\right)
    \end{align*}
    and $f^{\mathscr{S}_{-1}} = \overline{f^{\mathscr{S}_{-1}}}\rvert_{\mathcal{U}_{\mathscr{S}_{-1}}}$.  
    We recall that $p_{\mathscr{S}_{-1}}(\varepsilon_1) = \{-1\}$, and that $\varepsilon_1\rvert_{\{-1\}} = -1$.  Thus, in using multiplicative notation, we have
    \begin{align*}
        \varepsilon_1(t)+1 
        &= (\varepsilon_1(t)\varepsilon_2(t))\varepsilon_2(t)^{-1} + 1\\
        &= \varepsilon_2(t)^{-1}(\varepsilon_1(t)\varepsilon_2(t)+\varepsilon_2(t))\\
        &= \varepsilon_2(t)^{-1}(1+z_{C_1}z_{C_2} + -1 + z_{C_2})\\
        &= \varepsilon_2(t)^{-1}(1+z_{C_1})z_{C_2}.
    \end{align*}
    Since the function
    \begin{align*}
        p_{\varepsilon_1}(z_{C_1},z_{C_2},z_{C_3}) 
        &= \varepsilon_2(t)^{-1}(1+z_{C_1})\\
        &= \frac{1+z_{C_1}}{-1+z_{C_2}},
    \end{align*}
    is a regular function, whenever $\varepsilon_2(t)\neq 0$, it satisfies the statement of the above lemma.  
\end{example}

\begin{proof}
    Suppose $\alpha = \alpha_C\in\Phi^+$ is arbitrary.  Then for all $t\in T^k$, we can write $\alpha$ using the basis $\mathcal{B}^\mathscr{S}\cap \Phi^+_C$ using multiplicative notation as follows
    \begin{align*}
        \alpha_C(t) = \alpha_C^{m_C}(t)\cdot\prod_{\substack{D\supset C\\D\in\mathscr{S}}}\alpha_D^{m_D}(t),
    \end{align*}
    for some integers $m_C,m_D\in\mathbb{Z}$.  In setting $\alpha_C\rvert_C = a_C$ and $\alpha_D\rvert_D = a_D$ to be the constants on each layer, we have the following calculation:  
    \begin{align*}
        \alpha_C(t) - a_C 
        &= \alpha_C^{m_C}(t)\cdot\prod_{\substack{D\supsetneq C\\D\in\mathscr{S}}}\alpha_D^{m_D}(t) - a_C\\ 
        &= \left(\alpha_C^{m_C}(t) - a_C^{m_C}\right)\prod_{\substack{D\supsetneq C\\D\in\mathscr{S}}}\alpha_D^{m_D}(t) + a_C^{m_C}\prod_{\substack{D\supsetneq C\\D\in\mathscr{S}}}\alpha_D^{m_D}(t)-a_C.
    \end{align*}
    Factoring $\alpha_C^{m_C}(t) - a_C^{m_C}$ into roots of unity and repeating this process for each $D$ for which $D\supset C$, we find that 
    \begin{align*}
        \alpha_C(t) - a_C = \beta_C((z_{C_i})_{i=1}^k)\;(\alpha_C(t) - a_C) + \sum_{\substack{D\supsetneq C\\D\in\mathscr{S}}}\beta_D((z_{C_i})_{i=1}^k)\;(\alpha_D(t)-a_D),
    \end{align*}
    for some functions $\beta_C((z_{C_i})_{i=1}^k),\beta_D((z_{C_i})_{i=1}^k)$, which do not vanish on their respective layers $C,D$ when $z_{C_i} = 0$.  Repeating this process for each of the $\alpha_{C_i}$, we find that 
    \begin{align*}
        \alpha_C(t) - 1 &= \beta_C((z_{C_i})_{i=1}^k)\prod_{\substack{E\subset C\\E\in\mathscr{S}}}z_E + \sum_{\substack{D\supsetneq C\\D\in\mathscr{S}}} \beta_D((z_{C_i})_{i=1}^k) \prod_{\substack{E\subset D\\E\in\mathscr{S}}}z_E\\
        &= \left(\beta_C((z_{C_i})_{i=1}^k) + \sum_{\substack{D\supsetneq C\\D\in\mathscr{S}}}\beta_D((z_{C_i})_{i=1}^k) \prod_{\substack{C\subsetneq E\subset D\\E\in\mathscr{S}}}z_E\right)\prod_{\substack{E\subset C\\E\in\mathscr{S}}}z_E\\
        &= p_{\alpha_C}((z_{C_i})_{i=1}^k)\prod_{\substack{E\subset C\\E\in\mathscr{S}}}z_E.
    \end{align*}
    Since $p_{\alpha_C}(0,\ldots,0)\neq 0$, by definition of the $\beta_i$, we are done.    
\end{proof}

\begin{definition}
    Given a maximal nested set $\mathscr{S} = \{C_1,\ldots,C_k\}$, define the following open sets:  
    \begin{align*}
        \mathcal{V}_{\mathscr{S}} &= \left\{(z_{C_i})_{i=1}^k\in \mathcal{U}_{\mathscr{S}}\;\rvert\;p_\alpha((z_{C_i})_{i=1}^k)\neq 0\; \forall \alpha\in \Phi^+\right\}\\
        \mathcal{V}_{\mathscr{S}}^0 &= \left\{(z_{C_i})_{i=1}^k\in \mathcal{V}_\mathscr{S}\;\rvert\;z_{C_i}\neq 0\;\forall i=1,\ldots,k\right\}.
    \end{align*}      
    so $\mathcal{V}_\mathscr{S}^0\subset \mathcal{V}_\mathscr{S}\subset \mathcal{U}_\mathscr{S}\subset \mathbb{C}^k\simeq \mathbb{C}^\mathscr{S}$.
\end{definition}

\begin{definition}
    Let $C\in\cI(\Phi^+)$ be any irreducible layer and let $\mathscr{S} = \{C_1,\ldots,C_k\}$ be a maximal nested set of layers in $\cC_p(\Phi)$ as above.  Let $\Phi^+_C = \{\alpha_1,\ldots,\alpha_r\}$, let $a_i = \alpha_i(p)$, and let $C_{\alpha_i} = p_\mathscr{S}(\alpha_i)$ and take
    \begin{align*}
        \varphi_C\colon T^k\setminus C&\longrightarrow \mathbb{P}_C\\
        t&\longmapsto [\alpha_1(t)-a_1\colon\ldots\colon\alpha_r(t)-a_r].
    \end{align*}
    Then for each $i=1,\ldots,r$, we define the map 
    \begin{align*}
        \psi_{C}^{\mathscr{S},0}\colon \mathcal{V}_\mathscr{S}^0&\longrightarrow \mathbb{P}_C\\
        (z_{C_i})_{i=1}^k&\longmapsto [\;p_{\alpha_i}((z_{C_i})_{i=1}^k)\;\Pi_{C_j\subset C_{\alpha_i}}z_{C_j}\mid i=1,\ldots,r\;], 
    \end{align*}
    so that $\alpha_i(t)-a_i = p_{\alpha_i}((z_{C_i})_{i=1}^k)\prod_{C_j\subset C_i} z_{C_j}$, according to Lemma \ref{L3.2.5}.
\end{definition}

\begin{lemma}(Lemma 4.1 of~\cite{M11})
    The map $\psi_C^{\mathscr{S},0}$ factors uniquely through the inclusion $\mathcal{V}_{\mathscr{S}}^0\hookrightarrow\mathcal{V}_{\mathscr{S}}$.  
\end{lemma}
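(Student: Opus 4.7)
The plan is to clear a common monomial factor from the projective coordinates of $\psi_C^{\mathcal{S},0}$ so that the resulting sections become regular on all of $\mathcal{V}_{\mathcal{S}}$, and then to verify these sections have no common zero using the structure of the maximal nested set $\mathcal{S}$ together with the irreducibility of $\Phi_C^+$. Uniqueness of the extension is automatic because $\mathcal{V}_{\mathcal{S}}^0$ is open and dense in $\mathcal{V}_{\mathcal{S}}$ and $\mathbb{P}_C$ is separated.

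Concretely, I would first apply Lemma \ref{L3.2.5} to write, for each $\alpha_i \in \Phi_C^+ = \{\alpha_1,\ldots,\alpha_r\}$,
\begin{align*}
\alpha_i(t) - a_i \;=\; p_{\alpha_i}(z)\cdot M_{\alpha_i}(z),\qquad M_{\alpha_i}(z) \;:=\; \prod_{\substack{C_j\in\mathcal{S}\\ C_j\subset C_{\alpha_i}}} z_{C_j},\qquad C_{\alpha_i} \;:=\; p_{\mathcal{S}}(\alpha_i).
\end{align*}
Next define
\begin{align*}
K_0 \;:=\; \bigcap_{i=1}^r\{C_j\in\mathcal{S}\mid C_j\subset C_{\alpha_i}\},\qquad g(z) \;:=\; \prod_{C_j\in K_0} z_{C_j},
\end{align*}
the greatest common monomial divisor of the $M_{\alpha_i}$. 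Set $\tilde M_{\alpha_i}:= M_{\alpha_i}/g$, a monomial in the $z_{C_j}$, and propose the extension
\begin{align*}
\psi_C^{\mathcal{S}}\colon \mathcal{V}_{\mathcal{S}}\longrightarrow \mathbb{P}_C,\qquad z\longmapsto \bigl[\,p_{\alpha_i}(z)\,\tilde M_{\alpha_i}(z)\mid i=1,\ldots,r\,\bigr].
\end{align*}
On $\mathcal{V}_{\mathcal{S}}^0$ this agrees with $\psi_C^{\mathcal{S},0}$ because dividing homogeneous coordinates by the common nonzero factor $g(z)$ gives the same point of $\mathbb{P}_C$, and each coordinate $p_{\alpha_i}\tilde M_{\alpha_i}$ is regular on all of $\mathcal{V}_{\mathcal{S}}\subset \mathbb{C}^k$.

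The essential step, where I expect the main obstacle, is to prove that these coordinates never vanish simultaneously on $\mathcal{V}_{\mathcal{S}}$. Since $p_{\alpha_i}(z)\neq 0$ on $\mathcal{V}_{\mathcal{S}}$ by definition, it suffices to produce an index $i_0$ with $\tilde M_{\alpha_{i_0}}(z)\equiv 1$, equivalently $\{C_j\subset C_{\alpha_{i_0}}\} = K_0$, equivalently $C_{\alpha_{i_0}}$ is the minimum of $\{C_{\alpha_i}\}_{i=1}^r$ in the inclusion poset on $\mathcal{S}$. I would establish this minimum by showing that the core $\bar C$ of $C$ (Lemma \ref{L3.2.4}) lies in $\{C_{\alpha_i}\}$: one already has $\bar C\subset C_{\alpha_i}$ for every $i$ since $\bar C\in\mathcal{S}$, $\bar C\subset C\subset H_{\alpha_i}$, and $C_{\alpha_i}$ is maximal in $\mathcal{S}$ inside $H_{\alpha_i}$. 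To produce $\alpha\in\Phi_C^+$ with $C_\alpha=\bar C$, I would argue by contradiction using the irreducibility of $\Phi_C^+$: if no such $\alpha$ existed, then for each $\alpha\in\Phi_C^+$ there would be some $D_\alpha\in\mathcal{S}$ with $\bar C\subsetneq D_\alpha\subset H_\alpha$; grouping the $\alpha$'s by their associated $D_\alpha$ and applying the bijection between the adapted basis and $\mathcal{S}$ (Lemma 3.10 of \cite{Moci_2011}) together with the decomposition property of incomparable nested-set elements (the remark after Definition \ref{D3.2.6}) would force $\Phi_C^+$ to split nontrivially as a direct sum, contradicting its irreducibility. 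When $C$ itself is minimal in $\mathcal{I}$ and no core exists, a direct check shows that $C\in\mathcal{S}$ already realizes the required minimum.
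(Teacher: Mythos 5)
Your overall strategy---clearing the greatest common monomial factor and then exhibiting a coordinate with $\tilde M_{\alpha_{i_0}}\equiv 1$ by means of the core $\bar C$---is the right one, and matches the way Moci's Lemma 4.1 goes. But as written it only treats the case where the basepoint $p$ of the nested set $\mathcal{S}$ lies on the layer $C$. For a general irreducible $C$ with $p\notin C$ (which is allowed, and which occurs in the paper's own running example with $\mathcal{S}_{-1}$ and $C=\{(1,1,1)\}$), several steps fail at once: Lemma \ref{L3.2.5} is stated only for $\alpha\in\Phi_p^+$, so the monomial factorization $\alpha_i(t)-a_i=p_{\alpha_i}(z)M_{\alpha_i}(z)$ is unavailable when $a_i=\alpha_i|_C\neq\alpha_i(p)$; the inclusion $C\subset H_{\alpha_i}$ you invoke need not hold, since $\alpha_i|_C$ need not equal $1$; and the closing claim that ``$C\in\mathcal{S}$'' in the minimal/no-core case is simply false---every layer in $\mathcal{S}\subset\mathcal{C}_p(\Phi)$ passes through $p$, while $C$ does not (indeed $\{(1,1,1)\}\notin\mathcal{S}_{-1}$). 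This case has to be dispatched separately, and it is actually easier: $p\notin C$ forces some $i$ with $\alpha_i(p)\neq\alpha_i|_C$, so the coordinate $\alpha_i(t)-\alpha_i|_C$ is already nonvanishing at the center of the chart, and $\psi_C^{\mathcal{S},0}$ extends with no clearing needed.

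Within the case $p\in C$, the irreducibility argument is right in spirit but, as sketched, ``grouping by $D_\alpha$'' does not yet produce a decomposition: the various $C_\alpha$ can be mutually comparable, and for $D\subset D'$ the sublattices $\langle\Phi_{D'}^+\rangle\subset\langle\Phi_D^+\rangle$ certainly intersect, so the groups are not span-independent. The fix is to pass to the minimal elements $D_1,\ldots,D_m$ of $\mathcal{T}=\{C_\alpha\mid\alpha\in\Phi_C^+\}$, which are pairwise incomparable and all contain $\bar C$. Since each $C_\alpha$ contains some minimal $D_{j(\alpha)}$ and $\alpha\in\Phi_{C_\alpha}^+\subset\Phi_{D_{j(\alpha)}}^+$, one gets $\Phi_C^+=\bigsqcup_j(\Phi_C^+\cap\Phi_{D_j}^+)$ with each piece nonempty (it contains the $\alpha$ realizing $C_\alpha=D_j$) and with spans pairwise independent by the nested-set condition; $m\geq 2$ would then decompose $\Phi_C^+$, contradicting irreducibility. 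With $m=1$, the unique minimal $D_1$ lies inside every $C_\alpha\subset H_\alpha$, is connected and contains $p\in C$, hence $D_1\subset C$, whence $D_1\subset\bar C$ by maximality of the core, forcing $D_1=\bar C\in\mathcal{T}$. The passage to minimal elements and the final chain $D_1\subset\bigcap_{\alpha}H_\alpha\Rightarrow D_1\subset C\Rightarrow D_1=\bar C$ are the pieces your sketch omits, and both lean crucially on $p\in C$.
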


\begin{example}
    Consider the above example, with maximal nested set $\mathscr{S}_1$ and with the layer $C = \{q_1q_2 = 1,q_3 = 1\}$ corresponding to the complete set $\Phi_C = \{\varepsilon_1+\varepsilon_2,\varepsilon_3,\varepsilon_1+\varepsilon_2+\varepsilon_3\}$.  Choose the adapted basis $\{\varepsilon_1,\varepsilon_2,\varepsilon_3\}$ and pick generators $\{\varepsilon_1+\varepsilon_2,\varepsilon_3\}$ of $\Phi_C$.  Then we have 
    \begin{align*}
        \psi_C^{\mathscr{S}_1,0}\colon \mathcal{V}_{\mathscr{S}_1}^0&\longrightarrow \mathbb{P}_C\simeq \mathbb{P}^1\\
        (z_{C_1},z_{C_2},z_{C_3})&\longmapsto [\varepsilon_1(t)\varepsilon_2(t)-1\;\colon\;\varepsilon_3(t)-1] = [(1 + z_{C_1} + z_{C_1}z_{C_2}z_{C_3})z_{C_2}z_{C_3}\;\colon\;z_{C_3}],
    \end{align*}
    and the extension is given by 
    \begin{align*}
        \psi_C^{\mathscr{S}_1}\colon \mathcal{V}_{\mathscr{S}_1}&\longrightarrow \mathbb{P}_C\\
        (z_{C_1},z_{C_2},z_{C_3})&\longmapsto [(1 + z_{C_1} + z_{C_1}z_{C_2}z_{C_3})z_{C_2}\;\colon\;1],
    \end{align*}
    
    Now consider the above example, with $\mathscr{S}_{-1}$ and with the layer $C = \{(1,1,1)\}$ corresponding to the complete set $\Phi_1^+$.  Choose the adapted basis $\{\varepsilon_1+\varepsilon_2,\varepsilon_2,\varepsilon_2+\varepsilon_3\}$ and choose generators $\{\varepsilon_1+\varepsilon_2,\varepsilon_2,\varepsilon_2+\varepsilon_3\}$ of $\Phi^+$.  Then we have 
    \begin{align*}
        \psi_C^{\mathscr{S}_{-1},0}\colon \mathcal{V}_{\mathscr{S}_{-1}}^0&\longrightarrow \mathbb{P}_C \simeq \mathbb{P}^2\\
        (z_{C_1},z_{C_2},z_{C_3})&\longmapsto [\varepsilon_1(t)\varepsilon_2(t)-1\;\colon\; \varepsilon_2(t)-1\;\colon\;\varepsilon_2(t)\varepsilon_3(t)-1] = [z_{C_1}z_{C_2}\;\colon\;-2+z_{C_2} \;\colon\; z_{C_2}z_{C_3}],
    \end{align*}
    and it follows that this map extends to a map 
    \begin{align*}
        \psi_C^{\mathscr{S}_{-1}}\colon \mathcal{V}_{\mathscr{S}_{-1}}\longrightarrow \mathbb{P}_C.
    \end{align*}    
\end{example}

\begin{definition}
    The basis $\mathcal{B}^\mathscr{S} = \{\alpha_{C_1},\ldots,\alpha_{C_s}\}$ forms a coordinate system for $T^k$ and we can then define the map 
    \begin{align*} 
        \psi_T^\mathscr{S} \colon \mathcal{V}_{\mathscr{S}}&\longrightarrow T^k\\
        (z_{C_i})_{i=1}^k&\longrightarrow (\alpha_{C_i}(t)\mid i=1,\ldots,k),
    \end{align*}
    where $\alpha_{C_i}(t) = a_i + \prod_{C_j\subset C_i}z_{C_j}$ and where $a_i = \alpha_{C_i}(p)$ for all $i=1,\ldots,s$.  Thus, for each $C\in\cI(\Phi^+)$, define 
    \begin{align*}
        \psi^\mathscr{S} = \psi_T^\mathscr{S}\times \prod_{C\in\cI(\Phi^+)} \psi_C^{\mathscr{S}}\colon \mathcal{V}_\mathscr{S}\longrightarrow T^k\times \prod_{C\in\cI(\Phi^+)}\mathbb{P}_C.
    \end{align*}
\end{definition}

\begin{lemma}(Lemma 4.2 of~\cite{M11}
    The map $\psi^{\mathscr{S}}$ is an embedding to a smooth open set. 
\end{lemma}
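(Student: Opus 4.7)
The plan is to verify three things: that $\psi^{\mathcal{S}}$ is injective, that its differential is everywhere injective, and that its image is an open subset of $Z_{\Phi^+}$. Smoothness of the image then follows because $\mathcal{V}_{\mathcal{S}}$ is a smooth open subset of $\mathbb{C}^k$.

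For injectivity, I would first work on the dense open subset $\mathcal{V}_{\mathcal{S}}^0\subset\mathcal{V}_\mathcal{S}$ where all $z_{C_i}\neq 0$. On this locus the projective components are already determined by $\psi_T^{\mathcal{S}}$, so it suffices to show $\psi_T^{\mathcal{S}}$ alone is injective there. Exploiting the tree structure of $\mathcal{S}$ from Lemma \ref{L3.2.4}, I proceed inductively: for the unique minimal element $\{p\}\in\mathcal{S}$ the identity $\alpha_{\{p\}}(t)-a_{\{p\}}=z_{\{p\}}$ recovers $z_{\{p\}}$ directly; for any non-minimal $C\in\mathcal{S}$ with successor $s(C)$, every $D\in\mathcal{S}$ with $D\subsetneq C$ satisfies $D\subseteq s(C)$, so
\begin{align*}
\alpha_C(t)-a_C \;=\; z_C \prod_{\substack{D\subseteq s(C)\\ D\in\mathcal{S}}} z_D \;=\; z_C\bigl(\alpha_{s(C)}(t)-a_{s(C)}\bigr),
\end{align*}
and dividing (permitted on $\mathcal{V}_{\mathcal{S}}^0$) recovers $z_C$ in terms of $t$ and previously recovered coordinates. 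For $z\in\mathcal{V}_\mathcal{S}\setminus\mathcal{V}_\mathcal{S}^0$ I appeal to the projective factors: for each vanishing $z_{C_i}$, I select an irreducible layer $C\in\mathcal{I}$ containing $C_i$ together with a pair of characters $\alpha,\alpha'\in\Phi^+_C$ whose associated layers $p_\mathcal{S}(\alpha), p_\mathcal{S}(\alpha')\in\mathcal{S}$ straddle the node $C_i$ in the tree. Lemma \ref{L3.2.5} then expresses the ratio of the corresponding homogeneous coordinates of $\psi_C^{\mathcal{S}}(z)$ as $z_{C_i}$ times a ratio of the units $p_\alpha(z), p_{\alpha'}(z)$ (nonvanishing by the very definition of $\mathcal{V}_{\mathcal{S}}$), and $z_{C_i}$ is read off uniquely.

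For the immersion property, the Jacobian of $\psi_T^{\mathcal{S}}$ in the coordinates $(z_{C_i})$ is triangular with respect to the tree order: $\alpha_{C_i}(t)-a_i$ depends only on $z_{C_j}$ with $C_j\subseteq C_i$, and its derivative in $z_{C_i}$ equals $\prod_{C_j\subsetneq C_i} z_{C_j}$. Where this diagonal entry is nonzero (in particular throughout $\mathcal{V}_{\mathcal{S}}^0$) the Jacobian of $\psi_T^{\mathcal{S}}$ alone is invertible; where it vanishes, the differentials of the appropriate projective components $\psi_C^{\mathcal{S}}$, expanded via the same Lemma \ref{L3.2.5} formulas used in the injectivity argument, supply the missing $dz_{C_i}$-directions. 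Assembling these contributions, $d\psi^{\mathcal{S}}$ is injective at every point of $\mathcal{V}_{\mathcal{S}}$.

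Combining injectivity with injectivity of the differential, $\psi^{\mathcal{S}}$ is an embedding onto its image, which is a smooth $k$-dimensional subvariety of $T^k\times\prod_{C\in\mathcal{I}}\mathbb{P}_C$ contained in $Z_{\Phi^+}$; since $\psi^\mathcal{S}$ restricted to $\mathcal{V}_\mathcal{S}^0$ recovers the original embedding $\psi$ on an open dense subset of $(T^k)^\mathrm{reg}$ and $Z_{\Phi^+}$ is irreducible of dimension $k$, the image must be open in $Z_{\Phi^+}$. The main obstacle I foresee is the boundary half of both the injectivity and the immersion arguments: when several $z_{C_i}$ vanish simultaneously, correctly pairing each $C_i$ with an irreducible layer $C\in\mathcal{I}$ and a pair of characters in $\Phi^+_C$ that isolates $z_{C_i}$ requires a careful translation of the combinatorics of the nested set into the projective coordinates, and the reason this translation succeeds at all is the foresight built into the definition of $\mathcal{V}_{\mathcal{S}}$ that every $p_\alpha$ be a unit.
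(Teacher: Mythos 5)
The paper does not prove this lemma at all --- it simply cites Lemma 4.2 of Moci (reference \cite{Moci_2011}) and moves directly to the next example --- so there is no in-paper argument to compare your proposal against; you would need to consult Moci's paper for the reference proof. Judged on its own, your skeleton (injectivity on the dense torus by the tree recursion, then boundary injectivity and immersion via the projective factors, then openness by a dimension count in the smooth irreducible $Z_{\Phi^+}$) is a reasonable plan, and the recursion on $\mathcal{V}_{\mathcal{S}}^0$ using $\alpha_C(t)-a_C = z_C\,(\alpha_{s(C)}(t)-a_{s(C)})$ is correct.

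There is, however, a genuine gap in the boundary half, which you flag but do not close, and it is more than bookkeeping. First, a directional slip: to read $z_{C_i}$ off a coordinate ratio in $\mathbb{P}_C$ you need $\alpha,\alpha'\in\Phi^+_C$, i.e.\ $H_\alpha, H_{\alpha'}\supset C$, and since $p_{\mathcal{S}}(\alpha)=C_i$ forces $H_\alpha\supset C_i$ while $p_{\mathcal{S}}(\alpha')=s(C_i)$ only forces $H_{\alpha'}\supset s(C_i)$, the irreducible layer must satisfy $C\subseteq s(C_i)$ (the natural candidate being $C=s(C_i)$, which lies in $\mathcal{S}\subset\mathcal{I}$), not $C\supseteq C_i$ as you wrote. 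Second, and more substantively, the \emph{existence} of a pair $\alpha,\alpha'\in\Phi^+_{s(C_i)}$ with $p_{\mathcal{S}}(\alpha)=C_i$ and $p_{\mathcal{S}}(\alpha')=s(C_i)$ is asserted via the phrase ``straddle the node'' but never established, and one cannot take the adapted-basis vectors $\alpha_{C_i},\alpha_{s(C_i)}$ for granted here: as the paper's own $\mathcal{S}_{-1}$ example shows, an adapted-basis vector such as $\varepsilon_2\in\mathcal{B}^{\mathcal{S}_{-1}}$ need not lie in $\Phi^+_{p}$ at all and so never appears among the homogeneous coordinates of any $\mathbb{P}_C$. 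This existence statement (together with the fact that the resulting units $p_\alpha$ involve only already-determined variables so the recursion does not become circular) is exactly the combinatorial heart of Moci's Lemmas 3.8--3.10 and his Lemma 4.2, and without it both your boundary injectivity and your immersion argument at a general boundary point remain incomplete.
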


\begin{example}
    Consider the maximal nested set $\mathcal{V}_{\mathscr{S}_1}$ of the above Example.  For the layer $C_3 = \{1\}$, we note that $C_3$ is minimal and thus, we can set
    \begin{align*}
        z_{C_3} = \varepsilon_3(t) - 1.
    \end{align*}
    For the layer $C_2 = \{q_1=q_2=1\}$, we have $s(C_2) = C_3$ and we can take generators $\{\varepsilon_1,\varepsilon_2,\varepsilon_3\}$ of $\Phi_{C_3}$ and so that the corresponding map is 
    \begin{align*}
        \psi^{\mathscr{S}_1}_{C_3}\colon \mathcal{V}_{\mathscr{S}_1}&\longrightarrow \mathbb{P}_{C_3}\\
        (z_{C_1},z_{C_2},z_{C_3})&\longmapsto [\varepsilon_1(t)-1\;\colon\;\varepsilon_2(t)-1\;\colon\;\varepsilon_3(t)-1] = [z_{C_1}z_{C_2}\;\colon\; z_{C_2}\;\colon\; 1],
    \end{align*}
    where $t\in T^k\setminus C_3$, and we find that 
    \begin{align*}
        z_{C_2} = \frac{\varepsilon_2(t)-1}{\varepsilon_3(t)-1}.
    \end{align*}
    For the layer $C_1$, we have $s(C_1) = C_2$ and we can take generators $\{\varepsilon_1,\varepsilon_2\}$ of $\Phi_{C_2}$.  We then have 
    \begin{align*}
        \psi^{\mathscr{S}_1}_{C_2}\colon \mathcal{V}_{\mathscr{S}_1}&\longrightarrow \mathbb{P}_{C_2}\\
        (z_{C_1},z_{C_2},z_{C_3})&\longmapsto [\varepsilon_1(t)-1\;\colon\;\varepsilon_2(t)-1] = [z_{C_1}\;\colon\;1],
    \end{align*}
    where $t\in T^k\setminus C_2$, and we find that 
    \begin{align*}
        z_{C_1} = \frac{\varepsilon_1(t)-1}{\varepsilon_2(t)-1}.
    \end{align*}
    Thus, we can recover the coordinates $(z_{C_1},z_{C_2},z_{C_3})$ from the image $\psi^{\mathscr{S}_1}(\mathcal{V}_{\mathscr{S}_1})$.  
    
    Now consider the maximal nested set $\mathscr{S}_{-1}$.  For the layers $C_1 = \{q_1q_2 = 1\}$ and $C_3 = \{q_2q_3 = 1\}$, we have $s(C_1) = s(C_3) = C_2$, and we can choose the adapted basis $\{\varepsilon_1+\varepsilon_2,\varepsilon_2,\varepsilon_2+\varepsilon_3\}$ as generators of $\Phi_{C_2}$.  Thus, we have 
    \begin{align*}
        \psi^{\mathscr{S}_{-1}}_{C_2}\colon\mathcal{V}_{\mathscr{S}_1}&\longrightarrow \mathbb{P}_{C_2}\\
        (z_{C_1},z_{C_2},z_{C_3})&\longmapsto [\varepsilon_1(t)\varepsilon_2(t)-1\;\colon\; \varepsilon_2(t)+1\;\colon\; \varepsilon_2(t)\varepsilon_3(t)-1] = [z_{C_1}\;\colon\; 1\;\colon\; z_{C_3}],
    \end{align*}
    where $t\in T^k\setminus C_2$.  Thus, we can read off the coordinates 
    \begin{align*}
        z_{C_1} &= \frac{\varepsilon_1(t)\varepsilon_2(t)-1}{\varepsilon_2(t)+1},\\
        z_{C_3} &= \frac{\varepsilon_2(t)\varepsilon_3(t)-1}{\varepsilon_2(t)+1}.
    \end{align*}
    Moreover, since $C_2$ is minimal, we also have 
    \begin{align*}
        z_{C_2} = \varepsilon_2(t) + 1.
    \end{align*}
    Thus, we can recover the coordinates $(z_{C_1},z_{C_2},z_{C_3})$ from the image $\psi^{\mathscr{S}_{-1}}(\mathcal{V}_{\mathscr{S}_{-1}})$.
\end{example}

From now on, we make the identification $\mathcal{V}_\mathscr{S}:= \psi^\mathscr{S}(\mathcal{V}_\mathscr{S})$. 

\begin{theorem}\label{T3.2.1} (Theorem 4.5 of~\cite{M11})
    We have $\psi^\mathscr{S}(\mathcal{V}_\mathscr{S}^0) \subset \psi^\mathscr{S}(T^{\mathrm{reg}})\subset T^k\times\prod_{C\in\cI(\Phi^+)}\mathbb{P}_C$.  Moreover, taking the limits as $z_C\rightarrow 0$, we have $f^\mathscr{S}(\mathcal{V}_\mathscr{S})\subset Z_{\Phi^+}$. 
    In defining
    \begin{align*}
        Y_{\Phi^+} := \bigcup_{\mathscr{S}\in\mathcal{M}}\mathcal{V}_\mathscr{S},
    \end{align*}
    we have
    \begin{align*}
        Z_{\Phi^+} = Y_{\Phi^+}.
    \end{align*}
\end{theorem}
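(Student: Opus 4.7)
The plan is to break the statement into three pieces and tackle them in order: (i) the pointwise inclusion $\psi^{\mathcal{S}}(\mathcal{V}_{\mathcal{S}}^0)\subset \psi^{\mathcal{S}}(T^{\text{reg}})$; (ii) the boundary inclusion $f^{\mathcal{S}}(\mathcal{V}_{\mathcal{S}})\subset Z_{\Phi}^+$; (iii) the set-theoretic equality $Z_\Phi^+ = Y_\Phi^+$, where only the inclusion $Z_\Phi^+\subset Y_\Phi^+$ requires real work. Throughout I will use Lemma \ref{L3.2.5} as the central computational tool, since it expresses every $\alpha(t)-a_\alpha$ as a product of the $z_D$ with a nonvanishing factor $p_\alpha$.

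For (i), at any $z = (z_{C_i})\in \mathcal{V}_{\mathcal{S}}^0$, every $z_{C_i}\neq 0$ and every $p_\alpha(z)\neq 0$ by definition, so Lemma \ref{L3.2.5} gives $\alpha(\psi_T^{\mathcal{S}}(z))-a_\alpha\neq 0$ for all $\alpha\in\Phi^+$; thus $t := \psi_T^{\mathcal{S}}(z)\in T^{\text{reg}}$. Using the same lemma one checks that the projective coordinates $\psi_C^{\mathcal{S},0}(z)$ agree, up to the common scalar $\prod_{D\subset C_{\alpha_i}}z_D$, with $[\alpha_1(t)-a_1:\cdots:\alpha_r(t)-a_r] = \varphi_C(t)$, so $\psi^{\mathcal{S}}(z) = \psi(t)$. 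For (ii), since $\psi^{\mathcal{S}}$ is an embedding of a smooth variety by Lemma 4.2 of \cite{Moci_2011} and $\mathcal{V}_{\mathcal{S}}^0\subset \mathcal{V}_{\mathcal{S}}$ is open and dense, the image $\psi^{\mathcal{S}}(\mathcal{V}_{\mathcal{S}})$ lies in the closure of $\psi^{\mathcal{S}}(\mathcal{V}_{\mathcal{S}}^0)\subset \psi(T^{\text{reg}})$ taken inside $T^k\times\prod_{C\in\mathcal{I}}\mathbb{P}_C$, which is $Z_\Phi^+$ by definition.

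For (iii), the inclusion $Y_\Phi^+\subset Z_\Phi^+$ is (ii) applied to every $\mathcal{S}\in\mathcal{M}$. For the reverse, let $y\in Z_\Phi^+$ project to $\bar y\in T^k$, and let $p$ be the unique zero-dimensional layer such that the set of layers in $\mathcal{C}(\Phi^+)$ containing $\bar y$ is contained in $\mathcal{C}_p(\Phi^+)$ (if $\bar y\in T^{\text{reg}}$ this is vacuous and $y\in\mathcal{V}_{\mathcal{S}}$ for any $\mathcal{S}$). For each irreducible layer $C\in\mathcal{I}$ containing $\bar y$, the projective component $y_C\in\mathbb{P}_C$ picks out a preferred direction, i.e.\ a line in $N_T(C)$; the key combinatorial step is to build a maximal flag of irreducibles $T^k\supsetneq C_1\supsetneq\cdots\supsetneq C_k=\{p\}$ compatible with these directions in the sense that the successors determined by Lemma \ref{L3.2.4} agree with the flag. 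Letting $\mathcal{S} = \{C_1,\ldots,C_k\}$, one then reads off candidate coordinates $z_{C_i}$ by inverting the relations of Definition \ref{D3.2.7} together with the formulas $\psi^{\mathcal{S}}_C$ along the successor chain (as illustrated in the worked examples), and verifies that the resulting point lies in $\mathcal{V}_{\mathcal{S}}$ and maps to $y$ under $\psi^{\mathcal{S}}$.

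The main obstacle is the last step: producing an adapted flag of irreducibles whose projective data is consistent with $y$, and showing the coordinates $(z_{C_i})$ so constructed are well-defined and lie in $\mathcal{V}_{\mathcal{S}}$. The crux is a recursive argument that at each stage, the projective data $y_{C_i}\in\mathbb{P}_{C_i}$ either specifies a hyperplane direction that is itself an irreducible layer (which we take as $s(C_i)$), or splits into irreducible factors — in which case the combinatorial decomposition condition of Definition \ref{D3.2.6} is exactly what permits continuing the flag. Once the flag exists, Lemma \ref{L3.2.5} guarantees the $p_\alpha$ are nonvanishing at the candidate point (by the inductive construction of the successors), placing it in $\mathcal{V}_{\mathcal{S}}$ as required.
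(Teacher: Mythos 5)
The paper only cites this theorem (Theorem 4.5 of \cite{Moci_2011}) and gives no proof of its own, so there is no in-paper argument to compare against; I review your proposal directly. Parts (i) and (ii) are correct: on $\mathcal{V}_{\mathcal{S}}^0$ the nonvanishing of all $z_{C_i}$ and all $p_\alpha(z)$, together with Lemma \ref{L3.2.5} and the fact that $\mathcal{U}_{\mathcal{S}}$ already avoids layers not through $p$, puts $t=\psi_T^{\mathcal{S}}(z)$ in $T^{\text{reg}}$ with $\psi^{\mathcal{S}}(z)=\psi(t)$; and (ii) is density of $\mathcal{V}_{\mathcal{S}}^0$ in the irreducible $\mathcal{V}_{\mathcal{S}}$ combined with continuity of $\psi^{\mathcal{S}}$.

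Part (iii) has a structural gap. You propose to build ``a maximal flag of irreducibles $T^k\supsetneq C_1\supsetneq\cdots\supsetneq C_k=\{p\}$'' and take that chain as $\mathcal{S}$, but the elements of a maximal nested set are not totally ordered: the paper's own example $\mathcal{S}_{-1}=\{\{q_1q_2=1\},\{q_2q_3=1\},\{-1\}\}$ has two incomparable elements, so no chain of irreducibles can be a maximal nested set here. Per Definition \ref{D3.2.6}, $\mathcal{S}$ is the set of irreducible factors of the (possibly reducible) layers in a maximal flag, and by Lemma \ref{L3.2.4} its Hasse diagram is a tree rooted at $\{p\}$, not a chain. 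Your recursion from the projective data $y_C\in\mathbb{P}_C$ must therefore branch: $y_C$ selects a sublayer $C'\subsetneq C$, but when $C'$ is reducible you must insert each of its irreducible factors and recurse on each one. (The ``uniqueness'' of $p$ you assert is also false --- any $p$ works when $\bar{y}\in T^{\text{reg}}$, and in general there may be several --- but only existence is needed, which holds because the minimal layer through $\bar{y}$ contains some point of $\mathcal{C}_0(\Phi^+)$.) Without this branching and an actual induction showing the resulting coordinates land in $\mathcal{V}_{\mathcal{S}}$, the reverse inclusion $Z_\Phi^+\subset Y_\Phi^+$ is not established.
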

\subsection{Extending to a Compactification of the Torus $T^\mathrm{reg}$}\label{S3.3}

In this section, we prove the following theorem.
\begin{theorem}\label{T3.3.2}
    The map 
    \begin{align*}
        Q'\colon T^{\mathrm{reg}}&\longrightarrow \mathrm{Gr}(k,\mathfrak{u}_{\Phi^+}^1)\\
        q&\longmapsto \mathrm{Span}_\mathbb{C}\{H_i^\mathrm{trig}(q)\mid i=1,\ldots,k\},
    \end{align*}
    where 
    \begin{align*}
        H_i^\mathrm{trig}(q) = u_i + \sum_{\alpha\in\Phi^+}\frac{q^\alpha}{1-q^\alpha}\alpha_i t_\alpha,
    \end{align*}
    admits an extension to the map 
    \begin{align*}
        Q'_{\Phi^+}\colon Y_{\Phi^+}\longrightarrow \mathrm{Gr}(k,\mathfrak{u}_{\Phi^+}^1).       
    \end{align*}
\end{theorem}

\begin{proof}
    Write the open cover $Y_{\Phi^+} = \bigcup_{\mathscr{S}\in\mathcal{M}} U_\mathscr{S}$, and fix a maximal nested set $\mathscr{S} = \{C_1,\ldots,C_k\}$, with an adapted basis $\mathcal{B}^\mathscr{S} = \{\beta_1,\ldots,\beta_k\} := \{\beta_{C_1},\ldots,\beta_{C_k}\}$ and with coordinates $(z_1,\ldots,z_k)= 
 (z_{C_1},\ldots,z_{C_k})\in U_\mathscr{S}$.      
    Let $B = \begin{pmatrix}\beta_1\cdots\beta_k\end{pmatrix}^\mathsf{T}$ be the matrix corresponding to the basis $\mathcal{B}^\mathscr{S}$ and take 
    \begin{align*}
        (H_B)_i^\mathrm{trig}(q) 
        &= [B^{-1}]\cdot H_i^\mathrm{trig}(q)\\
        &= [B^{-1}]u_i + \sum_{\alpha\in{\Phi^+}}\frac{1}{q^\alpha-1}\alpha([B^{-1}]u_i)t_\alpha\\
        &= [B^{-1}]u_i + \sum_{\alpha\in\Phi}\frac{1}{q^\alpha-1}\alpha_i t_\alpha,
    \end{align*}
    where we have written $\alpha = (\alpha_i)\cdot [B] = \sum_{j=1}^k \alpha_j\beta_{C_j}$.  Write
    \begin{align*}
        Q'(q) = \mathrm{Span}\left\{(H_B)_i^\mathrm{trig}(q)\mid i=1,\ldots,k\right\}
    \end{align*}
    for all $q\in T^{\mathrm{reg}}$, and take $q_i:= q^{\beta_{C_i}}$, and $q_i - 1 = \prod_{C_j\subset C_i}z_j$, and let $\alpha\in{\Phi^+}$ be a given element such that $\alpha_{i_0}\neq 0$ for some fixed $i_0\in \{1,\ldots,k\}$.  Then we have
    \begin{align*}
        q^\alpha - 1 
        &= q_1^{\alpha_1}\cdot\ldots\cdot q_k^{\alpha_k}-1\\
        &=\left(1+\prod_{C_j\subset C_1}z_j\right)^{\alpha_1}\ldots\left(1+\prod_{C_j\subset C_k}z_j\right)^{\alpha_k}-1\\
        &= \left(\alpha_1\left(\prod_{C_j\subset C_1}z_j\right)+\ldots + \alpha_k\left(\prod_{C_j\subset C_k}z_j\right)\right)+\ldots+\left(\prod_{l=1}^k \left(\prod_{C_j\subset C_l}z_j\right)^{\alpha_l}\right).
    \end{align*}
    Let $i_{\mathrm{min}}\in \{i\in \{1,\ldots,k\}\mid \alpha_i\neq 0\}$ be the index such that $C_{i_{\mathrm{min}}}$ is minimal among the layers in the set $\{C_i\mid \alpha_i\neq 0\}\subset \mathscr{S}$.  Using the identity 
    \begin{align*}
        \prod_{C_j\subset C_{l}}z_j = \left(\prod_{C_{i_{\mathrm{min}}}\subset C_j\subset C_l}z_j\right)\cdot\left(\prod_{C_j\subset C_{i_{\mathrm{min}}}}z_j\right),
    \end{align*}
    along with $C_{i_\mathrm{min}} = C_\alpha$, we find that 
    \begin{align*}
        q^\alpha - 1
        &= p_\alpha((z_{C_i})_{i=1}^k)\cdot \prod_{C_j\subset C_{\alpha}}z_j,
    \end{align*}
    according to Lemma \ref{L3.2.5}, where $p_\alpha((z_{C_i})_{i=1}^k)\neq 0$ because $\alpha_{i_0}\neq 0$.  Thus, $C_\alpha\subset \bigcap_{i\in[k], \alpha_i\neq 0}C_i\subset C_{i_0}$ and by Lemma \ref{L3.2.4}, we know that the set $\{C_i\in\mathscr{S}\mid C_i\subset C_{i_0}\}$ is a linearly ordered set, so we have the following factorization:
    \begin{align*}
        \prod_{C_j\subset C_{i_0}}z_j &= \left(\prod_{C_\alpha\subset C_j\subset C_{i_0}}z_j\right)\left(\prod_{C_j\subset C_\alpha}z_j\right)\\
        \frac{1}{\left(\prod_{C_j\subset C_\alpha}z_j\right)}&= \frac{\left(\prod_{C_\alpha\subset C_j\subset C_{i_0}}z_j\right)}{\left(\prod_{C_j\subset C_{i_0}}z_j\right)},
    \end{align*}
    so 
    \begin{align*}
        \frac{1}{q^\alpha-1} 
        &= \frac{1}{p_\alpha((z_{C_i})_{i=1}^k)\cdot \prod_{C_j\subset C_{\alpha}}z_j}\\
        &= \frac{1}{p_\alpha((z_{C_i})_{i=1}^k)}\cdot\frac{\left(\prod_{C_\alpha\subset C_j\subset C_{i_0}}z_j\right)}{\left(\prod_{C_j\subset C_{i_0}}z_j\right)},
    \end{align*}
    which is well-defined on $\mathcal{V}_\mathscr{S}^0\subset \mathcal{V}_\mathscr{S}$. 
    Summing over the set $\{\alpha\in{\Phi^+}\mid \alpha_{i_0}\neq 0\}$, we get the following:
    \begin{align*}
        (H_B)_{i_0}^\mathrm{trig} &= [B^{-1}]H_{i_0}^\mathrm{trig} + \sum_{\alpha\in{\Phi^+}}\frac{1}{q^\alpha-1}\alpha_{i_0} t_\alpha\\
        &= [B^{-1}]H_{i_0}^\mathrm{trig} +  \sum_{\substack{\alpha\in{\Phi^+}\\\alpha_{i_0}\neq 0}}\left(\frac{1}{p_\alpha((z_{C_i})_{i=1}^k)}\cdot\frac{\left(\prod_{C_\alpha\subset C_j\subset C_{i_0}}z_j\right)}{\left(\prod_{C_j\subset C_{i_0}}z_j\right)}\right)\alpha_{i_0} t_\alpha.
    \end{align*}
    We extend this formula to $\mathcal{V}_\mathscr{S}$ by multiplying by $\prod_{C_j\subset C_{i_0}}z_j$:  
    \begin{align*}
        \left(\prod_{C_j\subset C_{i_0}}z_j\right) (H_B)_{i_0}^\mathrm{trig} &= \left(\prod_{C_j\subset C_{i_0}}z_j\right)[B^{-1}]H_{i_0}^\mathrm{trig} + \sum_{\substack{\alpha\in{\Phi^+}\\\alpha_{i_0}\neq 0}}\frac{\left(\prod_{C_\alpha\subset C_j\subset C_{i_0}}z_j\right)}{p_\alpha((z_{C_{i'}})_{i'=1}^k)}\alpha_{i_0} t_\alpha
    \end{align*}
    \begin{align*}
        \left(\prod_{C_j\subset C_{i_0}}z_j\right) (H_B)_{i_0}^\mathrm{trig} &=  t_{\beta_{i_0}} + \left(\prod_{C_j\subset C_{i_0}}z_j\right)[B^{-1}]H_{i_0}^\mathrm{trig}+\sum_{\substack{\alpha\in{\Phi^+}\\\alpha_{i_0}\neq 0\\\alpha\neq \beta_{i_0}}}\frac{\left(\prod_{C_\alpha\subset C_j\subset C_{i_0}}z_j\right)}{p_\alpha((z_{C_{i'}})_{i'=1}^k)}\alpha_{i_0} t_\alpha.
    \end{align*}    
    for all $i_0\in\{1,\ldots,k\}$.  Since the $\{t_{\beta_i}\mid i=1,\ldots,k\}$ are linearly independent, it follows that the map 
    \begin{align*}
        (Q')_{\Phi^+}^\mathscr{S}\colon \mathcal{V}_\mathscr{S}&\longrightarrow \mathrm{Gr}(k,\mathfrak{u}_{\Phi^+}^1)\\
        (z_i)_{i=1}^k&\longmapsto \mathrm{Span}\left\{\left(\prod_{C_j\subset C_i}z_j\right)\cdot v_i''\;\biggr\rvert\;C_i\in\mathscr{S}\right\}
    \end{align*}
    extends to the locus where $z_j = 0$, $j=1,\ldots,k$ and thus forms a well-defined map.  Moreover, the $Q_{\Phi^+}^\mathscr{S}$ glue together to give a well-defined map 
    \begin{align*}
        Q'_{\Phi^+}\colon Y_{\Phi^+} = \bigcup_{S\in\mathcal{M}}\mathcal{V}_\mathscr{S}\longrightarrow \mathrm{Gr}(k,\mathfrak{u}_{\Phi^+}^1),
    \end{align*}
    as needed.
\end{proof}

\begin{definition}\label{D3.3.1}
    In a manner analogous to above, let us consider a torus $T^k$, with a hyperplane arrangement $\{H_\alpha\subset T^k\mid \alpha\in{\Phi^+}\}$ for a finite set ${\Phi^+}$, along with a poset of irreducible layers $\cI(\Phi^+)$.  Then, for a positive integer $l\in\mathbb{Z}$, define the map
    \begin{align*}
        \psi\times \mathrm{id}_{\mathbb{A}^l}\;\colon\;((T^k)^\mathrm{reg}\times \mathbb{A}^l)&\longrightarrow ((T^k)\times \mathbb{A}^l)\times \prod_{C\in\cI(\Phi^+)}\mathbb{P}_C\\ 
        (t,x)&\longmapsto \left(\;(t,x)\times (\;[\alpha(t)-1\mid \alpha\in{\Phi^+}_C]\;)_{C\in\cI(\Phi^+)}\;\right).
    \end{align*}
    Then analogously to Theorem \ref{T3.2.1}, we have the following open cover decomposition:
    \begin{align*}
        \overline{(\psi\times\mathrm{id}_{\mathbb{A}^1})((T^k)^\mathrm{reg}\times \mathbb{A}^l)} = Z_{\Phi^+}\times\mathbb{A}^l = \bigcup_{\mathscr{S}\in\mathcal{M}}(\mathcal{V}_\mathscr{S}\times\mathbb{A}^l).
    \end{align*}
\end{definition}
\subsection{Extending to the Boundary Divisors of $X_\Sigma$}\label{S3.4}

In the previous two sections, we have considered wonderful models of arrangements inside the torus $T^k$.  In this section, we will now extend our previous arrangement of subvarieties to the subvarieties in the toric variety $X_\Sigma$ and consider wonderful models inside $X_\Sigma$.  We will then extend the map $Q$ to $X_\Sigma$ in Theorem \ref{T3.4.1}.  Throughout, we will follow the constructions given in~\cite{DCG18} and~\cite{L09}.  

\begin{definition} (5.1 of~\cite{L09})
    Let $X$ be a smooth variety and let $\cC_X = \left\{C_i\right\}$ be a finite collection of smooth subvarieties of $X$.  For each pair $C_i,C_j\in\cC_X$, we say that $C_i$ and $C_j$ intersects {\it cleanly} if $C_i\cap C_j$ is smooth at each point $x\in C_i\cap C_j$ and we have  
    \begin{align*}
        T_{C_i,x}\cap T_{C_j,x} = T_{C_i\cap C_j,x}.
    \end{align*}
\end{definition}

\begin{definition} (c.f. Definition 2.1, 2.12, 5.4 of~\cite{L09}, Definition 2.1, 2.4, Theorem 2.1 of~\cite{DCG18}) \label{D3.4.1}
    Let $U\hookrightarrow \mathbb{A}^k$ be a smooth subvariety and let $\cC_U = \left\{C_i\right\}$ be a finite set of smooth subvarieties in $U$.  We say that $(U,\cC_U)$ is {\it a simple arrangement of subvarieties of $U$} if it satisfies the following conditions:
    \begin{enumerate}
        \item $C_i$ and $C_j$ intersect cleanly for every pair $i,j$. 
        \item For every $i,j$, we have $C_i\cap C_j = \bigsqcup_k C_k$ or $C_i\cap C_j = \emptyset$.   
    \end{enumerate}
    
    For such $(U,\cC_U)$, define a poset structure by inclusion.  A subposet $\cG_U\subset \cC_U$ is said to be a {\it building set of $\cC_U$} if it satisfies the condition that for each $C_i\in \cC_U$, the minimal elements of $\{G\in \cG_U\mid G\supset C_i\}$ intersect transversely and their intersection is $C_i$.  The minimal elements are then said to be the {\it $\cG$--factors} of $C_i$.  A subposet $\cG_U'\subset \cC_U$, is said to be a {\it building set} if the poset $\cC_U'$, formed by taking the collection of all possible intersections of collections of subvarieties in $\cG_U'$, is a simple arrangement of subvarieties of $U$, and if $\cG_U'$ is the building set of $\cC_U'$.  In this case, we say that $\cC_U'$ is the {\it induced arrangement of $\cG_U'$}.  
    
    Given the data of $(U,\cC_U,\cG_U)$, where $\cC_U$ is an arrangement of subvarieties of $U$ and $\cG_U$ is a building set of $\cC_U$, define the {\it blow-up of $U$ along $\cC_U$ with building set $\cG_U$} to be the space, denoted $\mathrm{Bl}\left(U,\cC_U,\cG_U\right)$, formed by the following construction:  enumerate the elements of $\cG_U = \{G_1,\ldots,G_N\}$ in the order so that the sets $\cG_{U,i} = \{G_1,\ldots,G_i\}$ are building sets of some $\cC_{U,i}$, respectively, and take the iterated blowup 
    \begin{align*}
        \mathrm{Bl}_{\widetilde{G_N}}\ldots\mathrm{Bl}_{\widetilde{G_2}}\mathrm{Bl}_{G_1}U,
    \end{align*}
    where $\widetilde{G_i}$ is the dominant transform of $G_i$ in $\mathrm{Bl}_{\widetilde{G_{i-1}}}\ldots\mathrm{Bl}_{\widetilde{G_2}}\mathrm{Bl}_{G_1}U$.  
\end{definition}

\begin{definition}(c.f. Definition 5.5 of~\cite{L09}, Definition 2.5 of~\cite{DCG18})\label{D3.4.2}
    Let $X = \bigcup_\sigma U_\sigma$ be an open covering of quasi-affine varieties and let $\cC_X = \left\{C_i\right\}$ be a finite set of smooth subvarieties in $X$.  We say that $(X,\cC_X)$ is {\it an arrangement of subvarieties of $X$} if for each open set $U_\sigma$ the pair $(U_\sigma,\cC_X\rvert_{U_\sigma})$ is a simple arrangement of subvarieties, in the sense of Definition \ref{D3.4.1}, where 
    \begin{align*}
        \cC_X\rvert_{U_\sigma} = \{C_i\cap U_\sigma\mid C_i\in\cC_X\}.
    \end{align*}  
    
    For such $(X,\cC_X)$, define a poset structure on $\cC_X$ by inclusion.  A subposet $\cG_X\subset \cC_X$ is said to be a {\it building set of $\cC_X$} if for each $U_\sigma$, the restriction  $\cG_X\rvert_{U_\sigma}\subset \cC_X\rvert_{U_\sigma}$ is a building set of $\cC_X\rvert_{U_\sigma}$ in the sense of Definition \ref{D3.4.1}.  A subposet $\cG_X'\subset \cC_X$ is said to be a {\it building set} if the poset $\cC_X'$, formed by taking the collection of all possible intersections of collections of subvarieties in $\cG_X'$, is an arrangement $\cC_X'$ of subvarieties of $X$, and if $\cG_X'$ is the building set of $\cC_X'$.  In this case, we say that $\cC_X'$ is the {\it induced arrangement of $\cG_X'$}.  
    
    Given the data of $(X,\cC_X,\cG_X)$, where $\cC_X$ is an arrangement of subvarieties of $X$ and $\cG_X\subset \cC_X$ is a building set of $\cC_X$, define the {\it blow-up of $X$ along $\cC_X$ with building set $\cG_X$} to be the space, denoted $\mathrm{Bl}\left(X,\cC_X,\cG_X\right)$, formed by the following construction:  enumerate the elements of $\cG_X = \{G_1,\ldots,G_N\}$ in the order so that the sets $\cG_{X,i} = \{G_1,\ldots,G_i\}$ are building sets for some $\cC_{X,i}$, respectively, and take the iterated blowup 
    \begin{align*}
        \mathrm{Bl}_{\widetilde{G_N}}\ldots\mathrm{Bl}_{\widetilde{G_2}}\mathrm{Bl}_{G_1}X,
    \end{align*}
    where $\widetilde{G_i}$ is the dominant transform of $G_i$ in $\mathrm{Bl}_{\widetilde{G_{i-1}}}\ldots\mathrm{Bl}_{\widetilde{G_2}}\mathrm{Bl}_{G_1}X$.  
\end{definition}

We construct open charts on the space defined by  De Concini and Gaiffi in~\cite{DCG18}, which is formed by blowing up a toric arrangement inside a given toric variety.  Let $\Phi = \Phi_+\sqcup\Phi_-\subset \mathbb{R}^k$ be a finite set and consider a fan $\Sigma$ in $\mathbb{R}^k$ which is cut out by the hyperplanes 
\begin{align*}
    H_\alpha = \{q\in T^k\mid  q^\alpha - 1 = 0\}
\end{align*}
for $\alpha\in\Phi$.  
We invoke the assumption in Section \ref{S3.1} that $\Sigma$ is regular, so that all extremal vectors of each cone form a basis of the ambient space $T^k$.  Then we have an embedding of 
\begin{align*}
    T^k\hookrightarrow X_\Sigma = \bigcup_{\sigma\in\Sigma(k)}U_\sigma,
\end{align*}
where each $U_\sigma$ is an open affine space.  Let $\cH_{X_\Sigma} = \{\overline{H_\alpha}\subset X_\Sigma\mid \alpha\in\Phi\}$.  Let $\cD_{X_\Sigma} = \{D_i\}$, where we have taken the boundary divisors $X_\Sigma\setminus T^k = \bigcup_i D_i$, which are irreducible components of the boundary of $X_\Sigma$.  

On each coordinate chart $U_\sigma$, the torus $T^k$ embeds into each open set as follows 
\begin{align*}
    T^k = \mathrm{ Spec }\mathbb{C}[q_1^\pm,\ldots,q_k^\pm]\hookrightarrow \mathrm{ Spec }\mathbb{C}[q_1,\ldots,q_k] =: U_\sigma \simeq \mathbb{A}^k,
\end{align*}
for some choice of coordinates $q_1,\ldots,q_k$.  The boundary divisors which intersect $U_\sigma$ are given by $\{D_{i_1},\ldots,D_{i_k}\}$, so
\begin{align*}
    U_\sigma\setminus T^k = \bigcup_{j=1}^k D_{i_j}^\sigma,    
\end{align*}
where each $D_{i_j}^\sigma$ are given by 
\begin{align*}
    D_{i_j}^\sigma = D_{i_j}\cap U_\sigma = \{q\in U_\sigma\mid q_{i_j} = 0\}.
\end{align*}
Let 
\begin{align*}
    \cD_{U_\sigma} = \{D_{i_j}^\sigma\mid j=1,\ldots,k\}.
\end{align*}
Taking the hypersurfaces
\begin{align*}
    \overline{H_\lambda}^\sigma = \overline{H_\lambda}\cap U_\sigma := \{q\in U_\sigma\mid q^\lambda = 1\},
\end{align*}
we take the hyperplane arrangement
\begin{align*}
    \cH_{U_\sigma}:= \left\{\overline{H_\lambda}^\sigma\mid \lambda\in\Phi\right\}. 
\end{align*}
and obtain the families $\cH_{X_\Sigma}\cup\cD_{X_\Sigma}$ and $\cH_{U_\sigma}\cup \cD_{U_\sigma}$ of codimension-$1$ subvarieties of $X_\Sigma$ and $U_\sigma$, respectively, for each $\sigma\in\Sigma(k)$.

Define $\cC_{X_\Sigma}$ to be the set of connected components of the intersections of the subvarieties in $\cH_{X_\Sigma}\cup\cD_{X_\Sigma}$ and for each $\sigma\in\Sigma(k)$, define $\cC_{U_\sigma}$ to be the set of connected components of the intersections of the subvarieties in $\cH_{U_\sigma}\cup\cD_{U_\sigma}$.  Define $\cG_{X_\Sigma}:= \cI_{X_\Sigma}$ to be minimal building set of $\cC_{X_\Sigma}$ consisting of the closures in $X_\Sigma$ of the irreducible layers in $T^k$, in the sense of Definition \ref{D3.2.5}.  Similarly for each $\sigma\in\Sigma(k)$, define $\cG_{U_\sigma}:= \cI_{U_\sigma}$ to be the minimal building set of $\cC_{U_\sigma}$ consisting of the closures in $U_\sigma$ of the irreducible layers in $T^k$, in the sense of Definition \ref{D3.2.5}.     

\begin{lemma} (Lemma 7.1 of~\cite{DCG18})

    The families $(X_\Sigma,\cC_{X_\Sigma})$ and $(U_\sigma,\cC_{U_\sigma})$, $\sigma\in\Sigma(k)$, each form arrangements of subvarieties in $X_\Sigma$ and $U_\sigma$, $\sigma\in\Sigma(k)$, respectively, according to Definition \ref{D3.4.2}.
\end{lemma} 

\begin{definition}\label{D3.4.3}
    Given $T^k$ and $\Phi$, as above, we define the {\it De Concini-Gaiffi compactification} of $T^\mathrm{reg}$ to be $\widetilde{X_\Sigma}:=\mathrm{Bl}(X_\Sigma,\cC_{X_\Sigma},\cI_{X_\Sigma})$, as defined in Definition \ref{D3.4.2}. 
\end{definition}

\begin{remark}\label{R1}
    Given $\sigma\in\Sigma(k)$ with corresponding open set $U_\sigma = \mathrm{Spec}\mathbb{C}[\sigma^\vee\cap X^*(T)]$, the extremal vectors $\{\beta_i\mid i=1,\ldots,k\}$ defining $\sigma$ turn out to be the normal vectors $\lambda\in\Phi\subset X^*(T)$ to the hyperplanes forming $\Sigma$.  Since $\Sigma$ is assumed to be a regular fan, then according to Section 8 of~\cite{DCG18}, this fan satisfies Property (E), according to the notation of Section 8 and thus, for every $\lambda\in \Phi$, we can write $\lambda = \sum_{i=1}^k \lambda_i \beta_i$, where $\lambda_i\geq 0$.  Thus, the hyperplanes $\overline{H_\lambda}^\sigma\in\cH_{U_\sigma}$ are of the form $q^\lambda = q_1^{\lambda_1}\cdot\ldots\cdot q_k^{\lambda_k} = 1$, where $\lambda_i\geq 0$ for all $i$.  
\end{remark}

\begin{definition}
    Let $S\in\mathcal{P}([k]) = \mathcal{P}(\{1,\ldots,k\})$ and let $\sigma\in\Sigma(k)$.  Define
    \begin{align*}
        (\cH_{U_\sigma})_S = \left\{\overline{H_\lambda}^\sigma\in\cH_{U_\sigma}\mid \overline{H_\lambda}^\sigma\cap\bigcap_{i\in [k]\setminus S}D_i^\sigma\neq \emptyset\right\};
    \end{align*}
    define $(\cC_{U_\sigma})_S$ to be the poset formed by the irreducible components of intersections of subvarieties from $(\cH_{U_\sigma})_S$; and define $(\cI_{U_\sigma})_S$ to be the poset formed by the minimal building set of $(\cC_{U_\sigma})_S$.   
\end{definition}

\begin{remark}
    The equations for $\overline{H_\lambda}^\sigma$ are given by $q^\lambda = q_1^{\lambda_1}\cdot\ldots\cdot q_k^{\lambda_k} = 1$ in $U_\sigma$.  Thus, 
    \begin{align*}
        (\cH_{U_\sigma})_S 
        &= \left\{\lambda\in\Phi\mid \lambda_i=0\;\;\forall i\in [k]\setminus S \right\}\\
        &= \left\{\lambda\in\Phi\mid \mathrm{Supp}(\lambda)\subset S\right\},
    \end{align*}
    where $\mathrm{Supp}(\lambda) = \{i\in[k]\mid \lambda_i\neq 0\}$.  It follows that the equations for each $\overline{H_\lambda}^\sigma\in(\cH_{U_\sigma})_S$ only depend on $i\in S$, for each $S\subset[k]$.  Defining
    \begin{align*}
        p_{S}\;\colon \; U_\sigma\setminus\bigcup_{i\in S}D_i^\sigma\simeq (T^1)^{S}\times(\mathbb{A}^1)^{[k]\setminus S}\longrightarrow (T^1)^{S},
    \end{align*}
    to the the projection map, it follows that $\overline{H_\lambda}^\sigma\in(\cH_{U_\sigma})_S$ if and only if we can write $\overline{H_\lambda}^\sigma = p_{S}^{-1}((\overline{H_\lambda}^\sigma)_{S})$ for some $(\overline{H_\lambda}^\sigma)_{S}\subset (T^1)^{S}$.  The previous definitions of irreducible components of intersections and building sets is also compatible with the projection to $(T^1)^S$.     
\end{remark}

\begin{lemma}\label{L3.4.1}
    Assume that for each $\sigma\in\Sigma(k)$, $U = U_\sigma\simeq \mathbb{A}^k$ in Definition \ref{D3.4.2}.  Define 
    \begin{align*}
        (U_\sigma)_S := \left(U_\sigma\setminus\bigcup_{i\in S}D_i^\sigma\right)\setminus\left(\bigcup_{\cH_{U_\sigma}\setminus(\cH_{U_\sigma})_S}\overline{H_\lambda}^\sigma\right).
    \end{align*}
    Then we have the following:
    \begin{align*}
        \mathrm{Bl}\left(U_\sigma,\cC_{U_\sigma},\cI_{U_\sigma}\right) 
        = 
        \bigcup_{S\in\mathcal{P}([k])}\mathrm{Bl}\left((U_\sigma)_S,\;(\cC_{U_\sigma})_S\rvert_{(U_\sigma)_S},\;(\cI_{U_\sigma})_S\rvert_{(U_\sigma)_S}\;\right)
    \end{align*}
\end{lemma}

\begin{proof}
    For each subset $S\subset [k]$, we have 
    \begin{align*}
        \left(U_\sigma\cap \bigcap_{i\in [k]\setminus S}D_i^\sigma\right)\setminus \bigcup_{i\in S}D_i^\sigma 
        \subset (U_\sigma)_S 
    \end{align*}
    forms an open neighbourhood and that 
    \begin{align*}
        U_\sigma 
        &= \coprod_{S\subset [k]}\left(\left(U_\sigma\cap\bigcap_{i\in [k]\setminus S}D_i^\sigma\right)\setminus \bigcup_{i\in S}D_i^\sigma\right)\\
        &= \bigcup_{S\subset [k]}(U_\sigma)_S.
    \end{align*}
    We observe that each $(U_\sigma)_S$ is Zariski-dense in $U_\sigma$ and that there is a Zariski-dense copy of $T^k$ in each open set $(U_\sigma)_S$.  Thus, the formation of irreducible components of intersections and the formation of building sets, in the sense of Definition \ref{D3.4.2}, are both compatible with the restrictions $T^k\subset (U_\sigma)_S\subset U_\sigma\subset X_\Sigma$ (c.f. Definition 2.5 of~\cite{DCG18}).  This completes the proof.    
\end{proof}

\begin{remark}
    We observe that if $S = \emptyset$, then $U_\sigma\setminus\bigcup_{i\in S}D_i^\sigma = \mathbb{A}^k$, and $(\cH_{U_\sigma})_S = \left\{\overline{H_\lambda}^\sigma\cap \bigcap_{i=1}^k D_i^\sigma\neq \emptyset\right\} = \emptyset$, so we have 
    \begin{align*}
        \left(\; (U_\sigma)_S,\;(\cC_{U_\sigma})_S,\;(\cI_{U_\sigma})_S\;\right) 
        = \left(\mathbb{A}^k\setminus\bigcup_{\cH_{U_\sigma}}\overline{H_\lambda}^\sigma,\;\emptyset,\;\emptyset \right).
    \end{align*}
    On the other hand, if $S = [k]$, then $U_\sigma\setminus\bigcup_{i\in S}D_i^\sigma = T^k$ and $(\cH_{U_\sigma})_S = \left\{\overline{H_\lambda}^\sigma\neq\emptyset\right\} = \cH_{U_\sigma}$, so we have  
    \begin{align*}
        \left(\;(U_\sigma)_S,\;(\cC_{U_\sigma})_S,\; (\cI_{U_\sigma})_S\;\right)
        = \left(T^k,\cC_{U_\sigma}, \cI_{U_\sigma}\right). 
    \end{align*}
\end{remark}

\begin{definition}
    Given an arrangement of subvarieties $(X,\cC_X)$, let $(\cC_X)_0$ be the $0$--dimensional intersections of subvarieties in $\cC_X$.
\end{definition}

\begin{theorem}\label{T3.4.1}
    Let $\Sigma$ be a regular fan and $X_\Sigma$ the corresponding toric variety with open set decomposition $X_\Sigma = \bigcup_{\sigma\in\Sigma}U_\sigma$.  Then we have the following chart decomposition:
    \begin{align*}
        \mathrm{Bl}(X_\Sigma,\cC_{X_\Sigma},\cI_{X_\Sigma}) 
        &= \bigcup_{\sigma\in\Sigma}\mathrm{Bl}(U_\sigma,\cC_{U_\sigma},\cI_{U_\sigma})\\
        &=
        \bigcup_{\sigma\in\Sigma}\bigcup_{S\in\mathcal{P}([k])}\mathrm{Bl}\left((U_\sigma)_S,\;(\cC_{U_\sigma})_S\rvert_{(U_\sigma)_S},\;(\cI_{U_\sigma})_S\rvert_{(U_\sigma)_S}\right)\\
        &= \bigcup_{\sigma\in\Sigma}\bigcup_{S\in\mathcal{P}([k])}\bigcup_{\mathscr{S}\in \mathcal{M}_S^\sigma}(\mathcal{W}_\mathscr{S}^\sigma)_S,
    \end{align*}
    where $\mathcal{M}_\mathscr{S}^\sigma$ are the maximal nested sets in the toric arrangement $((T^1)^S,(\cH_{U_\sigma})_\mathscr{S})$ (c.f. Definition \ref{D3.2.6}) and where $(\mathcal{W}_\mathscr{S}^\sigma)_S$ fits into the following fibre diagram:
    \begin{center}
\begin{tikzcd}
(\mathcal{W}_\mathscr{S}^\sigma)_S \arrow[rrr] \arrow[d]                                                                                                                            &  &  & {\mathcal{V}_\mathscr{S}\times\mathbb{A}^{[k]\setminus S}} \arrow[d]                          \\
{\mathrm{Bl}\left(T^{S}\times\mathbb{A}^{[k]\setminus S},\;(\cC_{U_\sigma})_S,\; (\cI_{U_\sigma})_S\right)'} \arrow[d] \arrow[rrr]                                                                  &  &  & {\mathrm{Bl}\left(T^{S}\times\mathbb{A}^{[k]\setminus S},\;(\cC_{U_\sigma})_S,\; (\cI_{U_\sigma})_S\right)} \arrow[d] \\
{\left(T^S\times\mathbb{A}^{[k]\setminus S}\right)\setminus\left(\bigcup_{\cH_{U_\sigma}\setminus(\cH_{U_\sigma})_S}\overline{H_\lambda}^\sigma\right)} \arrow[rrr] &  &  & {T^S\times\mathbb{A}^{[k]\setminus S}},                                                       
\end{tikzcd}
    \end{center} 
    with $\mathcal{V}_\mathscr{S}\subset \mathrm{Bl}(T^k,\;\cC_{T^k},\;\cI_{T^k})$ a maximal nested set, as in Theorem \ref{T3.2.1}.
\end{theorem}

\begin{proof}
    This follows from a direct application of Definition \ref{D3.4.3} and Lemma \ref{L3.4.1}.
\end{proof}

\begin{theorem}
    We have the following extension diagram:
    \begin{center}
\begin{tikzcd}
T^{\mathrm{reg}} \arrow[d] \arrow[rr, "Q'"] &  & {\mathrm{Gr}(k,\mathfrak{u}_{\Phi^+}^1)} \\
\widetilde{X_\Sigma} \arrow[rru, dashed]               &  &                                     
\end{tikzcd}
    \end{center}
\end{theorem}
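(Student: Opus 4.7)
The plan is to extend $Q$ chart-by-chart, using the open cover of $\widetilde{X_\Sigma}$ supplied by Theorem \ref{T3.4.1}, and then to glue the extensions by uniqueness on the dense subset $T^\text{reg}$. On each chart the argument combines two ingredients: the wonderful-compactification extension already constructed in Theorem \ref{T3.3.2} (or its relative variant from Definition \ref{D3.3.1}) handles the strata cut out by the ``toric'' hyperplanes $\overline{H_\alpha}^\sigma$ meeting the chart, while a direct inspection shows that the contribution to $Q$ from circuit vectors $\alpha$ whose support touches the boundary divisors of $X_\Sigma$ is automatically regular there.

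Concretely, I would fix a maximal cone $\sigma \in \Sigma$, a subset $S \subset [k]$, and a maximal nested set $\mathcal{S} \in \mathcal{M}_S^\sigma$, and work on the chart $(\mathcal{W}_\mathcal{S}^\sigma)_S$. I choose coordinates $q_1, \dots, q_k$ on $U_\sigma \simeq \mathbb{A}^k$ so that, by Remark \ref{R1}, each $\alpha \in \Phi$ satisfies $q^\alpha = q_1^{\alpha_1} \cdots q_k^{\alpha_k}$ with $\alpha_i \geq 0$. The chart's boundary is $\{q_i = 0 : i \in [k]\setminus S\}$, and the hyperplanes retained in it are exactly those with $\mathrm{Supp}(\alpha) \subset S$. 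I split the defining generators
\[
w_i(q) \;=\; v_i \;+\; \hbar \sum_{\substack{\alpha \in \Phi^+\\ \mathrm{Supp}(\alpha) \subset S}} \frac{\alpha(v_i)}{q^\alpha - 1}\, t_\alpha \;+\; \hbar \sum_{\substack{\alpha \in \Phi^+\\ \mathrm{Supp}(\alpha) \not\subset S}} \frac{\alpha(v_i)}{q^\alpha - 1}\, t_\alpha
\]
and observe that for $\alpha$ in the second sum, picking any $i_0 \in [k] \setminus S$ with $\alpha_{i_0} > 0$ shows that $q_{i_0}^{\alpha_{i_0}}$ divides $q^\alpha$, whence $q^\alpha \to 0$ along $\{q_{i_0} = 0\}$ and $\tfrac{1}{q^\alpha - 1}$ extends to a regular function with boundary value $-1$. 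Hence the second sum poses no obstruction. The first sum depends only on $(q_i)_{i \in S}$ and has singular locus $(\mathcal{H}^\sigma)_S$, precisely the arrangement blown up on this chart; I then introduce the blow-up coordinates $(z_C)_{C \in \mathcal{S}}$ and an adapted basis $\mathcal{B}^\mathcal{S} = \{\beta_{C_i}\}$ as in Definition \ref{D3.2.7}, apply Lemma \ref{L3.2.5} to write $q^\alpha - 1 = p_\alpha((z_C)) \prod_{D \subset C_\alpha} z_D$ with $p_\alpha(0) \neq 0$, and replace each $w_i$ by $\bigl(\prod_{C_j \subset C_i} z_{C_j}\bigr) w_i$. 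Just as in the proof of Theorem \ref{T3.3.2}, this multiplication clears the denominators coming from the first sum and produces a surviving leading term $\hbar t_{\beta_i}$ at $z = 0$; since the $t_{\beta_i}$ are linearly independent in $\mathfrak{u}_\Phi^1$, the rescaled spanning vectors remain independent throughout the chart and define a morphism into the Grassmannian. The chart-wise extensions agree on $T^\text{reg}$, so they glue to a global map $\widetilde{X_\Sigma} \to \text{Gr}(k-1, \mathfrak{u}_\Phi^1)$.

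The main obstacle I anticipate is the interplay between the two kinds of boundary on each chart: the toric-variety divisors $\{q_i = 0 : i \in [k] \setminus S\}$ and the exceptional divisors $\{z_C = 0 : C \in \mathcal{S}\}$ produced by the iterated blow-ups of $(\mathcal{H}^\sigma)_S$. I must verify that the adapted basis $\mathcal{B}^\mathcal{S}$ for the restricted arrangement extends to a regular coordinate system on $(\mathcal{W}_\mathcal{S}^\sigma)_S \hookrightarrow T^S \times \mathbb{A}^{[k]\setminus S}$, and that the rescaling factor $\prod_{C_j \subset C_i} z_{C_j}$ does not vanish on any component of the purely toric boundary in a way that would collapse the span. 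Both points should follow from Lemma 3.9 of \cite{Moci_2011} combined with the regularity hypothesis on $\Sigma$, but the careful local analysis is where most of the technical work will lie; once this compatibility is established, the remainder reduces to a direct transcription of the Theorem \ref{T3.3.2} argument with the affine factor $\mathbb{A}^{[k]\setminus S}$ of Definition \ref{D3.3.1} adjoined.
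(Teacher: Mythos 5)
Your proposal follows essentially the same route as the paper: fix a chart $(\mathcal{W}_\mathcal{S}^\sigma)_S$ from Theorem \ref{T3.4.1}, split each spanning vector $v_i'$ according to whether $\mathrm{Supp}(\alpha)\subset S$, absorb the $\Phi_S^{\mathsf{c}}$ part into a regular term, and run the rescaling-by-$\prod z_{C_j}$ argument of Theorem \ref{T3.3.2} on the $\Phi_S$ part. The one place you go beyond the paper is the explicit observation that for $\alpha$ with $\alpha_{i_0}>0$, $i_0\in[k]\setminus S$, the function $q^\alpha\to 0$ along $\{q_{i_0}=0\}$ so $\tfrac{1}{q^\alpha-1}\to -1$; the paper simply introduces $w_i$ without verifying its regularity at the toric boundary, so this small addition is a genuine improvement in completeness.
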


\begin{proof}
    Recall the map 
    \begin{align*}
        Q'\colon T^{\mathrm{reg}}&\longrightarrow \mathrm{Gr}(k,\mathfrak{u}_{\Phi^+}^1)\\
    q&\longmapsto \mathrm{Span}_\mathbb{C}\{H_i^\mathrm{trig}(q)\mid i=1,\ldots,k\}\\&\qquad\; = \mathrm{Span}_\mathbb{C}\left\{u_i +\sum_{\alpha\in\Phi^+}\frac{q^\alpha}{1-q^\alpha}\alpha_i t_\alpha\mid i=1,\ldots,k\right\}.
\end{align*}
    As in Theorem \ref{T3.4.1}, fix a $\sigma\in\Sigma$, $S\in\mathcal{P}([k])$ and $\mathscr{S}\in \mathcal{M}_S^\sigma$, In partitioning 
    \begin{align*}
        \cH = (\cH_{U_\sigma})_S\sqcup\left(\cH_{U_\sigma}\setminus(\cH_{U_\sigma})_S\right),
    \end{align*}
    so that
    \begin{align*}
        \Phi^+ = (\Phi^+)_S\sqcup (\Phi^+)_{S}^\mathsf{c},
    \end{align*}
    where 
    \begin{align*}
        \Phi_{S} := \{\lambda\in \Phi^+\mid \mathrm{Supp}(\lambda)\subset S\},
    \end{align*}
    we can write each term as follows:
    \begin{align*}
        H_i^\mathrm{trig}(q) &= \left(u_i + \sum_{\alpha\in(\Phi^+)_S^c}\frac{q^\alpha}{1-q^\alpha}\alpha_it_\alpha\right) + \sum_{\alpha\in(\Phi^+)_S}\frac{q^\alpha}{1-q^\alpha}\alpha_it_\alpha.
    \end{align*}
    Setting 
    \begin{align*}
        u_i' := u_i + \hbar\sum_{\alpha\in(\Phi^+)_S^c}\frac{q^\alpha}{1-q^\alpha}\alpha_it_\alpha,
    \end{align*}
    we get 
    \begin{align*}
        H_i^\mathrm{trig}(q) = u'_i + \hbar\sum_{\alpha\in(\Phi^+)_S}\frac{q^\alpha}{1-q^\alpha}\alpha_it_\alpha,
    \end{align*}
    for each $i=1,\ldots,k$.  Then, we can extend $\mathrm{Span}_\mathbb{C}\{H_i^\mathrm{trig}(q)\}$ across each $(\mathcal{W}_\mathscr{S}^\sigma)_S$, using the proof of Theorem \ref{T3.3.2}.  
\end{proof}

\begin{corollary} (Main Theorem \ref{T2})
    Let $X$ be a hypertoric variety with an action by $G = T^d\times\mathbb{C}^\mathsf{x}$ and K\"{a}hler torus $T^k$, and assume that the toric variety $X_\Sigma$ formed from the discriminantal arrangement of $X$ is smooth.  Let $\widetilde{X_\Sigma}$ be the De Concini-Gaiffi compactification of $T^\mathrm{reg}$.  Then the map 
    \begin{align*}
        Q\colon T^\mathrm{reg}&\longrightarrow \mathrm{Gr}(n+1,E)\\
        q&\longmapsto \{u\star_q(-)\mid u\in H_G^2(X)\}
    \end{align*}
    admits an extension to $\widetilde{X_\Sigma}$, in the sense that the following diagram commutes:
    \begin{center}
\begin{tikzcd}
T^\mathrm{reg} \arrow[rr, "Q"] \arrow[d] \arrow[rrd, "Q'" description, dashed] &  & {\mathrm{Gr}(n+1,E)}                                             \\
{\widetilde{X_\Sigma}} \arrow[rr, dashed]                 &  & {\mathrm{Gr}(k,\mathfrak{u}_{\Phi^+}^1)}. \arrow[u]
\end{tikzcd}
\end{center} 
\end{corollary}

\bibliographystyle{plain}
\bibliography{sources.bib}

\end{document}